  \numberwithin{equation}{section}
  \newcommand{\N}{\mathbb{N}}         
  \newcommand{\R}{\mathbb{R}}         
  \newcommand{\EE}{\mathbb{E}}        
  \newcommand{\PP}{\mathbb{P}}        
  \newcommand{\leb}{\mathcal{L}}      
  \newcommand{\supp}{\text{supp}}        
  \newcommand{\BB}{\mathcal{B}}         
  \newcommand{\diam}{\text{diam}}       
  \newcommand{\dimloc}{\text{dim}}
  \newcommand{\e}{\varepsilon}
  \renewcommand{\AA}{\mathbb{A}}          
  \newcommand{\Q}{\mathcal{Q}}          
  \newcommand{\lam}{\lambda}
  \newcommand{\simi}{\text{SIM}}
\newcommand{\cP}{\mathcal{P}}
\newcommand{\zero}{\mathcal{Z}}
\newcommand{\wt}{\widetilde}
  \newcommand{\I}{\text{I}}
  \newcommand{\II}{\text{II}}
  \newcommand{\III}{\text{III}}
  \DeclareMathOperator{\dist}{dist}
  \newtheorem{thm}{Theorem}[section]
  \newtheorem{lemma}[thm]{Lemma}
  \newtheorem{prop}[thm]{Proposition}
  \newtheorem{cor}[thm]{Corollary}
  \theoremstyle{remark}
  \newtheorem{rem}[thm]{Remark}
  \newtheorem{rems}[thm]{Remarks}
  \newtheorem{defn}[thm]{Definition}
  \DeclareMathOperator{\spt}{spt}
\begin{document}

\title{Patterns in random fractals}

\author{Pablo Shmerkin}
\address{Department of Mathematics and Statistics, Torcuato Di Tella University, and CONICET, Buenos Aires, Argentina}
\email{pshmerkin@utdt.edu}
\thanks{P.S. was partially supported by Projects PICT 2013-1393 and PICT PICT 2014-1480 (ANPCyT). Part of this research was completed while P.S. was visiting the University of Oulu.}
\urladdr{http://www.utdt.edu/profesores/pshmerkin}

\author{Ville Suomala}
\address{Department of Mathematical Sciences, University of Oulu, Finland}
\email{ville.suomala@oulu.fi}
\thanks{V.S. was partially supported by the Academy of Finland via the Centre of Excellence in Analysis and Dynamics Research.}
\urladdr{http://cc.oulu.fi/~vsuomala/}

\keywords{configurations, progressions, random measures, random sets, martingales, Hausdorff dimension, fractal percolation, intersections}

\subjclass[2010]{Primary: 05D40, 28A75, 60C05; Secondary: 05C55, 28A78, 28A80, 60D05}

\begin{abstract}
We characterize the existence of certain geometric configurations in the fractal percolation limit set $A$ in terms of the almost sure dimension of $A$. Some examples of the configurations we study are: homothetic copies of finite sets, angles, distances, and volumes of simplices. In the spirit of relative Szemer\'{e}di theorems for random discrete sets, we also consider the corresponding problem for sets of positive $\nu$-measure, where $\nu$ is the natural measure on $A$. In both cases we identify the dimension threshold for each class of configurations. These results are obtained by investigating the intersections of the products of $m$ independent realizations of $A$ with transversal planes and, more generally, algebraic varieties, and extend some well known features of independent percolation on trees to a setting with long-range dependencies.
\end{abstract}

\maketitle

\tableofcontents

\section{Introduction and summary of main results}

\subsection{Introduction}
A classical general problem in combinatorics is to understand what conditions (especially, conditions of \emph{structure} and \emph{size}) on a set $A$ imply that $A$ contains certain configurations, like $3$-term arithmetic progressions.  Indeed, the classical theorem of Roth \cite{Roth53} implies that if $A\subset\N$ has positive upper density, then it must contain $3$-term arithmetic progressions. The famous theorem of Szem\'{e}rdi \cite{Szemeredi75} generalizes this to arbitrarily long arithmetic progressions. The celebrated theorem of Green and Tao \cite{Greentao08} generalizes the former statement to subsets of the primes, and has stimulated a large amount of further research over the past decade.

There has been much interest also in this kind of problems when $A$ is a subset of Euclidean space. One might heuristically conjecture that if $A\subset\R$ is `large', then it should contain progressions. If $A$ is large in the sense of measure, then this is indeed the case. A well known corollary of the Lebesgue density theorem asserts that any set $A\subset\R$ with positive Lebesgue measure contains arbitrary long arithmetic progressions and, more generally, in any dimension, homothetic copies of all finite sets. A wide open conjecture of Erd\H{o}s states that for all infinite sets $S\subset\R$, there is a set $A\subset\R$ with positive measure which does \emph{not} contain a similar copy of $S$.

In the zero-measure case, a natural candidate for size is Hausdorff dimension. Already in 1959, Davies, Marstrand, and Taylor \cite{Daviesetal59} showed that there are compact sets of dimension $0$ which contain a homothetic copy of all finite sets. This was recently extended to polynomial patterns in \cite{MolterYavicoli16}. Hence, a small dimension itself does not rule out containing rich sets of configurations. On the other hand, there are compact sets $A\subset \R$ of Hausdorff dimension $1$ without any arithmetic progressions (see \cite{Keleti88}) and compact sets in arbitrary dimension that do not contain parallelograms \cite{Maga10}. Thus, Hausdorff dimension of $A\subset\R$ alone gives no information whatsoever about the existence of arithmetic progressions (and certain other configurations) in $A$. We note, however, that the situation is different for other patterns. For example, Iosevich and Liu \cite{IosevichLiu16} recently proved that for $d\ge 4$ there exists $\e_d>0$ such that any Borel subset $A$ of $\R^d$ of dimension $>d-\e_d$ contains the vertices of an equilateral triangle. This was known to be false in dimension $d=2$ (\cite{Falconer92, Maga10}) and the problem is open in dimension $d=3$.

It turns out that in many cases a lower bound on the Hausdorff dimension does imply the presence of a \emph{positive measure} of configurations in some class (even if it often does not guarantee the existence of any one given configuration). Perhaps the most classical example is the distance set problem: Falconer \cite{Falconer85} conjectured that if $A\subset\R^d$, $d\ge 2$ is a Borel set with $\dim_H(A)>\tfrac{d}{2}$, then its distance set $D(A)=\{|x-y|:x,y\in A\}$ has positive Lebesgue measure. The best current results towards this conjecture are due to Wolff \cite{Wolff99} and Erdogan \cite{Erdogan05}: $\dim_H(A)>\tfrac{d}{2}+\tfrac{1}{3}$ suffices. Many other problems of a similar kind have been investigated, see e.g. \cite{GILP15, GGIP15} and references there.

A fruitful parallel line of work has focused on finding \emph{pseudo-randomness} conditions (in addition to size conditions) on subsets of $\mathbb{R}^d$ that ensure the presence of configurations (such as arithmetic progressions). Typically, these pseudo-randomness conditions take the form of a suitable Fourier decay of a measure supported on the set in question: see \cite{LabaPramanik09, Carnovale15, Chanetal16, Henriotetal16}. Recall that the Fourier transform of a finite measure $\mu$ on $\R^d$ is given by $\hat{\mu}(\xi)=\int\exp(-2\pi i \xi \cdot x)\,d\mu(x)$. To a give a flavour for this type of results, we state a very special case of \cite[Theorem 1.3]{Henriotetal16}: given $D>0, \beta\in (0,1)$, there is $\e(D,\beta)>0$ such that the following holds: if $\mu$ is a measure on $\R$ such that
\begin{align*}
&\mu([x-r,x+r])\le D r^\alpha\text{ for all }r>0\,,x\in\R\,,\\
&\hat{\mu}(k)\le D (1-\alpha)^{-B}|k|^{-\beta/2}\text{ for all }k\in\N\,,
\end{align*}
then, provided that $\alpha>1-\e(D,\beta)$, the topological support of $\mu$ contains a $3$-term arithmetic progression. In fact, the results from \cite{Henriotetal16} apply to many linear, and some polynomial, patterns in $\R^d$. An interesting feature of \cite{Carnovale15} is that the progressions can be found in any set of positive $\mu$-measure, not just the topological support. Although these are deep results, we note that the hypotheses are difficult to verify for concrete examples, in part because the bound on the mass decay exponent $\alpha$ depends on the constant $D$ (it was shown in \cite{Shmerkin16} that such dependence cannot be removed). The examples showing that measures satisfying conditions such as the above exist are random, see \cite{LabaPramanik09}. Moreover, the value of $\e(D,\beta)$ is not explicit, and certainly far from sharp.

Going back to the discrete setting, the last few years saw an explosion of \emph{relative} Szemer\'{e}di Theorems. That is, given some discrete set $A$, one is interested in knowing whether sets of positive density \emph{relative to $A$} contain large arithmetic progressions. The Green-Tao Theorem mentioned above is of this type, with $A$ equal to the prime numbers. A general approach to relative Szemer\'{e}di theorems (which in particular yields a simpler proof of the Green-Tao Theorem) was developed in \cite{ConlonEtAl15}. Closer to our work, sharp relative Szemer\'{e}di theorems have been obtained for random discrete sets by Conlon and Gowers \cite{ConlonGowers16} and, independently, by Schacht \cite{Schacht16}: for $\delta>0$, $k\in\N_{\ge 3}$, let us say that a set $A\subset\{1,\ldots,n\}$ is $(\delta,k)$-Szemer\'{e}di if every subset $A'\subset A$ with $|A'|\ge \delta|A|$ contains an arithmetic progression of length $k$. If $[n]_p$ denotes the canonical random set obtained by keeping each number in $\{1,\ldots,n\}$ independently with probability $p$ then, provided $p\ge C n^{-1/(k-1)}$ (with $C$ a suitably large absolute constant), the probability that $[n]_p$ is $(\delta,k)$-Szemer\'{e}di tends to $1$ as $n\to\infty$. Moreover, this threshold is sharp (up to the value of $C$). We note that the threshold for the existence of $k$-progressions in $A$ itself is $p\sim n^{-2/k}$, and this is a far more elementary fact.

\subsection{Summary of results}
\label{subsec:sum-res}

This circle of results show that, despite the substantial progress achieved, the connection between size, pseudo-randomness and the presence of progressions and other patterns, is far from being elucidated, especially in the continuous setting. The goal of this work is to present a systematic study of the existence of patterns in random fractals. That is, rather than dealing with pseudorandomness (such as fast Fourier decay), we will consider `honest' random sets and measures. This will also give us the chance to explore `relative Szemer\'{e}di' type of results in our setting.

Unlike the discrete case, there is no canonical random set or measure of fractional dimension. In \cite{ShmerkinSuomala14}, we proposed a large class of random fractal measures on Euclidean space which aims to capture the main properties of the canonical discrete random set. For concreteness, in this article we focus on what is perhaps the best known and studied model of stochastically self-similar set: fractal percolation. Nevertheless, the method should work for many other random fractals, including far more general subdivision constructions and Poissonian cutouts. In fact, our main abstract result in Section \ref{sec:continuity} holds for a wide variety of random measures satisfying suitable  martingale and weak dependency conditions.

In order to state some of our results more precisely, let us recall the definition of fractal percolation. For convenience of notation, we will consider fractal percolation on the dyadic grid only. Fix a parameter $p\in (0,1)$ and $d\in\N$. We subdivide the unit cube in $\R^d$ into $2^d$ equal sub-cubes. We retain each of them with probability $p$ and discard it with probability $1-p$, with all the choices independent. For each of the retained cubes, we continue inductively in the same fashion,  further subdividing them into $2^d$ equal sub-cubes, retaining them with probability $p$ and discarding them otherwise, with all the choices independent of each other and the previous steps. The fractal percolation limit set $A=A^{\text{perc}(d,p)}$ is the set of points which are kept at each stage of the construction.
It is well known that if $p\le 2^{-d}$, then $A$ is almost surely empty, and otherwise a.s.
\begin{equation}\label{eq:s(d,p)}
\dim_H A=\dim_B A=s(d,p):=d+\log_2 p
\end{equation}
conditioned on non-extinction (i.e. $A\neq\varnothing$). Here, and throughout the paper, $\dim_H,\dim_B$ denote Hausdorff and box-counting (Minkowski) dimensions, respectively. Fractal percolation can be seen as a Euclidean realization of a Galton-Watson branching process. See \cite{LyonsPeres17} for extensive background on branching processes, fractal percolation and dimension.

Our first class of results identify the dimension threshold $s(d,p)$ for the presence of a wide variety of geometric configurations in $A$:

\begin{thm} \label{thm:patterns}
The following hold for $A=A^{\text{perc}(d,p)}$, provided the required conditions on $d$ and $s=s(d,p)$ hold:
\begin{enumerate}
\item If $m\ge 2$ and $s>d-(d+1)/m$, then $A$ contains a homothetic copy of all $m$-point sets.
\item If $m\ge 2$ and $s>d-d/m$, then for any subset $\{x_1,\ldots,x_m\}\subset ]0,1]^d$, $A$ contains a translation of $\{ x'_1,\ldots,x'_m\}$ whenever $x'_i$ are close enough to $x_i$.
\item If $d\ge 2$ and $s>1/2$, then there is $\e>0$ such that $(0,\e)\subset D(A)$.
\item If $s>1/(d+1)$, then there is $\e>0$ such that $A$ contains the vertices of a simplex of all volumes in $(0,\e)$.
\item If $d=2$ and $s>1$, then for $\{ x_1,x_2,x_3\}\subset ]0,1[^2$, $A$ contains an isometric copy of $\{ x'_1,x'_2,x'_3\}$ whenever $x'_i$ are close enough to $x_i$.
\item If $d\ge 2$ and $s>1/3$, then triples of points in $A$ determine all angles in $]0,\pi[$.
\item If $d\ge 2$ and $s>2/3$, then $A$ contains the vertices of all non-degenerate triangles, up to similarities.
\item If $m\ge 3$, $d=2$ and $s>2-4/m$, then up to similarities $A$ contains the vertices of all non-degenerate $m$-gons.
\end{enumerate}
To be more precise, the claims $(2)$ and $(5)$ hold with positive probability and the others hold a.s. on $A\neq\varnothing$.
Moreover, in all these cases, the range of $s$ is sharp, in the sense that if $s$ is smaller or equal than the given threshold, then any one given configuration has probability zero of occurring in $A$. For example, for any $m$-point set $S\subset\R^d$, if $s\le d-(d+1)/m$, then a.s. $A$ contains no similar copy of $S$.

Furthermore, the thresholds are sharp for packing dimension (up to the endpoint), even for deterministic sets. For example, if $A\subset\R^d$ contains a homothetic copy of all $m$-point sets, then $\dim_P(A)\ge d-(d+1)/m$.
\end{thm}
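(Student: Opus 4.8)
\emph{Proof strategy.} All eight configuration statements share a common structure, and I would treat them uniformly. A configuration of one of the listed types is a tuple of $m$ points of $\R^d$ (homothet of an $m$-point set: $m$ points; prescribed distance: $m=2$; simplex volume: $m=d+1$; angle or triangle shape: $m=3$; $m$-gon: $m$ points), and the property ``$B\subset\R^d$ contains a configuration of prescribed shape $\theta$'' is equivalent to ``$B^m\subset\R^{dm}$ meets the real algebraic variety $V_\theta$ of all $m$-tuples realizing that shape.'' The first step is to write down, for each class, the family $\{V_\theta:\theta\in\Theta\}$ and compute $\dim V_\theta=:k$ by a degrees-of-freedom count (subtracting the transformations that are ``for free'': always translations, plus dilations for (1), plus the full similarity group for (7),(8)): one gets $k=d+1$, $d$, $2d-1$, $d(d+1)-1$, $3$, $3d-1$, $3d-2$, $4$ in cases (1)--(8) respectively, and in each case the stated threshold on $s$ is precisely the transversality inequality $ms+k>dm$, i.e.\ $s>d-k/m$. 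One must also excise the degenerate loci (coincident points; degenerate simplices and polygons) so that the $V_\theta$ are smooth and the family has bounded complexity, as required by the abstract intersection theorem of Section~\ref{sec:continuity}.

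The second, and conceptually central, step reduces a statement about a single realization $A=A^{\text{perc}(d,p)}$ to a statement about a product of \emph{independent} copies, the setting to which the abstract theorem applies (products of random measures with the required martingale and weak-dependency structure inherit that structure). Fix a level $\ell$ and $m$ pairwise disjoint level-$\ell$ dyadic subcubes $R_1,\dots,R_m$ of $[0,1]^d$, chosen --- by taking $\ell$ large and rescaling the target to fit --- so that $V_\theta\cap(R_1\times\cdots\times R_m)$ is a nonempty bounded piece of a $k$-plane for a whole neighbourhood of shapes $\theta$. On the positive-probability event that all $R_i$ survive, the rescaled sets $A\cap R_i$ are i.i.d.\ copies of $A$, their product has dimension $ms$, and --- provided $ms+k>dm$ --- the intersection theorem produces, a.s.\ on that event, a point of $\prod_i(A\cap R_i)$ lying on $V_\theta$, simultaneously for all $\theta$ in the neighbourhood; such a point is a configuration of the required class with its $i$-th vertex in $A\cap R_i\subset A$. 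This already gives (1)--(8) with positive probability. In cases (1),(3),(4),(6),(7),(8) the relevant event is essentially scale-invariant --- a homothet, a sufficiently small distance or simplex-volume, an angle, or a similarity-class of triangle or polygon found inside a rescaled copy of $A$ sitting in a surviving level-$n$ cube produces the same type of configuration inside $A$ --- so since on non-extinction the number of surviving level-$n$ cubes grows like $2^{ns}$ and these host independent copies of $A$, a zero-one / Borel--Cantelli argument (aggregating over scales where needed, e.g.\ to turn an interval $J\subset D(A)$ into $(0,\e)\subset D(A)$) boosts the conclusion to hold a.s.\ on $A\neq\varnothing$. In (2) and (5) there is no scaling freedom: a macroscopic translate or isometric copy cannot be fitted into a small subcube, and the restriction ``$x_i'$ close enough to $x_i$'' is exactly the neighbourhood of shapes realizable with a fixed $R_1,\dots,R_m$; here only the positive-probability statement survives.

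For sharpness, fix a configuration of the class and suppose $A$ contains a copy of it of diameter $\asymp 2^{-k}$. Then there are surviving level-$k$ cubes $Q_1,\dots,Q_m$, pairwise distinct once $k$ is large relative to the fixed target, whose i.i.d.\ rescaled copies $A_i=A\cap Q_i$ satisfy $\prod_iA_i\cap\bigl(V_\theta\cap\prod_iQ_i\bigr)\neq\varnothing$. But for a fixed bounded piece of a $k$-plane and $m$ independent percolations, the expected number of surviving level-$n$ product cubes meeting it is $\asymp 2^{nk}p^{mn}=2^{n(ms+k-dm)}$, which tends to $0$ when $s<d-k/m$; hence that intersection is a.s.\ empty. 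Summing this vanishing probability over the countably many scales $k$ and, for each, the finitely many tuples $(Q_1,\dots,Q_m)$ of level-$k$ cubes shows that $A$ a.s.\ contains no copy at all. At the endpoint $s=d-k/m$ the expectation is $\asymp 1$ and this argument fails; there one instead uses that the branching process describing the intersection of $\prod_iA_i$ with a non-degenerate $k$-plane has offspring mean exactly one, hence dies out a.s. --- which is where the excision of degenerate loci in step one is used.

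The deterministic packing-dimension bound is separate and elementary. For the homothety case: a harmless homothety lets us assume $A\subset[0,1]^d$. The homothety-invariant shape of an $m$-tuple $(a_1,\dots,a_m)$ with $a_1\neq a_2$ is recorded by $\bigl(\tfrac{a_2-a_1}{|a_2-a_1|},\tfrac{a_3-a_1}{|a_2-a_1|},\dots,\tfrac{a_m-a_1}{|a_2-a_1|}\bigr)$, living in a manifold $\mathcal S$ of dimension $dm-d-1$, and the hypothesis says exactly that this shape map carries $A^m$ onto $\mathcal S$. On $A^m_\eta=\{(a_i)\in A^m:|a_1-a_2|\ge\eta\}$ the shape map is Lipschitz, and $\mathcal S$ is the countable union over $k\in\N$ of $\mathrm{shape}(A^m_{1/k})$ (every realized homothet has some positive size); by countable stability of packing dimension some $\eta$ makes $\dim_P\mathrm{shape}(A^m_\eta)$ as close to $dm-d-1$ as we wish, whence $dm-d-1-o(1)\le\dim_P\mathrm{shape}(A^m_\eta)\le\dim_P(A^m)\le m\dim_P A$, using that Lipschitz maps do not raise packing dimension and that the packing dimension of a product is at most the sum. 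Thus $\dim_P A\ge d-(d+1)/m$, and the other thresholds follow identically with the corresponding configuration map. The main obstacles I anticipate are (i) verifying, case by case, that each variety family meets the hypotheses of the abstract intersection theorem --- the dimension counts, smoothness after excision, and transversality --- and (ii) the critical endpoint of the sharpness half; the reduction to independent products and the packing-dimension bound are comparatively routine.
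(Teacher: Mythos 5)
Your architecture is the paper's own: encode each configuration class as a family of affine planes or algebraic varieties in $(\R^d)^m$ (your degrees-of-freedom counts $k$ and the resulting thresholds $s>d-k/m$ all match), pass from $A^m$ to a product of $m$ independent percolations by restricting to disjoint dyadic cubes off $\Delta$, apply the abstract intersection theorem plus a survival lemma to get an open set of configurations with positive probability, and upgrade via the zero-one law/Harris inequality; your packing-dimension bound via the Lipschitz shape map is essentially Proposition \ref{prop:threshold_sharp}. The genuine gap is the parenthetical claim that products of independent copies ``inherit'' the required weak-dependency structure. They do not: the product of spatially independent martingales is \emph{not} spatially independent (cubes sharing a coordinate projection $\pi_j$ are dependent at every scale), and verifying that $\nu_n^{(1)}\times\cdots\times\nu_n^{(m)}$ is a WSDM with parameter $\delta<k-m(d-s)$ relative to the given family of planes/varieties is the technical core of the paper (Theorems \ref{thm:independent-product-linear-int} and \ref{thm:nonlinear}): the dependency degree is a \emph{random} quantity, bounded by the maximal mass of slices of the $(m-1)$-fold product along fibres $\{x_j=t\}$, and its control proceeds by induction on $m$, uses the sub-threshold growth estimate (part (3) of Theorem \ref{thm:independent-product-linear-int}, needed even though the applications are supercritical), and relies on transversality of the varieties with respect to the subspaces $H^I$, $H^{I,j,i}$ — not only on smoothness of the $V_\theta$. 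Two smaller inaccuracies: the intersection machinery yields $Y^\theta>0$ only with \emph{positive} probability (second moment, Lemma \ref{lem:Yt-survives}), not a.s.\ on survival of the cubes $R_i$ as you assert (harmless, since you only use positive probability downstream); and in case (1) the patterns with coordinate coincidences $a_i^k=a_j^k$, $a_i\neq a_j$, violate coordinate-hyperplane transversality, which your excision of coincident points does not cure — the paper handles them with the dyadic-metric modification in the second half of the proof of Corollary \ref{cor:patterns}.

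The sharpness half also has a gap at the endpoint. Your subcritical first-moment computation is fine, but at $s=d-k/m$ the proposed appeal to ``a critical branching process has offspring mean one, hence dies out'' does not apply: the surviving product cubes meeting $V_\theta$ do not form a Galton--Watson process, precisely because of the long-range dependencies between cubes sharing a coordinate projection (the paper explicitly notes that the standard tree/capacity arguments do not extend to products of independent percolations), and the offspring numbers are in any case position-dependent, with mean one only on average. The paper replaces this heuristic by Proposition \ref{prop:extinction-critical}: cover $V_\theta$ by dyadic cubes with bounded total $(d-s)m$-dimensional weight, use Markov's inequality to keep the number of surviving covering cubes bounded with conditional probability bounded below, and then bound below, uniformly in $n$, the probability that all of them die in one step, using a positive-correlation observation to absorb the dependencies; iterating forces a.s.\ extinction of the intersection. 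Some argument of this kind is needed to cover the case $s=d-k/m$ claimed in the theorem.
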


This theorem will be proved in the course of Sections \ref{sec:affine-intersections}--\ref{sec:nonlinear}. For now, we make some general remarks:
\begin{rems}
\begin{enumerate}[(i)]
\item Proving the existence of a single configuration is already more challenging than in the random discrete setting, although in general it can be done via the second moment method (see e.g. Lemma \ref{lem:Yt-survives}). However, the main challenge is proving the existence of open sets/all configurations simultaneously, which is an issue that obviously does not arise in the discrete world.
\item All the configurations arising in Theorem \ref{thm:dim_patterns} can be realized as the zero set of a suitable polynomial, and the dimension thresholds are derived from a general statement about intersections (of the Cartesian powers of $A$) with algebraic varieties, see Corollary \ref{cor:nonlinear}.
\item The statement about the distance set of $A$ was proved, in a slightly weaker form, by Rams and Simon \cite{RamsSimon14}. Although we use some of their ideas (as we did already in our paper \cite{ShmerkinSuomala14}), there are substantial differences that allow us to get stronger results, including the `relative Szemer\'{e}di' version discussed below.
\end{enumerate}
\end{rems}

One can visualize Theorem \ref{thm:patterns} by considering the following joint construction of all fractal percolation processes. Let $(U_Q)$ be a sequence of IID random variables, uniform in $[0,1]$, where $Q$ ranges over all dyadic cubes of all levels, starting with the unit cube. Given any $p$, we can construct a set $A_p$ by retaining cubes $Q$ for which $U_Q \le p$, and discarding those with $U_Q>p$. In this way we get an increasing ensemble $(A_p)_{p\in [0,1]}$, where $A_p$ has the distribution of $A^{\text{perc}(d,p)}$. Theorem \ref{thm:patterns} then shows that almost surely the sets $A_p$ undergo a phase transition for the presence of geometric configurations at the corresponding critical value of $p$. For example, given a fixed $m$-element set $S$ in $\R^d$,  $A_p$ contains no homothetic copy of $S$ as long as $\log_2 (1/p)\ge\tfrac{d+1}{m}$, but as soon as $\log_2 (1/p)<\tfrac{d+1}{m}$, the set $A_p$ transitions to containing a homothetic copy not just of $S$ but of all $m$-point configurations.

We are able to sharpen Theorem \ref{thm:patterns} as follows: for each class of configurations, if $s$ is above the given threshold, not only we get that $A$ contains all/an open set of configurations, but we can precisely measure how often each configuration arises. We give only one example here, deferring  further discussion to Section \ref{sec:szemeredi}.
\begin{thm} \label{thm:dim-patterns-homothetic}
Let $A=A^{\text{perc}(d,p)}$. If $s=s(d,p)>d-\tfrac{d+1}{m}$, then almost surely on $A\neq\varnothing$, for each $m$-point $S\subset\R^d$,
\[
\dim_H( \{ (a,b)\in (0,\infty)\times \R^d: aS+b\subset A\}) = m(s - d) +d+1.
\]
\end{thm}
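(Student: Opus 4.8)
The plan is to transfer the statement into a question about intersections of the $m$-th Cartesian power $A^m$ with $(d+1)$-dimensional affine planes, and then invoke the machinery for intersections with transversal planes developed in Section~\ref{sec:affine-intersections} (a special case of Corollary~\ref{cor:nonlinear}) together with the abstract continuity result of Section~\ref{sec:continuity}. Write $S=\{x_1,\dots,x_m\}$; since replacing $S$ by a dilate changes neither the set of homothetic copies of $S$ inside $A$ nor the Hausdorff dimension of the parameter set, we may assume $S\subset B(0,1)$, and $c:=\min_{i\neq j}|x_i-x_j|>0$ since $S$ has $m$ distinct points. Consider the linear map $\Phi_S\colon\R\times\R^d\to(\R^d)^m$ defined by $\Phi_S(a,b)=(ax_1+b,\dots,ax_m+b)$; it is injective (if $ax_i+b=0$ for all $i$ then $a(x_i-x_j)=0$, forcing $a=0$ and then $b=0$), so $W_S:=\Phi_S(\R\times\R^d)$ is a $(d+1)$-plane and $\Phi_S$ is a linear isomorphism onto $W_S$ (one recovers $(a,b)$ linearly from $a(x_i-x_j)=y_i-y_j$ and $b=y_1-ax_1$), hence bi-Lipschitz, with constants controlled by $c$. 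As $aS+b\subset A$ iff $\Phi_S(a,b)\in A^m$, the parameter set in the statement is the bi-Lipschitz image under $\Phi_S^{-1}$ of $A^m\cap W_S^+$, where $W_S^+:=\Phi_S((0,\infty)\times\R^d)$; this set is bounded since $A^m\subset[0,1]^{dm}$, so it suffices to prove $\dim_H(A^m\cap W_S^+)=m(s-d)+d+1$ a.s., simultaneously over all such $S$.

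\emph{Transversality.} Since $\dist(\Phi_S(a,b),\Delta_{ij})=a|x_i-x_j|/\sqrt2\ge ac/\sqrt2$ for the pairwise diagonals $\Delta_{ij}=\{y\in(\R^d)^m:y_i=y_j\}$, the section $A^m\cap W_S\cap\{a\ge\delta\}$ lies at distance $\ge\delta c/\sqrt2$ from every diagonal $\Delta_I=\{y:y_i=y_j\ \forall\,i,j\in I\}$ with $|I|\ge2$; this is exactly the uniform transversality required by the intersection machinery, and it holds uniformly over the compact family $\mathcal S_k:=\{S\subset B(0,1):\min_{i\neq j}|x_i-x_j|\ge1/k\}$. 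As $A^m\cap W_S^+$ is the increasing union over $\delta\downarrow0$ of these sections, the problem reduces to a transversal plane-section problem on each of them, and by monotonicity it is enough to treat a fixed macroscopic section.

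\emph{The two bounds, and the main obstacle.} The upper bound comes from a first--moment estimate, in the spirit of the sharpness part of Theorem~\ref{thm:patterns}: letting $N_n$ be the number of dyadic cubes of side $2^{-n}$ meeting $A^m\cap W_S$, split them according to whether the $m$ coordinate projections of the cube are pairwise separated at scale $\approx2^{-\eta n}$; bounding $\PP(A^m\cap Q\neq\varnothing)$ by $p$ raised to the number of distinct dyadic ancestors of those projections, summing, and using transversality to discard the near--diagonal cubes, one gets $\EE N_n\lesssim 2^{n(m(s-d)+d+1+\e)}$ for every $\e>0$ and all large $n$, uniformly over $\mathcal S_k$; Markov's inequality, Borel--Cantelli and a net argument in $S$ (the dependence of $W_S$ on $S$ being smooth) then give $\dim_H(A^m\cap W_S^+)\le m(s-d)+d+1$ for all $S\in\mathcal S_k$ at once, a.s. For the lower bound one applies the affine case of Corollary~\ref{cor:nonlinear}: for $\delta$ small it yields the natural cascade intersection measure $\nu_S$ supported on $A^m\cap W_S^+$, a.s.\ nontrivial on non-extinction (second moment method, as in Lemma~\ref{lem:Yt-survives}) and satisfying a Frostman bound of exponent $m(s-d)+d+1-\e$, hence the matching lower bound for each fixed $S$. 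As flagged in Remark~(i), the crux is to upgrade this to all $S$ simultaneously: one feeds the analytically varying family $\{W_S\}_{S\in\mathcal S_k}$ into the abstract continuity theorem of Section~\ref{sec:continuity}, whose hypotheses (bounded geometry, uniform transversality, analytic dependence on the parameter) we have just verified, concluding that $S\mapsto\nu_S$ is a.s.\ continuous, so that $\nu_S$ is nontrivial for every $S\in\mathcal S_k$ on one event of full probability; letting $k\to\infty$ covers all $m$-point sets $S$. I expect this simultaneity-in-$S$ step to be the main difficulty, the individual bounds being fairly standard applications of the moment methods.
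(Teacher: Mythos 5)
Your reduction to the plane sections $A^m\cap W_S$ and your identification of the simultaneity-in-$S$ as the crux are correct and match the paper's strategy (the paper proves this via Theorem \ref{thm:dim_patterns}, applied to the planes $V_T$ of Corollary \ref{cor:patterns}). However, both of your dimension bounds have genuine gaps. For the upper bound, the first-moment-plus-net argument does not close: Markov's inequality only gives a failure probability of order $2^{-\e n}$ for each fixed $S$ at scale $2^{-n}$, while any net of $\mathcal{S}_k$ fine enough that $N_n(S)$ and $N_n(S')$ are comparable for neighbouring net points must be $2^{-\Omega(n)}$-dense and hence has $2^{\Omega(n)}$ elements; the union bound over the net therefore diverges. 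A first-moment argument yields the bound only for countably many $S$ (this is exactly the scope of Lemma \ref{lem:dim-upper-bound-no-assumptions}). The paper obtains the uniform upper bound quite differently: from the almost sure \emph{uniform} bound $\sup_{n,P\in\mathcal{O}}Y_n^P<\infty$ supplied by the WSDM/large-deviation machinery (whose per-parameter failure probabilities are of order $\exp(-2^{cn})$ and thus do survive the exponential union bound), combined with the coarea formula as in \eqref{eq:co-area}.

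For the lower bound, you assert that the limit intersection measure $\nu_S$ satisfies a Frostman bound of exponent $m(s-d)+d+1-\e$, but nothing you invoke establishes this: Theorem \ref{thm:nonlinear} and Theorem \ref{thm:Holder-continuity} control only the \emph{total masses} $Y^P$ and their H\"{o}lder dependence on the parameter, not the local behaviour $\nu_S(B(x,r))$. Proving such a Frostman estimate uniformly in $S$ is a substantial extra piece of work (essentially \cite[Theorem 12.8]{ShmerkinSuomala14}, which the paper explicitly declines to redo here; see Remark \ref{rems:dim-patterns}(ii)). The paper instead gets the lower bound by intersecting $A^{(1)}\times\cdots\times A^{(m)}$ with an \emph{independent} fractal percolation $\wt{A}$ on $\R^{md}$ of codimension $t<m(s-d)+d+1$, checking that the product martingale is still a WSDM so that the nonemptiness result applies uniformly over the neighbourhood, and then invoking Theorem \ref{thm:hausdorff-dim-percolation} to convert survival under $\wt{A}$ into $\dim_H\ge t$. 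Finally, a smaller but real omission: your transversality check only addresses the diagonals, whereas Theorem \ref{thm:independent-product-linear-int} also requires transversality with respect to the coordinate planes $H^{I}$ and $H^{I,j,i}$; this fails for sets $S$ with coinciding coordinates ($x_i^k=x_j^k$), and the paper has to handle those coincidence classes separately via the dyadic metric, as in the second half of the proof of Corollary \ref{cor:patterns}.
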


There is a natural random measure supported on the fractal percolation limit set (this is sometimes called `branching measure' in the context of Galton-Watson trees). This is obtained as the weak-* limit of the measures $\nu_n:=p^{-n} \leb^d|_{A_n}$, where $A_n$ is the union of the surviving cubes of side-length $2^{-n}$, and $\leb^d$ is $d$-dimensional Lebesgue measure (See Section \ref{sec:perco} for more details). Then a.s. $\nu_n$ converges weakly to a limit $\nu=\nu^{\text{perc}(d,p)}$. Moreover, if $p>2^{-d}$, then $\nu\neq 0$ a.s. on $A\neq\varnothing$,  and in this case the Hausdorff dimension of $\nu$ equals $s(d,p)$ (that is, if $A'$ is a Borel set with $\nu(A')>0$, then $\dim_H(A')\ge s(d,p)$).

Positive $\nu$-measure is then a natural analogue of `positive relative density' in the discrete random case, and this gives us a way to investigate relative Szemer\'{e}di phenomena for fractal percolation:
\begin{thm}\label{thm:relative_S}
Let $\nu=\nu^{\text{perc}(d,p)}$. Almost surely, the following holds for each Borel set $A'$ such that $\nu(A')>0$ under the given conditions on $d$ and $s=s(d,p)$:
\begin{enumerate}
\item\label{eq:rs_patterns} If $m\ge 2$ and $s>d-\tfrac{1}{m-1}$, then $A'$ contains a homothetic copy of all $m$ point sets.
\item If $s>1$, then the distance set of $A'$
has non-empty interior.
\item If $s>\tfrac{1}{d}$, then the set of volumes of simplices with vertices in $A'$ has non-empty interior.
\item If $d=2$ and $s>\tfrac{3}{2}$, then there is an open set of
triples $\{a_1,a_2,a_3\}$ such that $A'$ contains an isometric image of $\{a_1,a_2,a_3\}$.
\item If $d\ge 2$ and $s>\tfrac12$, then $A'$ contains all angles in $]0,\pi[$.
\item If $d\ge 2$ and $s>1$, then $A'$ contains a similar copy of all non-degenerate triangles.
\item If $m\ge 3$, $d=2$ and $s>2-\tfrac{2}{m-1}$, then $A'$ contains a similar copy of all non-degenerate $m$-gons.

\end{enumerate}
Moreover, these thresholds are sharp, in the sense that for any countable set of configurations in each class, if $s$ is smaller or equal than the given threshold, then a.s. there is a Borel set $A'$ of full $\nu$-measure which does not contain any configuration in the countable set. For example, if $s\le 1$, then there is a full $\nu$-measure set $A'$ which does not contain any rational distances.
\end{thm}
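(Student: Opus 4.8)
\emph{Proof strategy.} The statement has two independent halves. In the \emph{positive} half ($s$ strictly above the stated threshold) one must show that almost surely \emph{every} Borel $A'$ with $\nu(A')>0$ contains the configurations in the given class; in the \emph{sharpness} half ($s$ at or below the threshold) one must produce, for each prescribed countable family of configurations and almost surely, a Borel set of full $\nu$-measure containing none of them. The plan is to carry out both through the single case \eqref{eq:rs_patterns} of homothetic copies; the other six cases are handled word for word, with the homothety variety $W=\{(y_1,\dots,y_m)\in(\R^d)^m:\{y_1,\dots,y_m\}\text{ is a homothetic copy of }S\}$ replaced by the algebraic variety encoding the configuration at hand (a sphere for distances, a quadric for angles, a union of affine hyperplanes for simplex volumes, a group orbit for the similarity and isometry problems). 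In every instance the relevant threshold equals $\mathrm{codim}(W)/(v-1)$, where $v$ is the number of points of a configuration: this is precisely the non-relative threshold $\mathrm{codim}(W)/v$ of Theorem~\ref{thm:patterns} improved by the fact that one of the $v$ vertices may be placed ``for free'' at a density point of $A'$. For homothetic $m$-point sets $\mathrm{codim}(W)=md-(d+1)$ and $v=m$, which gives $d-1/(m-1)$; the six remaining thresholds follow from the same formula by a routine computation of the codimension of the corresponding variety.

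\emph{Existence in positive-measure subsets.} The plan is to construct a random measure on configurations and play it against a density-point argument. Fix an $m$-point set $S$. Once one vertex $y_1$ of a homothetic copy $aS+b$ is chosen, the remaining vertices $y_i=y_1+a(x_i-x_1)$ trace a $1$-dimensional curve as the scale $a$ varies; the candidate measure is therefore the push-forward of $\nu\times(\text{a smooth distribution of scales})$ under $(y_1,a)\mapsto(y_1,\dots,y_m)$, restricted to the event that all the $y_i$ lie in $A$. The key input is Corollary~\ref{cor:nonlinear}, together with the martingale and continuity apparatus of Section~\ref{sec:continuity}: it shows that this construction, performed inside a dyadic cube $Q$, yields a measure $\Lambda_Q$ which is non-trivial — with $\|\Lambda_Q\|$ comparable to $\nu(Q)$ — for a set of cubes $Q$ of $\nu$-density bounded below by an absolute constant, the underlying dimension count being that of intersecting an $(m-1)$-fold product of copies of $A$ (of dimension $(m-1)s$) with a curve in $\R^{(m-1)d}$, which is non-empty precisely when $(m-1)(s-d)+1\ge 0$, i.e.\ $s>d-1/(m-1)$; moreover each coordinate marginal $\pi_{i,*}\Lambda_Q$ is absolutely continuous with respect to $\nu$ with uniformly bounded density, and all of this holds on a single full-probability event, simultaneously for all dyadic cubes and all shapes $S$ in a compact family (the general $S$ reducing to this by rescaling). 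Granting this: given $A'$ with $\nu(A')>0$, the Lebesgue density theorem for $\nu$ together with a pigeonhole among sub-cubes — using that non-$A'$-dense sub-cubes carry little $\nu$-mass while the good ones carry a fixed positive proportion — produces a cube $Q$ with $\|\Lambda_Q\|\gtrsim\nu(Q)$ in whose bounded dilate $A'$ has $\nu$-density close to $1$. Elementary mass balance then yields
\[
\Lambda_Q\big((A')^m\big)\ \ge\ \|\Lambda_Q\|-\sum_{i=1}^{m}\pi_{i,*}\Lambda_Q\big(A\setminus A'\big)\ >\ 0
\]
once that density is close enough to $1$ (a condition involving only absolute constants and $m$), so $A'$ contains a homothetic copy of $S$. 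Since this is reached deterministically on the full-probability event above, and every positive-$\nu$-measure set admits a density cube, the assertion holds almost surely, simultaneously for all $A'$ and all $S$.

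\emph{Sharpness.} Assume now $s\le d-1/(m-1)$ and fix one $m$-point set $S$. It suffices to produce, almost surely, a $\nu$-null set $F$ meeting every homothetic copy of $S$ contained in $A$: then $A'=A\setminus F$ has full $\nu$-measure and no homothetic copy of $S$, and the countable-family statement follows by taking a countable union of such null sets. The plan is first to treat copies whose scale lies in a fixed compact range $I\subset(0,1]$. Let $F_I$ be the set of points occurring as the lexicographically first vertex of such a copy inside $A$. If $x\in F_I$, then for every $n$ the level-$n$ dyadic cube of $x$ is the first-vertex cube of an approximate homothetic copy of $S$ of scale in $I$ inside the $n$-th approximation $A_n$; letting $\widehat F_n$ be the union of these cubes, we get $F_I\subseteq\bigcap_n\widehat F_n$, so $\nu(F_I)\le\inf_n\nu(\widehat F_n)$. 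The key estimate is a first-moment bound: conditionally on a cube surviving to level $n$, its $m-1$ partner cubes — which, scales being bounded below, separate from it within $O(1)$ generations — survive with probability $\asymp p^{(m-1)n}$ and essentially independently, and there are $\asymp 2^n$ admissible scales at resolution $2^{-n}$; hence $\EE[\nu(\widehat F_n)]\lesssim 2^{n}p^{(m-1)n}=2^{n(1+(m-1)(s-d))}$. When $s<d-1/(m-1)$ this tends to $0$, so $\inf_n\nu(\widehat F_n)=0$ almost surely and $\nu(F_I)=0$. Finally, copies of arbitrary scale $\asymp 2^{-j}$ are contained in level-$j$ dyadic cubes and, after rescaling, reduce to the bounded-scale case inside a fractal percolation; thus for each $j$, $\nu$-a.s.\ no level-$j$ cube hosts the first vertex of such a copy, and summing the resulting null sets over $j\in\N$ gives $F$.

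\emph{Main obstacle.} The delicate point is the boundary case $s=d-1/(m-1)$, where the first moment above is only $\asymp 1$ and the estimate $\nu(F_I)\le\inf_n\nu(\widehat F_n)$ is inconclusive. Here one must exploit that, at the critical value, the branching process recording how copies of $S$ propagate down the construction tree is critical, hence almost surely goes extinct, so that $\nu$-a.e.\ point eventually fails — deep enough in the tree, along the relevant scales — to be the first vertex of a genuine (as opposed to merely approximate) copy of $S$ in $A$. On the existence side, the matching difficulty is upgrading the almost sure positivity of the intersection measure to the uniform lower bound $\|\Lambda_Q\|\gtrsim\nu(Q)$ on a family of cubes of positive $\nu$-density, uniformly in $S$; this is exactly what the martingale and continuity machinery of Section~\ref{sec:continuity} is designed to deliver, and we invoke it as a black box.
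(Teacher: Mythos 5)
Your positive half has the same skeleton as the paper's argument (Proposition \ref{prop:counting-szemeredi}, Lemma \ref{lem:szemeredi-0-1-law}, then a density-point step), with the counting bounds recast as a configuration measure $\Lambda_Q$ and its marginals: your lower bound $\|\Lambda_Q\|\gtrsim\nu(Q)$ is the paper's \eqref{eq:N_nTbound} and your "marginals have bounded density w.r.t.\ $\nu$" is the paper's fibered count \eqref{eq:M_nTbound}, both coming from Theorem \ref{thm:nonlinear} applied to the $m$-fold and $(m-1)$-fold products. But you invoke as a black box several statements that the machinery does not literally give and that carry real content: that the \emph{limit} measure $\Lambda_Q$ has coordinate marginals absolutely continuous with uniformly bounded density (this requires exchanging the weak limit with the marginal/fiber decomposition; the paper deliberately works with level-$n$ cube counts to avoid this), that cubes with $\|\Lambda_Q\|\gtrsim\nu(Q)$ have positive $\nu$-density on a single full-probability event uniformly in $S$ (in the paper this is exactly the job of the self-similarity argument in Lemma \ref{lem:szemeredi-0-1-law}, which you do not reproduce), and your mass-balance step compares $\pi_{i,*}\Lambda_Q(A\setminus A')$ with the density of $A'$ in a \emph{bounded dilate} of $Q$ -- since a dilate of a dyadic cube can carry far more $\nu$-mass than $Q$ itself, "density close to $1$ in the dilate" is not enough; one must confine all vertices of the configurations to the density cube (the paper does this by fixing the pattern cube $Q_0$ in configuration space and rescaling). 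These points are fixable along the paper's lines, but as written they are gaps, not citations.

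The genuine gap is the endpoint. The theorem asserts sharpness for $s$ \emph{smaller or equal} to the threshold (e.g.\ no rational distances on a full-measure set when $s=1$), and your sharpness argument only works strictly below it: your first-moment bound $\EE[\nu(\widehat F_n)]\lesssim 2^{n(1+(m-1)(s-d))}$ is a perfectly valid alternative to the paper's dimension-based Proposition \ref{prop:dim_threshold} (via Lemma \ref{lem:dim-upper-bound-no-assumptions}) when $s<d-1/(m-1)$, but it is only $O(1)$ at the critical value, as you note. The appeal to "the branching process recording how copies propagate is critical, hence goes extinct" is not a proof: the relevant process is not a Galton--Watson process, because the same percolation governs all $m$ vertices and creates long-range dependencies across scales -- this is precisely why the paper proves Proposition \ref{prop:extinction-critical} and its quantitative form, Corollary \ref{cor:extinction-quantitative}, by a bespoke argument rather than quoting critical-branching extinction. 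Moreover, even granting extinction for one fiber, you must handle the uncountable family of fibers (first vertex and scale) simultaneously and decouple the fiber events from $\nu$; the paper's Theorem \ref{thm:sharp_Szemeredi_threshold} does this by using Lemma \ref{lem:isotopic-curves} to replace the fibers over each cube $Q\in\mathcal{Q}_n^d$ by $O(1)$ representative sliced varieties $\zero(\widetilde P_{a_i})$, covering each by $O(2^{n((m-1)d-q)})$ cubes, applying Corollary \ref{cor:extinction-quantitative} uniformly, and concluding $\EE\bigl(\nu(\pi_1(\wt A\cap\zero_P))\bigr)=O(q_n)\to 0$ using the independence of $\nu^{(1)}$ from the other factors. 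Without an argument of this kind the boundary case of the sharpness claim remains unproved.
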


It is interesting to compare the different thresholds with what is known or conjectured for deterministic sets. For example, we pointed out earlier that sets of full Hausdorff dimension in the line may fail to contain three-term progressions, and sets of full dimension in the plane may fail to contain equilateral triangles. Thus, (1) and (6) are very far from holding for general sets of the given dimensions. On the other hand, the distance set conjecture in the plane (but not in higher dimensions) almost gives (2) for \emph{any} set $A'$ of dimension $>1$.

Despite the formal analogy, the proof of this theorem does not use any of the methods of \cite{ConlonGowers16, Schacht16}. In our setting, the stochastic self-similarity of $\nu$ and the density point theorem will allow us weaken the statement from `full probability' to `positive probability' and from `positive measure' to `full measure'.  On the other hand, as is already the case for Theorem \ref{thm:patterns}, we have to deal with uncountable families of configurations.

\subsection{General strategy}
We will obtain all the aforementioned results by investigating the intersections of the Cartesian products $A^m\subset \R^{md}$ with families of affine subspaces, and more general algebraic varieties. For instance, to show that $A\subset\R$ contains similar copies of all triples $\{t_1,t_2,t_3\}\subset\R$, we have to show that $A\times A\times A$ intersects the $2$-dimensional plane
\[
V_{t}=\{(x,x,x)+\lambda(t_1,t_2,t_3)\,:\,x,\lambda\in\R\}\subset\R^3,
\]
outside of the diagonal $\{x=y=z\}$, for all choices of $t=(t_1,t_2,t_3)$. This will be verified by considering the intersections or `slices' of $\nu\times\nu\times\nu$ with the planes $V_{t}$, and showing that the total mass of these intersections is bounded away from zero. This will be achieved by showing that the total mass of a slice behaves in a continuous way (as a function of $t$). This continuity, in turn, will be derived as a consequence of a general intersection result for weakly dependent martingales (to be defined in Section \ref{sec:continuity} below). The main abstract result (Theorem \ref{thm:Holder-continuity}) yields H\"older continuity for the map $t\mapsto Y^t$, where $Y^t$ is the total mass of the intersection of $\mu$ and $\eta_t$, where $\{\eta_t\}_{t\in\Gamma}$ is a suitable deterministic family of measures on $\R^d$ (or rather $(\R^d)^m$ in our applications), parametrized by a metric space $\Gamma$, and $\mu$ is a random limit measure of absolutely continuous measures $\mu_n$ satisfying certain size and `weak spatial dependence' assumptions. This setup generalizes the spatially independent martingales from \cite{ShmerkinSuomala14}; in particular, Theorem \ref{thm:Holder-continuity} extends the main result of \cite{ShmerkinSuomala14}. We hope this more general framework will find further applications beyond those explored in this article. In a future work, we hope to relax also the martingale condition, and derive further applications to self-convolutions of $\nu$ and certain maximal operators.

To give an idea of the method, we discuss the proof of  the existence of $3$-patterns for fractal percolation sets $A\subset[0,1]$. See also the survey \cite{ShmerkinSuomala16} for a complete proof of this particular case. In this case, the natural measure on $A^3$ is the weak limit of $\mu_n=\nu_n\times\nu_n\times\nu_n$, recall that $\nu_n=p^{-n}\mathcal{L}|_{A_n}$. Given $t=(t_1,t_2,t_3)\in\R^3$, we consider the total mass of the intersection of $\mu_n$ and $\mathcal{H}^2|_{V_{t}}$ defined as
\[Y_n^{t}=\int_{V_{t}}\mu_n(x)\,d\mathcal{H}^2(x)\,.\]
The increments $Y_{n+1}^{t}-Y_n^{t}$ may be expressed as sums of the random variables
$X_Q=\int_{Q\cap V_{t}}\mu_{n+1}(x)-\mu_{n}(x)\,d\mathcal{H}^2(x)$ where $Q$ runs over all dyadic cubes of side-length $2^{-n}$. If these random variables $\{X_Q\}$ were independent, and if $\mu_n$ satisfied the martingale condition $\EE(\mu_{n+1}(x)\,|\,A_n)=\mu_n(x)$ for all $x\in\R$, $n\in\N$, we could apply a large deviation estimate to show that $Y_n^{t}$ converges very rapidly, and derive a continuity modulus for the limit with respect to $t$ (this is the strategy of \cite{ShmerkinSuomala14}, which in turn was inspired in \cite{PeresRams16}). These assumptions would be satisfied, for instance, if $\mu_n$ was the fractal percolation measure on $[0,1]^3$, instead of the $3$-fold self-product fractal percolation on $[0,1]$. In fact, such a $\mu_n$ would be a model example of the SI-martingales considered in \cite{ShmerkinSuomala14} and would allow us to conclude that the limits $Y^{t}=\lim_{n\to\infty}Y^{t}_n$ are a.s. H\"older continuous in $t$, provided that the dimension of the limit measure $\mu$ is larger than $1$. In the present situation, however, both the martingale condition and the spatial independence condition fail. For instance, if $Q,Q'$ are two dyadic cubes with the same $x$-coordinate, then $X_Q$ and $X_{Q'}$ are clearly dependent. A priori, there can be many such dependencies, since the planes $V_{t}$ intersect the hyperplanes $\{x=c\}$ in a line (and there could be many surviving cubes along this line). The martingale condition, on the other hand, breaks down at the dyadic cubes meeting one of the diagonals $\{x=y\}$, $\{x=z\}$ or $\{y=z\}$. It turns out that the amount of dependencies can be inductively bounded by looking at the slices of the lower dimensional product $\nu_n\times\nu_n$ with `transversal' lines. These bounds make the dependencies sparse enough that a large deviation estimate for $Y_{n+1}^{t}-Y_n^{t}$ can still be derived, so that the continuity in $t$ can then be established along the lines of \cite{ShmerkinSuomala14}, provided $\dim\mu>1$ (which is equivalent to $\dim\nu=\dim A>1/3$).  Meanwhile, since in order to find non-degenerate patterns we want to avoid the diagonals, we will be able to work with the product $\wt{\mu}_n=\nu_n^{(1)}\times \nu_n^{(2)}\times \nu_n^{(3)}$ of three independent realizations, instead of $\mu_n$. Now $\wt{\mu}_n$ is easily checked to be a martingale, although it has the same dependency issues as before. This strategy is formalized in the general Theorem \ref{thm:Holder-continuity} below.

\section{Notation}
\label{sec:notation}

We will use Landau's $O(\cdot)$ and related notation. If $n>0$ is a variable, by $g(n) = O(f(n))$ we mean that there exists $C>0$ such that $0\le g(n)\le C f(n)$ for all $n$. By $g(n)=\Omega(f(n))$ we mean $f(n)=O(g(n))$.
Occasionally we will want to emphasize the dependence of the constants implicit in the $O(\cdot)$ notation on other, previously defined, constants; the latter will be then added as subscripts. For example, $g(n) = O_\delta(f(n))$ means that $0\le g(n)\le C_\delta f(n)$ for some constant $C_\delta$ which is allowed to depend on $\delta$.

The notation $B(x,r)$ stands for the closed ball with centre $x$ and radius $r$. Open balls will be denoted by $B^\circ(x,r)$. 
We will write $E(\e)$ for the open $\e$-neighbourhood $\{ x\in\R^M: \dist(x,E)<\e\}$. Moreover, $E^\circ$ and $\overline{E}$ denote the interior and closure of $E$, respectively. Given $L\in\N$, we let $[L]=\{1,\ldots,L\}$. We will denote by $|\cdot|$ both the absolute value $|x|$ of an element of $\R^M$, as well as the cardinality $|I|$ of a (finite) set $I$.

By a measure we always mean a locally finite Borel-regular outer measure on a metric space. 
Given a measure $\mu$ on $\R^M$,
we denote $\|\mu\|=\mu(\R^M)$. We will denote by $\PP$ the law of the fractal percolation as well as various other probability measures (it should be always clear from the context what probability measure we are referring to). In general, we will denote $\PP$-measurable events by $\mathfrak{C}$, $\mathfrak{F}$, etc.

We denote by $\mathcal{Q}_n$ (or $\mathcal{Q}^{M}_n$) the family of dyadic cubes of $\R^M$ with side length $2^{-n}$, and by $\mathcal{Q}$ the union $\cup_{n\in\N}\mathcal{Q}_n$. It will be convenient that these are pairwise disjoint, so we consider a suitable half-open dyadic filtration.

As noted earlier, $\dim_H$ denotes Hausdorff dimension. We denote upper box-counting (or Minkowski) dimension by $\overline{\dim}_B$, and box-counting dimension (when it exists) by $\dim_B$, while packing dimension is denoted by $\dim_P$. A good introduction to fractal dimensions can be found in \cite[Chapters 2 and 3]{Falconer03}.

The Grassmann manifold of $k$-dimensional linear subspaces of $\R^M$ will be denoted $\mathbb{G}_{M,k}$. It is a compact manifold of dimension $k(M-k)$, and its metric is
\[
d(V,W)=\|\pi_V-\pi_W\|\,,
\]
where $\pi_{(\cdot)}$ denotes orthogonal projection. The manifold of $k$-dimensional affine subspaces of $\R^M$ will be denoted $\AA_{M,k}$. It is diffeomorphic to $\mathbb{G}_{M,k} \times \R^{M-k}$, and this identification defines a natural metric. The metrics on all these different spaces will be denoted by $d$; the ambient space will always be clear from context (also note that the ambient dimension is sometimes denoted by the same symbol $d$).

\begin{table}
\begin{tabular}[h]{|l|l|}
\hline
$\leb$ & the Lebesgue measure\\
$\dim_H$, $\dim_P$, $\dim_B$ & Hausdorff, packing, and box-counting dimensions\\
$(\mu_n)$,\,\, $\mu$ & a sequence of random measures and its (weak-*) limit\\
$\|\mu\|$ & the total mass of $\mu$\\
$A^{\text{perc}}$, $\nu^{\text{perc}}$  & fractal percolation set and the natural measure\\
$A_n$, $\nu_n$ & level $n$ approximations of $A=A^{\text{perc}}$ and $\nu=\nu^\text{perc}$\\
$N_n$ & the total number of cubes forming $A_n$\\
$\PP$ & the law of $A_n$, $\nu_n$ (or of some other random sequence $\mu_n$)\\
$\mathfrak{C}$, $\mathfrak{F}$ & $\PP$-measurable events\\
$\mathcal{DI}_n$ & dependency degree\\
$\{\eta_t\,:\,t\in\Gamma\}$ & parametrized family of (deterministic) measures\\
$\mu_{n}^t$ & the `intersection' of $\mu_n$ and $\eta_t$\\
$Y_{n}^t$,\,\, $Y^t$ & the total mass of $\mu_{n}^t$, and its limit\\
$\mathcal{Q}$, $\mathcal{Q}^M$,\,\,  & the family of half-open dyadic cubes of $\R^M$\\
$\mathcal{Q}_n$, $\mathcal{Q}^M_n$ & and the ones with side-length $\ell(Q)=2^{-n}$\\
$\simi_M$ & the family of non-singular similarities on $\R^M$\\
$\mathbb{G}_{M,k}$ & the manifold of $k$-dimensional linear subspaces of $\R^M$\\
$\AA_{M,k}$		& the manifold of $k$-dimensional affine subspaces of $\R^M$\\
$V$, $W$ & elements of $\AA_{M,k}$\\
$\cP_{r,q,M}=\cP_{r}$ & the polynomials $P\colon\R^M\to\R^q$ of degree $\le r$\\
$\cP_{r}^{\text{reg}}$ & the regular polynomials in $\cP_{r}$\\
$P,P_1,P_2$ & elements of $\cP_{r}$\\
$\zero(P)$, $\zero_P$ & the set $P^{-1}(0)\cap[0,1]^M$\\
$[L]$ & the integers $1,\ldots,L$\\
$(x_1,\ldots,x_m)$ & notation for the elements of $(\R^d)^m$\\
$(x_i^1,\ldots,x_i^d)$ & the (real) coordinates of $x_i$ in the above notation\\
$\Delta$ & the diagonals $\{x_i=x_j\}$, $i\neq j$ in the above notation\\
$\pi_W$ & orthogonal projection onto $W$\\
$\pi_i$ & orthogonal projection onto the $i$
:th coordinate\\
$\pi^i$ & orthogonal projection onto the $[m]\setminus\{i\}$ coordinates\\
$E(\e)$ & open $\e$-neighbourhood of a set $E\subset\R^d$\\
$E^\circ$ & interior of $E$\\
$\overline{E}$ & closure of $E$\\
\hline
\end{tabular}
\vspace{2mm}
\caption{Summary of notation\label{notation}}
\end{table}

Starting from Section \ref{sec:affine-intersections}, we will be working on the space $(\R^{d})^m$ for some integers $m,d$, which we sometimes shorten to $\R^{md}$. We will denote the elements of $(\R^d)^m$ by $(x_1,\ldots,x_m)$, where $x_j=(x_j^1,\ldots,x_{j}^d)\in\R^d$ for each $j\in[m]$, so that the (real) coordinates of $x_j$ are denoted $x_{j}^i$, $i\in[d]$. Given $1\le j\le m$, we will denote by $\pi_j$ the orthogonal projection onto the subspace
\[H_j:=\{x\in(\mathbb{R}^{d})^m\,:\, x_i=0\text{ for }i\neq j\}\in\mathbb{G}_{md,d}\,.\]
and by $\pi^j$ the projection onto the orthogonal complement of $H_j$ (which is an element of $\mathbb{G}_{md,(m-1)d}$). Furthermore, we will identify each $H_j$ with $\R^d$, and $H_j^{\perp}$ with $(\R^d)^{m-1}$. That is, for $x=(x_1,\ldots,x_m)\in(\R^d)^m$, $\pi_j(x)=x_j$, $\pi^j(x)=(x_1,\ldots,x_{j-1},x_{j+1},\ldots,x_m)$. We will denote by $\Delta\subset(\R^{d})^m$ the union of all the diagonals $\{x_i=x_j\}$, $i\neq j$. Furthermore, given an index set $I\subset[m]$, we denote
\[H^{I}=\{(x_1,\ldots,x_m)\in(\R^{d})^m\,:\,x_j=0\text{ for all }j\in I\}\,.\]
Moreover, if $j\in[m]\setminus I$ and $k\in[d]$, we let
\[H^{I,j,k}=\{x\in H^{I}\,:\,x_j^k=0\}\,.\]

In Sections \ref{sec:nonlinear} and \ref{sec:szemeredi}, we will replace the linear subspaces $V\in\AA_{M,k}$ ($M=md$) by algebraic varieties $\zero_P=\zero(P):=P^{-1}(0)\cap [0,1]^M$, where $P$ is a polynomial $P\colon\R^M\to\R^q$. Let $\cP_{r,q,M}$ denote the family of polynomials $\R^M\to\R^q$ of degree $\le r$
and write $\cP_{r,q,M}^{\text{reg}}$ for the polynomials in $\cP_{r,q,M}$ for which $0$ is a regular value on $[0,1]^M$.
We identify elements $P=(P_1,\ldots,P_q)$ of $\cP_{r,q,M}$ with the coefficients of $P_i$, $i\in[q]$ and in this way see $\cP_{r,q,M}$ as a subset of some Euclidean space. The Euclidean distance between the coefficients of $P_1,P_2\in\cP_{r,q,M}$, induces a metric on $\cP_{r,q,M}$ and this will be denoted by $|P_1-P_2|$.

Throughout the paper, $C, C', C_1$, etc., denote positive deterministic constants whose precise value is of no importance (and their value may change from line to line), while $K, K', K_1$ etc. will always denote random positive real numbers.

Our notation is summarized in Table \ref{notation}.  These will be specified later whenever needed.

\section{Preliminaries on fractal percolation}
\label{sec:perco}

In this section we review some standard facts about fractal percolation. It will be convenient for us to work with fractal percolation conditioned on survival, so we begin by describing this variant.

Given $d\in\N$ and $2^{-d}<p<1$, we consider fractal percolation in $[0,1]^d$ with parameter $p$ (recall the definition from the beginning of \S\ref{subsec:sum-res}). Denote by $\widetilde{A}_n$ the union of the retained cubes in $\mathcal{Q}_n$ (the cubes that have not been removed in the first $n$ generations of the fractal percolation process). Let
\[A=A^{\text{perc}}=A^{\text{perc}(d,p)}=\overline{\bigcap_{n\in\N}\widetilde{A}_n}\]
denote the fractal percolation limit set. We take the closure to ensure the compactness of $A$; recall that the elements of $\mathcal{Q}$ are half-open.

Since $p>2^{-d}$, it is well known that the limit set $A$ is non-empty with positive probability. Nevertheless, for any $p<1$, the probability of extinction (i.e. $A=\varnothing$) is  positive. We consider the \textbf{surviving fractal percolation} defined via the following procedure. Let $p>2^{-d}$, and given $k\in[2^d]$, denote by $p_k>0$ the probability
\begin{align*}
p_k=\PP\left(\text{ There are exactly }k\text{ surviving cubes }Q\in\mathcal{Q}_1\,|\,A\neq\varnothing\right)
\end{align*}
where $Q\in\mathcal{Q}_n$ is called \textbf{surviving} if for each $m\ge n$, there is $Q'\in\mathcal{Q}_m$ such that $Q'\subset \widetilde{A}_m\cap Q'$ (expressing $A$ via the associated dyadic tree, this means that the sub-tree rooted at the vertex corresponding to $Q$ is infinite).
Although the precise formula is not important, we note that
\[p_k=\binom{2^d}{k}p^k(1-q)^{k-1}(1-p(1-q))^{2^d-k}\,,\]
where $q\in]0,1[$ is the probability that $A=\varnothing$. We recall that $q$ is the smallest root of $f\colon[0,1]\to\R$, where
\[f(t)=\sum_{k=0}^{2^d}\binom{2^d}{k}p^k(1-p)^{2^d-k}t^k\,.\]
is the probability generating function corresponding to the Galton-Watson process associated to fractal percolation with extinction (See \cite[\S 5.1]{LyonsPeres17}).

For $n\ge 0$, denote by $A_n$ the union of the surviving cubes in $\mathcal{Q}_n$. Then,
\[A=\bigcap_{n=0}^\infty \overline{A_n}\,.\]
and notice that with this notation, $A\neq\varnothing$ and $A_0=[0,1]^d$ are the same event, so that we can condition on each of them indistinctly.
We observe that the law of $A_n$ (and whence $A$) on $A\neq\varnothing$ is given by a Galton-Watson process with the offspring probabilities $(p_k)_{k\in[2^d]}$ (see \cite[Proposition 5.28]{LyonsPeres17} for details). In particular, $A_0=[0,1]^d$ and for each $Q\in\mathcal{Q}_n$, conditional on $Q\subset A_n$, the probability that $Q'\subset A_{n+1}\cap Q$ for exactly $k$ cubes $Q\in\mathcal{Q}_{n+1}$ equals $p_k$. Furthermore, denoting by $\textbf{1}[A_n]$ the indicator function of $A_n$ and letting
\[\nu_n=p^{-n}\textbf{1}[A_n]\,,\]
the distribution of $\nu_{n+1}|_Q$, $Q\in\mathcal{Q}_{n}$ are independent conditional on $\mathcal{B}_n$, where $\mathcal{B}_n$ is the sigma-algebra generated by the random sets $A_n$. One easily checks that $\PP(Q\subset A_1)=p$ for $Q\in\mathcal{Q}_1$, and this together with the stochastic self-similarity implies the martingale property
\[
\EE(\nu_{n+1}(x)\,|\,\mathcal{B}_n)=\nu_n(x)\text{ for all }x\in[0,1]^d,\,n\in\N\,.
\]
Note that since each $\mathcal{Q}_n$ consists of pairwise disjoint cubes, this holds also on the boundaries of the dyadic cubes.  We may interpret each $\nu_n$ as a measure (assigning mass $\nu_n(B)=p^{-n}\mathcal{L}(B\cap A_n)$ to each Borel set $B\subset\R^d$). It is easy to see that this sequence of measures is almost surely convergent in the weak-* sense, we denote the limit measure by $\nu$. The above discussion shows also that if $\wt{\nu}$ denotes the original fractal percolation measure (defined via the retained cubes instead of the surviving ones), then conditioned on $\wt{A}\neq\varnothing$, the measures $\nu$ and $\wt{\nu}$ are multiples of each other.

It is known (see \cite[Theorem 4.1]{Liu01}) that
\[
\underline{\dimloc}(\nu,x)=\liminf_{r\downarrow 0}\frac{\log\nu(B(x,r))}{\log r}=s\text{ for all }x\in A.
\]
This property implies, via the mass distribution principle, that $\dim_H(A')\ge s$ for any set $A'$ of positive $\nu$-measure; in particular this is true for $A$. On the other hand, since $2^{-sn} |\{Q\in\mathcal{Q}_n\,:\,Q\subset A_n\}|$ is a positive martingale, we get that $\overline{\dim}_B(A)\le s$ and therefore $\dim_H(A)=\dim_B(A)=s$.

Throughout the rest of the paper, we will always work with the surviving fractal percolation as just defined, and denote the associated probability measure by $\PP$. If needed, the original definition via the sets $\widetilde{A}_n$ will be referred to as \textbf{fractal percolation with extinction} and its law is denoted $\widetilde{\PP}$. To conclude, note if an event $\mathfrak{F}$ holds $\PP$-almost surely, then $\wt{\PP}(\mathfrak{F}\,|\,A\neq \varnothing)=1$. Hence, it is enough to prove all the theorems stated in \S\ref{subsec:sum-res} for surviving fractal percolation.

We now present a zero-one law for surviving fractal percolation that will be very useful in our study of patterns. From now on, let
\[N_n=|\{Q\in\mathcal{Q}_n\,:\,Q\subset A_n\}|\]
be the number of generation $n$ cubes for (surviving) fractal percolation.
\begin{lemma} \label{lem:0-1-strong}
Let $\mathfrak{C}$ be a collection of subsets of $[0,1]^d$ such that $\PP(\mathfrak{C})>0$ (for simplicity of notation, we denote $\PP(\mathfrak{C})=\PP(A\in\mathfrak{C})$ and, in particular, we assume that $A\in\mathfrak{C}$ is a measurable event).  Then almost surely there exists $n_0$ such that for all $n\ge n_0$ there is a cube $Q\in\mathcal{Q}_n$ such that $h_Q(A\cap \overline{Q})\in\mathfrak{C}$, where $h_Q$ is the homothety renormalizing $\overline{Q}$ back to $[0,1]^d$.
\end{lemma}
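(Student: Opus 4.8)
The plan is to exploit the branching (Galton–Watson) structure of surviving fractal percolation together with the Borel–Cantelli lemma. The key observation is that for each dyadic cube $Q\in\mathcal{Q}_n$ with $Q\subset A_n$, the renormalized set $h_Q(A\cap\overline{Q})$ is, conditionally on $Q$ surviving and on $\mathcal{B}_n$, an independent copy of the surviving fractal percolation limit set $A$ — this is precisely the stochastic self-similarity recalled in Section \ref{sec:perco}. Hence, letting $p_0:=\PP(\mathfrak{C})>0$, each surviving cube of generation $n$ independently has probability $p_0$ of giving a renormalized copy that lies in $\mathfrak{C}$. The event ``no cube $Q\in\mathcal{Q}_n$ with $Q\subset A_n$ satisfies $h_Q(A\cap\overline Q)\in\mathfrak{C}$'' therefore has conditional probability $(1-p_0)^{N_n}$ given $\mathcal{B}_n$, where $N_n=|\{Q\in\mathcal{Q}_n:Q\subset A_n\}|$.

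First I would make this conditional independence precise: condition on $\mathcal{B}_n$, and for each of the $N_n$ surviving cubes $Q$, use that $h_Q(A\cap\overline{Q})$ has the law of $A$ and that these are mutually independent across the (disjoint) cubes of $\mathcal{Q}_n$; this gives $\PP(\text{no good }Q\text{ at level }n\mid\mathcal{B}_n)=(1-p_0)^{N_n}$. Next, I would recall the standard fact about supercritical Galton–Watson processes that, conditioned on survival, $N_n\to\infty$ almost surely (indeed $N_n\ge 1$ always by definition of surviving cubes, and $2^{-sn}N_n$ converges to a positive limit a.s., so $N_n$ grows exponentially); in particular $N_n\to\infty$ a.s. Consequently $\sum_n (1-p_0)^{N_n}<\infty$ a.s. Since this is a sum of conditional probabilities that is a.s. finite, a conditional Borel–Cantelli argument (e.g. via Lévy's extension of Borel–Cantelli, or by taking expectations after bounding $N_n$ below by a deterministic sequence on a large event and exhausting) yields that almost surely only finitely many of the events ``level $n$ has no good cube'' occur. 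This is exactly the assertion: there is $n_0$ (random) so that for all $n\ge n_0$ some $Q\in\mathcal{Q}_n$ with $Q\subset A_n$ has $h_Q(A\cap\overline{Q})\in\mathfrak{C}$.

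The one point requiring a little care — and the main obstacle — is the passage from ``$\sum_n \PP(\text{bad at }n\mid\mathcal{B}_n)<\infty$ a.s.'' to ``only finitely many bad events occur a.s.''. The events at different levels are not independent, since the level-$(n+1)$ structure lives inside the level-$n$ structure, so one cannot directly apply the second Borel–Cantelli lemma; instead one invokes the conditional (martingale) Borel–Cantelli lemma: if $\mathcal F_n$ is a filtration, $B_n\in\mathcal F_{n+1}$, and $\sum_n\PP(B_n\mid\mathcal F_n)<\infty$ a.s., then $\PP(\limsup B_n)=0$. Here one takes $\mathcal F_n=\mathcal{B}_n$ and $B_n$ the bad event at level $n$, noting that whether a good cube exists at level $n$ is measurable with respect to $\mathcal{B}_{n+1}$ (it only depends on the first $n+1$ generations up to the $\mathfrak{C}$-membership of each renormalized subtree — in fact one should be mildly careful that $\mathfrak{C}$-membership of $h_Q(A\cap\overline Q)$ is a tail-type event, so strictly $B_n$ is $\mathcal{B}_\infty$-measurable; this is handled by first observing $B_n$ is conditionally independent of $\mathcal{B}_n$ with the stated conditional probability, which is all the conditional Borel–Cantelli lemma actually needs). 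Everything else is routine branching-process bookkeeping.
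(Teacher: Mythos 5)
Your proposal is correct in substance and rests on the same core mechanism as the paper: conditioned on $\mathcal{B}_n$, the renormalized copies of the process in the $N_n$ surviving cubes are independent copies of $A$, so the ``no good cube at level $n$'' event has conditional probability $(1-\PP(\mathfrak{C}))^{N_n}$, and the problem reduces to growth of $N_n$ plus a Borel--Cantelli argument. Where you diverge is in the bookkeeping. The paper makes the growth quantitative: it proves $\PP(N_n\le \delta n)\le(1-\delta)^n$ directly from the generating function of the surviving offspring law together with Markov's inequality applied to $\EE(2^{-N_n})$, so that the \emph{unconditional} bound $\PP(\mathfrak{F}_n)\le(1-\delta)^n+(1-\PP(\mathfrak{C}))^{\delta n}$ is summable and the plain first Borel--Cantelli lemma finishes. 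You instead cite the a.s.\ exponential growth of $N_n$ (a Kesten--Stigum type fact, unproblematic here since the offspring law is bounded) and then a conditional Borel--Cantelli lemma. The one step you should shore up is the latter: as you yourself note, the bad event at level $n$ is only $\mathcal{B}_\infty$-measurable, so L\'evy's extension in the form you quote (requiring $B_n\in\mathcal{F}_{n+1}$) does not literally apply, and ``conditional independence gives the stated conditional probability'' is not by itself a proof. The conclusion you need is in fact true without adaptedness: since the partial sums $S_N=\sum_{n\le N}\PP(B_n\mid\mathcal{B}_n)$ are adapted, $\tau_M=\inf\{N:S_N>M\}$ is a stopping time, the tower property gives $\EE\bigl(\sum_n 1_{B_n}1_{\{S_n\le M\}}\bigr)\le M$, and exhausting over $M$ yields that a.s.\ only finitely many $B_n$ occur on $\{\sum_n\PP(B_n\mid\mathcal{B}_n)<\infty\}$; alternatively, your parenthetical fallback (restricting to the $\mathcal{B}_n$-measurable events $\{N_k\ge a_k \text{ for } m\le k\le n\}$ for a deterministic $a_k\to\infty$, applying first Borel--Cantelli there, and letting the exceptional probability tend to $0$) also works and is essentially the paper's unconditional route. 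So the argument goes through, but either supply such a non-adapted version of conditional Borel--Cantelli or follow the paper's quantitative tail bound, which sidesteps the issue entirely.
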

\begin{proof}
We claim that there is a constant $\delta=\delta(d,p)>0$ such that
\begin{equation} \label{eq:many-surviving-squares}
\PP(N_n \le \delta n) \le (1-\delta)^n.
\end{equation}
Let $f(t)=\sum_{k=1}^{2^d} p_k t^k$ be the probability generating function for the associated Galton-Watson process. Note that
\[
f(t) \le t(p_1+(1-p_1)t) \le t(p_1+(1-p_1)/2) =: \gamma^2 t
\]
for $t\le 1/2$, so that $f^n(1/2)\le \gamma^{2n}$ (note that $\gamma<1$). Now by Markov's inequality, and \cite[Proposition 5.2]{LyonsPeres17},
\[
\PP(2^{-N_n}>\gamma^n) \le \frac{\EE(2^{-N_n})}{\gamma^n} \le \gamma^n.
\]
This shows that \eqref{eq:many-surviving-squares} holds.

For each $n$, let $\mathfrak{F}_n$ be the event that $h_Q(A\cap Q)\notin\mathfrak{C}$ for all $Q\in\mathcal{Q}_n$ making up $A_n$. Then \eqref{eq:many-surviving-squares}  gives
\[
\PP(\mathfrak{F}_n) \le \PP(N_n \le \delta n) + \PP(\mathfrak{F}_n|N_n\ge \delta n) \le (1-\delta)^n + (1-\PP(\mathfrak{C}))^{\delta n}.
\]
The Borel-Cantelli lemma now yields the result.
\end{proof}

As a corollary, we obtain a small variant of the standard zero-one law for Galton-Watson processes (see e.g. \cite[Proposition 5.6]{LyonsPeres17}).
\begin{cor}\label{cor:0-1}
Let $\mathfrak{C}$ be a collection of subsets of $\R^d$ such that: (i) if $E\subset E'$ and $E\in\mathfrak{C}$ then $E'\in\mathfrak{C}$, (ii) any homothetic copy of $E\in\mathfrak{C}$ is again in $\mathfrak{C}$. Furthermore, assume that the event $A\in\mathfrak{C}$ is measurable.

Then $\PP(A\in\mathfrak{C})\in \{0,1\}$.
\end{cor}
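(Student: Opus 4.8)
The plan is to read off the corollary directly from Lemma~\ref{lem:0-1-strong}. Since the statement $\PP(A\in\mathfrak{C})\in\{0,1\}$ is vacuous when $\PP(A\in\mathfrak{C})=0$, the only thing to prove is that $\PP(A\in\mathfrak{C})>0$ implies $\PP(A\in\mathfrak{C})=1$. So assume $\PP(A\in\mathfrak{C})>0$.

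First I would pass to the hypotheses of the lemma. Because the limit set always satisfies $A\subseteq[0,1]^d$, the event $\{A\in\mathfrak{C}\}$ coincides with $\{A\in\mathfrak{C}_0\}$, where $\mathfrak{C}_0:=\{E\in\mathfrak{C}:E\subseteq[0,1]^d\}$ is a collection of subsets of $[0,1]^d$; moreover $\PP(\mathfrak{C}_0)=\PP(\mathfrak{C})>0$ and $\{A\in\mathfrak{C}_0\}$ is measurable. Applying Lemma~\ref{lem:0-1-strong} to $\mathfrak{C}_0$, almost surely there is $n_0$ such that (taking, say, $n=n_0$) there exists $Q\in\mathcal{Q}_{n_0}$ with $h_Q(A\cap\overline{Q})\in\mathfrak{C}_0\subseteq\mathfrak{C}$. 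Now $h_Q^{-1}$ is again a homothety (it has ratio $2^{-n_0}>0$), so $A\cap\overline{Q}=h_Q^{-1}\bigl(h_Q(A\cap\overline{Q})\bigr)$ is a homothetic copy of a set in $\mathfrak{C}$, hence $A\cap\overline{Q}\in\mathfrak{C}$ by the invariance property~(ii). Finally, $A\cap\overline{Q}\subseteq A$, so the monotonicity property~(i) yields $A\in\mathfrak{C}$. Since this holds on an almost sure event, $\PP(A\in\mathfrak{C})=1$, as desired.

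There is essentially no obstacle here beyond invoking the lemma correctly; the only point deserving a remark is measurability. One should note that by the stochastic self-similarity of fractal percolation (conditioning on $Q\subseteq A_{n_0}$), the events $\{h_Q(A\cap\overline{Q})\in\mathfrak{C}\}$ inherit measurability from $\{A\in\mathfrak{C}\}$, so that the conclusion of Lemma~\ref{lem:0-1-strong} is a genuine almost sure event — but this is already built into the statement of that lemma, so no extra work is needed.
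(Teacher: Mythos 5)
Your proposal is correct and follows exactly the paper's route: invoke Lemma \ref{lem:0-1-strong} to find, almost surely, a cube $Q$ with $h_Q(A\cap\overline{Q})\in\mathfrak{C}$, then use the homothety-invariance (ii) and monotonicity (i) to conclude $A\in\mathfrak{C}$ almost surely. You have simply spelled out the details that the paper leaves to the reader.
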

\begin{proof}
Suppose $\PP(A\in\mathfrak{C})>0$. Then Lemma \ref{lem:0-1-strong} ensures that almost surely $h_Q(A\cap Q)\in\mathfrak{C}$ for some cube $Q$, which in light of the assumptions on $\mathfrak{C}$ gives the claim.
\end{proof}

In our applications of these zero-one laws, $\mathfrak{C}$ will consist of sets containing certain patterns, such as all angles in a given open set. Another very useful basic property is Harris' inequality (a special case of the FKG inequality). We state it in a form suited to fractal percolation. Recall that $q=q(d,p)$ denotes the extinction probability for the fractal percolation with extinction.
\begin{lemma} \label{lem:Harris}
Let $\mathfrak{C}_1,\mathfrak{C}_2$ be collections of subsets of $[0,1]^d$ which are closed under taking supersets, and such that $A\in\mathfrak{C}_i$ is measurable  Then,
\[
\PP(\mathfrak{C}_1\cap\mathfrak{C}_2) \ge (1-q)\PP(\mathfrak{C}_1)\PP(\mathfrak{C}_2).
\]
\end{lemma}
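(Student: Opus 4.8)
The plan is to reduce the statement to the classical Harris/FKG inequality for a product measure, using the coupling between surviving and with-extinction fractal percolation. Recall that $\widetilde{\PP}$ denotes the law of fractal percolation with extinction, realized on the product space $\Omega=\{0,1\}^{\mathcal{Q}}$, where each dyadic cube $Q$ (below the root) carries an independent Bernoulli$(p)$ bit $\omega_Q$ recording whether $Q$ is retained given that its parent is retained; the retained set $\widetilde A$ is a measurable function of $\omega$, and so are the surviving set $A$ and the event $\{A\neq\varnothing\}=\{\widetilde A\text{ survives}\}$. The key point is that for a collection $\mathfrak C$ of subsets of $[0,1]^d$ closed under taking supersets, the event $\{\widetilde A\in\mathfrak C\}$ is \emph{increasing} in $\omega$ (adding retained cubes can only enlarge $\widetilde A$), and similarly the survival event $\{A\neq\varnothing\}$ is increasing.

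First I would observe that, by the discussion preceding the lemma, conditioning on survival is the same as conditioning on $A_0=[0,1]^d$, and that $\PP(\mathfrak C)=\widetilde\PP(\widetilde A\in\mathfrak C\mid A\neq\varnothing)$ for any superset-closed $\mathfrak C$ with $\{A\in\mathfrak C\}$ measurable. (Here one should check that $\{\widetilde A\in\mathfrak C\}$ and $\{A\in\mathfrak C\}$ agree up to a $\widetilde\PP$-null set on the survival event, or simply work directly with $\widetilde A$; since $\mathfrak C$ is closed under supersets and $A\subset\widetilde A$, we have $\{A\in\mathfrak C\}\subset\{\widetilde A\in\mathfrak C\}$, and on $\{A\neq\varnothing\}$ the reverse inclusion holds up to the standard identification — alternatively, one restates the lemma for $\widetilde A$, which is what is actually used.) Writing $\mathfrak C_i'=\{\omega:\widetilde A(\omega)\in\mathfrak C_i\}$ and $\mathfrak S=\{\omega:A(\omega)\neq\varnothing\}$, all three are increasing events in the product space $(\Omega,\widetilde\PP)$, and $\widetilde\PP(\mathfrak S)=1-q$.

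Next, apply Harris' inequality (the FKG inequality for product measures) to the increasing events $\mathfrak C_1'$, $\mathfrak C_2'\cap\mathfrak S$: this gives
\[
\widetilde\PP(\mathfrak C_1'\cap\mathfrak C_2'\cap\mathfrak S)\ \ge\ \widetilde\PP(\mathfrak C_1')\,\widetilde\PP(\mathfrak C_2'\cap\mathfrak S).
\]
Then apply Harris again to $\mathfrak C_2'$ and $\mathfrak S$ to get $\widetilde\PP(\mathfrak C_2'\cap\mathfrak S)\ge\widetilde\PP(\mathfrak C_2')\widetilde\PP(\mathfrak S)$, and combine. Dividing through by $\widetilde\PP(\mathfrak S)^2=(1-q)^2$ and using $\widetilde\PP(\mathfrak C_i')=\widetilde\PP(\mathfrak C_i'\mid\mathfrak S)\,\widetilde\PP(\mathfrak S)=(1-q)\PP(\mathfrak C_i)$ as well as $\PP(\mathfrak C_1\cap\mathfrak C_2)=\widetilde\PP(\mathfrak C_1'\cap\mathfrak C_2'\mid\mathfrak S)=\widetilde\PP(\mathfrak C_1'\cap\mathfrak C_2'\cap\mathfrak S)/(1-q)$, one obtains
\[
\PP(\mathfrak C_1\cap\mathfrak C_2)\ \ge\ \frac{\widetilde\PP(\mathfrak C_1')\,\widetilde\PP(\mathfrak C_2')\,\widetilde\PP(\mathfrak S)}{(1-q)}\cdot\frac{1}{1-q}\ =\ (1-q)\,\PP(\mathfrak C_1)\,\PP(\mathfrak C_2),
\]
which is the claim.

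The main obstacle is not any hard estimate but rather the bookkeeping in the reduction: one must set up the product-space model carefully enough that (a) the events in question are genuinely increasing coordinatewise, and (b) the passage between the surviving measure $\PP$ and the with-extinction measure $\widetilde\PP$ (including the identification of $\{A\in\mathfrak C\}$ with $\{\widetilde A\in\mathfrak C\}$ up to null sets, and of $\{A\neq\varnothing\}$ with the survival event) is justified. A subtlety worth flagging is monotonicity of the survival event and measurability of $\{A\in\mathfrak C\}$ in the product $\sigma$-algebra — both are standard for Galton–Watson trees but should be invoked explicitly. Once the model is fixed, the rest is two applications of the classical Harris inequality and elementary arithmetic with the factor $1-q$.
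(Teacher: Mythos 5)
Your overall strategy is the same as the paper's: pass to the with-extinction law $\widetilde\PP$, which is Bernoulli percolation on the $2^d$-adic tree, note that superset-closed collections correspond to increasing events, apply the classical Harris inequality there, and translate back to $\PP$ via the factor $1-q$. However, the final step contains a genuine arithmetic error, and as written your chain of inequalities does not prove the stated bound. From
\[
\widetilde\PP(\mathfrak C_1'\cap\mathfrak C_2'\cap\mathfrak S)\ \ge\ \widetilde\PP(\mathfrak C_1')\,\widetilde\PP(\mathfrak C_2')\,\widetilde\PP(\mathfrak S),
\]
together with your own identities $\PP(\mathfrak C_1\cap\mathfrak C_2)=\widetilde\PP(\mathfrak C_1'\cap\mathfrak C_2'\cap\mathfrak S)/(1-q)$ and $\widetilde\PP(\mathfrak C_i')=(1-q)\PP(\mathfrak C_i)$, one may divide by a single factor of $1-q$ (not by $(1-q)^2$), and this yields only
\[
\PP(\mathfrak C_1\cap\mathfrak C_2)\ \ge\ (1-q)^2\,\PP(\mathfrak C_1)\,\PP(\mathfrak C_2),
\]
which is strictly weaker than the lemma. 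The loss comes from your second application of Harris, $\widetilde\PP(\mathfrak C_2'\cap\mathfrak S)\ge\widetilde\PP(\mathfrak C_2')\widetilde\PP(\mathfrak S)$: this step throws away a factor of $1-q$ that is never recovered, and your displayed computation compensates by inserting an extra $1/(1-q)$ on the right-hand side that is not justified by the identity you state for the left-hand side.

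The repair is to not apply Harris to $\mathfrak S$ at all. First dispose of the trivial case: if $\varnothing\in\mathfrak C_i$ for some $i$, then $\mathfrak C_i$ contains every subset of $[0,1]^d$, so $\PP(\mathfrak C_i)=1$ and the claim is immediate; hence assume $\varnothing\notin\mathfrak C_1\cup\mathfrak C_2$. (This reduction is also needed to justify your identity $\widetilde\PP(\mathfrak C_i')=\widetilde\PP(\mathfrak C_i'\mid\mathfrak S)\,\widetilde\PP(\mathfrak S)$, which fails when $\varnothing\in\mathfrak C_i$, since then $\mathfrak C_i'$ is the whole space.) With this assumption $\mathfrak C_i'\subset\mathfrak S$ automatically, so $\mathfrak C_1'\cap\mathfrak C_2'\cap\mathfrak S=\mathfrak C_1'\cap\mathfrak C_2'$, and a single application of Harris gives $\widetilde\PP(\mathfrak C_1'\cap\mathfrak C_2')\ge\widetilde\PP(\mathfrak C_1')\widetilde\PP(\mathfrak C_2')$; rewriting both sides as $(1-q)\PP(\mathfrak C_1\cap\mathfrak C_2)\ge(1-q)^2\PP(\mathfrak C_1)\PP(\mathfrak C_2)$ and cancelling one factor of $1-q$ yields exactly the lemma. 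This is precisely the paper's argument. The remaining setup in your proposal (the product-space model, monotonicity of the events, measurability) is correct and matches the paper's appeal to \cite[\S 5.8]{LyonsPeres17}.
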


\begin{proof}
If $\PP(\mathfrak{C}_1)=1$ or $\PP(\mathfrak{C}_2)=1$ the claim is trivially true. We may thus assume that $\varnothing\notin \mathfrak{C}_1\cup\mathfrak{C}_2$.
Let $\widetilde{\PP}$ denote the law of fractal percolation with extinction. Since we are assuming that $\varnothing\notin\mathfrak{C}_1\cup\mathfrak{C}_2$, it follows that
\begin{equation}\label{eq:ne_does_not_matter}
(1-q)\PP(\mathfrak{C})=\widetilde{\PP}(\mathfrak{C}),\text{ for }\mathfrak{C}=\mathfrak{C}_1,\, \mathfrak{C}_2,\, \mathfrak{C}_1\cap\mathfrak{C}_2\,.
\end{equation}
Recalling that fractal percolation with extinction corresponds to Bernoulli percolation on a $2^d$-adic tree and $\mathfrak{C}_1$, $\mathfrak{C}_2$ correspond to increasing events, Harris inequality (see \cite[\S 5.8]{LyonsPeres17}) yields
\[\widetilde{\PP}(\mathfrak{C}_1\cap\mathfrak{C}_2) \ge \widetilde{\PP}(\mathfrak{C}_1)\widetilde{\PP}(\mathfrak{C}_2)\,.\]
Combining with \eqref{eq:ne_does_not_matter} gives the claim.
\end{proof}

\begin{rem}
Suitable versions of Lemma \ref{lem:0-1-strong} and Lemma \ref{lem:Harris} hold also for the finite level approximations $A_n=A^{\text{perc}}_n$ (with the same proofs).
\end{rem}

A classical result of Lyons asserts that for an arbitrary tree, the critical survival percolation parameter equals the branching number (essentially, the Hausdorff dimension of the boundary). Representing sets via their associated dyadic  trees, this yields the the following Euclidean version; see \cite[Theorem 15.11]{LyonsPeres17} for the proof (of a sharper and more general result).
\begin{thm} \label{thm:hausdorff-dim-percolation}
Let $B\subset [0,1]^d$ be a closed set, and let $A=A^{\text{perc}(d,p)}$.  If $\PP(A\cap B\neq\varnothing)>0$, then $\dim_H(B)\ge d-s$.
\end{thm}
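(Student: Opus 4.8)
The claim is that if $B \subset [0,1]^d$ is closed and the surviving percolation set $A = A^{\text{perc}(d,p)}$ hits $B$ with positive probability, then $\dim_H(B) \ge d - s$, where $s = s(d,p) = d + \log_2 p$. The natural approach is to translate this into Lyons' theorem identifying the critical percolation parameter on a tree with its branching number, via the dyadic coding of subsets of $[0,1]^d$. Concretely, to the closed set $B$ one associates the subtree $T_B$ of the full $2^d$-ary tree consisting of all dyadic cubes $Q$ with $Q \cap B \ne \varnothing$ (equivalently $\overline{Q} \cap B \ne \varnothing$); the boundary $\partial T_B$ is then, up to the usual identification, essentially $B$ (or a superset of $B$ of the same box-counting, hence at least the same Hausdorff, dimension — one must be slightly careful with boundary points of dyadic cubes, but this is a measure-zero, dimension-irrelevant issue). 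Fractal percolation with extinction is exactly Bernoulli$(p)$ bond percolation on this $2^d$-ary tree restricted to $T_B$, and the event $A \cap B \ne \varnothing$ for the surviving version corresponds (using $\PP(A \cap B \ne \varnothing) > 0 \iff \widetilde{\PP}(\widetilde A \cap B \ne \varnothing) > 0$, since hitting $B$ is an increasing event and extinction does not hit $B$ when $B \ne \varnothing$) to percolation surviving on $T_B$ with positive probability.

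The key input is then Lyons' theorem (as cited, \cite[Theorem 15.11]{LyonsPeres17} or the classical \cite{Lyons90}): Bernoulli$(p)$ percolation on a tree $T$ survives with positive probability if and only if $p > 1/\mathrm{br}(T)$, where $\mathrm{br}(T)$ is the branching number. Here the relevant tree is the full $2^d$-ary tree cut down to $T_B$, where edges at level $n$ have length $2^{-n}$ in the natural metric on $\partial T$ matching the Euclidean metric on $[0,1]^d$. Positive survival probability forces $p > 2^{-d}/\mathrm{br}(T_B)$ in the appropriate normalization; rearranging and using the identity $\dim_H(\partial T_B) = \log_2 \mathrm{br}(T_B)$ (the branching number equals the Hausdorff — in fact Minkowski, for trees — dimension of the boundary in this dyadic setting, again Lyons), one gets $\log_2(1/p) < d - \dim_H(B)$, i.e. $\dim_H(B) > d - s$. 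The statement asserts the non-strict inequality $\dim_H(B) \ge d - s$, which follows: if $\dim_H(B) < d - s$ one would have strict survival-criticality on the wrong side and contradict $\PP(A \cap B \ne \varnothing) > 0$.

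I expect the only genuine subtlety — and hence the "main obstacle" — to be the bookkeeping in the tree/Euclidean dictionary: (i) handling points of $B$ lying on dyadic hyperplanes, which correspond to several tree rays, and checking this does not inflate dimension; (ii) making sure the version of Lyons' theorem invoked gives the identification of branching number with Hausdorff dimension (not merely upper/lower box dimension) of the boundary, so that the conclusion is about $\dim_H(B)$ and not a coarser dimension — the cited \cite[Theorem 15.11]{LyonsPeres17} is stated precisely to cover this, being "a sharper and more general result"; and (iii) the reduction from surviving to with-extinction percolation, which is immediate here since $B \ne \varnothing$ (otherwise the statement is vacuous as $\dim_H \varnothing = 0 \le d - s$ fails only if... actually one should note $d - s = \log_2(1/p) > 0$, so for $B = \varnothing$ the hypothesis $\PP(A \cap \varnothing \ne \varnothing)>0$ is false and there is nothing to prove). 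Given that Lyons' theorem is quoted as a black box, the proof reduces to carefully setting up this correspondence and invoking it; I would write it in two or three sentences, essentially as done in the excerpt's own pointer to \cite[Theorem 15.11]{LyonsPeres17}.
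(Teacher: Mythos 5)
Your proposal is correct and is essentially the argument the paper intends: the paper offers no proof of its own for this theorem, pointing directly to Lyons' theorem via the dyadic-tree coding (\cite[Theorem 15.11]{LyonsPeres17}), which is exactly the reduction you carry out, including the correct handling of the surviving-versus-extinction versions and the dyadic-boundary bookkeeping. The only blemish is a sign slip in the line ``$\log_2(1/p)<d-\dim_H(B)$'', which should read $\log_2(1/p)\le\dim_H(B)$ (equivalently $\dim_H(B)\ge d-s$, which is the conclusion you correctly state immediately afterwards).
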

This is very useful when $B$ is random, because it allows to estimate the Hausdorff dimension of a random set by testing survival of a smaller random set, which is a priori an easier problem.

\section{A class of random measures and their intersections with parametrized families of deterministic measures}

In this section we state and prove our main result on continuity of intersections. This result is presented and proved in an abstract framework. In the later sections we will apply this result mostly to Cartesian products of fractal percolation to deduce our geometric applications. As mentioned in the introduction, we believe that Theorem \ref{thm:Holder-continuity} should have similar applications to a wide variety of random measures including various subdivision and cut-out type random fractals.  We start by defining the necessary concepts.

\subsection{Random measures}
Our goal in this section is to study intersections of random measures $\mu$ with a deterministic family of measures $\{ \eta_t\}_{t\in\Gamma}$.

We consider a sequence of Borel functions $\mu_n\colon\R^M\to[0,+\infty)$, corresponding to the densities of absolutely continuous measures (also denoted $\mu_n$). We note that these are actual functions (defined for every $x$) and not equivalence classes, since we will be integrating them against arbitrary measures. We assume that the following standing assumptions hold:

\begin{enumerate}
\renewcommand{\labelenumi}{(RM\arabic{enumi})}
\renewcommand{\theenumi}{RM\arabic{enumi}}
\item \label{RM:bounded} $\mu_0$ is a deterministic bounded function with bounded support.
\item \label{RM:measurable} There exists an increasing filtration of $\sigma$-algebras $\mathcal{B}_n$ (on some space $\Omega$) such that $\mu_n$ is $\mathcal{B}_n$-measurable.
\item \label{RM:quotients-bounded}
There is $C<\infty$ such that $\mu_{n+1}(x)\le C\mu_n(x)$ for all $n\in\N$, $x\in\R^M$.
\end{enumerate}

The last condition is of technical nature and could certainly be weakened. If we replace $C$ by a deterministic sequence $C_n$ growing at most subexponentially, then the proof of our main abstract theorem, Theorem \ref{thm:Holder-continuity}, goes through with very minor changes.  The papers \cite{CKLS14, FalconerJin16} consider geometric properties of random measures which satisfy \eqref{RM:bounded}, \eqref{RM:measurable}, and a variant of \eqref{RM:quotients-bounded} in which $C$ is random and/or grows quite fast with $n$. This suggests that there is scope for weakening the last condition considerably. Since for Cartesian products of fractal percolation, which is the focus of this article, \eqref{RM:quotients-bounded} holds as stated, we do not consider these variants here.

We now introduce the parametrized families $\{ \eta_t\}_{t\in\Gamma}$ of (deterministic) measures. We always assume the parameter space is a totally bounded metric space $(\Gamma,d)$.

Our main objects of interest will be the `intersections' of the random measures $\mu_n$ with  $\eta_t$ as $n\rightarrow\infty$, and their behaviour as $t$ varies.
Formally, we define:
\begin{align*}
\mu_n^t(A) &= \int_A \mu_n(x) d\eta_t(x),\\
\end{align*}
for each Borel set $A\subset\R^M$, $n\in\N$ and $t\in\Gamma$.
We are mainly interested in the asymptotic behaviour of the total mass, and denote
\begin{align*}
Y_n^t &= \|\mu_n^t\| = \int \mu_n(x) d\eta_t(x),\\
Y^t &=\lim_{n\to\infty} Y_n^t \quad\text{(if the limit exists)}.
\end{align*}

\subsection{Martingale condition}
\label{subsec:martingale}
Conditions \eqref{RM:bounded}--\eqref{RM:quotients-bounded} by themselves are far too weak to guarantee the convergence of $\mu_n$ or the regularity of the intersections $\mu_n^t$ (or $Y_{n}^t$). Thus, we need to impose further conditions, to at least ensure the a.s. existence of a limit measure $\mu$.

\begin{defn} \label{def:martingale}
A random sequence $( \mu_n)$ satisfying \eqref{RM:bounded}--\eqref{RM:quotients-bounded} will be called a \textbf{martingale measure}, if for all
 $x\in\R^M$ and $n\in\N$,
\begin{equation}\label{eq:martingalecondition}
\EE(\mu_{n+1}(x)|\BB_n) = \mu_n(x).
\end{equation}
\end{defn}

In other words, a martingale measure is a $T$-martingale in the sense of Kahane \cite{Kahane87}  with the extra growth condition \eqref{RM:quotients-bounded}, and it is well known and easy to see that, in this case, the sequence $\mu_n$ converges a.s. in the weak*-sense to a random limit measure $\mu$. Furthermore, for each fixed $t\in\Gamma$, also $\mu_n^t$ (and $Y_n^t$) converges a.s. to a random limit $\mu^t$ (resp. $Y^t=||\mu^t||$).

\subsection{Spatial independence}
\label{subsec:spatial-independence}

Martingale measures may exhibit long range spatial dependencies; in order to obtain any results about intersections, we need to impose conditions that guarantee a sufficient degree of independence in the process of defining $\mu_n$. The best that we could hope for is that if $\{ Q_j\}$ are dyadic cubes of side length $2^{-n}$ that are pairwise disjoint then, conditioned on the $n$th step of the construction, the masses $\{\mu_{n+1}(Q_j)\}$ are independent random variables. This is the content of the next definitions originating from \cite{ShmerkinSuomala14}.

\begin{defn}\label{def:usi}
A sequence $(\mu_n)_{n\in\N}$ satisfying \eqref{RM:bounded}--\eqref{RM:quotients-bounded} is \textbf{uniformly spatially independent} (USI) if there exists a constant $C>0$ such that for any $(C 2^{-n})$-separated family $\mathcal{Q}$ of dyadic cubes of side-length $2^{-(n+1)}$, the restrictions $\{\mu_{n+1}|_Q | \mathcal{B}_n\}$ are independent.
\end{defn}
\begin{defn}\label{def:si}
A sequence $(\mu_n)_{n\in\N}$ satisfying \eqref{RM:bounded}--\eqref{RM:quotients-bounded}
is called \textbf{spatially independent} (with respect to the family $\{ \eta_t:t\in\Gamma\}$) if there exists a constant $C>0$ such that for any $t\in\Gamma$, any $n\in\mathbb{N}$, and for any $C 2^{-n}$-separated family $\mathcal{Q}$ of dyadic cubes of side-length $2^{-(n+1)}$, the random variables $\{ \mu_{n+1}^t(Q)|\mathcal{B}_n\}_{Q\in\mathcal{Q}}$ are independent.
\end{defn}

The paper \cite{ShmerkinSuomala14} deals with martingale measures which are spatially independent (these will be termed \textbf{SI-martingales} for short). In order to handle cartesian products of independent fractal percolations, we will need to allow some long-range dependencies between the masses $\mu_{n+1}^t(Q)$ as long as they are ``sparse'' with large probability. In order to define this notion formally, we recall the concept of dependency graph: given an index set $I$, a graph with vertex set $I$ is a \textbf{dependency graph} for a family of random variables $\{ X_i: i\in I\}$ if for any $i\in I$ and any subset $J\subset I$ such that there is no edge from $i$ to any element of $J$, the random variable $X_i$ is independent from $\{ X_j:j\in J\}$.

\begin{defn} \label{def:DI-n}
Let $(\mu_n)_{n\in\N}$ be a martingale measure, and let $\{\eta_t\}_{t\in\Gamma}$ be a family of measures. The \textbf{dependency degree} at step $n$, denoted $\mathcal{DI}_n$, is the smallest constant $\Psi$ such that, \emph{for all} $t\in\Gamma$, there is a dependency graph for $\{ \mu_{n+1}^t(Q)|\mathcal{B}_n\}_{Q\in\mathcal{Q}_{n+1}}$ of degree at most $\Psi$.
\end{defn}

Clearly, if $(\mu_n)$ is spatially independent, then $\mathcal{DI}_n$ is bounded over all $n$.
In many cases $(\mu_n)$ will only be weakly spatially dependent, in the sense that $\mathcal{DI}_n$ will grow at a sufficiently slow rate:

\begin{defn}
A sequence $(\mu_n)_{n\in\N}$ satisfying \eqref{RM:bounded}--\eqref{RM:quotients-bounded} is \textbf{weakly spatially dependent (WSD)} with parameter $\delta\ge 0$ 
(with respect to a family $\{ \eta_t:t\in\Gamma\}$) if there is a random sequence $\Psi(n)$, such that
the following holds.
\begin{enumerate}
\item $\mathcal{DI}_n\le \Psi(n)$ for all $n$.
\item For each $\varepsilon>0$, there exist a (deterministic) $C=C_\varepsilon>0$ and $\varepsilon_n>0$ with $\sum_n \varepsilon_n<\varepsilon$, and $\mathcal{B}_n$-measurable events
\[\mathfrak{C}_{n}\subset\{\Psi(n)\le C 2^{\delta n}\}\]
such that for $n\ge 1$,
\[
\PP_{\mathfrak{C}_{n-1}}(\mathfrak{C}_{n}|\mathcal{B}_{n-1})\ge 1-\varepsilon_{n}\,,
\]
where we denote $\mathfrak{C}_0=\Omega$ (the entire probability space).
\end{enumerate}
\end{defn}
Here, and in the sequel, given a positive probability event $\mathfrak{C}$, we denote by $\PP_\mathfrak{C}$ the induced conditional probability distribution, i.e. $\PP_{\mathfrak{C}}(\mathfrak{F})=\PP(\mathfrak{F}|\mathfrak{C})$. We also remark that $\PP(\cdot|\mathcal{B}_n)$ is a random variable, and hence the WSD condition requires that whatever the realization of $\mu_n$ (so long as $\mathfrak{C}_n$ is satisfied), there is a very large probability that $\mathfrak{C}_{n+1}$ again is satisfied.

The reason we introduce the random sequence $\Psi(n)$, rather than dealing with $\mathcal{DI}_n$ directly, is that in practice we will have information about certain natural dependency graphs, while the minimum in the definition of $\mathcal{DI}_n$ is an awkward quantity to work with.

This definition can be motivated by the example $\mu_n = \nu_n^{(1)}\times\nu_n^{(2)}\times \nu_n^{(3)}$, with $\nu_n^{(i)}$ independent realizations of the fractal percolation measure on $[0,1]$. In this case, the masses $\mu_{n+1}(I_{i,1}\times I_{i,2}\times I_{i,3})$ are independent, conditional on $\mathcal{B}_n$, provided that $I_{i,j}$ are all contained in different intervals in $\mathcal{Q}_n$. However, given $I_i,J\in\mathcal{Q}_{n+1}$, then (for example) $\mu_{n+1}(J\times I_1\times I_2)$ and $\mu_{n+1}(I_3\times I_4\times J)$ are certainly not independent given $\mathcal{B}_n$. We then see that one can bound the dependency degree in terms of the maximum of the cardinalities of intersections of $A_n^{(1)}\times A_n^{(2)}\times A_n^{(3)}$ with planes in principal directions, and this will allow us to show weak spatial dependence with a suitable small $\delta$.

\subsection{H\"{o}lder continuity of intersections}

\label{sec:continuity}

The role of the spatial independence (or weak dependence) condition is to ensure that, with overwhelming probability, the convergence of $Y_n^t$ is very fast. More precisely, we decompose $Y_{n+1}^t - Y_n^t = \sum_{Q\in\mathcal{Q}_n} X_Q^t$, where
\[
X_Q^t = \int_Q (\mu_{n+1}-\mu_n)\,d\eta_t.
\]
Weak dependence ensures that there is enough independence among the $X_Q^t$ that its sum is tightly concentrated around the mean, implying that $Y_{n+1}^t$ is very close to $Y_n^t$ with very large probability. These ideas are made precise in Lemma \ref{lem:large-deviation}, which is a small adaptation of \cite[Lemma 3.4]{ShmerkinSuomala14}. Before stating this lemma, we recall the following definition.
\begin{defn}
We say that a measure $\eta$ has \textbf{Frostman exponent} $\kappa\ge 0$, if there exists a constant $C>0$ such that
\begin{equation}\label{eq:def_dim_deterministic_family}
\eta(B(x,r)) \le C r^\kappa\quad\text{for all }x\in\R^M,\,0<r<1\,.
\end{equation}
The family $\{ \eta_t\}_{t\in\Gamma}$ has Frostman exponent
$\kappa$ if each $\eta_t$ satisfies \eqref{eq:def_dim_deterministic_family} with a uniform constant $C$.
\end{defn}

\begin{lemma} \label{lem:large-deviation}
Let $(\mu_n)$ be a martingale measure, and let $\eta$ be a measure with Frostman exponent $\kappa\ge 0$. Fix $n$, positive constants $\Psi,\Upsilon$, and write $\mathfrak{F}$ for the event $\mathcal{DI}_n\le \Psi$, $\sup_{x\in\R^M}\mu_n(x)\le\Upsilon$, and suppose that \eqref{eq:martingalecondition} holds for $x\in\supp(\eta)$.

Then, for any $\varrho>0$ with
\begin{equation}\label{kappaeq}
\varrho^2 2^{\kappa n}\Upsilon^{-1}\Psi^{-1}\ge c_0>0\,,
\end{equation}
it holds that
\begin{equation*}
\begin{split}
\PP_{\mathfrak{F}}\left(\left|\int(\mu_{n+1}-\mu_n)\,d\eta\right|\ge \varrho\sqrt{\int\mu_n\,d\eta}\,|\,\mathcal{B}_n\right)\le O\left(\exp\left(-\Omega\left(\varrho^2 2^{\kappa n}\Upsilon^{-1}\Psi^{-1}\right)\right)\right),
\end{split}
\end{equation*}
where the implicit constants depend only on $c_0$, the ambient dimension $M$, and the constant $C$ in the definition of Frostman exponent of $\eta$.
\end{lemma}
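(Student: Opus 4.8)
The plan is to express the increment $\int(\mu_{n+1}-\mu_n)\,d\eta$ as a sum of the bounded-range, mean-zero random variables $X_Q = \int_Q(\mu_{n+1}-\mu_n)\,d\eta$ over $Q\in\mathcal{Q}_{n+1}$, and then apply a Hoeffding-type concentration inequality that is valid for sums with a dependency graph of bounded degree. First I would condition on $\mathcal{B}_n$ throughout (so $\mu_n$ is frozen and, on $\mathfrak{F}$, satisfies $\sup\mu_n\le\Upsilon$), and use the martingale condition \eqref{eq:martingalecondition} (valid on $\supp\eta$, which is all that matters since we integrate against $\eta$) to get $\EE(X_Q\mid\mathcal{B}_n)=0$. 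Next I would bound the range of each $X_Q$: by \eqref{RM:quotients-bounded} we have $0\le\mu_{n+1}\le C\mu_n$ on $Q$, so $|X_Q|\le (C+1)\,\Upsilon\,\eta(Q)\le (C+1)\,\Upsilon\,C'\,2^{-\kappa n}$ using the Frostman exponent $\kappa$ of $\eta$ applied to a ball of radius $\sim 2^{-n}$ containing $Q$. Also, the total $\ell^1$ and $\ell^2$ budgets are controlled: $\sum_Q\eta(Q)=\eta(\R^M)=\|\eta\|$ and, more usefully, $\sum_Q \eta(Q)^2\le \max_Q\eta(Q)\cdot\|\eta\|\le C'2^{-\kappa n}\|\eta\|$, which is the variance proxy that produces the $2^{\kappa n}$ gain.

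The key input is a large deviation bound for sums of bounded random variables whose dependence is encoded by a dependency graph of degree $\le\Psi$ — this is classical (e.g.\ Janson's inequality for sums with dependency graphs, or a martingale/partition-into-independent-sets argument: a graph of maximum degree $\Psi$ is $(\Psi+1)$-colourable, so one splits the $X_Q$ into $\Psi+1$ independent blocks and applies Hoeffding to each). Writing $S=\sum_Q X_Q$, this gives $\PP(|S|\ge \lambda\mid\mathcal{B}_n)\le O(\exp(-\Omega(\lambda^2/(\Psi\, V))))$ where $V=\sum_Q b_Q^2$ with $b_Q$ the range bound of $X_Q$; using $b_Q\le (C+1)\Upsilon\,\eta(Q)$ and the crude bound $\sum_Q\eta(Q)^2\le C'2^{-\kappa n}\Upsilon^{-1}\cdot\Upsilon\|\eta\|$... more precisely $V\le (C+1)^2\Upsilon^2\sum_Q\eta(Q)^2\le (C+1)^2\Upsilon^2 C'2^{-\kappa n}\|\eta\|$. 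Then I would choose $\lambda=\varrho\sqrt{\int\mu_n\,d\eta}$ and note $\int\mu_n\,d\eta\le\Upsilon\|\eta\|$, so that $\lambda^2/(\Psi V)\gtrsim \varrho^2\Upsilon\|\eta\|/(\Psi\,\Upsilon^2 2^{-\kappa n}\|\eta\|)=\varrho^2 2^{\kappa n}\Upsilon^{-1}\Psi^{-1}$, which is exactly the exponent claimed; hypothesis \eqref{kappaeq} guarantees this exponent is bounded below, which is what lets the $O(\cdot)/\Omega(\cdot)$ constants be absorbed (and handles the degenerate regime where the exponent would otherwise be tiny). Finally, on $\mathfrak{F}$ we have $\Psi$ and $\Upsilon$ as stated, and only $\mathcal{B}_n$-measurable information was used, so the conditional statement $\PP_{\mathfrak{F}}(\,\cdot\mid\mathcal{B}_n)$ is legitimate.

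The main obstacle I anticipate is getting the dependency-graph concentration inequality in precisely the right quantitative form — in particular tracking that the degree $\Psi$ enters only linearly in the exponent (not, say, as $\Psi^2$), and making sure the colouring/partition argument interacts correctly with the conditioning on $\mathcal{B}_n$ and with the fact that the dependency graph itself may be random but is $\mathcal{B}_n$-measurable on $\mathfrak{F}$. A secondary technical point is the careful bookkeeping of the Frostman constant when a dyadic cube of side $2^{-(n+1)}$ is covered by a ball of radius comparable to $2^{-n}$, and ensuring all implicit constants depend only on $c_0$, $M$, and the Frostman constant $C$ as asserted — but since this essentially reproduces \cite[Lemma 3.4]{ShmerkinSuomala14} with the single change of replacing a uniform bound on the dependency degree by the quantity $\Psi$, I expect this to go through with only routine modifications.
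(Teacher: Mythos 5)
Your overall strategy coincides with the paper's: decompose the increment into the cube-wise variables $X_Q$, use the martingale property for mean zero, bound the ranges via \eqref{RM:quotients-bounded} and the Frostman condition, and apply a dependency-graph Hoeffding/Janson inequality. However, your final computation contains a genuine error of direction. You take $\lambda=\varrho\sqrt{\int\mu_n\,d\eta}$ and a variance proxy $V\le (C+1)^2\Upsilon^2 C' 2^{-\kappa n}\|\eta\|$, and then assert $\lambda^2/(\Psi V)\gtrsim \varrho^2\Upsilon\|\eta\|/(\Psi\,\Upsilon^2 2^{-\kappa n}\|\eta\|)$ by ``noting $\int\mu_n\,d\eta\le\Upsilon\|\eta\|$''. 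That inequality is used in the wrong direction: to lower-bound the exponent you would need $\int\mu_n\,d\eta\gtrsim\Upsilon\|\eta\|$, which is false in general ($\mu_n$ may vanish on most of $\supp\eta$ and equal $\Upsilon$ on a small piece, making $\int\mu_n\,d\eta$ arbitrarily small compared with $\Upsilon\|\eta\|$). As written, your application of Janson only yields the exponent $\varrho^2\bigl(\int\mu_n\,d\eta\bigr)2^{\kappa n}\Upsilon^{-2}\Psi^{-1}\|\eta\|^{-1}$, which can be far weaker than the claimed $\varrho^2 2^{\kappa n}\Upsilon^{-1}\Psi^{-1}$.

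The fix, which is exactly what the paper's setup accomplishes, is to carry out the variance bookkeeping for $\eta_n:=\mu_n\,d\eta$ rather than $\eta$: one has $|X_Q|\le (C+1)\eta_n(Q)$, hence $\sum_Q b_Q^2\le (C+1)^2\bigl(\max_Q\eta_n(Q)\bigr)\|\eta_n\|\le O(\Upsilon 2^{-\kappa n})\|\eta_n\|$, and since $\lambda^2=\varrho^2\|\eta_n\|$ the factor $\|\eta_n\|$ cancels exactly. Two further points. First, this cancellation requires Janson's inequality in the form with individual ranges $\sum_i(b_i-a_i)^2$; the version recorded in the paper (Lemma \ref{lem:HoeffdingJanson}) has a uniform range $R$ and the count $|I|$, and $|I|$ admits no useful bound (exponentially many cubes may carry tiny mass). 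The paper therefore stratifies $\mathcal{Q}_{n+1}$ into classes on which $\eta_n(\widehat{Q})\asymp\Upsilon 2^{-\kappa\ell}$, applies the uniform-range inequality on each class with threshold $\varrho\sqrt{\|\eta_n\|}/(2(\ell-n)^2)$, and sums over $\ell>n$; if you want to avoid this stratification you must verify the individual-range form, including that the degree enters only linearly (the naive $(\Psi+1)$-colouring with equal thresholds per colour class gives $\Psi^2$, as you suspected). Second, hypothesis \eqref{kappaeq} is used at the end to absorb the polynomial losses from the summation over $\ell$ into the $O(\cdot)$, not merely to handle a degenerate regime. With these corrections your argument becomes the paper's proof.
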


We underline that the lemma provides a uniform bound over all realizations of $\mu_n$ in the event $\mathfrak{F}$. Of course, this implies that same bound holds conditioning only on $\mathfrak{F}$, but knowledge of this is not enough for us. In the proof we will use a generalization of Hoeffding's inequality due to Janson \cite[Theorem 2.1]{Janson04}, which allows for dependencies among the random variables:
\begin{lemma} \label{lem:HoeffdingJanson}
Let $\{ X_i: i\in I\}$ be zero mean random variables uniformly bounded by $R>0$, and suppose there is a dependency graph with degree $\Psi$. Then
\begin{equation} \label{eq:Hoeffding-Janson}
\mathbb{P}\left(\left|\sum_{i\in I} X_i\right|> \varrho\right) \le 2\exp\left(\frac{- 2 \varrho^2}{(\Psi+1)|I| R^2}\right).
\end{equation}
\end{lemma}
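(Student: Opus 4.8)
The plan is to deduce Lemma~\ref{lem:HoeffdingJanson} directly from Janson's concentration inequality \cite[Theorem~2.1]{Janson04}, after reducing to the hypotheses in which that result is stated. Janson's theorem asserts that if $\{X_i:i\in I\}$ are random variables with a dependency graph of maximal degree $\Delta$ (so that $\chi^*\le\Delta+1$ for the fractional-chromatic-number parameter appearing in his statement), with $a_i\le X_i\le b_i$ almost surely, then $\PP(\sum_i X_i - \EE\sum_i X_i \ge u)\le \exp(-2u^2/((\Delta+1)\sum_i(b_i-a_i)^2))$. First I would record that in our setting $\EE X_i=0$ for each $i$, so $\EE\sum_i X_i=0$ and the deviation is centred at $0$. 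Second, the uniform bound $|X_i|\le R$ lets me take $a_i=-R$, $b_i=R$, whence $b_i-a_i=2R$ and $\sum_{i\in I}(b_i-a_i)^2 = 4|I|R^2$. Plugging this in gives the one-sided bound $\PP(\sum_i X_i\ge\varrho)\le\exp(-2\varrho^2/((\Psi+1)\cdot 4|I|R^2))$, and the stated form with the cleaner exponent follows once I absorb the factor; actually Janson's bound as usually quoted already has the $2u^2$ in the numerator, so I should be a little careful about the constant and simply match it to \eqref{eq:Hoeffding-Janson}, possibly with the $4$ replaced as needed. Third, applying the same bound to $\{-X_i\}$ (which has the same dependency graph and the same bound $R$) controls $\PP(\sum_i X_i\le-\varrho)$, and a union bound over the two tails produces the factor $2$ in front of the exponential.

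Concretely, the key steps in order are: (i) identify the parameters in Janson's theorem with ours --- the dependency-graph degree $\Psi$, the interval $[-R,R]$ for each $X_i$, and the centring $\EE\sum X_i=0$; (ii) substitute $b_i-a_i=2R$ and sum over the $|I|$ variables to get $\sum(b_i-a_i)^2=4|I|R^2$; (iii) invoke Janson's one-sided inequality to bound $\PP(\sum X_i>\varrho)$; (iv) repeat with $-X_i$ and union-bound to obtain the two-sided estimate \eqref{eq:Hoeffding-Janson}. No martingale structure, no Frostman exponents, nothing about $(\mu_n)$ enters here; the lemma is a purely probabilistic black box that will later be fed the random variables $X_Q^t$.

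The only genuine subtlety --- and hence the ``main obstacle'', though it is a mild one --- is the precise relationship between the degree of a dependency graph in the sense defined in the paper (for any vertex $i$ and any $J$ with no edges from $i$ to $J$, $X_i\perp\{X_j:j\in J\}$) and the hypotheses under which Janson's theorem is proved, which are usually phrased either via such a dependency graph or via the fractional chromatic number $\chi^*$ of it. One needs the elementary bound $\chi^*(G)\le \Delta(G)+1$ (indeed even $\chi(G)\le\Delta(G)+1$ by greedy colouring) so that Janson's $\chi^*$ can be replaced by $\Psi+1$; this is where the $(\Psi+1)$ factor in \eqref{eq:Hoeffding-Janson} comes from. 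I would state this reduction in one line and cite \cite{Janson04} for the rest. The remaining arithmetic --- tracking whether the constant in the exponent is $2$, $1/2$, or something absorbed into the $4|I|R^2$ denominator --- is routine and will be done to exactly match the form \eqref{eq:Hoeffding-Janson} as used downstream.
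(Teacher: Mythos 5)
Your proposal is exactly what the paper does: Lemma \ref{lem:HoeffdingJanson} is not proved in the text but quoted directly from Janson's Theorem 2.1, and your reduction (centring at $0$, taking $[a_i,b_i]=[-R,R]$, bounding the fractional chromatic number by $\Psi+1$ via greedy colouring, and union-bounding the two tails) is the correct way to match the parameters. The one point you rightly flag is real: with $|X_i|\le R$ one gets $\sum_i(b_i-a_i)^2=4|I|R^2$ and hence the exponent $-\varrho^2/(2(\Psi+1)|I|R^2)$ rather than the stated $-2\varrho^2/((\Psi+1)|I|R^2)$, but this factor of $4$ is immaterial since the lemma is only ever invoked inside $O(\exp(-\Omega(\cdot)))$ bounds.
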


\begin{proof}[Proof of Lemma \ref{lem:large-deviation}]
We condition on a realization of $\BB_n$ which is contained in $\mathfrak{F}$. We will obtain the desired bound irrespective of the specific realization, hence establishing the claim. By assumption, there is $C_1>0$ such that $\eta(Q)\le C_1 2^{-\kappa n}$ for all $Q\in\mathcal{Q}_{n}$. Define $d\eta_n=\mu_n d\eta$.

We decompose $\mathcal{Q}_{n+1}$ into the families
\[
\mathcal{Q}_{n+1}^\ell = \{ Q\in \mathcal{Q}_{n+1}: C_1 \Upsilon 2^{-\kappa\ell}< \eta_n(\widehat{Q}) \le C_1 \Upsilon 2^{\kappa(1-\ell)} \},
\]
where $\widehat{Q}\in\mathcal{Q}_n$ is the dyadic cube containing $Q$. Since
\[
\eta_n(Q) \le \int_Q \Upsilon\, d\eta \le C_1 \Upsilon 2^{-\kappa n}
\]
for $Q\in\mathcal{Q}_n$, we see that $\mathcal{Q}_{n+1}^\ell$ is empty for all $\ell\le n$.

For each $Q\in\Q_{n+1}$, let $X_Q=\eta_{n+1}(Q)-\eta_n(Q)$. Then $\EE(X_Q)=0$ for all $Q\in\mathcal{Q}_{n+1}$; recall that we are conditioning on $\mathcal{B}_n$. Also, by \eqref{RM:quotients-bounded},
\[
|X_Q| \le O(\eta_n(Q)) \le O(1) 2^{-\kappa\ell} \Upsilon \,\,\text{ for all } Q\in\Q_{n+1}^\ell.
\]
Moreover, since $\|\eta_n\| = \sum_{Q\in\Q_n} \eta_n(Q)$, we have
\[
|\Q_{n+1}^\ell| \le O(1) 2^{\kappa\ell}\Upsilon^{-1}\|\eta_n\|.
\]
Furthermore, by definition of the dependency index, there is a dependency graph for $\{ X_Q:\mathcal{Q}_{n+1}^\ell\}$ of degree at most $\Psi$. Therefore, by the Hoeffding-Janson inequality \eqref{eq:Hoeffding-Janson},
\[
\PP\left(\left|\sum_{Q\in\Q_{n+1}^\ell} X_Q\right|>\frac{\varrho\sqrt{\|\eta_n\|}}{2(\ell-n)^2} \right) =O\left( \exp\left(-\Omega\left((\ell-n)^{-4}\varrho^2 2^{\kappa\ell} \Upsilon^{-1}\Psi^{-1}\right)\right)\right).
\]
for any $\varrho>0$. It follows that
\begin{align*}
\PP\left(\left|\int (\mu_{n+1}-\mu_n)\,d\eta\right|>\varrho\sqrt{\|\eta_n\|}\right) &\le \sum_{\ell> n}\PP\left(\left|\sum_{Q\in\Q_{n+1}^\ell} X_Q\right|>\frac{\varrho\sqrt{\|\eta_n\|}}{2(\ell-n)^2} \right) \\
&=O\left(\exp\left(-\Omega\left(\varrho^2 2^{\kappa n}\Upsilon^{-1}\Psi^{-1}\right)\right)\right)\,,
\end{align*}
for any $\varrho>0$, where \eqref{kappaeq} is used for the last estimate.
\end{proof}

The following is the main abstract result of the paper.
\begin{thm} \label{thm:Holder-continuity}
Let $(\mu_n)_{n\in\N}$ be a WSDM with parameter $\delta$, and let $\{\eta_t\}_{t\in\Gamma}$ be a family of measures indexed by the metric space $(\Gamma,d)$.
We assume that there are positive constants $\alpha,\kappa,\theta,\gamma_0$ such that the following holds:
\begin{enumerate}
\renewcommand{\labelenumi}{\textup{(H\arabic{enumi})}}
\renewcommand{\theenumi}{\textup{H\arabic{enumi}}}
\item \label{H:size-parameter-space} $\Gamma$ has finite upper box-counting dimension (i.e. it can be covered by $O(1) r^{-O(1)}$ balls of radius $r$ for all $r\in (0,1]$).
\item \label{H:dim-deterministic-measures} The family $\{\eta_t\}_{t\in\Gamma}$ has Frostman exponent $\kappa$.
\item \label{H:codim-random-measure} Almost surely, $\mu_n(x)\le 2^{\alpha n}$ for all $n\in \N$ and $x\in\R^M$.
\item \label{H:Holder-a-priori} Almost surely, there is a (random) integer $K$, such that
    \begin{equation} \label{eq:Holder-a-priori}
        \sup_{t,u\in\Gamma,t\neq u;n\ge K} \frac{\left|Y_n^t-Y_n^u\right|}{2^{\theta n}\,d(t,u)^{\gamma_0}} <\infty.
    \end{equation}
\end{enumerate}

Further, suppose that the various parameters satisfy
\begin{equation} \label{eq:relations-parameter}
\kappa-\alpha-\delta >0\,.
\end{equation}

Then there is a deterministic number $\gamma>0$ (depending on all parameters) such that almost surely $Y_n^t$ converges uniformly in $t$, exponentially fast, to a limit $Y^t$, and the function $t\mapsto Y^t$ is H\"{o}lder continuous with exponent $\gamma$.
\end{thm}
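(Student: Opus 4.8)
The plan is to establish Hölder continuity of the limit by a chaining argument built on top of the large deviation estimate of Lemma~\ref{lem:large-deviation}, applied uniformly over a dense net of parameters. The key point is that, under the hypothesis \eqref{eq:relations-parameter}, at each step $n$ the increments $Y_{n+1}^t - Y_n^t$ are exponentially small with overwhelming probability, and this probability is strong enough to survive a union bound over a polynomially large net of parameters coming from \eqref{H:size-parameter-space}.

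First I would fix, for each $n$, a maximal $2^{-n}$-separated subset $\Gamma_n \subset \Gamma$; by \eqref{H:size-parameter-space} we have $|\Gamma_n| = O(1)2^{O(n)}$. On the event $\mathfrak{C}_n$ from the WSD condition we have $\mathcal{DI}_n \le C 2^{\delta n}$, and by \eqref{H:codim-random-measure} we may take $\Upsilon = \Upsilon_n = 2^{\alpha n}$; choosing $\varrho = \varrho_n = 2^{-\beta n}$ for a small $\beta>0$ to be optimized, the exponent $\varrho_n^2 2^{\kappa n}\Upsilon_n^{-1}\Psi^{-1} \gtrsim 2^{(\kappa-\alpha-\delta-2\beta)n}$ is a positive power of $2^n$ provided $\beta$ is small enough (here \eqref{eq:relations-parameter} is exactly what is needed). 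Then Lemma~\ref{lem:large-deviation} gives, conditionally on $\mathcal{B}_n$ and on $\mathfrak{C}_n$, that $|Y_{n+1}^t - Y_n^t| \le \varrho_n \sqrt{Y_n^t}$ fails with probability at most $\exp(-\Omega(2^{cn}))$ for some $c>0$. Since $Y_n^t$ is a bounded martingale for each fixed $t$ and $\|\mu_0\|$ is deterministic, $\sqrt{Y_n^t}$ is bounded by a deterministic constant with high probability (or one runs the same estimate on the event $\{Y_n^t \le \Lambda\}$ for large $\Lambda$ and handles the complement separately using $\sup_t Y_n^t < \infty$ a.s., which follows from the a priori regularity \eqref{H:Holder-a-priori} together with $\mathfrak{C}_0 = \Omega$). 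A union bound over $t \in \Gamma_{n+1}$ and over all $n \ge n_1$, combined with the WSD tail $\sum_n \varepsilon_n < \varepsilon$ controlling the failure of the $\mathfrak{C}_n$, then yields via Borel--Cantelli that almost surely, for all $n$ large, $\sup_{t \in \Gamma_{n}}|Y_{n+1}^t - Y_n^t| \le 2^{-\beta n}$. Telescoping gives uniform convergence over $\bigcup_n \Gamma_n$, exponentially fast.

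Next I would interpolate to get genuine Hölder continuity. For $t,u \in \Gamma$ with $d(t,u) \approx 2^{-N}$, pick $t',u' \in \Gamma_N$ within $2^{-N}$ of $t,u$; write
\begin{equation*}
Y^t - Y^u = (Y^t - Y_N^t) + (Y_N^t - Y_N^{t'}) + (Y_N^{t'} - Y_N^{u'}) + (Y_N^{u'} - Y_N^u) + (Y_N^u - Y^u).
\end{equation*}
The first and last terms are $O(2^{-\beta N})$ by the exponential convergence just established (here one also needs the convergence to be uniform over all of $\Gamma$, not just the net, which follows by combining the net convergence with \eqref{H:Holder-a-priori} applied at scale $n \approx N$ to control $|Y_n^t - Y_n^{t'}|$ for $t$ near $t'$; the a priori modulus $2^{\theta n} d(t,t')^{\gamma_0} \le 2^{(\theta - \gamma_0)N}$ is a negative power of $2^N$ if we first absorb a loss, i.e. run the net argument at a slightly finer scale $\lfloor \lambda n\rfloor$ with $\lambda>1$ so that $\theta n - \gamma_0 \lambda n < 0$). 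The middle two terms are handled the same way since $d(t,t'), d(u,u') \le 2^{-N}$, and the term $Y_N^{t'} - Y_N^{u'}$ with $t',u' \in \Gamma_N$: if $t' = u'$ it is zero, otherwise $d(t',u') \ge 2^{-N}$ but also $d(t',u') \lesssim 2^{-N}$, so \eqref{H:Holder-a-priori} bounds it by $2^{\theta N}(2^{-N})^{\gamma_0} = 2^{(\theta-\gamma_0)N}$, again a negative power of $2^N$ after the scale adjustment. Collecting, $|Y^t - Y^u| = O(2^{-\gamma N}) = O(d(t,u)^{\gamma})$ for a suitable deterministic $\gamma>0$ depending on $\alpha,\kappa,\delta,\theta,\gamma_0$ and the box dimension of $\Gamma$.

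The main obstacle, and the place requiring care, is the bookkeeping that makes the large deviation bound genuinely \emph{uniform} over the net while simultaneously controlling the WSD events $\mathfrak{C}_n$: Lemma~\ref{lem:large-deviation} is conditional on $\mathcal{B}_n$ and on the event $\{\mathcal{DI}_n \le \Psi, \sup \mu_n \le \Upsilon\}$, so one must propagate the conditioning on $\mathfrak{C}_{n-1} \to \mathfrak{C}_n$ through the induction without the small probabilities $\varepsilon_n$ accumulating, and one must choose $\beta$ (and the scale-dilation factor $\lambda$) so that all the competing exponents — $\kappa - \alpha - \delta - 2\beta > 0$ for the deviation to beat the union bound, and $\theta \lambda - \gamma_0 \lambda < 0$ wait, rather $\theta - \gamma_0\lambda < 0$ for the a priori estimate to be useful at the dilated scale — are simultaneously satisfiable. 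This is exactly the type of argument carried out in \cite[Section 3]{ShmerkinSuomala14}, and the present proof should follow it closely, the only new ingredient being that $\Psi$ is now allowed to grow like $2^{\delta n}$ rather than staying bounded, which is precisely absorbed by the margin in \eqref{eq:relations-parameter}.
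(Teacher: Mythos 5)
Your probabilistic step --- applying Lemma \ref{lem:large-deviation} with $\Upsilon=2^{\alpha n}$, $\Psi\le C2^{\delta n}$ on $\mathfrak{C}_n$, taking a union bound over a net of size $\exp(O(n))$, and propagating the conditioning $\mathfrak{C}_{n-1}\to\mathfrak{C}_n$ so the $\varepsilon_n$ do not accumulate --- is exactly the content of Proposition \ref{prop:probabilistic-technical}, and that part of your outline is sound. The gap is in the deterministic interpolation. In your five-term decomposition at the single scale $N$ with $d(t,u)\approx 2^{-N}$, the central term $Y_N^{t'}-Y_N^{u'}$ has $d(t',u')\approx d(t,u)\approx 2^{-N}$ no matter how fine you take the net: the scale dilation $\lfloor\lambda n\rfloor$ shrinks $d(t,t')$ and $d(u,u')$, but it cannot shrink the separation between the two net points, which is pinned to $d(t,u)$. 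Hence \eqref{H:Holder-a-priori} gives only $|Y_N^{t'}-Y_N^{u'}|\le O(2^{\theta N}2^{-\gamma_0 N})=O(2^{(\theta-\gamma_0)N})$, and no relation $\theta<\gamma_0$ is assumed (in every application in this paper $\theta=m(2d-s)-q$ or $m(d-s)+k$ while $\gamma_0=1$, so $\theta\gg\gamma_0$). This term therefore diverges, and your claim that it is ``a negative power of $2^N$ after the scale adjustment'' is incorrect.

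The a priori estimate \eqref{H:Holder-a-priori} is only useful in the regime $d(t,u)\le 2^{-B_0 n}$ with $B_0\gamma_0>\theta$; for the mesoscopic regime $2^{-B_0 n}<d(t,u)$ one cannot compare $Y_N^{t'}$ and $Y_N^{u'}$ directly at scale $N$ and must instead telescope over \emph{all} scales $0\le n<N$. The clean way to organize this is the induction the paper runs on the H\"older seminorm $X_n=\sup_{t\neq u}|Y_n^t-Y_n^u|/d(t,u)^\gamma$: the problematic term becomes the inductive term $\I=|Y_n^t-Y_n^u|\le X_n\,d(t,u)^\gamma$ of \eqref{eq:bound-I}, and the recursion $X_{n+1}\le X_n+K'2^{-\e n}$ closes because the net-approximation and martingale-increment terms are both exponentially small relative to $d(t,u)^\gamma\ge 2^{-\gamma B_0 n}$ (this forces the additional constraint $\gamma B_0<\lambda$, which is where the final H\"older exponent comes from). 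A second, more minor, imprecision: $Y_n^t$ is an $L^1$-bounded, not uniformly bounded, martingale, so $\sup_{n}\sup_t Y_n^t<\infty$ is not automatic; it must be bootstrapped from the deviation bound in the form $Z_n\le 2^{-\lambda n}\max(\overline{Y}_n,1)$ via $\overline{Y}_{n+1}\le\overline{Y}_n(1+2^{-\lambda n})+O(2^{(\theta-B\gamma_0)n})$, as in \eqref{eq:bound_C_N}. Your parenthetical gestures at this but does not carry it out, and it is needed both to make $\sqrt{Y_n^v}\le\max(\overline{Y}_n,1)$ usable in the large deviation step and to get the exponential rate of uniform convergence.
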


The following lemma captures the core probabilistic argument needed in the proof of Theorem \ref{thm:Holder-continuity}. It is only here that the weak dependence assumption gets used, via an application of Lemma \ref{lem:large-deviation}.

\begin{prop} \label{prop:probabilistic-technical}
Under the assumptions of Theorem \ref{thm:Holder-continuity}, the following holds. Let $\lambda, B>0$ be such that
\begin{align}
\label{eq:bound-lambda} \lambda&< \frac12(\kappa-\alpha-\delta).
\end{align}
For each $n$, let $\Gamma_n\subset\Gamma$ be a subset with $O(\exp(O(nB)))$ elements.

We define
\begin{align*}
Z_n &= \max_{v\in\Gamma_n} |Y_{n+1}^v - Y_n^v|,\\
\overline{Y}_n &= \max_{t\in\Gamma} Y_n^t.
\end{align*}
Then almost surely there exists an integer $K_1$ such that
\begin{equation} \label{eq:main-claim}
Z_n \le 2^{-\lambda n}\max(\overline{Y}_n,1) \quad\text{for all }n\ge K_1.
\end{equation}

\end{prop}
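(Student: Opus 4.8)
The plan is to control $Z_n$ by combining the large deviation estimate of Lemma \ref{lem:large-deviation} with a union bound over $\Gamma_n$, and then apply Borel--Cantelli over the good events furnished by the WSD hypothesis. First I would fix $\varepsilon>0$ and invoke the WSD property to obtain the nested events $\mathfrak{C}_n\subset\{\Psi(n)\le C_\varepsilon 2^{\delta n}\}$ with $\PP_{\mathfrak{C}_{n-1}}(\mathfrak{C}_n\mid\mathcal{B}_{n-1})\ge 1-\varepsilon_n$ and $\sum_n\varepsilon_n<\varepsilon$. On $\mathfrak{C}_n$ we have both $\mathcal{DI}_n\le C_\varepsilon 2^{\delta n}$ and, by \eqref{H:codim-random-measure}, $\sup_x\mu_n(x)\le 2^{\alpha n}$; these are exactly the conditions defining the event $\mathfrak{F}$ in Lemma \ref{lem:large-deviation}, with $\Psi=C_\varepsilon 2^{\delta n}$ and $\Upsilon=2^{\alpha n}$.

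Next, for a fixed $v\in\Gamma_n$, I would apply Lemma \ref{lem:large-deviation} to $\eta=\eta_v$ (which has Frostman exponent $\kappa$ by \eqref{H:dim-deterministic-measures}) with the choice $\varrho=\varrho_n:=2^{-\lambda n}$. Then $|Y_{n+1}^v-Y_n^v|\le \varrho_n\sqrt{Y_n^v}\le \varrho_n\max(\overline{Y}_n,1)^{1/2}\le \varrho_n\max(\overline{Y}_n,1)$ unless the exceptional event occurs, and the exceptional event has conditional probability (on $\mathcal{B}_n$, restricted to $\mathfrak{C}_n$) at most
\[
O\!\left(\exp\!\left(-\Omega\!\left(\varrho_n^2\, 2^{\kappa n}\,\Upsilon^{-1}\Psi^{-1}\right)\right)\right)
= O\!\left(\exp\!\left(-\Omega\!\left(2^{(\kappa-\alpha-\delta-2\lambda)n}\right)\right)\right),
\]
where I must check that the hypothesis \eqref{kappaeq} holds: since $\kappa-\alpha-\delta-2\lambda>0$ by \eqref{eq:bound-lambda}, the quantity $\varrho_n^2 2^{\kappa n}\Upsilon^{-1}\Psi^{-1}$ grows exponentially, so it is eventually $\ge c_0$, and for the finitely many small $n$ one adjusts constants. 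Writing $\beta:=\kappa-\alpha-\delta-2\lambda>0$, a union bound over the $O(\exp(O(nB)))$ elements of $\Gamma_n$ gives, for the event $\mathfrak{G}_n$ that $Z_n\le 2^{-\lambda n}\max(\overline{Y}_n,1)$ fails,
\[
\PP_{\mathfrak{C}_n}(\mathfrak{G}_n\mid\mathcal{B}_n)\le O\!\left(\exp(O(nB))\exp\!\left(-\Omega(2^{\beta n})\right)\right),
\]
which is summable in $n$ because double-exponential decay beats the exponential factor $\exp(O(nB))$.

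Finally I would assemble the conclusion. Let $\mathfrak{G}_n'=\mathfrak{G}_n\cap\mathfrak{C}_n$; then $\sum_n\PP(\mathfrak{G}_n')<\infty$ by the display above together with $\PP(\mathfrak{C}_n)\le 1$, so Borel--Cantelli gives that a.s. only finitely many $\mathfrak{G}_n'$ occur. Separately, the WSD condition gives $\PP(\bigcap_n\mathfrak{C}_n)\ge 1-O(\varepsilon)$ (using the telescoping lower bound $\PP(\mathfrak{C}_N)\ge\prod_{n\le N}(1-\varepsilon_n)\ge 1-\sum\varepsilon_n$ together with nestedness); hence with probability $\ge 1-O(\varepsilon)$ all $\mathfrak{C}_n$ hold and only finitely many $\mathfrak{G}_n$ hold, so \eqref{eq:main-claim} holds for all $n\ge K_1$ for some random $K_1$. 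Since $\varepsilon>0$ is arbitrary and the event $\{\eqref{eq:main-claim}\text{ holds eventually}\}$ does not depend on $\varepsilon$, it has probability $1$. The main obstacle is the bookkeeping around conditioning: Lemma \ref{lem:large-deviation} gives a bound uniform over realizations of $\mathcal{B}_n$ inside $\mathfrak{F}$, and one must be careful that $\overline{Y}_n$ (a supremum over the uncountable set $\Gamma$, not just $\Gamma_n$) is $\mathcal{B}_n$-measurable and that replacing $Y_n^v$ by $\max(\overline{Y}_n,1)$ in the large-deviation bound is legitimate — this is where the trivial estimate $\sqrt{Y_n^v}\le\max(\overline{Y}_n,1)$ and the fact that we only need an upper bound on the deviation are used. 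A secondary point requiring care is ensuring \eqref{kappaeq} is satisfied uniformly, which is where the strict inequality in \eqref{eq:bound-lambda} (rather than $\le$) is essential.
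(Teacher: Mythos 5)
Your proposal is correct and follows essentially the same route as the paper's proof: apply Lemma \ref{lem:large-deviation} with $\varrho\asymp 2^{-\lambda n}$, $\Upsilon=2^{\alpha n}$, $\Psi=C_\varepsilon 2^{\delta n}$ on the WSD events $\mathfrak{C}_n$ (where \eqref{kappaeq} holds for $n\ge n_0(\varepsilon)$ thanks to the strict inequality \eqref{eq:bound-lambda}), take a union bound over $\Gamma_n$, and use $\sqrt{Y_n^v}\le\max(\overline{Y}_n,1)$ together with the fact that the doubly exponential decay beats $\exp(O(nB))$. The only, harmless, difference is in the final assembly — you run Borel--Cantelli on $\mathfrak{G}_n\cap\mathfrak{C}_n$ and separately bound $\PP(\bigcap_n\mathfrak{C}_n)$ from below, whereas the paper chains the conditional probabilities $\PP(\mathfrak{C}_{k+1}\cap\mathfrak{F}_k\mid\mathfrak{C}_k\cap\mathfrak{F}_{k-1})$ directly; both are valid, and note that your product bound $\PP(\bigcap_{n\le N}\mathfrak{C}_n)\ge\prod_{n\le N}(1-\varepsilon_n)$ does not actually require the $\mathfrak{C}_n$ to be nested, only the chain rule and the $\mathcal{B}_{n-1}$-measurability of $\bigcap_{k<n}\mathfrak{C}_k$.
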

\begin{proof}
Let $\Psi(n)$ be the random variable in the definition of weak martingale. Fix $\varepsilon>0$, and let $C_\e>0$, $\varepsilon_n>0$ and $\mathfrak{C}_{n}\subset\{\Psi(n)\le C_\e 2^{n\delta}\}$ be as in the definition of WSDM. Denote
\begin{align*}
\mathfrak{F}_{n} &= \left\{ Z_n \le 2^{-\lambda n} \max(\overline{Y}_n,1)\right\}.
\end{align*}
Hence, we want to show that $\PP(\liminf \mathfrak{F}_n)=1$.

To begin, we claim that there is a deterministic $n_0(\e)\in\N$ such that
\begin{equation} \label{eq:inductive-bound}
\PP_{\mathfrak{C}_n}((\mathfrak{C}_{n+1}\cap \mathfrak{F}_{n})^c\,|\,\BB_n)\le  \exp(2^{-c n})+\varepsilon_{n+1}
\end{equation}
for all $n\ge n_0(\e)$, and some constant $c>0$ independent of $n, \e$.

For a given $v\in\Gamma_n$, we know from Lemma \ref{lem:large-deviation} and our assumptions that
\begin{equation} \label{eq:application-large-dev}
\PP_{\mathfrak{C}_n}(|Y_{n+1}^v - Y_n^v|> \tfrac12 2^{-\lambda n} \sqrt{Y_n^v} \,|\,\BB_n) \le O\left(\exp\left(-\Omega_{\e}\left(2^{(\kappa-\alpha-\delta-2\lambda)n}\right)\right)\right)\,.
\end{equation}
Note that because the bound in Lemma \ref{lem:large-deviation} is uniform, it continues to hold after conditioning on the $\mathcal{B}_n$-measurable event $\mathfrak{C}_n$. Observe that \eqref{kappaeq} holds with $\Omega=\Omega_{\e}$ by \eqref{eq:bound-lambda}. Hence, recalling that $|\Gamma_n| = O(\exp(O(nB)))$, and using \eqref{eq:bound-lambda},
\begin{align*}
\PP_{\mathfrak{C}_n}\left(Z_n> \tfrac12 2^{-\lambda n}\overline{Y}_n^{1/2}\,|\,\BB_n\right)
&\le O(\exp(O(nB)))\exp\left(-\Omega_{\e}(1)2^{(\kappa-\alpha-\delta-2\lambda)n}\right)\\
&\le \exp(2^{-c n})
\end{align*}
for $c=(\kappa-\alpha-\delta-2\lambda)/2$, provided $n$ is large enough in terms of $\e$ only. Since $x^{1/2}\le \max(x,1)$, this yields $\PP_{\mathfrak{C}_n}(\mathfrak{F}_{n}^c\,|\,\BB_n)\le \exp(2^{-c n})$ for all $n\ge n_0(\e)$. As $\PP_{\mathfrak{C}_n}(\mathfrak{C}_{n+1}^c\,|\,\BB_n)\le \varepsilon_{n+1}$ by assumption, the estimate \eqref{eq:inductive-bound} follows.

Note that $\mathfrak{F}_k$ is $\mathcal{B}_{k+1}$-measurable (but not $\mathcal{B}_k$-measurable). Hence it follows from \eqref{eq:inductive-bound} that
\[
\PP(\mathfrak{C}_{k+1}\cap \mathfrak{F}_{k}|\mathfrak{C}_{k}\cap \mathfrak{F}_{k-1}) \ge  1-\exp(2^{-c k})-\varepsilon_{k+1}
\]
for all $k\ge n_0(\e)$, and likewise if we condition only on $\mathfrak{C}_k$. Using this and  the definition of WSDM, for any $n\ge n_0(\e)$ we estimate
\begin{align*}
\PP&\left(\bigcap_{k=n}^\infty \mathfrak{F}_{k}\right)\ge \PP\left(\bigcap_{k=n}^\infty \mathfrak{C}_{k}\cap \mathfrak{F}_{k}\right)\\
&\ge\PP(\mathfrak{C}_{n})\PP(\mathfrak{C}_{n+1}\cap \mathfrak{F}_{n}\,|\,\mathfrak{C}_{n})\prod_{k=n+1}^\infty \PP(\mathfrak{C}_{k+1}\cap \mathfrak{F}_{k}|\mathfrak{C}_{k}\cap \mathfrak{F}_{k-1})\\
&\ge\PP(\mathfrak{C}_{1})\prod_{\ell=1}^{n-1}\PP\left(\mathfrak{C}_{\ell+1}\,|\,\mathfrak{C}_{\ell}\right) \PP\left(\mathfrak{C}_{n+1}\cap \mathfrak{F}_{n}\,|\,\mathfrak{C}_{n}\right) \prod_{k=n+1}^\infty \PP\left(\mathfrak{C}_{k+1}\cap \mathfrak{F}_{k}|\mathfrak{C}_{k}\cap \mathfrak{F}_{k-1}\right)\\
&\ge\prod_{\ell=1}^\infty\left(1-\varepsilon_\ell\right)\prod_{k=n}^\infty\left(1- \exp(2^{-c k})-\e_{k+1}\right)\,,
\end{align*}
Since $\sum_k \varepsilon_k<\varepsilon$, we conclude that $\PP\left(\cap_{k=n}^\infty \mathfrak{F}_k\right)>1-O(\e)$ if $n$ is sufficiently large (depending on $\e$). This is what we wanted to show.
\end{proof}

\begin{proof}[Proof of Theorem \ref{thm:Holder-continuity}]
Having established Proposition \ref{prop:probabilistic-technical}, the proof is a small variant of that of \cite[Theorem 4.1] {ShmerkinSuomala14}. We give full details for the reader's convenience.

Pick constants $\lambda, B, B_0$ such that $0<\lambda<\frac12(\kappa-\alpha-\delta)$, $0<B_0<B$, and
\begin{equation} \label{eq:bound-A}
 \gamma_0-\frac{\theta}{B_0}=\frac{\lambda}{B_0}\,.
\end{equation}
Also, let
\begin{equation}\label{eq:choice_gamma}
0<\gamma<\frac{\lambda}{B_0}\,.
\end{equation}
Further, for each $n$, let $\Gamma_n$ be a $(2^{-nB})$-dense family with $O(\exp(O(nB)))$ elements, whose existence is guaranteed by \eqref{H:size-parameter-space}.

By Proposition \ref{prop:probabilistic-technical} and \eqref{H:Holder-a-priori}, almost surely there is $K\in\N$ such that \eqref{eq:Holder-a-priori} holds and
\begin{equation}\label{eq:bound_Z_n}
Z_n\le 2^{-\lambda n}\max(\overline{Y}_n,1) \quad\text{for all }n\ge K.
\end{equation}
For the rest of the proof we condition on such $K$. Our goal is to  estimate $X_{n+1}$ in terms of $X_n$, where $X_k=\sup_{t\neq u} X_k(t,u)$, and
\[
X_k(t,u) = \frac{|Y_k^t-Y_k^u|}{d(t,u)^\gamma}.
\]
If $d(t,u)\le 2^{-B_0 n}$, we simply use \eqref{eq:Holder-a-priori} to get a deterministic bound. Otherwise, we find $t_0,u_0$ in $\Gamma_n$ such that $d(t,t_0), d(u,u_0)<2^{-B n}$ and estimate
\[
|Y_{n+1}^t-Y_{n+1}^u| \le \I+\II+\III,
\]
where
\begin{align*}
\I &= |Y_n^t-Y_n^u|,\\
\II &= |Y_{n+1}^t-Y_{n+1}^{t_0}|+|Y_n^t-Y_n^{t_0}|+|Y_{n+1}^u-Y_{n+1}^{u_0}|+|Y_n^u-Y_n^{u_0}|,\\
\III &= |Y_{n+1}^{t_0}-Y_n^{t_0}|+|Y_{n+1}^{u_0}-Y_n^{u_0}|.
\end{align*}
The term $\I$ will be estimated inductively, for $\II$ we will use the a priori estimate \eqref{eq:Holder-a-priori} and to deal with $\III$ we appeal to Proposition \ref{prop:probabilistic-technical}.

We proceed to the details. If $n\ge K$ and $d(t,u)\le 2^{-B_0 n}$ then, by \eqref{eq:Holder-a-priori},
\begin{equation} \label{eq:estimate-close}
|Y_{n+1}^t-Y_{n+1}^u|\le 2^{(n+1)\theta} d(t,u)^{\gamma_0} \le  O(1) d(t,u)^{\gamma_0-\theta/B_0}.
\end{equation}
Hence in this regime, we get a H\"{o}lder exponent $\gamma_0-\theta/B_0=\lambda/B_0>0$,  thanks to \eqref{eq:bound-A}.

From now on we consider the case $d(t,u)>2^{-B_0 n}$. By definition,
\begin{equation} \label{eq:bound-I}
\I \le X_n d(t,u)^\gamma.
\end{equation}

Let $t_0,u_0\in\Gamma_n$ be $(2^{-Bn})$-close to $t,u$. Using \eqref{eq:Holder-a-priori}, if $n\ge K$ then $|Y_{k}^t-Y_{k}^{t_0}|\le 2^{k\theta} 2^{-\gamma_0 B n}$ for $k=n,n+1$, and likewise for $u,u_0$, whence
\begin{equation} \label{eq:bound-II}
\II \le O(1) 2^{-(\gamma_0 B- \theta-\gamma B_0)n}\, d(t,u)^\gamma.
\end{equation}
Note that due to \eqref{eq:bound-A}  and \eqref{eq:choice_gamma}, the exponent $\gamma_0 B- \theta-\gamma B_0$ is positive.

We are left to estimating $\III$. We first claim that
\begin{equation}\label{eq:bound_C_N}
\sup_{n\ge K} \overline{Y}_n\le O(2^{\alpha K})<\infty\,,
\end{equation}
recall that $\overline{Y}_n=\sup_{t\in\Gamma}Y^V_n$.
The point here is that $O(2^{\alpha K})$, although random, is independent of $n$. Let $n\ge K$. Using \eqref{H:Holder-a-priori} again
to estimate $Y_{n+1}^t$ via $Y_{n+1}^{t_0}$, with $t_0\in\Gamma_n$, $d(t,t_0)\le 2^{-B n}$, we have
\begin{align*}
\overline{Y}_{n+1} &\le \left(\max_{v\in\Gamma_n} Y_{n+1}^v\right)+ O(1)2^{(\theta-B\gamma_0)n} \\
&\le \overline{Y}_n + \max(1,\overline{Y}_n) 2^{-\lambda n}+ O(1)2^{(\theta-B\gamma_0)n}.
\end{align*}
Recall that we are conditioning on \eqref{eq:bound_Z_n}. Since $\lambda>0$, $\theta-B\gamma_0<0$, and $\overline{Y}_K=O(2^{\alpha K})$, this implies \eqref{eq:bound_C_N}.

Now it follows that $Z_n\le O(2^{\alpha K}) 2^{-\lambda n}$ and, in particular,
\begin{equation} \label{eq:bound-III}
\III \le O(2^{\alpha K}) \, 2^{-(\lambda-\gamma B_0) n} \,d(t,u)^\gamma\,,
\end{equation}
where $\lambda-\gamma B_0>0$ by \eqref{eq:choice_gamma}.

Putting together \eqref{eq:bound-I}, \eqref{eq:bound-II} and \eqref{eq:bound-III}, we have shown that there exist $\e>0$ and an almost surely finite random variable $K'>0$,  such that
\[
|Y_{n+1}^t-Y_{n+1}^u| \le (X_n+K' 2^{-\e n})d(t,u)^\gamma\quad\text{for all } n\ge K, t, u\in\Gamma,
\]
which immediately yields $\overline{X}:=\sup_n X_n<\infty$.

We are left to show that almost surely $Y_n^t$ converges uniformly, at exponential speed, since then we will have
\[
|Y^t-Y^u| = \lim_{n\to\infty} |Y_n^t-Y_n^u| \le \overline{X} d(t,u)^\gamma.
\]
To see this, observe that \eqref{eq:bound_C_N}, \eqref{eq:main-claim} yield that $Z_n$ decreases exponentially. Estimating $Y_m^t- Y_n^t$ via $Y_m^{t_0}-Y_n^{t_0}$ and using the estimates \eqref{eq:Holder-a-priori}, \eqref{eq:bound_Z_n}, we conclude that for all $t$,
\begin{align*}
|Y_{n+1}^t-Y_n^t| &\le |Y_{n+1}^t - Y_{n+1}^{t_0}| + |Y_{n+1}^{t_0}-Y_n^{t_0}| + |Y_{n}^{t_0}-Y_n^{t}| \\
&\le O(Y_K+1) 2^{-\lambda n}  + O(1) 2^{(\theta-B\gamma_0)n}.
\end{align*}
This shows that the sequence $\{ Y_n^t\}$ is uniformly Cauchy with exponentially decreasing differences, finishing the proof.
\end{proof}

\begin{rems}
\begin{enumerate}[(i)]
\item An inspection of the proof shows that one can take any
\[
 \gamma < \frac{\gamma_0(\kappa-\alpha-\delta)}{2(\theta+\lambda_0)}\,.
\]
We do not expect this to be optimal (see \cite{ShmerkinSuomala16} for a special case in which the sharp H\"{o}lder exponent can be determined.)
\item The proof works with minor changes if, instead of assuming \eqref{H:size-parameter-space}, we assume the weaker condition that $\Gamma$ can be covered by $\exp(r^{-\xi})$ balls of radius $r$ for small $r$, where
\[
 \xi\theta < \gamma_0(\kappa-\alpha-\delta)\,.
\]
Although we do not treat them here, we note that there are natural families that satisfy a size bound of this kind (but have infinite box counting dimension), such as Hausdorff measures on convex curves, see \cite[Proposition 6.1]{ShmerkinSuomala15}.
\item In all our applications, we will be able to take the random variable $K$ of \eqref{H:Holder-a-priori} to be $1$. Roughly speaking, this is because the a priori H\"{o}lder continuity will follow from the transversality of the hyperplanes in the dyadic grid with certain algebraic varieties, which is a purely deterministic geometric phenomenon. Allowing $K$ to be random is useful when the martingale $\mu_n$ is not tied to a fixed geometric frame, as is the case, for example, for Poissonian cutouts - see \cite{ShmerkinSuomala14} for further discussion.
\item In practice, the most important assumption on the parameters is $\kappa>\alpha$. Once this holds, it is often possible to find the required  $0<\delta<\kappa-\alpha$. In many situations, and certainly for Cartesian powers of fractal percolation, $d-\alpha$ equals the dimension of the random measure $\mu$, and thus $\kappa>\alpha$ simply means that $\dim \mu+\dim\eta_t>M$. When $\dim\mu+\dim\eta_t<M$, we can no longer expect $Y^{t}_n$ to converge to a continuous (or even finite) limit. However, we will still need to bound the size of the intersections in this case, in the sense of having good control on the growth rate of $Y_{n}^t$. For this, we use the following variant of Theorem \ref{thm:Holder-continuity} and \cite[Theorem 4.4]{ShmerkinSuomala14}.
\end{enumerate}
\end{rems}

\begin{thm}\label{thm:small_dimension_projections}
Suppose that $(\mu_n)$ is a WSDM with parameter $\delta$ satisfying
\eqref{H:size-parameter-space}--\eqref{H:codim-random-measure} from Theorem \ref{thm:Holder-continuity}, together with the following condition:
\begin{enumerate}
\renewcommand{\labelenumi}{\textup{(H\arabic{enumi})}}
\renewcommand{\theenumi}{\textup{H\arabic{enumi}}}
\setcounter{enumi}{5}
\item \label{H:finite_approx_family}
There are $\gamma>0$ and deterministic families $\Gamma_n\subset\Gamma$ with at most $\exp(O(n))$ elements, such that a.s. there is a random integer $K$ such that for each $n\ge K$,
\[
\sup_{t\in\Gamma}Y_n^t\le\sup_{t\in\Gamma_n}Y_n^t+O(2^{\gamma n})\,.
\]
\end{enumerate}
Suppose further that
\begin{equation}
0<\alpha+\delta-\kappa<\gamma\,.\label{eq:dim_small-1}\\
\end{equation}
Then, almost surely,
\[
\sup_{n\in\N,t\in\Gamma}2^{-\gamma n}Y_n^t<\infty\,.
\]
\end{thm}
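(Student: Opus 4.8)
The plan is to mirror the proof of Theorem \ref{thm:Holder-continuity}, replacing the fast (super-exponential) convergence of $Y_n^t$ by exponential \emph{growth control}. The main point is that the large-deviation estimate of Lemma \ref{lem:large-deviation} still gives, at each step, that the increments $|Y_{n+1}^v-Y_n^v|$ are small compared with $\sqrt{Y_n^v}$ with overwhelming probability, but now the gain $2^{(\kappa-\alpha-\delta)n/2}$ in the exponent is used merely to beat the $\exp(O(n))$ union bound over $\Gamma_n$, at the cost of an admissible exponential error $2^{\gamma n}$ coming from \eqref{H:finite_approx_family}.

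First I would establish the probabilistic core, an analogue of Proposition \ref{prop:probabilistic-technical}. Pick $\lambda$ with $\tfrac12(\alpha+\delta-\kappa)<\lambda<\tfrac12\gamma$ (possible by \eqref{eq:dim_small-1}); note $\lambda$ is now \emph{positive} but still below $\gamma/2$. Applying Lemma \ref{lem:large-deviation} conditionally on the WSDM events $\mathfrak{C}_n$, with $\Upsilon=2^{\alpha n}$ (by \eqref{H:codim-random-measure}), $\Psi=C_\e 2^{\delta n}$, and $\varrho=2^{-\lambda n}$, condition \eqref{kappaeq} holds because $\varrho^2 2^{\kappa n}\Upsilon^{-1}\Psi^{-1}\asymp 2^{(\kappa-\alpha-\delta-2\lambda)n}$ and $\kappa-\alpha-\delta-2\lambda=-(2\lambda-(\kappa-\alpha-\delta))$ — wait, this is negative, so one instead takes $\varrho$ slightly larger, say $\varrho=2^{\lambda' n}$ with $\lambda'$ chosen so that $2\lambda'+\kappa-\alpha-\delta>0$ and $\lambda'<\gamma/2$; the union bound over $|\Gamma_n|=\exp(O(n))$ elements then still gives $\PP_{\mathfrak{C}_n}(\max_{v\in\Gamma_n}|Y_{n+1}^v-Y_n^v|> 2^{\lambda' n}\sqrt{\overline Y_n}\mid\BB_n)\le\exp(-2^{cn})$ for large $n$. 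Combining with $\PP_{\mathfrak{C}_n}(\mathfrak{C}_{n+1}^c\mid\BB_n)\le\e_{n+1}$ and running the telescoping conditional-probability product exactly as in Proposition \ref{prop:probabilistic-technical}, Borel–Cantelli gives: almost surely there is a random $K_1$ with
\[
\max_{v\in\Gamma_n}|Y_{n+1}^v-Y_n^v|\le 2^{\lambda' n}\sqrt{\max(\overline Y_n,1)}\quad\text{for all }n\ge K_1.
\]

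Next I would run the deterministic induction. Condition on $K_1$ and on the event of \eqref{H:finite_approx_family}, and set $K=\max$ of the two random integers. For $n\ge K$ and any $t\in\Gamma$, pick $t_0\in\Gamma_n$ with $Y_n^t\le Y_n^{t_0}+O(2^{\gamma n})$ and similarly at level $n+1$, so that
\[
\overline Y_{n+1}\le \max_{v\in\Gamma_n}Y_{n+1}^v+O(2^{\gamma n})\le \overline Y_n + 2^{\lambda' n}\sqrt{\max(\overline Y_n,1)} + O(2^{\gamma n}).
\]
Since $\lambda'<\gamma/2$, the middle term is $O(2^{\gamma n})$ once $\overline Y_n=O(2^{\gamma n})$ (and is harmless otherwise because $\sqrt{x}\le 1+x$), so by induction on $n\ge K$ one gets $\overline Y_n\le \overline Y_K + O(\sum_{k=K}^{n}2^{\gamma k})=O(2^{\gamma n})$, with the implicit constant random but $n$-independent. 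Then $2^{-\gamma n}Y_n^t\le 2^{-\gamma n}\overline Y_n$ is bounded for $n\ge K$, and for $n<K$ there are only finitely many terms, each finite by \eqref{RM:bounded}, \eqref{RM:quotients-bounded} and \eqref{H:codim-random-measure}; hence $\sup_{n,t}2^{-\gamma n}Y_n^t<\infty$ a.s.

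The main obstacle is the bookkeeping with the two exponential scales: one must choose the auxiliary rate $\lambda'$ so that it simultaneously exceeds the deficit $\tfrac12(\alpha+\delta-\kappa)$ in \eqref{kappaeq} — ensuring the large-deviation probabilities are doubly-exponentially small and survive the $\exp(O(n))$ union bound — yet stays below $\gamma/2$ so the accumulated increments are absorbed into the allowed $O(2^{\gamma n})$ growth. The strict inequality \eqref{eq:dim_small-1} is exactly what leaves room for such a $\lambda'$. Everything else is a routine adaptation of the telescoping argument in Proposition \ref{prop:probabilistic-technical} and the inductive growth estimate \eqref{eq:bound_C_N} in the proof of Theorem \ref{thm:Holder-continuity}.
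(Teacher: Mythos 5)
Your proposal is correct and follows essentially the same route as the paper's proof: you apply Lemma \ref{lem:large-deviation} with threshold $2^{\lambda' n}\sqrt{Y_n^v}$ where $\lambda'$ (the paper's $\gamma_0/2$) is squeezed between $\tfrac12(\alpha+\delta-\kappa)$ and $\gamma/2$, union-bound over $\Gamma_n$, telescope the conditional probabilities as in Proposition \ref{prop:probabilistic-technical}, and then run the deterministic induction $\overline{Y}_{n+1}\le \overline{Y}_n+2^{\lambda' n}\sqrt{\max(\overline{Y}_n,1)}+O(2^{\gamma n})$ to conclude $\overline{Y}_n=O(2^{\gamma n})$. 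Your write-up is, if anything, slightly more careful than the paper's in explicitly carrying the $O(2^{\gamma n})$ error from \eqref{H:finite_approx_family} through the induction.
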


\begin{proof}
The proof is similar to that of Proposition \ref{prop:probabilistic-technical}, and to Theorem 4.4 in \cite{ShmerkinSuomala14}, so we skip some details. Let $\Gamma_n\subset\Gamma$ be as in \eqref{H:finite_approx_family},  $Z_n=\max_{v\in\Gamma_n} |Y_{n+1}^v - Y_n^v|$ and
$\overline{Y}_n=\max_{t} Y_n^t$.

Firstly, we claim that almost surely there is $K_1\in\N$ such that, for some $\gamma_0<\gamma$,
\begin{equation}\label{eq:Zn_bound}
Z_n \le\sqrt{2^{\gamma_0 n}\overline{Y}_n}\,,
\end{equation}
for all $n\ge K_1$. Indeed, pick $\gamma_0<\gamma$ so that \eqref{eq:dim_small-1} continues to hold with $\gamma_0$ in place of $\gamma$. We can apply Lemma \ref{lem:large-deviation} to get, for each fixed $v\in \Gamma_n$,
\[
\PP_{\mathfrak{C}_n}(|Y_{n+1}^v - Y_n^v|> \tfrac12 2^{\gamma_0 n/2} \sqrt{Y_n^v} \,|\,\BB_n) \le \exp\left(-\Omega_{\e}\left(2^{(\kappa+\gamma_0-\alpha-\delta)n}\right)\right)\,,
\]
where $\varepsilon$  is as in the definition of WSDM.  Recalling \eqref{eq:dim_small-1}, we deduce that, provided $n\gg_\e 1$,
\[
\PP_{\mathfrak{C}_n}\left(Z_n>2^{\gamma_0 n/2} \sqrt{Y_n^v}\,|\,\mathcal{B}_n\right)< \exp(-2^{cn})
\]
for $c=\tfrac12(\kappa+\gamma_0-\alpha-\delta)>0$. From here the proof of \eqref{eq:Zn_bound} is concluded exactly as in the proof of Proposition \ref{prop:probabilistic-technical}.

In combination with \eqref{H:finite_approx_family}, the bound \eqref{eq:Zn_bound} implies that a.s. there is a random integer $K$ such that for all $n\ge K$,
\[
\overline{Y}_{n+1}\le \overline{Y}_n+\sqrt{2^{\gamma_0 n} \overline{Y}_n}
\]
Writing $K'= 2^{-\gamma K}Y_K$, we conclude by induction in $n\ge K$ that $\overline{Y}_{n}\le O(K')2^{\gamma n}$ for all $n\ge K$, as claimed.
\end{proof}

\section{Affine intersections and linear patterns}
\label{sec:affine-intersections}

\subsection{Intersections with affine planes}

In this section we start applying Theorem \ref{thm:Holder-continuity} to study the geometry of fractal percolation. Recall from Section \ref{sec:perco}, that $A_n$ denotes the union of the surviving cubes $Q\in\mathcal{Q}_n$. Moreover, $\nu_n=p^{-n}\mathcal{L}|_{A_n}$, $\nu=\nu^{\text{perc}(d,p)}=\lim_n \nu_n$ denotes the natural measure on the (surviving) fractal percolation set $A=A^{\text{perc}(d,p)}=\spt\nu=\cap_{n}\overline{A_n}$.
Furthermore, if $p>2^{-d}$, then a.s. $\dim_H A=\dim_B A=s=s(d,p)=d+\log_2 p$ and
\[
\underline{\dimloc}(\nu,x) = \liminf_{r\downarrow 0} \frac{\log\mu(B(x,r))}{\log r} = s
\]
for all $x\in A$ (and the limit exists for $\nu$-almost all $x$). From now on, $s$ will always refer to this number and,  even when not explicitly mentioned, we assume that the parameters $d,p$ have been fixed accordingly. Recall also that $\nu_n$ is uniformly spatially independent, and that $\nu_n(x)\le 2^{\alpha n}$ for $\alpha=d-s=-\log_2 p$ and all $x\in\R^d$. Indeed, $\nu_n(x) \in \{ 0, 2^{\alpha n}\}$ for all $x,n$.

We are interested in the Cartesian powers $(\nu_n^\text{perc})^m$ for $m\ge 2$ and, as pointed out in the introduction, these are neither spatially independent, nor martingales. In all our applications below, we will be able to work instead with the product $\mu_n$ of $m$ independent realizations of the fractal percolation measure, which is a martingale measure. Briefly, the reason is that given distinct $Q_1,\ldots,Q_m\in\mathcal{Q}_n^d$, then conditional on $Q=Q_1\times\cdots\times Q_m$ surviving, the restriction of $\nu^m$ to $Q$ has (up to rescaling and normalizing) the distribution of the product of $m$ independent fractal percolations. Regarding independence, we will show that $\mu_n$ is a WSDM with parameter $\delta$, depending on $m,d,p$ and the parametrized family $\{\eta_t\}_{t\in\Gamma}$.

From now on, we let $\nu_n^{(1)},\ldots, \nu_n^{(m)}$ be independent realizations of $\nu_n^{\text{perc}(d,p)}$, and write $\mu_n=\nu_n^{(1)}\times\cdots\times\nu_n^{(m)}$, and likewise for the limits $\nu^{(i)}$ and $\mu$. Our first result concerns the intersections of $\mu$ with affine planes. To formulate the result, we need a notion of angle between two affine subspaces. If $V\in\mathbb{G}_{M,k}, W\in\mathbb{G}_{M,\ell}$, we define the angle $0\le \angle(V,W)\le\pi/2$ by setting $\angle(V,W)=0$ if $\dim(V\cap W)>\max\{0,\dim V+\dim W-M\}$, and otherwise defining $\angle(V,W)$ as
\[
 \angle(V,W) = \inf\{ \angle(v,w): v\in V_0, w\in W_0 \}
\]
where $V_0,W_0$ are the orthogonal complements of $V\cap W$ inside $V,W$ respectively. Equivalently, $\angle(V,W)$ is the $j$-th smallest principal angle between $V$ and $W$, where $j=\dim(V\cap W)+1$.  This notion of angle measures how `transversal' the subspaces are. In particular, if $c>0$, then the requirement $\angle(V,W)\ge c$ can be seen as a uniform transversality condition. Such conditions will arise repeatedly in the sequel.

\begin{rems} \label{rem:angles}
\begin{enumerate}[(i)]
\item If $\angle(V,W)\neq 0$ then by elementary geometry we also have
\[
\sin\angle(V,W)=\inf_{x\in W\setminus V}\frac{\dist(x,V)}{\dist(x,V\cap W)}\,.
\]
\item The map $(V,W)\mapsto \angle(V,W)$ is continuous. In particular, if $\dim(V\cap W)=\max \{0,\dim V+\dim W-M\}$, then there exist $c>0$ and neighbourhoods $\mathcal{V},\mathcal{W}$ of $V,W$ such that $\angle(V',W')\ge c$ for all $V'\in\mathcal{V}, W'\in\mathcal{W}$. This observation will be used repeatedly.
\end{enumerate}
\end{rems}

If $V\in\mathbb{A}_{M,k}$ and $W\in\mathbb{A}_{M,\ell}$, we define the angle between $V$ and $W$ as the angle between the linear subspaces $V'\in\mathbb{G}_{M,k}$, $W'\in\mathbb{G}_{M,\ell}$ parallel to $V$ and $W$, respectively. For practical purposes, we also define $\angle(V,\{0\})=1$ for all $V\in\AA_{M,k}$. Recall that for an index set $I\subsetneq[m]$ and $j\in[m]\setminus I$, $i\in[d]$, we denote
\begin{align}
&H^{I}=\{(x_1,\ldots,x_m)\in(\R^{d})^m\,:\,x_j=0\text{ for all }j\in I\}\in\mathbb{G}_{md,(m-|I|)d}\,, \label{eq:trans_coordinate}\\
&H^{I,j,i}=\{x\in H^{I}\,:\,x_j^i=0\}\in\mathbb{G}_{md,(m-|I|)d-1}\text{ where }j\in[m]\setminus I, i\in[d]\,.\label{eq:trans_hyperplanes}
\end{align}

\begin{thm} \label{thm:independent-product-linear-int}
Let $\nu_n^{(i)}$, $i=1,\ldots,m$ be independent realizations of $\nu_n^{\text{perc}(d,p)}$,  and let $\mu_n=\nu_n^{(1)}\times\cdots\times \nu_n^{(m)}$. Let $\Gamma\subset\mathbb{A}_{md,k}$ such that for some $c>0$ and all $I\subsetneq[m]$, $j\in[m]\setminus I$, $i\in[d]$,
each $V\in\Gamma$ makes an angle $>c$ with the planes $H^{I}$, $H^{I,j,i}$.

If $s=s(d,p)>d-k/m$, then there is a deterministic $\gamma>0$ depending on $s,k,d,m$ such that a.s. there is $K<\infty$ for which
\begin{enumerate}
\item The sequence $Y_n^V:=\int_{V}\mu_n(x)\,d\mathcal{H}^k$ converges uniformly over all $V\in\Gamma$; denote the limit by $Y^V$.
\item $|Y_n^V-Y_n^W|\le K\, d(V,W)^\gamma$ for all $n$ and $V,W\in\Gamma$, and in particular the same holds for $Y^V$.
\end{enumerate}

If $s=s(d,p)\le d-k/m$, then almost surely
\begin{enumerate}
\setcounter{enumi}{2}
\item
$\sup_{n\in\N, V\in\Gamma}2^{-\gamma n}Y^{V}_{n}<\infty$,
\end{enumerate}
for any $\gamma>m(d-s)-k$.
\end{thm}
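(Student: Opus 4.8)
The plan is to verify that the hypotheses of Theorem \ref{thm:Holder-continuity} (for the case $s>d-k/m$) and Theorem \ref{thm:small_dimension_projections} (for the case $s\le d-k/m$) hold for the martingale measure $\mu_n=\nu_n^{(1)}\times\cdots\times\nu_n^{(m)}$ on $\R^M$ with $M=md$, relative to the family $\{\eta_V\}_{V\in\Gamma}$ given by $\eta_V=\mathcal{H}^k|_V$. Several of the conditions are immediate: $\Gamma\subset\AA_{md,k}$ has finite upper box-counting dimension (it is a subset of a compact manifold), giving \eqref{H:size-parameter-space}; the restrictions of $\mathcal{H}^k$ to $k$-planes have Frostman exponent $\kappa=k$ with a uniform constant (this is where the totally boundedness of $\Gamma$ and the bound on angles does \emph{not} enter — $\mathcal{H}^k|_V$ is always Ahlfors $k$-regular), giving \eqref{H:dim-deterministic-measures}; and $\mu_n(x)\le 2^{\alpha n}$ with $\alpha=m(d-s)$ since $\mu_n(x)\in\{0,2^{\alpha n}\}$ as the product of the corresponding bounds for each factor, giving \eqref{H:codim-random-measure}. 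That $\mu_n$ is a martingale measure follows from the independence of the factors together with the martingale property of each $\nu_n^{(i)}$ recalled in Section \ref{sec:perco}, and the growth condition \eqref{RM:quotients-bounded} holds with $C=p^{-m}$.

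The two substantive points are the a priori H\"older continuity \eqref{H:Holder-a-priori} (equivalently \eqref{H:finite_approx_family} in the low-dimensional regime) and the weak spatial dependence with a small enough parameter $\delta$. For \eqref{H:Holder-a-priori}, I would argue deterministically: since $\mu_n$ is a sum of (rescaled) indicator functions of products of $n$-th level dyadic cubes, $Y_n^V=\int_V\mu_n\,d\mathcal{H}^k$ is a finite sum of terms $2^{\alpha n}\mathcal{H}^k(V\cap Q)$ over surviving product-cubes $Q\in\mathcal{Q}_n^M$. Comparing $Y_n^V$ and $Y_n^W$ reduces to estimating how $\mathcal{H}^k(V\cap Q)$ changes as $V$ moves, together with how many cubes $Q$ have $V\cap Q\neq\varnothing$ but $W\cap Q=\varnothing$ or vice versa. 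The uniform transversality of every $V\in\Gamma$ with the coordinate subspaces $H^I$ and hyperplanes $H^{I,j,i}$ is exactly what makes the intersection $V\cap Q$ behave stably: it prevents $V$ from running nearly parallel to a face of $Q$, so that $\mathcal{H}^k(V\cap Q)=O(2^{-kn})$ uniformly and the ``boundary'' cubes near $V$ number at most $O(2^{(M-1)n})=O(2^{\alpha n}2^{(k-1)n})$-ish after accounting for the at most $2^{\alpha n}$ surviving cubes meeting a given $k$-plane direction; chasing the constants yields \eqref{eq:Holder-a-priori} with $K=1$, some $\theta$ linear in $n$'s exponents, and $\gamma_0$ that one can take to be $1$ (or any value $\le 1$). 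This transversality argument is purely geometric and is where Remark \ref{rem:angles}(ii) is used.

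The main obstacle is establishing the WSD bound $\mathcal{DI}_n\le\Psi(n)$ with $\Psi(n)\le C2^{\delta n}$ for a $\delta$ small enough that \eqref{eq:relations-parameter}, i.e. $k-m(d-s)-\delta>0$, holds — which is precisely the condition $s>d-k/m$ with a little room to spare. The natural dependency graph connects two product-cubes $Q=Q_1\times\cdots\times Q_m$ and $Q'=Q'_1\times\cdots\times Q'_m$ in $\mathcal{Q}_{n+1}^M$ whenever they share a coordinate, $Q_i$ and $Q'_i$ lying in the same $\mathcal{Q}_n^d$-cube for some $i$; the degree of this graph at a cube $Q$ meeting $V$ is controlled by $m$ times the maximal number of surviving cubes of $A^{(i)}_n\times\prod_{j\ne i}A^{(j)}_n$ whose product meets $V$ and shares the $i$-th coordinate — and this in turn is bounded by the number of surviving cubes in the lower-dimensional product $\prod_{j\ne i}\nu^{(j)}_n$ that meet the appropriate ``slice'' of $V$, a plane that is itself uniformly transversal to the relevant coordinate subspaces by hypothesis. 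Here I would invoke Theorem \ref{thm:hausdorff-dim-percolation} / the branching-process zero-one law of Section \ref{sec:perco}, or more precisely an inductive version of Theorem \ref{thm:independent-product-linear-int} in lower dimension, to show that with overwhelming conditional probability the number of such surviving cubes is $O(2^{((m-1)(d-s))n+\e n})=O(2^{\delta n})$ for $\delta$ slightly above $(m-1)(d-s)$ — and one checks $(m-1)(d-s)<k-m(d-s)$ exactly when $s>d-k/m$, giving the needed slack. Assembling these into the $\mathcal{B}_n$-measurable events $\mathfrak{C}_n$ with $\sum_n\e_n<\e$ completes the verification of WSD, and then Theorems \ref{thm:Holder-continuity} and \ref{thm:small_dimension_projections} deliver statements (1)–(2) and (3) respectively; the inductive/bootstrapping structure (using the $m-1$ case to control dependencies in the $m$ case) is the delicate organizational point and will require care to set up cleanly.
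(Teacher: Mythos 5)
Your overall architecture matches the paper's: reduce to Theorems \ref{thm:Holder-continuity} and \ref{thm:small_dimension_projections}, observe that the easy hypotheses (\ref{H:size-parameter-space})--(\ref{H:codim-random-measure}) hold with $\kappa=k$ and $\alpha=m(d-s)$, get (\ref{H:Holder-a-priori}) from transversality with the coordinate hyperplanes, and isolate the WSD bound as the substantive point, handled by slicing and induction on $m$. However, the quantitative heart of your argument — the exponent in the dependency-degree bound and the check that it is small enough — is wrong, and this is a genuine gap rather than a cosmetic one.

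First, the exponent. The number of surviving cubes of $A_n^{(1)}\times\cdots\times A_n^{(m-1)}$ that meet a fixed transversal $(k-d)$-plane $V'\in\mathcal{R}(\Gamma)$ is \emph{not} of order $2^{(m-1)(d-s)n}$; that quantity is the sup of the density $\wt{\mu}_n$, i.e.\ the analogue of $2^{\alpha n}$ one level down, and you appear to have conflated it with the cube count. A Fubini/volume comparison (this is the content of \eqref{eq:volume_argument} in the paper) shows the cube count is $O\bigl(Z_n\,2^{n\xi}\bigr)$ with $\xi=k-d-(m-1)(d-s)$, where $Z_n=\sup_{V'\in\mathcal{R}(\Gamma)}\widetilde{Y}_n^{V'}$ is the supremum of the lower-dimensional intersection masses. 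Heuristically $2^{n\xi}$ is just (number of cubes meeting $V'$) $\times$ (survival probability) $=2^{n(k-d)}\cdot 2^{n(m-1)(s-d)}$. Second, your arithmetic check fails: $(m-1)(d-s)<k-m(d-s)$ is equivalent to $s>d-k/(2m-1)$, not to $s>d-k/m$, so even granting your exponent you would only prove the theorem under a strictly stronger hypothesis. With the correct exponent the logic is different: one checks that $\xi<k-m(d-s)$ holds for \emph{all} $s>0$, so once $k-m(d-s)>0$ (i.e.\ $s>d-k/m$) there is always room to choose $\delta\in(\xi,\,k-m(d-s))$; the threshold enters only through $\kappa-\alpha-\delta>0$, not through comparing $\xi$ with $k-m(d-s)$.

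Third, the probabilistic input you invoke is the wrong one. Theorem \ref{thm:hausdorff-dim-percolation} (or a zero-one law) gives no uniform high-probability control of $Z_n$ over the uncountable family $\mathcal{R}(\Gamma)$, and a naive first-moment bound is not $\mathcal{B}_n$-measurable in the form the WSD definition requires. What is actually needed is a conditional tail bound of the form $\PP_{\mathfrak{C}_n}(\overline{Y}_{n+1}\ge C^42^{\lambda(n+1)}\mid\mathcal{B}_n)=O(\exp(-C2^{\Omega(n)}))$ for the $(m-1)$-fold product uniformly over $\mathcal{R}(\Gamma)$ (Lemma \ref{lemma:AWSD_product}), which itself uses the inductive hypothesis that $\wt{\mu}_n$ is a WSDM with respect to $\mathcal{R}(\Gamma)$. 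Note also that the induction necessarily passes through the regime $\xi\le 0$, where the sliced intersections do not converge; this is exactly why the statement must include part (3) even though the applications only use $s>d-k/m$, and why the case $k\le d$ (where the dependency degree is deterministically $O(1)$ by transversality with the $H^I$) must be treated separately as the base of the argument. Your proposal does not address either point.
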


\begin{rems}
\begin{enumerate}[(i)]
\item As long as $\Gamma$ is compact, the transversality conditions in Theorem \ref{thm:independent-product-linear-int} are equivalent to assuming that
\begin{align}
\dim V\cap H^I&=\max\{0,k-m|I|\}\,,\label{eq:trans_dim_coordinate}\\
\dim V\cap H^{I,j,i}&=\max\{0,k-m|I|-1\}\,,\label{eq:trans_dim_hyper}
\end{align}
for all $I\subsetneq[m]$, $j\in[m]\setminus I$, $i\in[d]$. (Recall Remark \ref{rem:angles} (ii).) In particular, if $V\in\AA_{md,k}$ satisfies \eqref{eq:trans_dim_coordinate}, \eqref{eq:trans_dim_hyper}, then the transversality assumptions 
of Theorem \ref{thm:independent-product-linear-int} are valid when $\Gamma$ is a small neighbourhood of $V$.
\item The transversality with respect to the coordinate hyperplanes  \eqref{eq:trans_hyperplanes} is needed to establish the a priory H\"older continuity condition \eqref{H:Holder-a-priori}, while the transversality with respect to the planes \eqref{eq:trans_coordinate} is used to bound the dependency degree of $\mu_n$.  Depending on the value of $k$, $m$, $d$ one of these conditions may (or may not) imply the other.
\end{enumerate}
\end{rems}

The theorem is a rather direct application of Theorems \ref{thm:Holder-continuity} and \ref{thm:small_dimension_projections}. All the hypotheses in these theorems are fairly easy to check, except for the fact that $\mu_n$ is a WSDM with a suitably small parameter $\delta$. This will be verified by a joint probabilistic induction in $n$ and $m$. See also the survey \cite{ShmerkinSuomala16} for the proof of a special case highlighting the main ideas.

Many of our arguments will feature an induction on $m$; in order to set it up we need some further notation. Given $V\in\mathbb{A}_{md,k}$, we let
\[
V'_{i,t} = \pi^i(V\cap\{x\in(\R^d)^m\,:\,x_i=t\}), \quad (i\in[m],\, t\in [0,1]^d)\,.
\]
(Recall the notation from Section \ref{sec:notation}.) For $\Gamma\subset\mathbb{A}_{md,k}$, we define
\begin{equation} \label{eq:def-R}
\mathcal{R}(\Gamma) =  \{ V'_{i,t}: V\in\Gamma, i\in[m], t\in\R^d \}.
\end{equation}
For $2\le p\le m-1$, we inductively define $\mathcal{R}^p(\Gamma)=\mathcal{R}(\mathcal{R}^{p-1}(\Gamma))$.

\begin{rem}\label{rem:transversality}
  In terms of the families $\mathcal{R}(\Gamma)$, the transversality assumptions in Theorem \ref{thm:independent-product-linear-int} are equivalent to the claim that for each $p$ such that $\mathcal{R}^p(\Gamma)$ is nontrivial, all
$V\in\mathcal{R}^p(\Gamma)$ make an angle $\ge c$ with the planes of the type $\{x_j=0\}$, $\{x_{j}^i=0\}$ for all $j\in[m-p]$, $i\in[d]$.
\end{rem}

In the course of the proof of Theorem \ref{thm:independent-product-linear-int}, we will require the following tail bound for $\overline{Y}_n$, whose proof may be gleaned from the proofs of Theorems \ref{thm:Holder-continuity} and \ref{thm:small_dimension_projections}.
\begin{lemma}\label{lemma:AWSD_product}
Under the assumptions of Theorem \ref{thm:independent-product-linear-int}, let
\[
\delta>0,\quad \lambda>\max\{0,m(d-s)-k+\delta\}\,,
\]
and denote $\overline{Y}_n=\sup_{W\in\Gamma} Y_n^{W}$. Suppose $\Psi(n)$ is a bound for the dependency degree
as in the definition of WSDM. Let $\mathfrak{F}\in\mathcal{B}_n$ such that $\Psi(n)\le C 2^{n\delta}$ and $\overline{Y}_n\le C^4 2^{\lambda n}$ on $\mathfrak{F}$, where $C<\infty$ is a sufficiently large constant depending only on $\lambda$.

Then
\[
\mathbb{P}_\mathfrak{F}\left(\overline{Y}_{n+1}\ge C^4 2^{\lambda(n+1)}\mid \mathcal{B}_n\right)=O(\exp(-C 2^{\Omega(n)})),
\]
with the $O(\cdot),\Omega(\cdot)$ are independent of $C$ and $n$.
\end{lemma}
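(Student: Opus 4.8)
The plan is to run the same large-deviation-plus-union-bound argument that underlies Proposition \ref{prop:probabilistic-technical} and Theorem \ref{thm:small_dimension_projections}, but keeping explicit track of the constant $C$ and stating the conclusion as a one-step tail bound conditional on $\mathcal{B}_n$ rather than as an almost-sure statement. First I would fix a $(2^{-Bn})$-dense family $\Gamma_n\subset\Gamma$ with $O(\exp(O(n)))$ elements (using \eqref{H:size-parameter-space}, via the same reasoning as in the proofs of Theorems \ref{thm:Holder-continuity} and \ref{thm:small_dimension_projections}), and recall that on $\mathfrak{F}$ the a priori Hölder estimate \eqref{H:Holder-a-priori} — which here holds with $K=1$ by the transversality of $\Gamma$ with the coordinate hyperplanes, see Remark \ref{rem:transversality} — lets us pass from $\sup_{W\in\Gamma}Y_{n+1}^W$ to $\max_{v\in\Gamma_n}Y_{n+1}^v$ up to an additive error $O(2^{(\theta-B\gamma_0)n})$, which for $B$ chosen large is negligible compared to $2^{\lambda n}$.

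Next, for each fixed $v\in\Gamma_n$ I would apply Lemma \ref{lem:large-deviation} to the martingale measure $\mu_n$ and the measure $\eta=\mathcal{H}^k|_V$ (Frostman exponent $\kappa=k$), with $\Upsilon=2^{\alpha n}$ (recall $\mu_n(x)\le 2^{\alpha n}$ by \eqref{H:codim-random-measure} with $\alpha=m(d-s)$), $\Psi=C2^{n\delta}$, and the deviation scale $\varrho=\tfrac12 2^{\lambda_0 n/2}$ for some $\lambda_0<\lambda$ still satisfying $\lambda_0>\max\{0,m(d-s)-k+\delta\}$. Since $\lambda_0>m(d-s)-k+\delta$, we have $\kappa n + \lambda_0 n - \alpha n - \delta n = (\lambda_0 - (m(d-s)-k)-\delta)n\to\infty$, so condition \eqref{kappaeq} holds and Lemma \ref{lem:large-deviation} gives
\[
\PP_{\mathfrak{F}}\!\left(|Y_{n+1}^v-Y_n^v|>\tfrac12 2^{\lambda_0 n/2}\sqrt{Y_n^v}\,\Big|\,\mathcal{B}_n\right)\le O\!\left(\exp\!\left(-\Omega\!\left(C^{-1}2^{(\lambda_0-(m(d-s)-k)-\delta)n}\right)\right)\right),
\]
where the $C^{-1}$ comes from $\Psi^{-1}=C^{-1}2^{-n\delta}$. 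Union bounding over the $O(\exp(O(n)))$ elements of $\Gamma_n$, the exponential-in-$n$ decay of each term beats the exponential-in-$n$ count (for $n$ large, but the constant in "large" is absorbed into the statement's $\Omega$), giving that $Z_n:=\max_{v\in\Gamma_n}|Y_{n+1}^v-Y_n^v|\le 2^{\lambda_0 n/2}\sqrt{\overline{Y}_n}$ fails with $\mathcal{B}_n$-conditional probability at most $O(\exp(-C2^{\Omega(n)}))$. Here I would be slightly careful to extract an honest factor of $C$ in the exponent: since the exponent in each individual bound is $\Omega(C^{-1}2^{cn})$ with $c>0$, and $C\ge 1$, after the union bound the surviving bound is still of the form $O(\exp(-2^{\Omega(n)}))$; to land exactly on $O(\exp(-C2^{\Omega(n)}))$ one chooses $\lambda_0$ with a bit of room and notes $C^{-1}2^{cn}\ge C\cdot 2^{(c-\varepsilon)n}$ once $2^{\varepsilon n}\ge C^2$, i.e. for $n$ past a threshold depending on $C$ — which is acceptable since the $O(\cdot),\Omega(\cdot)$ are allowed to hide the passage to large $n$ but, as stated, must be independent of $C$ and $n$. (This bookkeeping is the one genuinely fiddly point; alternatively one absorbs the $C$-dependence into the threshold and into the leading constant of the $O(\cdot)$, which is the reading consistent with how the lemma is invoked.)

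Finally, on the event where $Z_n\le 2^{\lambda_0 n/2}\sqrt{\overline{Y}_n}$ and $\overline{Y}_n\le C^4 2^{\lambda n}$, I combine with the $\Gamma_n$-approximation step to get
\[
\overline{Y}_{n+1}\le \overline{Y}_n + Z_n + O(2^{(\theta-B\gamma_0)n}) \le C^4 2^{\lambda n} + 2^{\lambda_0 n/2}\sqrt{C^4 2^{\lambda n}} + O(2^{(\theta-B\gamma_0)n}).
\]
Since $\lambda_0<\lambda$, the middle term is $C^2 2^{(\lambda_0+\lambda)n/2}=o(C^4 2^{\lambda n})\cdot C^{-2}2^{(\lambda_0-\lambda)n/2}$, and the last term is negligible; as $C$ is taken large (depending only on $\lambda$, through the requirement that $C^4 2^{\lambda n}(2^\lambda-1)$ dominate $C^2 2^{(\lambda_0+\lambda)n/2}+O(2^{(\theta-B\gamma_0)n})$ for all $n\ge 0$) this forces $\overline{Y}_{n+1}\le C^4 2^{\lambda(n+1)}$. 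Hence $\overline{Y}_{n+1}\ge C^4 2^{\lambda(n+1)}$ can only occur inside the bad event of conditional probability $O(\exp(-C2^{\Omega(n)}))$, which is the assertion. The main obstacle, as flagged, is not any single estimate but the uniform-in-$C$ bookkeeping of constants through the union bound; everything else is a direct transcription of the arguments already given for Theorems \ref{thm:Holder-continuity} and \ref{thm:small_dimension_projections}, which is why the lemma can be stated as following "from their proofs."
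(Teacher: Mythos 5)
Your overall strategy is the same as the paper's (a dense net $\Gamma_n$ plus the Lipschitz/H\"older a priori estimate to reduce $\sup_{W\in\Gamma}$ to a max over $O(\exp(O(n)))$ parameters, then Lemma \ref{lem:large-deviation} and a union bound, then the deterministic recursion $\overline{Y}_{n+1}\le \overline{Y}_n+Z_n+\text{error}$). But there is a genuine gap in the probabilistic step, and you have correctly identified where it is without resolving it. With your choice $\varrho=\tfrac12 2^{\lambda_0 n/2}$ (no $C$ in the threshold), Lemma \ref{lem:large-deviation} gives an exponent $\varrho^2 2^{\kappa n}\Upsilon^{-1}\Psi^{-1}=\Omega\bigl(C^{-1}2^{(\lambda_0+k-m(d-s)-\delta)n}\bigr)$, i.e. a tail of the form $\exp\bigl(-\Omega(C^{-1}2^{cn})\bigr)$. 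This is \emph{weaker} than the target $O(\exp(-C2^{\Omega(n)}))$ by a factor $C^2$ in the exponent, and your patch ("valid once $2^{\varepsilon n}\ge C^2$") only recovers the stated form for $n\ge n_0(C)$. That is not enough: the lemma's constants must be independent of $C$, and, more importantly, the lemma is invoked in the proof of Theorem \ref{thm:independent-product-linear-int} precisely through the requirement that $\sum_n O(\exp(-C2^{\Omega(n)}))\to 0$ as $C\to\infty$ (see \eqref{Delta_n_j_bound}). A bound of the form $\exp(-\Omega(C^{-1}2^{cn}))$ gets \emph{worse} as $C$ grows for fixed $n$, so the summability-in-$C$ argument breaks down on the finitely many (but $C$-dependently many) scales $n<n_0(C)$.

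The fix — and this is what the paper actually does — is to put the $C$ into the deviation threshold rather than trying to extract it afterwards: bound $\PP_{\mathfrak{F}}\bigl(|Y_{n+1}^V-Y_n^V|>C^3 2^{\lambda n}\mid\mathcal{B}_n\bigr)$. On $\mathfrak{F}$ one has $\sqrt{Y_n^V}\le C^2 2^{\lambda n/2}$, so the effective $\varrho$ in Lemma \ref{lem:large-deviation} is at least $C2^{\lambda n/2}$, whence $\varrho^2 2^{\kappa n}\Upsilon^{-1}\Psi^{-1}\ge C\,2^{(\lambda+k-m(d-s)-\delta)n}$ with a genuine surviving factor of $C$ (the $C^2$ from $\varrho^2$ beats the $C^{-1}$ from $\Psi^{-1}$), and the exponent of $2$ is positive by the hypothesis on $\lambda$. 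The deterministic step then closes under the condition $C^4 2^{\lambda}>C^4+C^3+1$, which holds for $C$ large depending only on $\lambda$. With this modification the rest of your argument (net approximation via \eqref{Nn-Holder-product}, union bound over $\Gamma_n$, recursion) goes through exactly as in the paper.
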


\begin{proof}
In the course of the proof, the $O(\cdot),\Omega(\cdot)$ constants independent of $n$ or $C$ (but allowed to depend on any other parameters).

To begin with, we verify that for each dyadic cube $Q\in\mathcal{Q}^{md}_{n}$, the map $V\mapsto\mathcal{H}^k(V\cap Q)$, $\Gamma\to\R$ is Lipschitz, where the Lipschitz constant is independent of $n$ and $Q$ (it only depends on the transversality constant $c$). To see this, fix $V,W\in\Gamma$, denote $\varepsilon=d(V,W)$ and note that on $\AA_{md,k}$, our metric $d$ is equivalent to the Hausdorff metric on $[0,1]^d$. It follows that
\begin{equation}\label{eq:P_Wint}
\pi_W\left(V\cap Q\setminus\partial Q(O(\varepsilon))\right)\subset W\cap Q\,,
\end{equation}
where $\pi_W$ denotes the orthogonal projection onto $W$. On the other hand, since $V$ forms an angle $\ge c$ with the faces of $Q$,
\begin{equation}\label{eq:bndry}
\mathcal{H}^k\left(V\cap(\partial Q(O(\varepsilon)))\right)=O(\varepsilon)\,.
\end{equation}
Combining \eqref{eq:P_Wint}, \eqref{eq:bndry}, and using that orthogonal projection does not increase $\mathcal{H}^k$-measure, we get
\[\mathcal{H}^k(W\cap Q)\ge\mathcal{H}^k(V\cap Q)-O(\varepsilon)\ge \mathcal{H}^k(V\cap Q)-O(d(V,W))\,.\]
By symmetry, we end up with the estimate
\begin{equation}\label{eq:Lipschitz-a-priori}
|\mathcal{H}^k(W\cap Q)-\mathcal{H}^k(V\cap Q)|=O(d(V,W))\,,
\end{equation}
as required. Since each $V\in\Gamma$ intersects at most $O(2^{nk})$ cubes in $\mathcal{Q}_n$, and $\mu_n(x) \le 2^{m(d-s)n}$ for all $x$, from \eqref{eq:Lipschitz-a-priori} we get the (crude but sufficient) estimate
\begin{equation}\label{Nn-Holder-product}
\sup_{V\neq W\in\Gamma} |Y_{n}^V-Y_{n}^W|=O\left(d(V,W)2^{(m(d-s)+k)n}\right)\,,
\end{equation}
for all $n\in\N$.

Let $\Gamma_{n+1}\subset\Gamma$ be $\Omega(2^{n(\lambda-m(d-s)-k)})$ dense. By restricting $\Gamma$ to those planes that hit a fixed neighbourhood of the unit cube, we may assume that $\Gamma$ is bounded. It is then easily seen that $\Gamma$ has finite upper box dimension in the metric on $\mathbb{A}_{md,k}$ defined in Section \ref{sec:notation}. Thus, we may assume that $\Gamma_{n+1}$ has  $O(\exp(O(n))$ elements. Now \eqref{Nn-Holder-product} implies
\begin{equation}\label{Y_Vbound}
\sup_{V\in\Gamma}Y^{V}_{n+1}\le \sup_{V\in\Gamma_{n+1}}Y^{V}_{n+1}+2^{\lambda n}\,.
\end{equation}
For each $V\in\Gamma_{n+1}$, we can use Lemma \ref{lem:large-deviation} in a similar way to \eqref{eq:application-large-dev} to estimate
\begin{align*}
  \PP_{\mathcal{F}}\left(|Y_{n+1}^V - Y_n^V|> C^3 2^{\lambda n} \mid \mathcal{B}_n\right)&\le\exp\left(-\Omega( C)2^{n(\lambda+k-m(d-s)-\delta}\right)\\
 &\le \exp\left(-C 2^{\Omega(n)}\right)\,.
\end{align*}
Thus, if $C$ is so large that $C^4 2^\lambda > C^4 + C^3 + 1$, then \eqref{Y_Vbound} and the fact that $\Gamma_{n+1}$ has $O(\exp(O(n))$ elements, imply the claim.
\end{proof}

\begin{proof}[Proof of Theorem \ref{thm:independent-product-linear-int}]
It is clear that $\mu_n$ is a martingale measure in $\R^{md}$, and that \eqref{H:dim-deterministic-measures}, \eqref{H:codim-random-measure} hold with $m(d-s)$ in place of $\alpha$, and with Frostman exponent $k$. Further,
\eqref{H:Holder-a-priori} holds for $\theta=m(d-s)+k$ and $\gamma_0=1$ as explained in \eqref{Nn-Holder-product}. Also \eqref{H:size-parameter-space} holds, and \eqref{H:finite_approx_family} is valid for any $\gamma>0$, since $\Gamma$ has finite upper box dimension as explained in the proof of Lemma \ref{lemma:AWSD_product}. Thus, all the claims follow from Theorems \ref{thm:Holder-continuity} and \ref{thm:small_dimension_projections} if we can show that
\begin{equation}\label{AWSDclaim}
\mu_n\text{ is a WSDM with parameter }\delta
\end{equation}
for some $\delta<k-m(d-s)$, if $k-m(d-s)>0$, and for any $\delta>0$, if $k-m(d-s)\leq 0$.

Without loss of generality, we may assume that $\Gamma$ is translation invariant, so that $V+z\in\Gamma$ for all $V\in\Gamma$ and all $z\in\R^d$ for which $(V+z)\cap[0,1]^{md}\neq\emptyset$.

Let us start by defining dependency graphs for $\mu^{V}_{n+1}(Q)$, $Q\in\mathcal{Q}^{md}_n$. We observe that
if $Q\in\mathcal{Q}_{n}^{md}$ and $\mathcal{Q}'\subset\mathcal{Q}^{md}_n$ are such that $\pi_j(Q)\neq \pi_{j'}(Q')$ for all $j,j'\in[m]$ and all $Q'\in\mathcal{Q}'$ then, conditional on $\mathcal{B}_n$, $\mu_{n+1}^V(Q)$ is independent of $\{\mu_{n+1}^V(Q')\}_{Q'\in\mathcal{Q}'}$. Hence the graph defined (for each $V\in\Gamma$) by drawing an edge between $Q,Q'\in\mathcal{Q}_n^{md}$ if and only if $Q\cap V\neq\varnothing\neq Q\cap V'$ and $\pi_j(Q)=\pi_{j'}(Q')$ for some $j,j'\in [m]$, is a dependency graph for $\{\mu_{n+1}^V(Q)\}_{Q\in\mathcal{Q}^{md}_n}$.

If $k\le d$, the transversality with respect to the hyperplanes \eqref{eq:trans_hyperplanes} implies that for all $j\in[m]$, $Q\in\mathcal{Q}^{d}_n$, and $V\in\Gamma$, we have
\[\diam(V\cap\pi_j^{-1}(Q))=O_c(1)\,.\]
Hence, given $Q\in\mathcal{Q}^{md}_n$, there can be at most $O_{c,m}(1)$ cubes $Q'\in\mathcal{Q}^{md}_n$ such that $\pi_j(Q)=\pi_{j'}(Q')$ for some $j,j'\in[m]$. Consequently, the dependency degree of $\{\mu_{n+1}^V(Q)\}_{Q\in\mathcal{Q}^{md}_n}$ is uniformly bounded, so in this case \eqref{AWSDclaim} holds for any $\delta>0$.

For the rest of the proof, we assume that $k>d$ and we prove \eqref{AWSDclaim} by induction on $m$ (with the data $\Psi(n), \mathfrak{C}_n,\e_n$ to be specified in the course of the proof). If $m=1$, the claim is clearly true since then $(\mu_n)$ is (uniformly) spatially independent, so that $\Psi(n)=O(1)$, recall Definition \ref{def:usi}. Suppose, then, the claim holds for $m-1\ge 1$.

We apply the induction hypothesis to the intersection of the measures
\[
\wt{\mu}_n = \nu_n^{(1)}\times\cdots\times \nu_n^{(m-1)}
\]
with the elements of $\mathcal{R}(\Gamma)$. For $V'\in\mathcal{R}(\Gamma)$, consider
\begin{equation*}
\widetilde{Y}_{n}^{V'}=\int_{V'} \wt{\mu}_n \,d\mathcal{H}^{k-d}\,,
\end{equation*}
and let $Z_n=\sup_{V'\in\mathcal{R}(\Gamma)}\widetilde{Y}_{n}^{V'}$.
We first claim that there is a constant $C>0$ (depending only on $m$) such that
\begin{equation}\label{Delta_n_bound}
\mathcal{DI}_n \le \Psi(n):=CZ_n2^{n(k-d-(m-1)(d-s))} \,.
\end{equation}
where $\mathcal{DI}_n$ is the dependency degree of $\mu_n$ with respect to $\Gamma$.

Suppose that $V\in\Gamma$, $Q\in\mathcal{Q}_n^{md}$ and $\mathcal{Q}\subset\mathcal{Q}^{md}_n$ are such that $Q\subset A_n^{(1)}\times\cdots\times A_n^{(m)}$, $V\cap Q\neq\emptyset$ and for all $Q'\in\mathcal{Q}$ it holds that $Q'\subset (A_n)^m$, $V\cap Q'\neq\emptyset$, and also  $\pi_j(Q)=\pi_{j'}(Q')$ for some $j,j'\in[m]$. If we can show that
\begin{equation}\label{eq:pigeon1}
|\mathcal{Q}|=O_m(Z_n2^{n(k-d-(m-1)(d-s))})\,,
\end{equation}
then \eqref{Delta_n_bound} follows by virtue of the dependency graphs defined above.

We note that it suffices to show \eqref{eq:pigeon1} in the case when $j$ is fixed and $j'=j$ for all $Q'\in\mathcal{Q}$. Indeed, since there are only $m^2$ possible pairs $(j,j')$, there is a subset $\mathcal{Q}'\subset\mathcal{Q}$ with $|\mathcal{Q}'|\ge |\mathcal{Q}|/m^2$ and $j,j'\in\{1,\ldots, m\}$ such that the above conditions are satisfied with a fixed $j,j'$ for all $Q'\in\mathcal{Q}'$. Furthermore, replacing $Q$ by any $Q'\in\mathcal{Q}'$ (and $\mathcal{Q}'$ by $\mathcal{Q}'\setminus\{Q'\}$), we may assume that $j=j'$.
Thus, in the following we assume that $Q,\mathcal{Q}$ are as above and $j=j'$ is fixed.

Without loss of generality, we may assume that $V$ contains the origin so that $V\in \mathbb{G}_{md,k}$ and furthermore, that $0\in\pi_j(Q)$ (this is just to simplify notation). Let $V_0=V\cap\{x_j=0\}$, let $V^\perp\in \mathbb{G}_{md,md-k}$ be the orthogonal complement of $V$, and let $\widetilde{V}=V\cap V_0^\perp\in \mathbb{G}_{md,d}$ the orthogonal complement of $V_0$ relative to $V$.
Then, the map $\pi_j|_{\widetilde{V}}\colon \widetilde{V}\to\pi_j(\widetilde{V})$ is $O(1/c)$-Bi-Lipschitz, where $c>0$ is the constant appearing in  condition \eqref{eq:trans_coordinate}. Indeed, if $x,y\in \widetilde{V}$ and $x\neq y$ then, using Remarks \ref{rem:angles}(i),
\begin{equation}\label{eq:bilip}
\sin c\le\frac{\dist(x-y,\{x_j=0\})}{\dist(x-y,V\cap\{x_j=0\})}=\frac{|\pi_j(x)-\pi_j(y)|}{|x-y|}\le 1
\end{equation}
since $V$ forms an angle $\ge c$ with the plane $\{x_j=0\}\in \mathbb{G}_{md,(m-1)d}$.

Let us denote $U=\cup_{Q\in\mathcal{Q}}Q$,
\begin{align*}
B_1&=B(0,\sqrt{md-k}\, 2^{-n})\subset V^\perp,\\
B_2&= B(0,\sqrt{d}O(1/c)2^{-n})\subset\widetilde{V}\,,
\end{align*}
and
\begin{align*}
A'_n &= A_n^{(1)}\times\cdots\times A_n^{(m)}\,,\\
\wt{A}'_n &= A_n^{(1)}\times\cdots\times A_n^{(m-1)}\,.
\end{align*}
Using Fubini's theorem, we arrive at the estimate
\begin{equation}\label{eq:volume_argument}
\begin{split}
|\mathcal{Q}| 2^{-nmd}&=\mathcal{L}^{md}\left(U\right)\\
&=\int\limits_{y\in B_1}\int\limits_{z\in B_2}\mathcal{H}^{k-d}\left(U\cap (V+y)\cap\{x_j=\pi_j(z)\}\right)\,d\mathcal{L}^{d}(z)\,d\mathcal{L}^{md-k}(y)\\
&\le O(2^{-n((m+1)d-k)})\sup\limits_{y\in\R^{md},z\in\R^d}
\mathcal{H}^{k-d}\left(A'_n\cap(V+y)\cap\{x_j=\pi_j(z)\}\right)\,,
\end{split}
\end{equation}
with the $O$ constant depending on $c,d,m,k$. Noting that \begin{align*}
\mathcal{H}^{k-d}\left(A'_n \cap(V+y)\cap\{x_j=\pi_j(z)\}\right)=\mathcal{H}^{k-d}\left(\wt{A}'_n \cap V'\right)=2^{n(m-1)(s-d)}\widetilde{Y}_n^{V'}\,,
\end{align*}
for $V'=\pi^j((V+y)\cap\{x_j=\pi_j(z)\})\in\mathcal{R}(\Gamma)$, \eqref{eq:volume_argument} yields \eqref{eq:pigeon1}.

To complete the proof of \eqref{AWSDclaim} we still need to verify that $\Psi(n)$ as defined in \eqref{Delta_n_bound}
fulfils the conditions in the definition of WSDM.

Write $\xi=k-d-(m-1)(d-s)$ for simplicity. If $\xi>0$, we know by the induction assumption (recall Remark \ref{rem:transversality}) that $\wt{\mu}_n$ is a WSDM (with respect to $\mathcal{R}(\Gamma)$) with parameter $\widehat{\delta}$, for some $\widehat{\delta}<\xi$, while if $\xi\le 0$, then this holds for any $\widehat{\delta}>0$. Choosing a suitable
\begin{equation}\label{eq:lambdadef}
\lambda>\max\{0,\widehat{\delta}-\xi\}\,,
\end{equation}
and considering different cases depending on the signs of $k-m(d-s)=\xi+s$ and $\xi$, we can make sure that for  \begin{equation}\label{eq:deltadef}
0<\delta:=\lambda+\xi\,,
\end{equation}
we have $\delta<k-m(d-s)$ if $s>d-k/m$, while $\delta$ can be arbitrarily small if $s\le d-k/m$.

Having such parameters $\widehat{\delta},\lambda,\delta$ fixed, given $\varepsilon>0$, there are $C<\infty$ and events $\widehat{\mathfrak{C}}_{n}\subset\{\widehat{\Psi}(n) \le C 2^{\widehat{\delta} n}\}$ such that $\PP(\widehat{\mathfrak{C}}_{1})>1-\varepsilon_1$ and
$\mathbb{P}\left(\widehat{\mathfrak{C}}_{n+1}\mid\mathcal{B}_n\right)\ge 1-\varepsilon_{n+1}$ on $\widehat{\mathfrak{C}}_{n}$, where $\sum_n \varepsilon_n<\varepsilon$.
Here $\widehat{\Psi}(n)$ is the bound for the dependency degree for $\wt{\mu}_n$ and $V'\in\mathcal{R}(\Gamma)$ as in the definition of WSDM.

Let us define $\mathfrak{E}_{n}=\widehat{\mathfrak{E}}_{n}\cap\{Z_n\le C^4 2^{\lambda n}\}$. Thanks to \eqref{eq:lambdadef}, and making $C$ larger if needed, we may apply Lemma \ref{lemma:AWSD_product} to $\wt{\mu}_n$ and $\mathcal{R}(\Gamma)$. We deduce that
\begin{equation}\label{Delta_n_j_bound}
\mathbb{P}_{\mathfrak{C}_n}\left(\mathfrak{C}_{n+1}\mid \mathcal{B}_n\right)\ge 1-\varepsilon_{n+1}-O\left(\exp(-C 2^{\Omega(n)})\right)\,.
\end{equation}
Since $\sum_{n} \varepsilon_{n}+O(\exp(-C 2^{\Omega(n)}))$ can be made as small as we wish by making $\varepsilon$ small enough, and $C$ large enough, and as
\[
\Psi(n)=C Z_n2^{-n\xi}\le C^5 2^{n(\lambda-\xi)}=C^5 2^{\delta n}
\]
on $\mathfrak{C}_{n}$, we conclude that $(\mu_n)$ is a WSDM with a parameter $\delta$ obeying the desired bounds.
\end{proof}

\begin{rem}
All of our applications of Theorem \ref{thm:independent-product-linear-int} deal with the case $s>d-k/m$. However, in the induction step, the case $s\le d-k/m$ is also needed.
\end{rem}

Theorem \ref{thm:independent-product-linear-int} gives non-trivial information only if there is a positive probability that $Y^V>0$ for some $V\in\Gamma$. We next show that, for a fixed $V$ for which it is a priori possible to have $\PP(Y^V>0)>0$, this is indeed true.  In the spatially independent case, this was proved in \cite[Lemma 3.6]{ShmerkinSuomala14} via a tail estimate. Here we use the second moment method.
\begin{lemma} \label{lem:Yt-survives}
In the setting of Theorem \ref{thm:independent-product-linear-int}, suppose that $s>d-k/m$. Then, for each $V\in\Gamma$ with $V\cap]0,1[^{md}\neq\varnothing$,
\[\PP(Y^V>0)>0\,.\]
\end{lemma}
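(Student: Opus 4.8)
The plan is to apply the second moment method to the random variables $Y_n^V$, exploiting the martingale structure and the weak spatial independence established above. First I would record that $(Y_n^V)_n$ is a nonnegative martingale (with respect to the filtration $\mathcal{B}_n$), since $\mu_n$ is a martingale measure and $\eta_t = \mathcal{H}^k|_V$ is deterministic; hence $\EE(Y_n^V) = Y_0^V = \mathcal{H}^k(V\cap[0,1]^{md})>0$ for all $n$, using $V\cap\,]0,1[^{md}\neq\varnothing$. By Theorem \ref{thm:independent-product-linear-int}(1), $Y_n^V \to Y^V$ almost surely (and also in a strong enough sense; in any case the a.s.\ limit exists), so $\EE(Y^V)$ is controlled once we have a uniform bound on a higher moment. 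The Paley--Zygmund inequality then gives $\PP(Y^V>0)\ge \PP(Y^V > \tfrac12 \EE(Y^V))\ge \tfrac14\,\EE(Y^V)^2/\EE((Y^V)^2) > 0$, provided we can show $\sup_n \EE((Y_n^V)^2)<\infty$ (which, with the a.s.\ convergence and Fatou, bounds $\EE((Y^V)^2)$, and uniform integrability also yields $\EE(Y^V) = Y_0^V > 0$).

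So the crux is the uniform second moment bound $\sup_n \EE((Y_n^V)^2) < \infty$. I would write $Y_{n+1}^V = Y_n^V + \sum_{Q\in\mathcal{Q}_n^{md}} X_Q^V$ with $X_Q^V = \int_Q(\mu_{n+1}-\mu_n)\,d\eta_t$ as in Lemma \ref{lem:large-deviation}, note $\EE(X_Q^V\mid\mathcal{B}_n)=0$, and compute
\[
\EE\big((Y_{n+1}^V)^2\mid\mathcal{B}_n\big) = (Y_n^V)^2 + \sum_{Q,Q'} \EE(X_Q^V X_{Q'}^V\mid\mathcal{B}_n).
\]
By the dependency-graph structure, $\EE(X_Q^V X_{Q'}^V\mid\mathcal{B}_n)$ vanishes unless $Q,Q'$ are joined by an edge (for $Q\neq Q'$), i.e.\ unless $\pi_j(Q)=\pi_{j'}(Q')$ for some $j,j'$. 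Using $|X_Q^V| = O(\eta_t(\widehat Q)) = O(\mu_n\text{-dependent mass on }\widehat Q)$ together with the Frostman bound $\eta_t(Q)\le C2^{-kn}$, the diagonal terms contribute $\sum_Q \EE((X_Q^V)^2\mid\mathcal{B}_n) = O(2^{-(k - m(d-s))n})\,Y_n^V$, which is summable in $n$ since $k - m(d-s) = k - m(d-s) > 0$ is exactly the standing hypothesis $s>d-k/m$. The off-diagonal terms are controlled by the dependency degree: each $Q$ has at most $\mathcal{DI}_n$ neighbours, and after a Cauchy--Schwarz / AM--GM step these too are bounded by $O(\mathcal{DI}_n\,2^{-(k-m(d-s))n})$ times local mass. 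Since the WSDM bound on $\mathcal{DI}_n$ is $O(2^{\delta n})$ on a large-probability event with $\delta < k - m(d-s)$, the total off-diagonal contribution is again exponentially summable, at least off a negligible event; a minor point is to handle the exceptional event where $\mathcal{DI}_n$ is large, which I would do by absorbing it using the crude deterministic bound $Y_n^V = O(2^{(m(d-s)+k)n})$ from \eqref{Nn-Holder-product} against the superexponentially small probability of the bad event from the WSDM definition.

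Taking conditional expectations iteratively, $\EE((Y_{n+1}^V)^2) \le \EE((Y_n^V)^2) + c_n \EE(Y_n^V) + (\text{error}) = \EE((Y_n^V)^2) + c_n Y_0^V + (\text{error})$ with $\sum_n c_n < \infty$ and summable errors, whence $\sup_n\EE((Y_n^V)^2)<\infty$. This also gives $L^2$-boundedness of the martingale $(Y_n^V)$, hence $L^1$-convergence, so $\EE(Y^V) = Y_0^V = \mathcal{H}^k(V\cap[0,1]^{md}) > 0$ and $\EE((Y^V)^2) < \infty$, and Paley--Zygmund finishes the argument. The main obstacle I anticipate is the bookkeeping in the off-diagonal second-moment estimate — specifically, organizing the sum over pairs $Q,Q'$ sharing a coordinate projection so that it factors through the lower-dimensional intersection quantities $\widetilde Y_n^{V'}$ (as in the proof of \eqref{Delta_n_bound}), and cleanly separating the ``good'' event where $\mathcal{DI}_n = O(2^{\delta n})$ from the rare ``bad'' event, on which one must fall back on the deterministic polynomial bound. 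Everything else is routine once the correct decomposition is in place.
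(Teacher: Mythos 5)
Your high-level strategy (show $Y_n^V$ is an $L^2$-bounded martingale, then conclude via $L^1$-convergence or Paley--Zygmund) is exactly the paper's, but the route you propose to the uniform second-moment bound has a genuine gap in the treatment of the exceptional event. Your increment decomposition needs a realization-wise bound $\mathcal{DI}_n=O(2^{\delta n})$, which is only available on the events $\mathfrak{C}_n$ from the WSDM definition, and you propose to absorb the complementary event using the crude bound on $Y_n^V$ ``against the superexponentially small probability of the bad event.'' But the WSDM definition only controls $\PP_{\mathfrak{C}_{n-1}}(\mathfrak{C}_n^c\mid\mathcal{B}_{n-1})\le\varepsilon_n$, i.e.\ the conditional probability of \emph{newly} leaving the good set; the unconditional probability $\PP(\mathfrak{C}_n^c)$ is bounded only by $\sum_k\varepsilon_k<\varepsilon$ and does not decay in $n$ (once the process exits the good set at some early level, nothing forces it back). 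Consequently the term $\EE\bigl[(Y_{n+1}^V)^2-(Y_n^V)^2;\mathfrak{C}_n^c\bigr]$, which is all you can say is $O\bigl(\PP(\mathfrak{C}_n^c)\cdot 2^{2(m(d-s)+k)n}\bigr)$ with your deterministic bound, is not summable, and the recursion for $\EE((Y_n^V)^2)$ does not close. Restricting to $\bigcap_n\mathfrak{C}_n$ instead does not obviously help, since the restricted process is no longer a martingale and lower-bounding $\EE[Y_n^V;\bigcap_k\mathfrak{C}_k]$ would itself require the second-moment bound you are trying to prove.

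The paper sidesteps all of this by computing $\EE((Y_n^V)^2)$ \emph{directly and unconditionally}: it writes the second moment as $2^{2nm(d-s)}\sum_{Q,Q'}\PP(Q\cup Q'\subset A_n)\,\mathcal{H}^k(V\cap Q)\mathcal{H}^k(V\cap Q')$ and exploits the explicit pair-survival probabilities of the product percolation, $\PP(Q'\subset A_n\mid Q\subset A_n)=2^{(s-d)(n-r)(m-\ell)}$, where $r$ is the level of the last common dyadic ancestor and $\ell$ the number of shared coordinate projections. The only geometric input is the \emph{deterministic} count \eqref{eq:count-dependent-cubes} of cubes meeting $V$ with $\ell$ prescribed projections inside $Q_r$, which follows from transversality alone; no dependency graph, no random slice masses $\widetilde{Y}_n^{V'}$, and no good/bad event dichotomy enter. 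If you want to salvage your increment-based computation, you would have to replace the appeal to $\mathcal{DI}_n$ by an explicit evaluation of $\EE(X_QX_{Q'}\mid\mathcal{B}_n)$ for pairs sharing a projection --- at which point you are essentially reconstructing the paper's direct calculation.
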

\begin{proof}
It is enough to show that
\[
Y_n^V = \int_{V} \mu_n \, d\mathcal{H}^k
\]
is an $L^2$ bounded martingale, as the claim then follows from the martingale convergence theorem. Since $Y_n^V$ is clearly a martingale, it remains to establish $L^2$ boundedness.

Let $\mathcal{Q}_n^{md}(V)$ denote the cubes in $\mathcal{Q}_n^{md}$ that hit $V$.  To begin, we estimate
\begin{align}
\EE(Y_n^2) &= 2^{2n(m(d-s))} \EE\left(\left(\mathcal{H}^k(V\cap A_n)\right)^2\right)  \nonumber \\
&=2^{2n(m(d-s))}\sum_{Q,Q'\in\mathcal{Q}_n^{md}(V)} \PP(Q\cup Q'\subset A_n) \mathcal{H}^k(V\cap Q)\mathcal{H}^k(V\cap Q')\nonumber  \\
&\le O(1) 2^{2n(m(d-s)-k)}\sum_{Q,Q'\in\mathcal{Q}_n^{md}(V)} \PP(Q\in A_n)\PP(Q'\in A_n|Q\in A_n) \nonumber \\
&=O(1)2^{2n(m(d-s)-k)} 2^{n(m(s-d))} \sum_{Q\in\mathcal{Q}_n^{md}(V)} \sum_{Q'\in\mathcal{Q}_n^{md}(V)} \PP(Q'\in A_n|Q\in A_n) \nonumber \\
&=O(1)2^{n(m(d-s)-k)}  \max_{Q\in\mathcal{Q}_n^{md}(V)}   \sum_{Q'\in\mathcal{Q}_n^{md}(V)} \PP(Q'\in A_n|Q\in A_n)\,. \label{eq:L2-estimate}
\end{align}
Hence, we fix $Q\in\mathcal{Q}_n^{md}(V)$ and set out to estimate
\[
\zeta(Q) :=  \sum_{Q'\in\mathcal{Q}_n^{md}(V)} \PP(Q'\in A_n|Q\in A_n)\,.
\]
For $r=0,1,\ldots,n$, let $Q_r$ be the cube in $\mathcal{Q}_r^{md}$ containing $Q$. We will estimate each
\[
\zeta_r(Q) := \sum_{Q'\in\mathcal{Q}_n^{md}(V), Q'\subset Q_r\setminus Q_{r+1}} \PP(Q'\in A_n|Q\in A_n)
\]
separately. Given $\ell\in \{0,1,\ldots,m\}$, let
\[
\mathcal{T}_{r,\ell}(Q) = \{ Q'\in\mathcal{Q}_n^{md}(V): Q'\subset Q_r\setminus Q_{r+1},|\{ j\in [m]:\pi_j(Q)=\pi_j(Q') \}|=\ell\}.
\]
We claim that
\begin{equation} \label{eq:count-dependent-cubes}
|\mathcal{T}_{r,\ell}(Q)| = O(1)\max\left(1, 2^{(n-r)(k-\ell d)}\right).
\end{equation}
Indeed, fix a subset $I\subset[m]$ with $|I|=\ell$. Let $\wt{\pi}(x)=(\pi_j(x):j\in I)$, $\R^{md}\to\R^{d\ell}$. By the transversality assumption, $V$ makes an angle $\Omega(1)$ with $\wt{\pi}^{-1}(0)$ (this can be seen from dimensional considerations). Hence, $V(\sqrt{d} 2^{-n})\cap\wt{\pi}^{-1}(\wt{\pi}(Q))\cap Q_r$ can be covered by a cube of side-length $O(2^{-n})$, if $k-d\ell \le 0$, and by a parallelepiped that has $k-\ell d$ sides of length $O(2^{-r})$ and the remaining $md-(k-\ell d)$ sides of length $O(2^{-n})$, otherwise. Hence, if there are $M$ cubes $R\in\mathcal{Q}_n^{md}$ such that $R\cap V\neq \varnothing$, $R\subset Q_r$ and $\wt{\pi}(R)=\wt{\pi}(Q)$ then, comparing volumes, $M=O(1)$ if $k-d\ell \le 0$, and otherwise
\[
M 2^{-nmd} \le O(1) 2^{-r(k-\ell d)} 2^{-n(md+\ell d-k)}\,.
\]
Since there are finitely many maps $\wt{\pi}$ to consider, we get \eqref{eq:count-dependent-cubes}.

On the other hand, if $Q'\in \mathcal{T}_{r,\ell}(Q)$, then
\begin{equation} \label{eq:conditional-prob-surviving}
\PP(Q'\in A_n|Q\in A_n) = 2^{(s-d)(n-r)(m-\ell)}.
\end{equation}
Combining \eqref{eq:count-dependent-cubes} and  \eqref{eq:conditional-prob-surviving}, we get
\begin{align*}
\zeta_r(Q) &= \sum_{\ell=0}^{m-1} \left( O(1)\max\left(1, 2^{(n-r)(k-\ell d)}\right)\right) \left(2^{(s-d)(n-r)(m-\ell)}\right)\\
&= O(1) \sum_{\ell=0}^{\lfloor k/d \rfloor} 2^{(n-r)(k-\ell d+(s-d)(m-\ell))} + O(1) \sum_{\ell=\lceil k/d\rceil}^m 2^{(n-r)(s-d)(m-\ell)}\\
&= O(1) 2^{(n-r)(k+(s-d)m)} + O(1)\\
&=O(1) 2^{(n-r)(k+(s-d)m)}\,,
\end{align*}
using that $k+(s-d)m\ge 0$ in the last line. We deduce that
\[
\zeta(Q) =\sum_{r=0}^n \zeta_r(Q) = O(1) 2^{n(k+(s-d)m)}.
\]
Plugging this into \eqref{eq:L2-estimate}, we conclude that $\EE(Y_n^2) = O(1)$, as desired.
\end{proof}

As we have remarked, our ultimate goal is to study Cartesian powers of the same fractal percolation set or measure, rather than independent copies. The following corollary will help us in achieving this.
\begin{cor}\label{cor:indep-products}
Under the same hypotheses of Theorem \ref{thm:independent-product-linear-int}, the following holds.

Let $n_0\in\N$ and $U$ be a finite union of cubes $Q\in\mathcal{Q}^{md}_{n_0}$, such that $U\cap\Delta=\varnothing$, where
\[\Delta=\{x\in(\R^d)^m\,:\,x_i=x_j\text{ for some }i\neq j\}\]
is the union of all the diagonals.

Then the conclusions of Theorem \ref{thm:independent-product-linear-int} also hold for
\[
Y_n^V=\int_{U\cap V}\nu_n^m\,d\mathcal{H}^k.
\]

Furthermore, if $s>d-k/m$, $V\in\Gamma$ and $V\cap U^\circ\neq\varnothing$, then $\PP(Y^V>0)>0$. Recall that $U^\circ$ denotes the interior of $U$
\end{cor}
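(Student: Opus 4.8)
The plan is to reduce Corollary \ref{cor:indep-products} to Theorem \ref{thm:independent-product-linear-int} and Lemma \ref{lem:Yt-survives} applied \emph{cube by cube}, inside each cube of $\mathcal{Q}^{md}_{n_0}$ making up $U$. The crucial point is that, since $U\cap\Delta=\varnothing$, for every cube $Q=Q_1\times\cdots\times Q_m$ forming $U$ the coordinate cubes $Q_1,\ldots,Q_m\in\mathcal{Q}^d_{n_0}$ are pairwise distinct, hence pairwise disjoint, dyadic cubes; consequently, conditionally on $\mathcal{B}_{n_0}$, the sub-percolations carried by $Q_1,\ldots,Q_m$ are independent, so that if $h_Q$ denotes the homothety taking $Q$ onto $[0,1]^{md}$ (acting on each $\R^d$-coordinate by the same scaling $2^{n_0}$ and a translation), then for $n\ge n_0$,
\[
\int_{V\cap Q}\nu_n^m\,d\mathcal{H}^k = c_{n_0}\int_{h_Q(V)\cap[0,1]^{md}}\mu^{(Q)}_{n-n_0}\,d\mathcal{H}^k,
\]
where $c_{n_0}>0$ is a deterministic constant depending only on $d,m,p,n_0$ and $(\mu^{(Q)}_n)_n$ is distributed as the product $\nu^{(1)}_n\times\cdots\times\nu^{(m)}_n$ of $m$ independent fractal percolation measures. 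First I would record this identity, together with the elementary observations that, being a similarity, $h_Q$ preserves all angles with the subspaces $H^I$ and $H^{I,j,i}$ (so $h_Q(\Gamma)$ satisfies the transversality hypotheses of Theorem \ref{thm:independent-product-linear-int} with the same constant $c$) and distorts the metric on $\AA_{md,k}$ by a factor depending only on $n_0$.

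Next I would decompose $Y_n^V=\sum_{Q\in\mathcal{U}}\int_{V\cap Q}\nu_n^m\,d\mathcal{H}^k$, where $\mathcal{U}$ is the finite collection of cubes forming $U$ all of whose coordinate cubes belong to $A_{n_0}$ (the remaining cubes forming $U$ contribute $0$ once $n\ge n_0$). In each summand I would change variables by $h_Q$ to obtain $c_{n_0}$ times the corresponding intersection functional for the independent product $(\mu^{(Q)}_n)$ and the plane $h_Q(V)\in h_Q(\Gamma)$, and then invoke Theorem \ref{thm:independent-product-linear-int}: this provides a single deterministic exponent $\gamma>0$ and, almost surely, random constants $K_Q<\infty$ realizing conclusions (1)--(2) when $s>d-k/m$, and conclusion (3) when $s\le d-k/m$, for the rescaled functionals. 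Taking $K=O_{n_0}(1)\max_{Q\in\mathcal{U}}K_Q$ and summing over the finitely many $Q$ transfers these conclusions to $Y_n^V$, since a finite sum of sequences that converge uniformly (resp.\ are uniformly $\gamma$-H\"older, resp.\ are $O(2^{\gamma n})$) again has that property. Finally, as $\mathcal{B}_{n_0}$ has only finitely many realizations, carrying out the argument conditionally on each of them upgrades the statement from conditional to unconditional. I would stress that the contributions of distinct cubes $Q\in\mathcal{U}$ may well be dependent (cubes forming $U$ can share a coordinate cube), but this is harmless because only finitely many almost-sure events are being intersected.

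For the last assertion, assume $s>d-k/m$ and $V\cap U^\circ\neq\varnothing$. Since $V$ is transversal to the hyperplanes $H^{I,j,i}$, the set $V\cap\bigcup_Q\partial Q$ is $\mathcal{H}^k$-null in $V$, so the nonempty relatively open set $V\cap U^\circ$ must meet the interior of some cube $Q_0$ forming $U$; thus $h_{Q_0}(V)\cap\,]0,1[^{md}\neq\varnothing$. Let $\mathfrak{E}$ be the positive-probability event that all coordinate cubes of $Q_0$ belong to $A_{n_0}$. On $\mathfrak{E}$ one has $Y^V\ge c_{n_0}\widetilde{Y}^{\,h_{Q_0}(V)}$, where $\widetilde{Y}$ is the limit intersection functional for the independent product $(\mu^{(Q_0)}_n)$; by Lemma \ref{lem:Yt-survives} applied to $(\mu^{(Q_0)}_n)$ and $h_{Q_0}(V)$, $\PP(\widetilde{Y}^{\,h_{Q_0}(V)}>0\mid\mathfrak{E})>0$, whence $\PP(Y^V>0)\ge\PP(\mathfrak{E})\,\PP(\widetilde{Y}^{\,h_{Q_0}(V)}>0\mid\mathfrak{E})>0$.

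The step I expect to require the most care is the first one: pinning down precisely the conditional law of $\nu_n^m$ restricted to a diagonal-avoiding cube, keeping track of the normalizing constant $c_{n_0}$ and the index shift by $n_0$, and checking that \emph{both} families of transversality conditions (with the $H^I$, used for the dependency degree, and with the $H^{I,j,i}$, used for the a priori H\"older bound) are inherited by $h_Q(\Gamma)$. Everything after that is bookkeeping with finitely many cubes and finitely many realizations of $\mathcal{B}_{n_0}$.
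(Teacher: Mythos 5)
Your proposal is correct and takes essentially the same route as the paper: decompose $U$ into the diagonal-avoiding cubes of $\mathcal{Q}^{md}_{n_0}$, observe that conditional on $\mathcal{B}_{n_0}$ the restriction of $\nu_n^m$ to each such cube is (after rescaling by $h_Q$ and renormalizing) a product of $m$ independent fractal percolations, and then apply Theorem \ref{thm:independent-product-linear-int} and Lemma \ref{lem:Yt-survives} to each of the finitely many pieces. The details you flag as delicate (the conditional law with its constant $c_{n_0}$ and index shift, and the invariance of both transversality conditions under the homothety $h_Q$) are exactly the points the paper's terse proof leaves implicit, and you have handled them correctly.
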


\begin{proof}
For each $Q\in\mathcal{Q}^{md}_{n_0}$, $Q\subset U$, either $Q\cap A_{n_0}^m=\varnothing$, or  $\mu_n|_Q$ is (up to scaling and renormalizing) a product of independent fractal percolation measures on the projections $\pi_i(Q)\subset\R^d$. Hence we can condition on $\mathcal{B}_{n_0}$ and apply Theorem \ref{thm:independent-product-linear-int} to each of these restrictions.

Similarly, the proof of Lemma \ref{lem:Yt-survives} applies to $\mu|_U$, yielding that $\PP(Y^V>0)>0$ if $V\cap U^\circ\neq\varnothing$.
\end{proof}

\subsection{Finite patterns in fractal percolation}

We now apply Theorem \ref{thm:independent-product-linear-int} and Corollary \ref{cor:indep-products} to prove the existence of homothetic copies of finite sets in the fractal percolation limit set. Recall that $S'\subset\R^d$ is a homothetic copy of $S\subset\R^d$ if there are $\lambda>0$ and $z\in\R^d$, such that $S'=F_{\lambda,z}(S)$, where $F_{\lambda,z}(x)=\lambda(x+z)$.
\begin{cor}\label{cor:patterns}
If $m\in\N_{\ge 2}$ and $s>d-(d+1)/m$, then a.s., the fractal percolation limit set $A=A^{\text{perc}(d,p)}$ contains homothetic copies of all $m$-point sets $\{a_1,\ldots,a_m\}\subset\R^d$.
\end{cor}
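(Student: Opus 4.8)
The plan is to realize homothetic copies of an $m$-point set $S=\{a_1,\dots,a_m\}\subset\R^d$ as intersection points of $A^m\subset(\R^d)^m$ with a fixed $(d+1)$-plane, and then feed this into Theorem \ref{thm:independent-product-linear-int} and Corollary \ref{cor:indep-products}. A tuple $(x_1,\dots,x_m)$ is a homothetic copy of $S$ exactly when it lies on
\[
V_S=\{(\lambda a_1+v,\dots,\lambda a_m+v):\lambda\in\R,\ v\in\R^d\}\in\mathbb{G}_{md,d+1}
\]
with $\lambda\neq 0$; note that $V_S$ depends on $S$ only up to homothety, that $k:=\dim V_S=d+1$, so the hypothesis $s>d-(d+1)/m$ reads $s>d-k/m$, and that a homothetic copy of $S$ sitting inside $A$ corresponds to a point of $A^m\cap V_S$ with $\lambda>0$, hence automatically off the union of diagonals $\Delta$. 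A direct computation shows $V_S$ always makes a positive angle with every coordinate subspace $H^I$, and makes a positive angle with every $H^{I,j,i}$ precisely when no two of the points $a_i$ share a coordinate; call such $S$ \emph{generic}, and note that the generic $m$-point sets form an open, dense (indeed full-measure) subset $\mathcal{G}$ of configuration space, equal to everything when $d=1$.

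For a generic $S_0$ I would fix a small translation-invariant neighbourhood $\Gamma_0\subset\mathbb{A}_{md,d+1}$ of $V_{S_0}$ on which the transversality constants of Theorem \ref{thm:independent-product-linear-int} are uniform, together with a finite union $U$ of dyadic cubes with $\overline U\cap\Delta=\varnothing$ and $V\cap U^\circ\neq\varnothing$ (in the region $\lambda>0$) for all $V\in\Gamma_0$. Theorem \ref{thm:independent-product-linear-int} and Corollary \ref{cor:indep-products}, applied to $\nu^m|_U$, then give that almost surely $V\mapsto Y^V=\int_{U\cap V}\nu^m\,d\mathcal{H}^{d+1}$ is H\"older continuous on $\Gamma_0$ and that $\PP(Y^{V_{S_0}}>0)>0$. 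On the positive-probability event $\{Y^{V_{S_0}}>0\}$, H\"older continuity forces $Y^{V_S}>0$ — hence $A^m\cap V_S\cap\overline U\neq\varnothing$, i.e.\ a homothetic copy of $S$ inside $A$ — for every $S$ in a random neighbourhood of $S_0$; decomposing this event along a countable basis of radii produces a \emph{deterministic} open $\mathcal{U}_{S_0}\subset\mathcal{G}$ around $S_0$ with $\PP(A\text{ contains a homothetic copy of every }S\in\mathcal{U}_{S_0})>0$. Since this event is measurable, closed under supersets and under homothety, Corollary \ref{cor:0-1} upgrades its probability to $1$. Covering $\mathcal{G}$ by countably many such $\mathcal{U}_{S_0}$ (the space is second countable) yields: almost surely, $A$ contains a homothetic copy of every generic $m$-point set — which already settles the case $d=1$ and, for $d\ge 2$, the dense generic part.

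It remains to deal with non-generic $S$. Here I would approximate $S$ by generic $S^{(n)}\to S$ of uniformly comparable shape — minimal pairwise distance and diameter bounded away from $0$ and $\infty$, which is possible since the points of $S$ are distinct — so that any homothetic copy of an $S^{(n)}$ fitting inside $[0,1]^d$ has scale bounded above, while arranging the localization regions used in the countable cover of $\mathcal{G}$ so that, near the non-degenerate configuration $S$, they stay a fixed distance from $\Delta$, which keeps the scales of the resulting copies of the $S^{(n)}$ bounded below as well. On the full-measure event of the previous step, $A$ then contains such copies $p^{(n)}\in A^m\cap V_{S^{(n)}}$, and by compactness of $A^m$ a subsequence converges to some $p\in A^m\cap V_S$ which, by the uniform separation from $\Delta$, is off $\Delta$ and has $\lambda>0$, i.e.\ a genuine homothetic copy of $S$ inside $A$. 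Finally, ``a.s.\ on $A\neq\varnothing$'' is automatic since we have been working with surviving fractal percolation throughout (and in any case follows from Corollary \ref{cor:0-1}).

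The main obstacle is the passage, in the generic case, from ``one homothetic copy exists with positive probability'' to ``homothetic copies of every configuration in a whole open family exist with positive probability'': this uniformity over an uncountable family of planes is exactly what the H\"older continuity of $t\mapsto Y^t$ — the content of Section \ref{sec:continuity} and, for Cartesian products of percolation, the verification that $\nu_n^{(1)}\times\cdots\times\nu_n^{(m)}$ is weakly spatially dependent with parameter $\delta<k-m(d-s)$ — is designed to provide. The secondary, more technical point is the genericity reduction, where one must organize the localizations compatibly so that the approximating copies of a non-generic configuration cannot collapse onto the diagonal in the limit.
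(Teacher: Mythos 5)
Your treatment of the generic configurations (no two points sharing a coordinate) is essentially the paper's own argument: the plane $V_S$, the transversality check against the $H^I$ and $H^{I,j,i}$, localization in a product of disjoint dyadic cubes away from $\Delta$ and away from the $\lambda\le 0$ branch, Corollary \ref{cor:indep-products} plus H\"older continuity to get an open set of configurations with positive probability, then Corollary \ref{cor:0-1} and a countable cover. That half is fine.

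The gap is in your reduction of the non-generic case to the generic one by approximation. On the almost sure event ``$A$ contains a homothetic copy of every generic $S'$'', the copies of your approximants $S^{(n)}\to S$ are produced by the zero-one law (Lemma \ref{lem:0-1-strong}), i.e.\ they live inside random dyadic sub-cubes of unbounded level; nothing bounds their scales $\lambda_n$ from below, so the compactness limit can collapse onto $\Delta$. Your proposed fix --- keeping the localization regions a fixed distance from $\Delta$ --- does not repair this, for two reasons. First, the scale-controlled, localized statement is only a \emph{positive-probability} statement; the events for the different $S^{(n)}$ live in different neighbourhoods and cannot be intersected infinitely often while retaining positive probability (Harris degenerates), and once you pass to the a.s.\ event via Corollary \ref{cor:0-1} the localization is lost. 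Second, any neighbourhood $U_0$ on which Theorem \ref{thm:independent-product-linear-int} applies must have closure inside the generic set, because the transversality constant with respect to the hyperplanes \eqref{eq:trans_hyperplanes} degenerates at the coincidence locus --- and Remark \ref{rem:nodiagonal}(i) shows such degeneration can make $Y_n$ blow up, so continuity of $Y^V$ up to the boundary of the generic region is not available. A sequence $S^{(n)}\to S$ with $S$ non-generic therefore escapes every such $U_0$, and you have no uniform control. (One can salvage a fixed non-generic $S$ via the second-moment bound of Lemma \ref{lem:Yt-survives}, whose constants only involve transversality with the $H^I$ and hence are uniform near $S$, together with $\PP(\limsup_j \mathfrak{E}_j)\ge\limsup_j\PP(\mathfrak{E}_j)$; but that yields an a.s.\ statement for each \emph{fixed} $S$, not simultaneously over the uncountable family of non-generic configurations, which is the whole difficulty.)

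The paper resolves this differently: it partitions the non-generic configurations into finitely many coincidence classes $\Gamma_{\mathcal{N}}$ and re-runs the H\"older-continuity machinery inside each class, replacing the Euclidean metric on the slice families $\mathcal{R}_i(\Gamma_{\mathcal{N}})$ by a hybrid metric that is dyadic in the degenerate directions. This restores the a priori Lipschitz estimate \eqref{eq:Lipschitz-a-priori} (which is the only place hyperplane transversality is used) while keeping the parameter space of finite box dimension, so Theorem \ref{thm:independent-product-linear-int} and Corollary \ref{cor:indep-products} apply verbatim within each class, and the zero-one law plus a countable cover of each $\Gamma_{\mathcal{N}}$ finish the proof. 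You would need either to carry out this metric modification or to find a genuinely uniform lower bound on the scales of the approximating copies; as written, the non-generic case is not established.
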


\begin{proof}
Let
\[
\Gamma=\{T=(t_1,\ldots,t_{m-1})\in(\R^d)^{m-1}\,:\, 0\neq t_i^k\neq t_j^k\text{ if }i\neq j\in[m-1],\,k\in[d]\}.
\]
and denote $V_T=\{(y,\ldots,y)+\lambda(t_1,\ldots, t_{m-1},0)\,:\, y\in\R^{d}\,,\lambda\in\R\}\in\mathbb{G}_{md,d+1}$. Our goal is to apply Corollary \ref{cor:indep-products} to the family $\{V_T\}_{T\in\Gamma}$, and $\mu_n=(\nu_n^{\text{perc}(d,p)})^m$. Fix $T_0\in\Gamma$. Then, $V_{T_0}$ has transversal intersections with all coordinate planes $\{x_i^k=0\}$, $\{x_i=0\}$ for all $i\in[m], k\in[d]$. Indeed, it is clear that $V_{T_0}\cap\{x_i^k=0\}\in\AA_{md,d}$ while, denoting $t_m=0$,
\[
V_{T_0}\cap\{x_i=0\}=\{(\lambda (t_1-t_i),\ldots,\lambda (t_m-t_i))\,:\,\lambda\in\R\}
\]
is a line. Furthermore, if $i,j\in[m-1]$, $j\neq i$ and $k\in[d]$, then
\begin{equation}\label{eq:trans_d+1}
V_{T_0}\cap\{x_j=0, x_i^k=0\}=\{0\}
\end{equation}
thanks to the assumption $0\neq t_{i}^k\neq t_{j}^k$ for $i\neq j$. Note that these are the only nontrivial transversality conditions to be checked since $\dim V_{T_0}=d+1$.

For each fixed $T_0\in\Gamma$, we may find $n_0\in\N$ and disjoint cubes $Q_1,\ldots, Q_m\in\mathcal{Q}^{d}_{n_0}$ such that $V_{T_0}$ intersects the interior of $Q=Q_1\times\ldots\times Q_m\in\mathcal{Q}^{md}_{n_0}$ and $Q\cap V_{T_0}^-=\varnothing$, where
\[
V_{T_0}^- = \{(x,\ldots,x)+\lambda(T_0,0)\,:\, x\in\R^{d}\,,\lambda\le  0\}.
\]
Applying Corollary \ref{cor:indep-products}, we conclude that $Y_n^T=\int_{Q\cap V_T} \mu_n\,d\mathcal{H}^{d+1}$ converges in a neighbourhood $U_0$ of $T_0$, and that $T\mapsto Y^T=\lim_{n}Y^T_{n}$ is H\"older continuous on $U_0$. Furthermore, $\PP(Y^{T_0}>0)>0$, by the last claim of Corollary \ref{cor:indep-products}. Making $U_0$ smaller if needed, we can then ensure that $Q\cap V_T^{-}=\varnothing$ for all $T\in U_0$, and
\begin{equation}\label{eq:YTpositive}
\PP(Y^T>0\text{ for all }T\in U_0)>0\,.
\end{equation}
If $Y^T>0$, then $V_T\cap A^m\cap Q\neq\varnothing$. In other words, there are $x\in A\cap Q$ and $\lambda>0$ such that $x+\lambda t_i\in A$ for all $i=1,\ldots,m-1$.

The property of containing a homothetic copy of $\{t_1,\ldots, t_{m-1},0\}$ for all $t=(t_1,\ldots,t_{m-1})\in U_0$ is invariant under homotheties and passing to supersets. Corollary \ref{cor:0-1} then implies that a.s., $A$ contains a homothetic copy of $\{t_1,\ldots,t_{m-1},0\}$ for all $(t_1,\ldots,t_{m-1})\in U_0$. Since $\Gamma$ is $\sigma$-compact, we may cover $\Gamma$ by countably many such neighbourhoods $U_0$, and conclude that a.s., $A$ contains a homothetic copy of any $m$-element set $\{a_1,\ldots, a_m\}\subset\R^d$ with $a_i^k\neq a_j^k$ for all $i\neq j$ and $k\in[d]$.

To show that $A$ contains also homothetic copies of those $\{a_1,\ldots,a_m\}$ with $a_i^k=a_j^k$ for some $i\neq j$, $k\in[d]$, we have to modify the argument slightly. This is due to the fact that if there are coincidences $t_{i}^k=t_{j}^k$ for some $i\neq j$, $k\in[d]$, or if some $t_{i}^k=0$, then $\mathcal{R}(\{V_T\})$ contains lines parallel to some hyperplane $\{x_{i}^j=0\}\in\mathbb{A}_{(m-1)d,(m-1)d-1}$.

Note that in the proof of Theorem \ref{thm:independent-product-linear-int}, the transversality with respect to the coordinate hyperplanes is only used to obtain the a priori H\"{o}lder bound via \eqref{eq:Lipschitz-a-priori}. The key to solving this problem is the observation that in order for the proof of Theorem \ref{thm:independent-product-linear-int} to work, we may freely use any metric on $\mathcal{R}(\Gamma)$ as long as $\mathcal{R}(\Gamma)$ has finite box dimension with respect to this metric and \eqref{eq:Lipschitz-a-priori} remains valid. Furthermore, instead of the whole of $\mathcal{R}(\Gamma)$, we may consider each
\[\mathcal{R}_i(\Gamma):=\{V'_{i,t}\,:\,V\in\Gamma, t\in[0,1]^d\}\]
separately, and even use a different metric on each $\mathcal{R}_i(\Gamma)$, $i\in[m]$. Note that in Theorem \ref{thm:independent-product-linear-int}, $\mathcal{R}(\Gamma)=\cup_{i\in[m]}\mathcal{R}_i(\Gamma)$, but in the proof, the induction assumption on $\widetilde{Y}^{V'}_n$ is applied to one $\mathcal{R}_i(\Gamma)$ at a time.

Now to the details. We consider different coincidence classes separately: let
\[\mathcal{N}\subset\{(i,j,k)\,:\,i,j\in[m],k\in[d],i\neq j\}\]
and (still denoting $t_m=0$), define
\begin{align*}
\Gamma_\mathcal{N}=\{T=(t_1,\ldots,t_{m-1})\in(\R^d)^{m-1}\,:\, t_i^k=t_j^k\text{ if }(i,j,k)\in\mathcal{N}\\
\text{ and }t_i^k\neq t_j^k\text{ if }i\neq j\text{ and }(i,j,k)\notin\mathcal{N}\}\,.
\end{align*}

Fix $\mathcal{N}$ and $i\in[m]$. Given
$T\in\Gamma_\mathcal{N}$, and $V=V_T$, the projections
\[V'_{i,c}=\pi^i(\{V_T\cap\{x_i=c\})\]
are parallel to
\[H_i=\pi^i\left(\{x\in\R^{d(m-1)}\,:\,x_j^k=0\text{ whenever } (i,j,k)\in\mathcal{N}\}\right)\]
and transversal with respect to the coordinate hyperplanes orthogonal to $H_i$.

To define a new metric on $\mathcal{R}_i(\Gamma_{\mathcal{N}})$, let $d_2$ denote the dyadic metric on $H_i^\perp$. For $x,y\in H_i^\perp$, this is defined as the side length of the largest dyadic cube in $\mathcal{Q}^{\dim H_i^\perp}$ that contains both $x$ and $y$ (or $0$ if $x=y$). Note that $d_2$ is indeed a metric since the families $\mathcal{Q}_n$ consist of pairwise disjoint dyadic cubes.
The argument now carries on by defining a metric on $\AA_{d(m-1),1}$ as
\[d(\ell,\ell')=\max\{d_2(\widetilde{\pi}_1(\ell),\widetilde{\pi}_1(\ell')), d_1(\widetilde{\pi}_2(\ell),\widetilde{\pi}_2(\ell'))\}\,,\]
where $\widetilde{\pi}_2$ denotes the orthogonal projection onto $H_i$, $\widetilde{\pi}_1$ is the orthogonal projection onto $H_i^\perp$, and $d_1$ denotes our usual metric on $\AA_{\text{dim} H_i,1}$.

Note that since $\ell,\ell'\in\Gamma_{\mathcal{N}}$, the projections $\widetilde{\pi}_1(\ell),\widetilde{\pi}_1(\ell')$ are points, and so $d$ is well defined, and it is easy to check that it is indeed a metric. Furthermore, for each fixed $T_0\in\Gamma_\mathcal{N}$, and for all $\ell\in\mathcal{R}_i(\{V_{T_0}\})$, the projections $\widetilde{\pi}_2(\ell)$ are transversal with respect to the coordinate hyperplanes in $\widetilde{\pi}_2([0,1]^{(m-1)d})\subset H_i$. This allows us to conclude that for a small neighbourhood $U_0$ of $T_0$ in $\Gamma_{\mathcal{N}}$, the maps $\ell\mapsto\mathcal{H}^1(Q\cap\ell)$, $\mathcal{R}(U_0)\to[0,+\infty[$ are Lipschitz continuous with respect to the $d$-metric, with a Lipschitz constant independent of $Q\in\mathcal{Q}^{(m-1)d}$.
Indeed, using the expression $\ell=\widetilde{\pi}_1(\ell)\times\widetilde{\pi}_2(\ell)$, the Lipschitz continuity is easy to check with respect to both coordinates; for $\widetilde{\pi}_2$ it is the same argument as in \eqref{Nn-Holder-product} and for $\widetilde{\pi}_1$ it follows from the definition of the dyadic metric. Note also that $(\Gamma_\mathcal{N},d)$ has finite box dimension.

We conclude that the claims of Lemma \ref{lem:Yt-survives} and Corollary \ref{cor:indep-products} continue to hold in $U_0$ (note that transversality with respect to the planes \eqref{eq:trans_hyperplanes} is not used at all in the proof of Lemma \ref{lem:Yt-survives}).
This yields $\PP(Y^T>0\text{ for all }T\in U_0)>0$ and further ensures the a.s. existence of homothetic copies of all $\{t_1,\ldots,t_{m-1},0\}$ for all $t\in \Gamma_\mathcal{N}$ by virtue of Corollary \ref{cor:0-1}. Since there are only finitely many coincidence classes $\mathcal{N}$, this finishes the proof.
\end{proof}

\begin{rem}
The special case $m=2$ implies that the fractal percolation set contains all directions almost surely, provided $s>(d-1)/2$. A small variant of the proof of Theorem \ref{thm:patterns} yields the following generalization: if $s>(d-k)/2$, then a.s. for all $k$-planes $V$, there are two points in $A^{\text{perc}(d,p)}$ determining a direction in $V$.
\end{rem}

We turn to finding translated copies of finite sets inside the fractal percolation set $A$ (without scaling). Just because $A$ is bounded, we cannot hope to find a translation of every $m$ point set. However, if $s>d-d/m$, we have the following variant of Corollary \ref{cor:patterns}

\begin{cor}\label{cor:patterns_without_scaling}
Let $\mathcal{X}$ be a compact subset of $([0,1[^d)^m\setminus \Delta$, where $\Delta=\{ x_i=x_j \text{ for some }i\neq j\}$ is the union of the diagonals. If $s>d-d/m$, then with positive probability $A=A^{\text{perc}(d,p)}$ contains a translated copy of each $(x_1,\ldots,x_m)\in\mathcal{X}$.
\end{cor}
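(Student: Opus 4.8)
The plan is to exhibit a translated copy of $x=(x_1,\dots,x_m)\in\mathcal X$ as a point of $A^m$ lying on the affine $d$-plane
\[
V_x=\{(x_1+y,\dots,x_m+y):y\in\R^d\}\in\AA_{md,d},
\]
and to feed the family $\{V_x\}$ into Theorem \ref{thm:independent-product-linear-int} and Corollary \ref{cor:indep-products} with $k=d$ and $\mu_n=(\nu_n^{\text{perc}(d,p)})^m$; note that the hypothesis $s>d-d/m$ is precisely $s>d-k/m$. First I would check transversality. All the planes $V_x$ share the direction $D=\{(y,\dots,y):y\in\R^d\}\in\mathbb G_{md,d}$, so the relevant angles are independent of $x$, and it suffices to verify that $D$ is transversal, in the sense of Theorem \ref{thm:independent-product-linear-int}, to every $H^I$ and $H^{I,j,i}$. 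For $I\neq\varnothing$ one has $D\cap H^I=\{0\}=D\cap H^{I,j,i}$, which is the generic dimension since $\dim D$ minus the codimension of $H^I$ equals $d-|I|d\le0$ and likewise for $H^{I,j,i}$; for $I=\varnothing$ one has $D\cap H^{\varnothing,j,i}=\{(y,\dots,y):y^i=0\}$, of generic dimension $d-1$. Hence there is a deterministic $c>0$ with $\angle(V_x,H^I),\angle(V_x,H^{I,j,i})>c$ for all $x$. (It is the equality $k=d$ that forces each $\mathcal R^p(\{V_x\})$ to consist of points, so none of the metric surgery of the proof of Corollary \ref{cor:patterns} is needed here.)

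Next I would fix $x^0=(x_1^0,\dots,x_m^0)\in\mathcal X$. Since $\mathcal X$ is a compact subset of $([0,1[^d)^m$, there is a small $z^0\in\R^d$ with $x_i^0+z^0\in\,]0,1[^d$ for every $i$, and since $x^0\notin\Delta$ the points $x_i^0+z^0$ are pairwise distinct; choose $n_0$ and pairwise disjoint cubes $Q_i\in\mathcal Q^d_{n_0}$ with $x_i^0+z^0\in Q_i^\circ$, and put $U=Q_1\times\dots\times Q_m$. Then $U\cap\Delta=\varnothing$ and $V_{x^0}\cap U^\circ\neq\varnothing$. Applying Corollary \ref{cor:indep-products} with $U$, the family $\{V_x\}$ for $x$ near $x^0$, and $s>d-k/m$, we get a deterministic $\gamma>0$ such that almost surely $Y_n^x:=\int_{U\cap V_x}\nu_n^m\,d\mathcal H^d$ converges uniformly to a limit $Y^x$ that is $\gamma$-H\"{o}lder in $x$, and moreover $\PP(Y^{x^0}>0)>0$. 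If $Y^x>0$, the limiting intersection measure is nonzero and, by the nesting of the sets $A_n$, supported on $A^m\cap V_x\cap\overline U$; hence that set contains a point $(x_1+z,\dots,x_m+z)$, i.e.\ $A$ contains the translate $\{x_1+z,\dots,x_m+z\}$ of $x$.

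The remaining steps use that only positive probability is claimed. Since $x\mapsto Y^x$ is a.s.\ continuous, $\{Y^{x^0}>0\}$ is contained, up to a null set, in the increasing union $\bigcup_{j\ge1}\{Y^x>0\ \text{for all}\ x\in B(x^0,1/j)\}$, so by continuity of measure from below there is a deterministic radius $r_0=r_0(x^0)>0$ with $\PP(Y^x>0\ \text{for all}\ x\in B(x^0,r_0))>0$. Let $\mathfrak C(x^0)$ be the event that $A$ contains a translated copy of every point of $B(x^0,r_0)\cap\mathcal X$; then $\PP(\mathfrak C(x^0))>0$, and $\mathfrak C(x^0)$ is an increasing event (preserved under enlarging $A$, i.e.\ under adjoining retained cubes). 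Covering the compact set $\mathcal X$ by finitely many such balls $B(x^1,r_1),\dots,B(x^N,r_N)$ and applying Harris' inequality (Lemma \ref{lem:Harris}) $N-1$ times gives
\[
\PP\Bigl(\bigcap_{l=1}^N\mathfrak C(x^l)\Bigr)\ \ge\ (1-q)^{N-1}\prod_{l=1}^N\PP(\mathfrak C(x^l))\ >\ 0,
\]
and on this event $A$ contains a translated copy of each $x\in\mathcal X$.

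The main obstacle is the very first step — ensuring that the common direction $D$ of the planes $V_x$ is uniformly transversal to all the coordinate subspaces $H^I$ and $H^{I,j,i}$; once this purely deterministic verification is in place, Corollary \ref{cor:indep-products} supplies the H\"{o}lder continuity and positivity with essentially no further work, and the passage from continuity to openness of $\{Y^x>0\}$, patched across the compact parameter set by Harris, is routine. One must also keep in mind that, $A$ being bounded, only translates landing in $[0,1]^d$ can occur — this is what compactness of $\mathcal X$ inside $([0,1[^d)^m$ guarantees, and the reason the auxiliary translation $z^0$ is introduced.
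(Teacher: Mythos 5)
Your proof is correct and follows essentially the same route as the paper's: the same affine $d$-planes $V_x$ (the paper merely normalizes the parametrization by setting $t_m=0$), the same transversality check against $H^I$ and $H^{I,j,i}$ (trivialized by $k=d$), an application of Corollary \ref{cor:indep-products} on a product of disjoint dyadic cubes avoiding $\Delta$ to get H\"older continuity and $\PP(Y^{x^0}>0)>0$, and then compactness of $\mathcal{X}$ plus Harris' inequality. Your extra care about pushing boundary points of $[0,1[^d$ into the open cube and about upgrading $\PP(Y^{x^0}>0)>0$ to positivity on a ball via continuity is a correct elaboration of steps the paper leaves implicit.
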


\begin{proof}
The proof is very similar to the first part of the proof of Corollary \ref{cor:patterns} above.
Fix $T_0=(x_1,\ldots,x_{m-1})\in(\R^{d})^{m-1}$. We first find $n_0\in\N$ and a finite union $U$ of cubes $Q\in\mathcal{Q}_{n_0}^{md}$ which do not intersect the $\Delta$, and such that $V_{T_0}\cap U^\circ\neq\varnothing$, where for $T=(t_1,\ldots, t_{m-1})\in(\R^d)^{m-1}$,
\[
V_{T}=\{(x,\ldots,x)+(t_1,\ldots, t_{m-1},0)\,:\, x\in\R^d\}\in\mathbb{A}_{md,d}\,.
\]
We apply Corollary \ref{cor:indep-products} in a suitably small neighbourhood $\mathcal{U}$ of $T_0$ to deduce that there is a positive probability that
$Y^T>0$ for all $T\in \mathcal{U}$, where $Y^T=\lim_{n}Y^T_{n}$ and
\[Y_n^T=\int_U\mu_n\,d\mathcal{H}^d|_{V_T}\,.\]
The application of Corollary  \ref{cor:indep-products}  is justified, since each $V_T$ is transversal to the coordinate planes $\{x_{i}^j=0\}, \{x_{i}=0\}$ ($i\in[m]$, $j\in[d]$). Recall that these are the only transversality conditions to be checked since $\dim V_T=d$.
If $Y^T>0$, then $V_T\cap A^m\neq\varnothing$ and so there is $x\in A$ such that $x+\{t_1,\ldots, t_{m-1},0\}\subset A$.

Since $\mathcal{X}$ is compact, we can cover it by finitely many neighbourhoods $\mathcal{U}$ as above. The claim then follows from Harris' inequality (Lemma \ref{lem:Harris}).
\end{proof}

\subsection{Optimality of the results}

As noted in the introduction, there are sets $A\subset\R^d$ of zero Hausdorff dimension which contain homothetic copies of all finite sets of $\R^d$. This suggests that other notions of dimension might be more natural for this problem. If we instead consider packing dimension, we find that the dimension thresholds in Corollaries \ref{cor:patterns}, \ref{cor:patterns_without_scaling} are optimal (up to the endpoint), even for deterministic sets:

\begin{prop}\label{prop:threshold_sharp}
Let $A\subset\R^d$ be a Borel set of packing dimension $s$.
\begin{enumerate}
\item\label{sharp_2} If $A$ contains a homothetic copy of $\{t_1,\ldots, t_{m-1},0\}$ for a non-empty open set of $T=(t_1,\ldots,t_{m-1})\in(\R^d)^{m-1}$, then $s\ge d-(d+1)/m$.
\item\label{sharp_3} If $A$ contains a translated copy of $\{t_1,\ldots, t_{m-1},0\}$ for a non-empty open set of $T\in(\R^d)^{m-1}$, then $s\ge d-d/m$.
\end{enumerate}
Moreover, in both cases `non-empty open set' may be replaced by  `set of full packing dimension' in the relevant ambient space.
\end{prop}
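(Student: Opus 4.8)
The plan is a dyadic covering argument resting on the standard fact that $\dim_P E=\inf\{\sup_i\overline{\dim}_B E_i:E=\bigcup_i E_i\}$ together with the countable stability of packing dimension. I treat \eqref{sharp_2}; part \eqref{sharp_3} is identical with the dilation factor frozen at $1$, and the last sentence is immediate since a non-empty open subset of $(\R^d)^{m-1}$ has packing dimension $(m-1)d$ while the full-packing-dimension statement is this assertion by hypothesis. Fix $\varepsilon>0$ and write $A=\bigcup_{j\in\N}B_j$ with each $B_j$ bounded and $\overline{\dim}_B B_j<s+\varepsilon$. Let $\Omega\subset(\R^d)^{m-1}$ be the set of $T=(t_1,\ldots,t_{m-1})$ for which there are $a>0$ and $b\in\R^d$ with $b\in A$ and $b+at_i\in A$ for all $i\in[m-1]$; by hypothesis $\dim_P\Omega=(m-1)d$, and it suffices to prove $\dim_P\Omega\le 1+m(s+\varepsilon)$, since letting $\varepsilon\to0$ then gives $(m-1)d\le 1+ms$, i.e.\ $s\ge d-(d+1)/m$. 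Discarding the point $T=0$, I cover $\Omega\setminus\{0\}$ by countably many pieces indexed by the coordinate $i_0$ with $|t_{i_0}|=\max_i|t_i|$, a tuple $(j_0,\ldots,j_{m-1})$ such that some chosen realization $(a,b)$ has $b\in B_{j_0}$ and $b+at_i\in B_{j_i}$, and the dyadic scales of $|t_{i_0}|$ and of $a$; since $b$ and $b+at_{i_0}$ lie in bounded sets, $a|t_{i_0}|$ is bounded, so only finitely many scales of $a$ arise once the rest is fixed, and this is genuinely a countable decomposition. By symmetry it is enough to bound $\overline{\dim}_B$ of one piece $\Omega'$ on which $i_0=1$ and, for fixed positive constants, $a\in[a_1,a_2]$, $|t_1|\ge c$, hence $|T|\le C$ as well.

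On such a piece the counting is routine. For large $n$ and $T\in\Omega'$ with its chosen $(a,b)$, I record the dyadic $2^{-n}$-cubes of $\R^d$ containing $b,b+at_1,\ldots,b+at_{m-1}$ and the value $\widehat a$ of $a$ rounded to a multiple of $2^{-n}$. This data determines $b$, and hence each $at_i=(b+at_i)-b$, up to $O(2^{-n})$; since $a\ge a_1>0$, $|at_i|\le a_2 C$ and $|\widehat a-a|\le 2^{-n}$, the trivial estimate $|u/a-\widehat u/\widehat a|\le(|u|\,|a-\widehat a|+|a|\,|u-\widehat u|)/(a\widehat a)$ shows it also determines $t_i=(at_i)/a$ up to $O(2^{-n})$. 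Hence all $T\in\Omega'$ sharing the recorded data lie in a single ball of radius $O(2^{-n})$, and the number of admissible data tuples is at most $\bigl(\prod_{i=0}^{m-1}N_{j_i}(n)\bigr)\cdot O(2^n)$, where $N_j(n)$ counts $2^{-n}$-cubes meeting $B_j$ and the factor $O(2^n)$ bounds the admissible choices of $\widehat a\in[a_1,a_2]$. As $N_{j_i}(n)\le 2^{(s+\varepsilon)n}$ for $n$ large, $\Omega'$ is covered by $O(2^{(1+m(s+\varepsilon))n})$ balls of radius $O(2^{-n})$, so $\dim_P\Omega'\le\overline{\dim}_B\Omega'\le 1+m(s+\varepsilon)$; reassembling the countably many pieces yields $\dim_P\Omega\le 1+m(s+\varepsilon)$ and completes \eqref{sharp_2}. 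For \eqref{sharp_3} the same runs with $a\equiv 1$: there is no scale parameter for $a$ and no factor $O(2^n)$, each $t_i$ is recovered directly as a difference of two points known to precision $2^{-n}$, and one obtains $\overline{\dim}_B\Omega'\le m(s+\varepsilon)$, hence $(m-1)d\le ms$, i.e.\ $s\ge d-d/m$.

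The substance is just the dimension heuristic: $m$ points of $A$ carry at most $ms$ degrees of freedom, the dilation costs one more, so an $(m-1)d$-parameter family of configurations forces $ms+1\ge(m-1)d$ (respectively $ms\ge(m-1)d$ when there is no dilation). The only genuinely delicate part is the combinatorial bookkeeping: arranging the countable decomposition so that on each piece both the dilation factor $a$ and the norm $|t_1|$ are trapped between two positive constants. The lower bound on $a$ is what makes the division step quantitative, the upper bound on $a|t_1|$ is what keeps the count of admissible $\widehat a$ at $O(2^n)$ rather than unbounded, and one has to notice separately that the only portion of $\Omega$ not captured by the decomposition, namely $\{T=0\}$, is a single point and so contributes nothing to the dimension.
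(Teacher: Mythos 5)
Your argument is correct, but it takes a genuinely different route from the paper's. The paper disposes of both claims in a few lines by pushing the configuration set forward: it applies the locally Lipschitz map $g(x_1,\ldots,x_m)=|x_2-x_1|^{-1}(x_2-x_1,\ldots,x_m-x_1)$ (resp.\ $(x_2-x_1,\ldots,x_m-x_1)$ for translations) to $A^m\setminus\Delta$, observes that by hypothesis the image contains a set of packing dimension $(m-1)d-1$ (resp.\ $(m-1)d$), and concludes from the two standard facts that packing dimension does not increase under locally Lipschitz maps and that $\dim_P(A^m)\le m\dim_P(A)$. You instead work backwards in the parameter space: you decompose $A$ into bounded pieces of upper box dimension $<s+\varepsilon$, localize the dilation $a$ and the norm $|t_{i_0}|$ to dyadic windows, and count directly how many balls of radius $O(2^{-n})$ cover each piece of the parameter set, reconstructing $T$ from the recorded cubes and the discretized $\widehat a$. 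In effect you re-prove inline the product inequality $\dim_P(A^m)\le m\dim_P(A)$ (which the paper cites from Mattila), and you replace the paper's normalization by $|x_2-x_1|^{-1}$ --- which quotients out the one-dimensional dilation parameter on the configuration side --- by an explicit $O(2^n)$ count of admissible $\widehat a$, which contributes the same $+1$ on the other side of the inequality. Your version is longer but elementary and self-contained; the paper's is shorter but leans on quoted facts. One immaterial slip: once the dyadic scale of $|t_{i_0}|$ is fixed, the boundedness of $a|t_{i_0}|$ only bounds $a$ from \emph{above}, so infinitely many (not finitely many) dyadic scales of $a$ can occur; this is harmless, since all the argument needs is that the decomposition is countable, which it is.
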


\begin{proof}
The only properties of packing dimension that we use are  (a) it does not increase under locally Lipschitz maps, and (b) $\dim_P(A^q) \le q \dim_P(A)$ for $q\in\N$, $A\subset\R^d$. For the latter property see e.g. \cite[Theorem 8.10]{Mattila95}.

Let $g$ be the map
\[(x_1,x_2,\ldots,x_m)\mapsto|x_2-x_1|^{-1}(x_2-x_1,\ldots,x_m-x_1)\colon (\R^d)^m\to S^{d-1}\times(\R^d)^{m-2}\,,\]
which is locally Lipschitz outside of $\Delta$. Hence
\[
 d-1+(m-2)d = \dim_P(g(A^m\setminus\Delta)) \le m \dim_P(A)\,
\]
giving \eqref{sharp_2}.· The claim \eqref{sharp_3} obtained in a similar manner, by considering the map
 \[(x_1,x_2,\ldots,x_m)\mapsto (x_2-x_1,\ldots,x_m-x_1)\colon (\R^d)^m\to (\R^d)^{m-1}\,.\]

The last claim is clear from the argument.
\end{proof}

What happens at the threshold? We recall that fractal percolation with the critical parameter $p=2^{-d}$ goes extinct a.s. Much more is true: if $V\subset [0,1]^d$ is a Borel set with finite $(d-s)$-capacity, and $A=A^{\text{perc}(d,p)}$ has dimension $s$, then $A\cap V=\varnothing$ almost surely, see \cite{LyonsPeres17}. The usual proofs of these facts do not seem to easily extend to the setting of the product of independent realizations of fractal percolation. Nevertheless, we have the following result:
\begin{prop} \label{prop:extinction-critical}
Let $A^{(1)},\ldots,A^{(m)}$ be independent realizations of $A^{\text{perc}(d,p)}$. Then for each compact set $V\subset (\R^d)^m$  of finite $(d-s)m$-dimensional Hausdorff measure, almost surely $V\cap (A^{(1)}\times\cdots\times A^{(m)})=\varnothing$.
\end{prop}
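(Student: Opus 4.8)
The plan is to reduce the statement to a first-moment (expectation) estimate, exactly as one does for the classical fact that critical fractal percolation misses a fixed set of the appropriate capacity/measure. Since $V$ has finite $(d-s)m$-dimensional Hausdorff measure, for every $\varepsilon>0$ we may cover $V$ by countably many dyadic cubes $Q_i=Q_i^{(1)}\times\cdots\times Q_i^{(m)}\in\mathcal{Q}^{md}_{n_i}$ with $\sum_i \ell(Q_i)^{(d-s)m}<\varepsilon$ (passing to dyadic cubes costs only a dimensional constant, and one may also assume the cubes are essentially disjoint, or simply work with the cover directly). For a fixed such cube $Q$ of side $2^{-n}$, independence of the $m$ realizations gives
\[
\PP\big(Q\cap (A^{(1)}\times\cdots\times A^{(m)})\neq\varnothing\big)\le \prod_{j=1}^m \PP\big(Q^{(j)}\cap A^{(j)}\neq\varnothing\big)\le \prod_{j=1}^m \EE(N^{(j)}_n)\cdot 2^{-dn}\cdot \text{(const)},
\]
and since $\EE(N_n)\le C p^{-n}$ for surviving fractal percolation (equivalently the number of surviving level-$n$ subcubes of a level-$k$ cube has expectation $O(p^{-(n-k)})$), each factor is $O(p^{-n}2^{-dn})=O(2^{-ns})$, so the whole product is $O(2^{-nsm})=O(\ell(Q)^{sm})$. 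Hmm — wait, that is the wrong exponent; the correct bound comes from noting that the probability a fixed level-$n$ dyadic cube $Q^{(j)}$ meets $A^{(j)}$ is $O(2^{-(d-s)n})$ (this is the standard one-dimensional-analogue estimate: there are $O(2^{dn})$ candidate cubes and the expected number surviving is $O(2^{sn})$, so by a union/Markov bound the probability of a fixed one surviving is $O(2^{-(d-s)n})$ — alternatively and more honestly, condition down the nested cubes containing $Q^{(j)}$ and use $\EE(\nu_n(Q^{(j)}))=\ell(Q^{(j)})^d$ together with $\nu_n(Q^{(j)})\in\{0,2^{(d-s)n}\}$, giving $\PP(Q^{(j)}\subset A_n)=2^{-(d-s)n}\cdot 2^{dn}\ell(Q^{(j)})^d=\ell(Q^{(j)})^{d-s}$). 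Hence
\[
\PP\big(Q\cap (A^{(1)}\times\cdots\times A^{(m)})\neq\varnothing\big)\le \prod_{j=1}^m \ell(Q^{(j)})^{d-s}=\ell(Q)^{(d-s)m}.
\]

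Summing over the cover, $\PP\big(V\cap (A^{(1)}\times\cdots\times A^{(m)})\neq\varnothing\big)\le \sum_i \ell(Q_i)^{(d-s)m}<\varepsilon$ for every $\varepsilon>0$, hence this probability is $0$, which is the claim. One technical point to handle carefully: we are working with \emph{surviving} fractal percolation, for which the exact identity $\PP(Q^{(j)}\subset A_n)=\ell(Q^{(j)})^{d-s}$ still holds because of the martingale property $\EE(\nu_n(x))=\nu_0(x)$ established in Section~\ref{sec:perco} together with $\nu_n(x)\in\{0,2^{(d-s)n}\}$; in fact this gives $\PP(Q^{(j)}\subset A_n)\le 2^{-(d-s)n}\EE(\nu_n(Q^{(j)}))/\mathcal{L}(Q^{(j)})\cdot\mathcal{L}(Q^{(j)})=\ell(Q^{(j)})^{d-s}$, uniformly in $n$, so letting $n\to\infty$ (i.e. refining so that $Q^{(j)}$ is a cube of the covering and $A^{(j)}\cap Q^{(j)}\neq\varnothing$ implies $Q^{(j)}\subset A_{n}$ for the relevant level) is legitimate. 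Equivalently, one uses the closed set $A^{(j)}=\bigcap_n \overline{A^{(j)}_n}$ and continuity from above: $\PP(Q^{(j)}\cap A^{(j)}\neq\varnothing)=\lim_n \PP(Q^{(j)}\cap \overline{A^{(j)}_n}\neq\varnothing)\le \liminf_n O(\ell(Q^{(j)})^{d-s})$, after absorbing the boundary effects of the half-open convention into the constant.

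**Main obstacle.** The only real subtlety is bookkeeping around the covering: $\mathcal{H}^{(d-s)m}(V)<\infty$ gives efficient covers by arbitrary sets, and one must pass to a cover by dyadic cubes of comparable diameter without losing control of $\sum \ell(Q_i)^{(d-s)m}$, and then relate ``$Q_i$ meets $A^{(1)}\times\cdots\times A^{(m)}$'' to ``each $\pi_j(Q_i)$ meets $A^{(j)}$'' — this is where the product structure and the independence of the realizations are used, and it is immediate. There is no martingale/large-deviation machinery needed here: the first moment method suffices because we are \emph{at} (or below) the critical exponent $(d-s)m$, where $Y^V_n$ does not converge to a finite limit but the expected number of surviving product-cubes meeting $V$ at level $n$ stays bounded (indeed tends to $0$ along the cover), which is precisely the regime governed by Theorem~\ref{thm:small_dimension_projections} with the borderline exponent, though for this clean statement the direct computation is shorter. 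I would therefore present the direct first-moment argument and not invoke the abstract machinery.
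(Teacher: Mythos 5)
There is a genuine gap, and it sits exactly at the point your argument is supposed to handle: the critical exponent. Finiteness of $\mathcal{H}^{(d-s)m}(V)$ does \emph{not} allow you to choose covers with $\sum_i \ell(Q_i)^{(d-s)m}<\varepsilon$ for every $\varepsilon>0$; that would require $\mathcal{H}^{(d-s)m}(V)=0$. If $0<\mathcal{H}^{(d-s)m}(V)<\infty$, every cover satisfies $\sum_i \ell(Q_i)^{(d-s)m}\ge c>0$, so your union bound only yields $\PP(V\cap A^{(1)}\times\cdots\times A^{(m)}\neq\varnothing)\le C$ for some finite constant $C$ that may well exceed $1$. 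Your parenthetical remark that the expected number of surviving cover cubes ``tends to $0$ along the cover'' is the same error in disguise: at criticality this expectation stays \emph{bounded}, it does not vanish. This is the familiar phenomenon that the first moment method proves subcritical extinction but says nothing at criticality (a critical Galton--Watson process has $\EE(Z_n)\equiv 1$ yet dies out, and that fact is not a first-moment statement).

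The paper's proof supplies precisely the missing second step. Writing $K_n$ for the number of cubes of the $n$-th cover that survive, the bounded first moment gives, via Markov, that on a positive fraction of the survival event one has $1\le K_n\le M_0$ for a fixed $M_0$. Conditional on this, each of the at most $M_0$ surviving product cubes fails to produce offspring with probability at least $(1-2^{(s-d)m})^{2^d}$, and --- this is where care is needed because product cubes sharing a coordinate projection are \emph{not} independent --- conditioning on earlier cubes having no offspring can only increase the probability that the next one has none (a Harris/FKG-type monotonicity). Hence $\PP(\mathfrak{F}_{n+1}^c\mid\mathfrak{F}_n)\ge c>0$ uniformly in $n$, forcing $\PP(\mathfrak{F}_n)\le(1-c)^n\to 0$ and contradicting positive survival probability. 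So your reduction to a covering and the survival-probability estimate $\PP(Q\subset A_n)=\ell(Q)^{d-s}$ per factor are fine, but the conclusion requires this conditional extinction argument rather than summing expectations; without it the proof only covers the case $\mathcal{H}^{(d-s)m}(V)=0$.
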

\begin{proof}
Without loss of generality, $V\subset [0,1)^{md}$. By assumption, there exists a constant $C$ (depending only on $\mathcal{H}^{(d-s)m}(V)$) and collections $\mathcal{C}_n$ of dyadic cubes such that:
\begin{enumerate}
\item $\mathcal{C}_0=\{ [0,1)^d\}$.
\item The cubes in each $\mathcal{C}_n$ are disjoint, and their union covers $V$,
\item Each element of $\mathcal{C}_{n+1}$ is strictly contained in some element of $\mathcal{C}_n$,
\item $\sum_{Q\in\mathcal{C}_n} 2^{-\ell(Q) (d-s)m} \le C$,
\end{enumerate}
where $\ell(Q)$ is the side-length of $Q$. As usual, write $A_n=A^{(1)}_n\times\cdots\times A^{(m)}_n$, and let $K_n$ be the number of cubes $Q$ in $\mathcal{C}_n$ such that $Q\subset A_{\ell(Q)}$.

Let $\mathfrak{F}_n$ be the event $(K_n\ge 1)$. Note that the events $\mathfrak{F}_n$ are decreasing by (3), and $\cap_n \mathfrak{F}_n$ is the event $V\cap (A^{(1)}\times\cdots\times A^{(m)})\neq \varnothing$. Assume that $\PP(\cap_n \mathfrak{F}_n)=q>0$. Since the $\mathfrak{F}_n$ are nested,
\[
\PP(\mathfrak{F}_n) = \PP(\mathfrak{F}_n|\mathfrak{F}_{n-1})\cdots \PP(\mathfrak{F}_1|\mathfrak{F}_0).
\]
So if we can show that $\PP(\mathfrak{F}_{n+1}^c|\mathfrak{F}_n)\ge c$ for some constant $c>0$ independent of $n$, we are done. Note that $2^{-\ell(Q) (d-s)m}$ is the probability that $Q\in A_{\ell(Q)}$. Hence, by (4), $\EE(K_n) \le C$ for all $n$. By Markov's inequality, $\PP(K_n>M_0)\le C/M_0$. Hence, if $M_0$ is large enough (depending only on $C,q$) then $\PP(1\le K_n\le  M_0) \ge q/2$. Write $\mathfrak{C}_n$ for the event $(1\le K_n\le M_0)\subset \mathfrak{F}_n$. Note that
\begin{equation} \label{eq:conditional-probability-dying}
\PP(\mathfrak{F}_{n+1}^c|\mathfrak{C}_n) \ge (1-2^{(s-d)m})^{2^d M_0} =: c_0>0.
\end{equation}
Indeed, suppose $\mathfrak{C}_n$ holds and let $Q_1,\ldots,Q_M$ be the cubes in $\mathcal{C}_n$ such that $Q_i\subset A_{\ell(Q_i)}$ (so that $1\le M\le M_0$). For each $j=0,\ldots,M-1$, the probability that $Q_{j+1}\cap A_{\ell(Q_{j+1})+1}=\varnothing$ given that $Q_{i}\cap A_{\ell(Q_{i})+1}=\varnothing$ for $i=1,\ldots,j$ is bounded below by $(1-2^{(s-d)m})^{2^d}$. Indeed, this number is the unconditional probability that $Q_{j+1}$ has no offspring in $A_{\ell(Q_{j+1})+1}$, and the information that the previous cubes had no offspring can only increase the probability (if there is overlap in some coordinate projection). Hence the probability that all $Q_j$ have no offspring is at least $c_0$, but thanks to (3) this is a sub-event of $\mathfrak{F}_{n+1}^c$, giving \eqref{eq:conditional-probability-dying}.

To conclude, note that
\[
\PP(\mathfrak{F}_{n+1}^c|\mathfrak{F}_n) \ge \frac{\PP(\mathfrak{F}_{n+1}^c\cap \mathfrak{E}_n)}{\PP(\mathfrak{F}_n)}\ge \frac{q}{2} \PP(\mathfrak{F}_{n+1}^c\,|\, \mathfrak{C}_n)\ge \frac{q c_0}{2} =: c >0.
\]
 \end{proof}

As an immediate corollary, we get:
\begin{cor}
Let $(S_j)_j$ be a countable collection of $m$-element sets in $\R^d$. If $s=s(d,p)\le d-(d+1)/m$, then a.s. $A^{\text{perc}(d,p)}$ does not contain a homothetic copy of any of the $S_j$. Likewise, if $s\le d-d/m$, then a.s. $A^{\text{perc}(d,p)}$ does not contain a translated copy of any of the $S_j$.
\end{cor}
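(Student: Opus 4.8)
The plan is to deduce the corollary from Proposition \ref{prop:extinction-critical} by localising away from the diagonal, where the self-product $A^m$ of $A=A^{\text{perc}(d,p)}$ behaves like a product of independent copies of $A$. First I would fix a single $m$-element set $S=\{a_1,\dots,a_m\}\subset\R^d$ and treat homothetic copies (the translated case being the same but simpler). Since $A$ contains a homothetic copy of $S$ exactly when there are $y\in\R^d$ and $\lambda>0$ with $y+\lambda a_i\in A$ for all $i$, this happens if and only if $A^m$ meets
\[
V_S=\bigl\{\,(y+\lambda a_1,\dots,y+\lambda a_m)\in[0,1]^{md}\,:\,y\in\R^d,\ \lambda>0\,\bigr\}.
\]
As the $a_i$ are distinct, $V_S$ is disjoint from the diagonal $\Delta$ and is a bounded subset of the $(d+1)$-dimensional affine plane $\{(y,\dots,y)+\lambda(a_1,\dots,a_m):y\in\R^d,\ \lambda\in\R\}$; writing $V_S=\bigcup_k V_S^{(k)}$ with $V_S^{(k)}=\{(y+\lambda a_i)_i\in[0,1]^{md}:\lambda\ge 1/k\}$ compact and $\dist(V_S^{(k)},\Delta)>0$, the matter reduces to showing that for each fixed compact $W\subset[0,1]^{md}\setminus\Delta$ lying in an affine plane of dimension $\le d+1$, almost surely $A^m\cap W=\varnothing$. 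A countable union over $k$ and over $(S_j)_j$ then yields the homothetic assertion; for translated copies one uses instead $V_S=\{(t_1+z,\dots,t_m+z)\in[0,1]^{md}:z\in\R^d\}$, which is already compact, disjoint from $\Delta$, and contained in a $d$-dimensional plane.

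To establish this reduced statement I would pick $n_0$ with $\sqrt{md}\,2^{-n_0}<\dist(W,\Delta)$, so that $W$ is covered by finitely many cubes $Q\in\mathcal{Q}^{md}_{n_0}$ and each such $Q$ meeting $W$ is a product $Q=Q_1\times\dots\times Q_m$ of pairwise disjoint cubes $Q_i\in\mathcal{Q}^d_{n_0}$ (else $Q$ would meet $\Delta$). Conditioning on $\mathcal{B}_{n_0}$, on the event that $Q_1,\dots,Q_m$ all survive the Galton--Watson/stochastic self-similarity structure recalled in Section \ref{sec:perco} makes $(h_{Q_1}\times\dots\times h_{Q_m})(A^m\cap\overline{Q})$ a product of $m$ independent realizations of $A^{\text{perc}(d,p)}$, while the image of $W\cap\overline{Q}$ is a compact subset of an affine plane of dimension $\le d+1$. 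Because $s\le d-\tfrac{d+1}{m}$ forces $(d-s)m\ge d+1$, this image has finite $\mathcal{H}^{(d-s)m}$-measure (indeed zero if the inequality is strict), so Proposition \ref{prop:extinction-critical} gives that almost surely it is disjoint from the product, i.e.\ $A^m\cap W\cap\overline{Q}=\varnothing$ a.s.; summing over the finitely many cubes $Q$ finishes the reduced statement. For translated copies the same argument runs with $d$ in place of $d+1$, the hypothesis $s\le d-\tfrac{d}{m}$ supplying $(d-s)m\ge d$.

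The one genuinely non-routine point — and the one I expect to be the main obstacle to write cleanly — is the localisation itself: $A^m$ is neither a martingale measure nor spatially independent (precisely the phenomenon driving the whole paper), so it can only be identified with a product of independent copies of $A$ on cubes whose factors are pairwise disjoint, which is why $\Delta$ must be excised and $V_S$ exhausted by the compact pieces $V_S^{(k)}$. Everything else is a direct application of Proposition \ref{prop:extinction-critical} combined with the elementary fact that a bounded subset of a $j$-dimensional affine plane has finite $\mathcal{H}^t$-measure exactly when $t\ge j$, used here with $j=d+1$ (resp.\ $d$) and $t=(d-s)m$; the familiar harmless issues at dyadic cube boundaries, where a point of $A^m$ need not a priori force its factor cubes to survive, are handled exactly as in the proof of Proposition \ref{prop:extinction-critical}.
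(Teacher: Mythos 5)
Your proposal is correct and takes essentially the same route as the paper: reduce to a single configuration, use the dyadic decomposition of $A^m\setminus\Delta$ into product cubes with pairwise disjoint factors to replace $A^{\text{perc}(d,p)}$ (conditionally, up to rescaling) by $m$ independent realizations, and apply Proposition \ref{prop:extinction-critical} to the bounded piece of the $(d+1)$-plane (resp.\ $d$-plane), which has finite $\mathcal{H}^{(d-s)m}$-measure exactly because $s\le d-(d+1)/m$ (resp.\ $s\le d-d/m$). The paper compresses this into three lines; your write-up merely makes the localisation, the exhaustion away from the diagonal, and the boundary bookkeeping explicit.
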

\begin{proof}
It is enough to show the claim for a single set $S$. Moreover, by the usual decomposition of $A^m\setminus\Delta$ into dyadic cubes, we may replace $A^{\text{perc}(d,p)}$ by the product of $m$ independent realizations of $A^{\text{perc}(d,p)}$. The claim is then immediate from Proposition \ref{prop:extinction-critical}.
\end{proof}

We also have the following corollary of (the proof of) Proposition \ref{prop:extinction-critical} that will be required later.
\begin{cor} \label{cor:extinction-quantitative}
  Suppose $V\subset\R^{md}$ can be covered by $C 2^{m(d-s)n}$ cubes in $\mathcal{Q}_n$ for all $n$. Let $A^{(1)},\ldots,A^{(m)}$ be independent realizations of $A^{\text{perc}(d,p)}$. Then there exists a sequence $q_n\downarrow 0$, depending only on $C$, such that
  \[
  \PP(V\cap A_n^{(1)}\times \cdots\times A_n^{(m)}\neq \varnothing) \le q_n \quad\text{for all }n\in\N.
  \]
\end{cor}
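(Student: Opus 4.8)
The plan is to re-run the argument of Proposition \ref{prop:extinction-critical}, but this time track the rate of decay explicitly rather than merely conclude that the limiting event has probability zero. First I would set up the analogue of the covering data: for each fixed $n$, let $\mathcal{C}_n$ be a collection of at most $C 2^{m(d-s)n}$ cubes in $\mathcal{Q}_n$ covering $V$. Writing $A_k = A_k^{(1)}\times\cdots\times A_k^{(m)}$ and $A_\infty^{(i)} = A^{(i)}$, the event $V\cap A_n^{(1)}\times\cdots\times A_n^{(m)}\neq\varnothing$ is contained in the event that at least one cube $Q\in\mathcal{C}_n$ satisfies $Q\cap A_n\neq\varnothing$ (indeed, if $x\in V\cap A_n$ then $x$ lies in some $Q\in\mathcal{C}_n$, and that $Q$ meets $A_n$). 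So it suffices to bound $\PP(\exists Q\in\mathcal{C}_n : Q\cap A_n\neq\varnothing)$.

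Next I would estimate this probability via a two-scale argument. Fix an auxiliary level $j=j(n)$ with $j\le n$, to be chosen at the end. Group the cubes of $\mathcal{C}_n$ according to which cube of $\mathcal{Q}_j$ contains them; there are at most $C2^{m(d-s)n}$ cubes total. The probability that a fixed $Q\in\mathcal{Q}_j$ satisfies $Q\subset A_j$ is $2^{-jm(d-s)}$, and conditional on this, the number of cubes $Q'\in\mathcal{Q}_n$ inside $Q$ with $Q'\subset A_n$ is governed by an $m$-fold product of Galton--Watson processes started at level $j$. Using the bound $\mu_n(x)\le 2^{m(d-s)n}$ (equivalently, $\EE(\#\{Q'\in\mathcal{Q}_n : Q'\subset A_n\}) = 2^{m(d-s)n}$) together with Markov's inequality, I would show that for each $Q\in\mathcal{Q}_j$,
\[
\PP(\exists Q'\in\mathcal{C}_n,\ Q'\subset Q,\ Q'\cap A_n\neq\varnothing)\le \min\bigl(1,\ |\{Q'\in\mathcal{C}_n : Q'\subset Q\}|\cdot 2^{-nm(d-s)}\bigr)\cdot \PP(Q\subset A_j)\cdot(\text{const}),
\]
and then sum over the (at most $2^{jd}$) cubes $Q\in\mathcal{Q}_j$, or better, over only those $Q$ that actually contain a cube of $\mathcal{C}_n$, of which there are at most $C2^{m(d-s)n}$ as well but more usefully at most $2^{jd}$. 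Balancing, the total is at most $O(C)\,2^{-jm(d-s)}\cdot 2^{jd}\cdot(\text{something})$; a cleaner route is to bound the whole thing by $O(C\,2^{jsm})\cdot 2^{-jm(d-s)}$ using that each occupied level-$j$ cube contributes probability $2^{-jm(d-s)}$ and there are $O(\min(2^{jd}, C2^{m(d-s)n}))$ of them—here I would simply take $j$ growing slowly with $n$ (e.g. $j = \lfloor \log\log n\rfloor$, or any $j(n)\to\infty$ with $j(n)\le n$) so that $2^{-jm(d-s)}\to 0$, while the factor $2^{jd}$ (or $2^{jsm}$) is absorbed because $2^{-jm(d-s)}2^{jd}$ can be arranged to tend to $0$ only if $d-s$ is large enough—so instead I should keep the occupied-cube count at $O(C)$ independent of $j$, which is exactly the content of item (4) in the proof of Proposition \ref{prop:extinction-critical}.

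Thus the cleanest implementation reuses the \emph{nested} covering from that proof verbatim: choose $\mathcal{C}_0,\mathcal{C}_1,\ldots$ nested with $\sum_{Q\in\mathcal{C}_k}2^{-\ell(Q)m(d-s)}\le C'$ for all $k$ (such a covering exists whenever $V$ can be covered by $C2^{m(d-s)k}$ cubes of $\mathcal{Q}_k$—this is a standard dyadic-covering/energy fact, and $C'$ depends only on $C$), define $K_n = \#\{Q\in\mathcal{C}_n : Q\subset A_{\ell(Q)}\}$ and $\mathfrak{F}_n = (K_n\ge 1)$, and observe $\EE(K_n)\le C'$, so $\PP(\mathfrak{F}_n) = \PP(K_n\ge 1)$ is \emph{not} by itself small. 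The decay instead comes from iterating the one-step estimate $\PP(\mathfrak{F}_{n+1}^c\mid \mathfrak{F}_n)\ge c$ obtained in Proposition \ref{prop:extinction-critical} (with $c$ depending only on $C'$, hence only on $C$): since $\mathfrak{F}_n\cap A_n^{(1)}\times\cdots\times A_n^{(m)}\neq\varnothing$ requires $V\cap A_n^{(1)}\times\cdots\times A_n^{(m)}\neq\varnothing \subset \mathfrak{F}_n$, and $\PP(\mathfrak{F}_n)\le \PP(\mathfrak{F}_n\mid\mathfrak{F}_{n-1})\cdots\PP(\mathfrak{F}_1\mid\mathfrak{F}_0)\le (1-c)^{n}$, we may take $q_n := (1-c)^n$, which depends only on $C$ and tends to $0$. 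The main obstacle is the bookkeeping needed to extract the nested covering with the uniform energy bound $\sum_{Q\in\mathcal{C}_k}2^{-\ell(Q)m(d-s)}\le C'(C)$ from the hypothesis that $V$ is coverable by $C2^{m(d-s)n}$ cubes of $\mathcal{Q}_n$ for every $n$; once that is in place, the probabilistic estimate is exactly the one already proved, run quantitatively, and the conditional one-step bound \eqref{eq:conditional-probability-dying} carries over verbatim with $M_0$ a function of $C$ alone.
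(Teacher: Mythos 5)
Your setup is fine (and simpler than you make it sound: one can just take $\mathcal{C}_n$ to be the cubes of $\mathcal{Q}_n$ meeting $V$; nesting is automatic and the energy bound is exactly the hypothesis, since all cubes in $\mathcal{C}_n$ have side $2^{-n}$), but the final step contains a genuine error. You assert that the one-step estimate $\PP(\mathfrak{F}_{n+1}^c\mid\mathfrak{F}_n)\ge c$ from Proposition \ref{prop:extinction-critical} holds with $c$ depending only on $C$. It does not: in that proof, $c=qc_0/2$, where $q$ is the \emph{assumed lower bound} on $\PP(\mathfrak{F}_n)$ and $M_0=M_0(C,q)$ is chosen so that $\PP(K_n>M_0)\le C/M_0\le q/2$. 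The Markov bound $\EE(K_n)\le C$ controls $K_n$ unconditionally, but says nothing about the conditional law of $K_n$ given the (possibly very rare) event $K_n\ge 1$; if $\PP(\mathfrak{F}_n)$ is small, $K_n$ may be concentrated on large values conditionally on survival, and then the probability of total extinction in one more step is not bounded below uniformly. Consequently your conclusion $q_n=(1-c)^n$ is actually \emph{false} in general: take $m=2$, $d=1$, $s=1/2$ and $V$ the diagonal of $[0,1]^2$ (so $C=O(1)$); then $V\cap A_n^{(1)}\times A_n^{(2)}\neq\varnothing$ is the survival event of a critical Galton--Watson process, whose probability decays like $1/n$, not exponentially.

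The correct way to quantify the argument is by contradiction/bootstrapping, which is what the paper does: fix any $q>0$ and suppose $\PP(\mathfrak{C}_n)\ge q$ for the event $\mathfrak{C}_n=\{V\cap A_n^{(1)}\times\cdots\times A_n^{(m)}\neq\varnothing\}$. Since the $\mathfrak{C}_k$ are nested decreasing, $\PP(\mathfrak{C}_k)\ge q$ for all $k\le n$, so the one-step bound with constant $c(q,C)$ applies at every step $k<n$ and yields $q\le\PP(\mathfrak{C}_n)\le(1-c(q,C))^n$, which is absurd once $n\ge n_0(q,C)$. Hence $\PP(\mathfrak{C}_n)<q$ for all $n\ge n_0(q,C)$, and letting $q$ run over a sequence tending to $0$ produces $q_n\downarrow 0$ depending only on $C$ — with no claim of any particular rate. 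Your proposal needs this restructuring; as written, the key constant is not uniform and the stated rate is wrong.
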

\begin{proof}
  Let $\mathcal{C}_n$ be the cubes in $\mathcal{Q}_n$ that hit $V$, so that (1)-(4) in the proof of Proposition \ref{prop:extinction-critical}  hold by assumption. Let $\mathfrak{C}_n$ denote the event that $V\cap A_n^{(1)}\times \cdots\times A_n^{(m)}\neq \varnothing$.  Fix $q>0$ and suppose $q\le \PP(\mathfrak{C}_n)$. Then the proof  of Proposition \ref{prop:extinction-critical} shows that there is $c=c(q,C)\in (0,1)$ such that
  \[
  q \le \PP(\mathfrak{C}_n)  \le (1-c)^n.
  \]
  This is absurd if $n$ is large enough depending on $q, C$, so $\PP(\mathfrak{C}_n)\le q$ for $n$ sufficiently large depending on $q, C$, which yields the claim.
\end{proof}

\section{Nonlinear intersections and applications}\label{sec:nonlinear}

In this section, we prove a non-linear generalization of Theorem \ref{thm:independent-product-linear-int} : we will replace the family $\Gamma\subset\AA_{md,k}$ in Theorem \ref{thm:independent-product-linear-int} by a family of $k$-dimensional algebraic surfaces of bounded degree. As applications, we obtain the sharp dimension thresholds for the existence of many non-linear configurations in $A$, such as all angles, similar copies of all triangles and all small enough distances.

\subsection{Preliminaries}

Before stating the main result of the section, we recall some notation and  prove some preliminary results. For $M,q\in\N$ fixed, $q\le M$, let $\cP_{r,q,M}$ be the family of non-constant polynomials $\R^M\to\R^q$ of degree $\le r$ (as in Section \ref{sec:affine-intersections}, in our applications $M$ will be of the form $M=md$ for some $m\in\N$). We will often shorten this to $\cP_{r}$ when the parameters $q$ and $M$ are clear from context.
Also write $\cP_{r,q,M}^{\text{reg}}$ for the polynomials in $\cP_{r,q,M}$ for which $0$ is a regular value on $[0,1]^M$ (that is, the rank of $D P(x)$ is $q$ for all $x\in P^{-1}(0)\cap[0,1]^M$). We identify elements $P=(P_1,\ldots,P_q)$ of $\cP_r$ with the coefficients of $P_i$, $i\in[q]$ and in this way see $\cP_r$ as a subset of some Euclidean space $\R^N$. This induces a metric, given by the Euclidean distance of the coefficients, which will be referred to as the Euclidean distance on $\cP_r$.

Given a polynomial $P:\R^M\to\R^q$, we denote $P^{-1}(0)\cap [0,1]^M$ by either $\zero_P$ or $\zero(P)$. The dimensions $M,q$ will always be clear from context. Furthermore, we denote $\eta_P = \mathcal{H}^{M-q}|_{\zero(P)}$, with the convention that $\eta_P$ is the trivial measure if $\mathcal{H}^{M-q}(\zero(P))=0$.

Globally, the Euclidean metric does not match the geometric closeness of the varieties: two polynomials $P_1, P_2$ with $|P_1-P_2|$ small may have completely different zero sets $\zero_{P_1},\zero_{P_2}$. The next lemma asserts that, near a polynomial for which $0$ is a regular value, this does not happen. For $M=2$ and $q=1$, the statement of the lemma is contained in Lemma 8.8 of \cite{ShmerkinSuomala14}.
\begin{lemma} \label{lem:isotopic-curves}
Let $P\in\cP_{r,q,M}^{\text{reg}}$
such that $P^{-1}(0)\cap ]0,1[^M\neq\varnothing$.

Then there exist neighbourhoods $\mathcal{O}$ of $P$ (in $\cP_r$ with the Euclidean metric) and $U$ of $\zero_P$, and a real analytic map $G:\mathcal{O}\times \mathcal{O}\times U\to\R^M$  such that for all $P_1,P_2\in \mathcal{O}$:
\begin{enumerate}
\item\label{G1} $\zero_{P_1}\subset U$, and $G(P_1,P_1,\cdot)$ is the identity on $P_1^{-1}(0)\cap U$,
\item\label{G2} $G(P_1,P_2,\cdot)|_{P_1^{-1}(0)\cap U}$ is a diffeomorphism onto its image,\\ and $\zero_{P_2}\subset G(P_1,P_2,P_1^{-1}(0)\cap U)$,
\item\label{G3} $P_2(G(P_1,P_2,u))=0$ whenever $u\in\zero_{P_1}$.
\end{enumerate}
\end{lemma}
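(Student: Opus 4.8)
The plan is to realise $G(P_1,P_2,\cdot)$ as the time-$1$ map of an explicit real-analytic flow that deforms $\{P_1=0\}$ onto $\{P_2=0\}$ along the linear homotopy $P_s^{P_1,P_2}:=(1-s)P_1+sP_2$. The starting point is the regular value hypothesis: $DP(x)$ has rank $q$ for every $x\in\zero_P$, so by compactness of $\zero_P$ and continuity of $x\mapsto DP(x)$ there are a bounded open neighbourhood $U_0$ of $\zero_P$ and $c>0$ with $DP(x)DP(x)^T\ge c^2\,\mathrm{Id}$ for all $x\in U_0$. Since $(P',x)\mapsto DP'(x)$ is (jointly polynomial, hence) continuous, I can pick a neighbourhood $\mathcal{O}_0$ of $P$ in $\cP_r$ so small that $\sup_{x\in U_0}\|DP'(x)-DP(x)\|<c/4$ for $P'\in\mathcal{O}_0$; then $DQ(x)DQ(x)^T\ge(c/2)^2\,\mathrm{Id}$ on $U_0$ for every $Q$ lying on a segment joining two elements of $\mathcal{O}_0$, in particular for $Q=P_s^{P_1,P_2}$ with $P_1,P_2\in\mathcal{O}_0$. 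Fixing then an open neighbourhood $U$ of $\zero_P$ with $\overline U\subset U_0$, and shrinking $\mathcal{O}_0$, I may also assume that every $P'\in\mathcal{O}_0$ is non-vanishing on $[0,1]^M\setminus U$, so that $\zero_{P'}\subset U$ (this already gives the first assertion of \ref{G1}). Finally, a routine compactness argument using $\sup_{[0,1]^M}|P'-P|\to0$ shows that for each prescribed $\rho>0$ one has $\zero_{P'}$ contained in the $\rho$-neighbourhood of $\zero_P$ once $P'$ is close enough to $P$; this will be what keeps the backward flow inside $U$.

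Next I would introduce, for $P_1,P_2\in\mathcal{O}_0$, $s\in[0,1]$ and $y\in U_0$, the time-dependent vector field
\[
 V(P_1,P_2,s,y)=-DP_s(y)^T\bigl(DP_s(y)DP_s(y)^T\bigr)^{-1}(P_2-P_1)(y),\qquad P_s=P_s^{P_1,P_2},
\]
i.e. the minimal-norm solution $v$ of $DP_s(y)\,v=-(P_2-P_1)(y)$. Because $DP_s(y)DP_s(y)^T$ is invertible throughout this domain, $V$ is real analytic in $(P_1,P_2,s,y)$ (it is rational in the coefficients of $P_1,P_2$ and in $s$, and polynomial in $y$), and $|V(P_1,P_2,s,y)|\le C|P_2-P_1|$ there, with $C$ depending only on $c,M,q,r$. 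Let $\phi_s^{P_1,P_2}$ denote its flow, $\partial_s\phi_s(y)=V(P_1,P_2,s,\phi_s(y))$, $\phi_0=\mathrm{id}$. Writing $d_0=\dist(\zero_P,\partial U)>0$, I choose $\mathcal{O}=B(P,\varepsilon)\subset\mathcal{O}_0$ with $\varepsilon$ small enough that $2C\varepsilon<\min\{\dist(U,\partial U_0),\,d_0/2\}$ and, moreover, $\zero_{P'}$ lies in the $(d_0/4)$-neighbourhood of $\zero_P$ for all $P'\in\mathcal{O}$. The displacement bound $|\phi_s(y)-y|\le 2C\varepsilon$ then guarantees that $\phi_s^{P_1,P_2}$ is defined on $U$ with values in $U_0$ for every $s\in[0,1]$. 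I then set $G(P_1,P_2,\cdot)=\phi_1^{P_1,P_2}$; by analytic dependence of solutions of ODE's on time, initial condition and analytic parameters, $G\colon\mathcal{O}\times\mathcal{O}\times U\to\R^M$ is real analytic.

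The three properties then follow by short computations. If $P_1=P_2$ then $V\equiv0$, so $\phi_s\equiv\mathrm{id}$ and $G(P_1,P_1,\cdot)=\mathrm{id}$, which together with $\zero_{P_1}\subset U$ gives \ref{G1}. For \ref{G3}: if $P_1(u)=0$, differentiating $s\mapsto P_s(\phi_s^{P_1,P_2}(u))$ and using $\partial_sP_s=P_2-P_1$ together with the defining identity $DP_s\,V=-(P_2-P_1)$ yields $\tfrac{d}{ds}P_s(\phi_s(u))=0$, hence $P_s(\phi_s(u))\equiv P_1(u)=0$ and in particular $P_2(G(P_1,P_2,u))=0$; the same computation shows $\phi_1^{P_1,P_2}$ maps $P_1^{-1}(0)\cap U$ into $P_2^{-1}(0)$. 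Since $\phi_1^{P_1,P_2}$ is a diffeomorphism of $U$ onto an open subset of $U_0$ and $P_1^{-1}(0)\cap U$ is a smooth $(M-q)$-submanifold (as $DP_1$ has rank $q$ on $U\subset U_0$), its restriction is a diffeomorphism onto its image, giving the first half of \ref{G2}. For the inclusion $\zero_{P_2}\subset G(P_1,P_2,P_1^{-1}(0)\cap U)$, observe that the inverse of $\phi_1^{P_1,P_2}$ is the time-$1$ flow $\psi$ of the reversed field $(s,y)\mapsto-V(P_1,P_2,1-s,y)$; applying the differentiation argument to $Q_s:=(1-s)P_2+sP_1=P_{1-s}^{P_1,P_2}$ gives $\tfrac{d}{ds}Q_s(\psi_s(y))=0$, so for $y\in\zero_{P_2}$ one gets $P_1(\psi_1(y))=Q_1(\psi_1(y))=Q_0(y)=P_2(y)=0$, while the displacement bound places $\psi_1(y)$ within $2C\varepsilon$ of $y$, hence within $d_0/4+2C\varepsilon<d_0$ of $\zero_P$, i.e. inside $U$. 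Therefore $\psi_1(y)\in P_1^{-1}(0)\cap U$ and $y=\phi_1^{P_1,P_2}(\psi_1(y))\in G(P_1,P_2,P_1^{-1}(0)\cap U)$.

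The conceptual content is light — this is essentially the structural stability of regular level sets, generalising \cite[Lemma 8.8]{ShmerkinSuomala14} from $(M,q)=(2,1)$ to arbitrary $(M,q)$ — so the main obstacle I anticipate is purely the bookkeeping: the neighbourhoods $U_0$, $U$ and the radius $\varepsilon$ of $\mathcal{O}$ must be chosen in the right order so that simultaneously (i) the rank bound for $DP_s$ persists on $U_0$, (ii) the forward flow started in $U$ never leaves $U_0$, and (iii) the backward flow applied to $\zero_{P_2}$ lands back inside $U$ (not merely inside $U_0$), the last point relying on the fact, noted above, that $\zero_{P_2}$ is Hausdorff-close to $\zero_P$ when $P_2$ is close to $P$. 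Everything else — the real-analyticity of $V$ and of the flow, and the three displayed properties — is a direct computation with the identity $DP_s V=-(P_2-P_1)$.
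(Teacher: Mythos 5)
Your construction is correct, but it is genuinely different from the one in the paper. You realise $G(P_1,P_2,\cdot)$ as the time-$1$ map of the flow of the vector field $V=-DP_s^T(DP_sDP_s^T)^{-1}(P_2-P_1)$ along the linear homotopy $P_s=(1-s)P_1+sP_2$ (Moser's isotopy trick), whereas the paper obtains $G$ by applying the implicit function theorem to the map $\Phi_1(P_1,P_2,x,y,\lambda)=\bigl(P_2(y)-P_1(x),\,y-x-DP_2(y)^t\lambda\bigr)$, i.e.\ $G(P_1,P_2,x)$ is the point of $\{P_2=P_1(x)\}$ reached from $x$ along the normal directions of $P_2$; both hinge on the same nondegeneracy $\det(DP\,DP^t)\neq 0$ on $\zero_P$. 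The trade-offs are as you anticipate: your route makes property (3) and the surjectivity onto $\zero_{P_2}$ essentially automatic (the flow preserves the level sets of the homotopy, and the inverse is the reversed flow), at the cost of ODE bookkeeping — the order of choice of $U_0$, $U$, $\varepsilon$, the displacement bound keeping the forward flow in $U_0$ and the backward flow of $\zero_{P_2}$ inside $U$, and the appeal to analytic dependence of flows on parameters. The paper's route avoids ODE theory entirely, but has to work for injectivity: it constructs a candidate inverse from the symmetric map $\Phi_2$ and rules out $f_2f_1(x)\neq x$ by an angle argument between the normal space $H_{P_2,f_1(x)}$ and the displacement vector. Your checks of the transversality of the reversed flow landing back in $U$ (via Hausdorff closeness of $\zero_{P_2}$ to $\zero_P$) are exactly the points that need care, and they are handled correctly. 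One small inaccuracy: $V$ is rational, not polynomial, in $y$ (the denominator $\det(DP_s(y)DP_s(y)^T)$ depends on $y$), but since that denominator is bounded below on $U_0$ the real-analyticity conclusion stands. It is also worth noting that your $G$ enjoys the same quantitative $C^1$-closeness to the identity in terms of $|P_1-P_2|$ that the paper exploits later (e.g.\ in the proof of Theorem \ref{thm:nonlinear}), via the displacement bound and Gronwall, so the construction is a drop-in replacement.
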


\begin{proof}
The idea of the proof is to define $G$ as follows: given $P_1, P_2\in\cP_{r,q,M}$ close to $P$ and $x$ close to $\zero_P$, we define $G(P_1,P_2,x)$ as the `first' point in $\{ P_2 = P_1(x)\}$ that is reached by moving from $x$ orthogonally to the variety $\{ P_2 = P_1(x) \}$. The implicit function theorem will ensure that this function is indeed well defined and has the claimed properties.

As above, we identify $\cP_{r,q,M}$ with the appropriate Euclidean space $\R^N$. We let $\Phi_1,\Phi_2:\R^N\times\R^N\times \R^M\times \R^M\times \R^q\to \R^M\times \R^M$ be given by
\begin{align*}
\Phi_1(P_1,P_2,x,y,\lambda) &= \left(P_2(y)-P_1(x),y-x-DP_2(y)^t \cdot \lambda\right),\\
\Phi_2(P_1,P_2,x,y,\lambda) &= \left(P_2(y)-P_1(x),y-x-DP_1(x)^t \cdot \lambda\right)\,.
\end{align*}
We write $\partial\Phi_i/\partial(y,\lambda)$ for the partial derivatives of $\Phi_i$ with respect to the $M+q$ variables $(y,\lambda)$. Then we have the following block structures for $i=1,2$:
\[
\frac{\partial\Phi_i}{\partial(y,\lambda)}(P,P,x,x,0)= \left(
                                             \begin{array}{c|c}
                                               DP(x) & 0_{q\times q} \\
                                               \hline
                                               I_{d\times d} & -DP(x)^t \\
                                             \end{array}
                                           \right).
\]
Since $P\in\cP_{r,q,M}^{\text{reg}}$, for $x\in \zero_P$ we have:
\[
\det\left(
                                             \begin{array}{c|c}
                                               DP(x) & 0_{q\times q} \\
                                               \hline
                                               I_{d\times d} & -DP(x)^t \\
                                             \end{array}
                                           \right) = \det\left(-DP(x)\cdot DP(x)^t\right) \neq 0.
\]
The implicit function theorem together with compactness then provides neighbourhoods $\widetilde{\mathcal{O}}, \widetilde{U}$ of $P$, $\zero_P$, and real-analytic functions $(y_1,\lambda_1),(y_2,\lambda_2):\widetilde{\mathcal{O}}\times \widetilde{\mathcal{O}}\times \widetilde{U}\to \R^M\times\R^q$,  such that
\[
\Phi_i(P_1,P_2,x,y_i(P_1,P_2,x),\lambda_i(P_1,P_2,x)) \equiv 0,\quad i=1,2.
\]
Let $\mathcal{O},U$ be neighbourhoods of $P,\zero_P$ which are compactly contained in $\widetilde{\mathcal{O}}, \widetilde{U}$ respectively.

We define $G:=y_1$. Note that $\Phi_1(P_1,P_1,x,x,0)=0$ whenever $P_1\in\widetilde{\mathcal{O}}$, $x\in  \widetilde{U}$, so the uniqueness of the implicit function (assuming $\widetilde{\mathcal{O}}$ is connected as we may) ensures that $G(P_1,P_1,x)=x$. Using this, compactness, and the continuity of $\Phi_i$, by making $\mathcal{O}, U$ smaller, we may ensure that $G(P_1,P_2,x)\in \widetilde{U}$ whenever $P_1,P_2\in\mathcal{O}$ and $z\in P_1^{-1}(0)\cap U$. Again by compactness, and making $\mathcal{O}$ smaller if needed, we may guarantee that $\zero_{\wt{P}}\subset U$ for all $\wt{P}\in \mathcal{O}$.  In particular, \eqref{G1} holds.

We claim that, perhaps after making $\mathcal{O}$ even smaller, if $f_1=y_1(P_1,P_2,\cdot)$ and $f_2=y_2(P_2,P_1,\cdot)$, then $f_2 f_1(x)=x$ for all $P_1,P_2\in \mathcal{O}$ and all $x\in P_1^{-1}(0)\cap U$. Indeed, note first that $f_2 f_1(x)\in P_1^{-1}(0)$ is well defined. Suppose $z=f_2 f_1(x)-x\neq 0$. Given $\wt{P}\in\mathcal{O},y\in U$, let $H_{\wt{P},y}$ be the span of the gradients of the coordinate functions of $\wt{P}$ evaluated at $y$. Since $H_{\wt{P},y}$ is perpendicular to the tangent of $\{ \wt{P}=\wt{P}(y)\}$ at $y$, and the map $(\wt{P},y)\to H_{\wt{P},y}$ is continuous at points $(\wt{P},y)$ such that $y$ is a regular point of $\wt{P}$, we find that (making $\mathcal{O}$ smaller again) the angle between $H_{P_2,f_1(x)}$ and $z$ is non-zero. But it follows from the definitions that $z$ has the form
\[
z =  DP_2(f_1(x))^t\cdot \lambda
\]
for some $\lambda\in \R^q\setminus\{0\}$, which means precisely that $z\in H_{P_2,f_1(x)}$, contradicting our hypothesis. So $z=0$ as we had claimed. Taking stock, we have shown that $f_2$ is the inverse function to $f_1$, so that $f_1:P_1^{-1}(0)\cap U\to\R^M$ is a diffeomorphism onto its image.

By definition of $G$, we have that $P_2(G(P_1,P_2,x))=0$ whenever $P_1,P_2\in \mathcal{O}$ and $x\in P_1^{-1}(0)\cap U$, in particular giving \eqref{G3}. By making $\mathcal{O}$ smaller one more time, we may assume that $\zero(P_2)\subset G(P_1,P_2,P_1^{-1}(0)\cap U)$, showing that \eqref{G2} holds.
\end{proof}

Next, we present a useful consequence of the coarea formula for submanifolds of $\R^M$. Recall that the $\ell$-Jacobian $J_\ell L$ of a linear map $L:\R^M\to\R^\ell$, $d\ge \ell$, is given by the product of its singular values, which are the square roots of the eigenvalues of $L L^t$ (so that $(J_\ell L)^2 = \det(L L^t)$). We denote the tangent of a submanifold $S\subset\R^M$ at $x\in S$ by $T_x(S)$.

\begin{prop} \label{prop:coarea}
Let $P\in\mathcal{P}_{r,q,M}^{\text{reg}}$, let $V\subset [0,1]^M$ be a Borel set, and let $f:U\to \R^\ell$ be a $C^1$ map, where $U$ is a neighbourhood of $\zero_P\cap V$ and $\ell\in [M]$. Assume that:
\begin{itemize}
\item $\inf_{x\in V\cap \zero_P} J_\ell(Df(x)) \ge c_1 >0$,
\item For all $x\in\zero_P\cap V$, the angle between the tangent plane to $P^{-1}(0)$ at $x$ and $\text{ker}(Df(x))$ is at least $c_2>0$.
\end{itemize}
If, furthermore, $M-q \ge \ell$, then
\[
\mathcal{H}^{M-q}(\zero_P\cap V) \le O(c_1^{-1} c_2^{-(M-q)}) \int_{f(\zero_P\cap V)} \mathcal{H}^{M-q-\ell}(\zero_P\cap V\cap f^{-1}(y)) \, d\mathcal{H}^{\ell}(y).
\]
\end{prop}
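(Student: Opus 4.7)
The plan is to view this as a straightforward application of the coarea formula for Lipschitz maps on rectifiable sets, with the bulk of the work going into a purely linear-algebraic lower bound for the restricted Jacobian. Since $0$ is a regular value of $P$ on $[0,1]^M$, the set $\zero_P$ is a $C^1$ submanifold of dimension $M-q$ (with boundary in $\partial [0,1]^M$, which causes no issue because we only need to integrate over the smooth part of $\zero_P\cap V$). I would apply the coarea formula to the Lipschitz map $f\colon \zero_P\cap V\to\R^\ell$ to obtain
\[
\int_{\zero_P\cap V} J_\ell\bigl(Df(x)|_{T_x\zero_P}\bigr)\,d\mathcal{H}^{M-q}(x) = \int_{f(\zero_P\cap V)} \mathcal{H}^{M-q-\ell}\bigl(\zero_P\cap V\cap f^{-1}(y)\bigr)\,d\mathcal{H}^\ell(y),
\]
so the conclusion will follow at once from a pointwise lower bound $J_\ell(Df(x)|_{T_x\zero_P}) \gtrsim c_1 c_2^{M-q}$ on $\zero_P\cap V$.

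The main step is therefore the following linear-algebraic claim: if $L\colon\R^M\to\R^\ell$ is linear with $J_\ell L\ge c_1$ and $K=\ker L$, and if $T\subset\R^M$ is an $n$-dimensional subspace ($n\ge \ell$) with $\angle(T,K)\ge c_2$, then $J_\ell(L|_T)\ge c_1(\sin c_2)^\ell$. To prove this, I would let $T_0$ be the orthogonal complement of $T\cap K$ inside $T$; the angle assumption forces $\dim(T\cap K)=n-\ell$ (the minimum possible), hence $\dim T_0=\ell$. Since $L$ vanishes on $T\cap K$, we have $J_\ell(L|_T)=J_\ell(L|_{T_0})$. Decomposing $L|_{T_0}=L|_{K^\perp}\circ \pi_{K^\perp}|_{T_0}$ and using that $J_\ell(L|_{K^\perp})=J_\ell L$, this gives
\[
J_\ell(L|_{T_0}) = J_\ell(L)\cdot J_\ell\bigl(\pi_{K^\perp}|_{T_0}\bigr).
\]
For a unit vector $v\in T_0$, the relation $|\pi_{K^\perp}(v)|=\sin\angle(v,K)$ together with $v\perp T\cap K$ and the definition of $\angle(T,K)$ in the paper yields $\angle(v,K)\ge c_2$. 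Hence every singular value of $\pi_{K^\perp}|_{T_0}$ is at least $\sin c_2$, so $J_\ell(\pi_{K^\perp}|_{T_0})\ge(\sin c_2)^\ell$. Since $\sin c_2\gtrsim c_2$ and $M-q\ge\ell$ (so $c_2^\ell\ge c_2^{M-q}$ for $c_2\le 1$), this gives the desired bound $J_\ell(L|_T)\gtrsim c_1 c_2^{M-q}$.

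Combining the two steps, the LHS of the coarea identity is $\gtrsim c_1 c_2^{M-q}\,\mathcal{H}^{M-q}(\zero_P\cap V)$, and rearranging gives exactly the claimed inequality (noting that $\zero_P\cap V\cap f^{-1}(y)$ is empty outside $f(\zero_P\cap V)$). I do not expect any genuine obstacle: boundary-of-cube issues are routine (one can intersect with $]0,1[^M$ or invoke rectifiability), and the only slightly delicate computation is the identification $J_\ell(L|_{T_0})=J_\ell(L)\cdot J_\ell(\pi_{K^\perp}|_{T_0})$, which I would verify via a singular value decomposition of $L$ with $K$ appearing as the span of the last $M-\ell$ right-singular vectors.
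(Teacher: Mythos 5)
Your proposal is correct and follows essentially the same route as the paper: reduce to the coarea formula for $f$ restricted to the submanifold $\zero_P\cap V$, and prove the pointwise lower bound $J_\ell^{S}f(x)=\Omega(c_1c_2^{\ell})\ge\Omega(c_1c_2^{M-q})$ by factoring $Df(x)$ on the $\ell$-dimensional orthogonal complement of $T_x\cap\ker Df(x)$ through the orthogonal projection onto $\ker(Df(x))^{\perp}$, whose singular values are bounded below by $\sin c_2$ thanks to the angle hypothesis. The paper carries out this last computation with determinants in singular-value-decomposition coordinates rather than via multiplicativity of $\ell$-Jacobians, but the argument is the same in substance.
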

\begin{proof}
By a standard approximation, we may assume $V$ is open. Let $S=\zero_P\cap V$; it is a regularly embedded submanifold of $\R^M$. Write $J_\ell^S f(x)$ for the $\ell$-Jacobian relative to $S$; see \cite[Definition 5.3.3]{KrantzParks2008}. We claim that
\begin{equation} \label{eq:lower-bound-ell-Jacobian}
J_\ell^S f(x)=\Omega(c_1 c_2^\ell)\quad\text{ for all }x\in S.
\end{equation}
Assuming this, the claim follows immediately from the coarea formula for submanifolds, see \cite[Theorem 5.3.9]{KrantzParks2008}, so the task is to establish \eqref{eq:lower-bound-ell-Jacobian}.

Write $L=Df(x)$ by simplicity. By the singular value decomposition, after orthonormal changes of bases in $\R^M$ and $\R^\ell$ we may assume that $L$ has the form
\begin{equation} \label{eq:matrix-SVD}
\left(
  \begin{array}{c|c}
    D & O_{M-\ell\times\ell}
  \end{array}
\right),
\end{equation}
where $D$ is an $\ell\times\ell$ diagonal matrix with the singular values of $L$ on the diagonal, so that $J_\ell(L)=\det(D)$. Now let $e_1,\ldots, e_\ell$ be an orthonormal basis for the orthogonal complement $W$ of $T_x(S)\cap \text{ker}(L)$ inside $T_x(S)$; by assumption, any non-zero element of $W$ makes an angle $\ge c_2$ with $\text{ker}(L)$. Let $\pi$ denote orthogonal projection onto the orthogonal complement of $\text{ker}(L)$. We note that $\pi^{-1}:\pi(W)\to W$ is well defined and $O(c_2)$-Lipschitz; this is the same argument as in \eqref{eq:bilip}. In particular, restricting $\pi^{-1}$ to the cube spanned by $\pi(e_i)$, we see that the parallelogram spanned by $\pi(e_i)$ has $\ell$-area $\Omega(c_2^\ell)$. On the other hand, from \eqref{eq:matrix-SVD} we see that if $E$ is the matrix with $e_i$ as columns and $E'$ is the matrix with $\pi(e_i)$ as columns, then
\[
\det(LE) = \det(D)\det(E') \ge \Omega(c_1 c_2^\ell),
\]
so that the definition of relative Jacobian gives \eqref{eq:lower-bound-ell-Jacobian}.
\end{proof}

\subsection{Continuity for intersections with algebraic varieties}

We are ready to extend Theorem \ref{thm:independent-product-linear-int} to intersections with algebraic varieties. We note that for $d>2$, this result is new even in the SI-martingale case (that is, in the case $m=1$). Recall the notations $H^I$, $H^{I,i,k}$ from Section \ref{sec:notation}.

\begin{thm}\label{thm:nonlinear}
Let $\nu_{n}^{(j)}$, $j\in[m]$, be $m$ independent realizations of $\nu_n^{\text{perc}(d,p)}$, and let
\[
\mu_n = \nu_n^{(1)}\times\cdots\times \nu_n^{(m)}.
\]
Let $q\in[md-1]$ and $P_0\in\cP_{r,q,md}^{\text{reg}}$, such that $P_0^{-1}(0)\cap]0,1[^{md}\neq\varnothing$.
Suppose that for each index set $I\subsetneq [m]$, and each $i\in[m]\setminus I$, $k\in[d]$, the tangent planes of $P_0^{-1}(0)$ at each $a\in P_0^{-1}(0)\cap[0,1]^{md}$ form an angle $>0$ with the planes $H^I$, $H^{I,i,k}$.

Then there is a neighbourhood $\mathcal{O}$ of $P_0$ such that:
\begin{enumerate}
\item
Assume $s=s(d,p)>q/m$. Denoting
\[
Y_n^{P}=\int_{P^{-1}(0)\cap[0,1]^{md}}\mu_n\,d\mathcal{H}^{md-q}\,,
\]
the sequence $Y_n^P$ converges uniformly over all $P\in \mathcal{O}$ and $Y^P=\lim_{n\to\infty}Y_n^P$ satisfies
\[
|Y^{P_1}-Y^{P_2}|\le K|P_1-P_2|^\gamma\,,
\]
where $\gamma>0$ is a deterministic constant depending on $s,r,q,d,m$, and $K$ is a finite random variable.
\item Now suppose $s\le q/m$.  Then almost surely
\[\sup_{n\in\N, P\in \mathcal{O}}2^{-\gamma n}Y_{n}^P<\infty\,,\]
for any $\gamma>q-ms$.
\end{enumerate}
\end{thm}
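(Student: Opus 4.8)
The plan is to deduce Theorem~\ref{thm:nonlinear} from the abstract results Theorem~\ref{thm:Holder-continuity} and Theorem~\ref{thm:small_dimension_projections} in essentially the same way that Theorem~\ref{thm:independent-product-linear-int} is deduced from them, with Lemma~\ref{lem:isotopic-curves} and Proposition~\ref{prop:coarea} supplying the substitutes for the elementary facts about affine planes used in the linear case. First I would fix $P_0$ as in the hypothesis, apply Lemma~\ref{lem:isotopic-curves} to obtain neighbourhoods $\mathcal{O}$ of $P_0$ and $U$ of $\zero_{P_0}$ together with the real-analytic map $G$, and take $\mathcal{O}$ small enough that: (a) every $P\in\mathcal{O}$ lies in $\cP_{r,q,md}^{\text{reg}}$ with $\zero_P\subset U$; (b) the angle condition with respect to the coordinate planes $H^I$, $H^{I,i,k}$ persists, uniformly in $P\in\mathcal{O}$ and in the base point, since tangent planes vary continuously; (c) $\mathcal{H}^{md-q}(\zero_P)=O(1)$ and, more importantly, $\eta_P=\mathcal{H}^{md-q}|_{\zero_P}$ has Frostman exponent $\kappa=md-q$ with a uniform constant — this follows because near a regular value the variety is, at every scale down to its curvature radius, a graph over an $(md-q)$-plane, so the coarea/volume comparison in Proposition~\ref{prop:coarea} (or a direct implicit-function-theorem argument) gives $\eta_P(B(x,\rho))=O(\rho^{md-q})$. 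The parameter space $\Gamma=\mathcal{O}$ is a bounded subset of a Euclidean space, so (H\ref{H:size-parameter-space}) holds, and (H\ref{H:codim-random-measure}) holds with $\alpha=m(d-s)$ exactly as in the linear case.

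**The a priori Hölder bound and the dependency degree.** For (H\ref{H:Holder-a-priori}) I would use the diffeomorphism $G(P_1,P_2,\cdot)\colon\zero_{P_1}\cap U\to\R^{md}$ from Lemma~\ref{lem:isotopic-curves}, which depends real-analytically, hence Lipschitz on the compact $\mathcal{O}\times\mathcal{O}\times \overline{U}$, on $(P_1,P_2)$, and equals the identity when $P_1=P_2$; pushing $\eta_{P_1}$ forward by this map and comparing with $\eta_{P_2}$, together with the uniform bound $\mu_n\le 2^{m(d-s)n}$ and the fact that $\zero_{P}$ meets $O(2^{(md-q)n})$ cubes in $\mathcal{Q}_n$, yields $|Y_n^{P_1}-Y_n^{P_2}|=O(|P_1-P_2|\,2^{(m(d-s)+md-q)n})$, i.e. $\theta=m(d-s)+md-q$, $\gamma_0=1$, and one can take $K=1$ as in Remark~(iii) after Theorem~\ref{thm:Holder-continuity}; (H\ref{H:finite_approx_family}) holds for any $\gamma>0$ since $\mathcal{O}$ has finite box dimension. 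The substantive point, exactly as in Theorem~\ref{thm:independent-product-linear-int}, is showing $\mu_n$ is a WSDM with a small enough parameter $\delta$. I would run the same joint induction on $m$: the dependency graph on $\{\mu_{n+1}^P(Q)\}_{Q\in\mathcal{Q}_n^{md}}$ is defined by joining $Q,Q'$ when both meet $\zero_P$ and $\pi_j(Q)=\pi_{j'}(Q')$ for some $j,j'$; bounding its degree reduces, by a Fubini/volume argument as in \eqref{eq:volume_argument}, to bounding $\mathcal{H}^{md-q-d}\big(\zero_P\cap(\{x_j=c\})\cap\text{(small box)}\big)$, which is where Proposition~\ref{prop:coarea} is used with $f=\pi_j$: the hypotheses of that proposition are exactly that $\pi_j$ restricted to $\zero_P$ has $J_d$ bounded below and that the tangent of $\zero_P$ is transversal to $\ker D\pi_j=H^{\{j\}}$, which is guaranteed by the angle assumption on $\mathcal{O}$. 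This expresses the dependency degree in terms of the slice-masses $\widetilde Y_n^{P'}$ of the $(m-1)$-fold product against the variety $P' = \pi^j(\zero_P\cap\{x_j=c\})$ — note $P'$ is again a regular zero set of a polynomial of degree $\le r$ (this is where one checks that the class of families is stable under this slicing operation and that transversality with respect to the lower-dimensional coordinate planes is inherited, the analogue of $\mathcal{R}(\Gamma)$ and Remark~\ref{rem:transversality}) — so the induction hypothesis plus the analogue of Lemma~\ref{lemma:AWSD_product} closes the loop, giving $\delta<md-q-m(d-s)=\kappa-\alpha$ when $s>q/m$ and $\delta$ arbitrarily small when $s\le q/m$.

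**Conclusion and main obstacle.** With $\kappa=md-q$, $\alpha=m(d-s)$, $\kappa-\alpha=ms-q$, the condition $\kappa-\alpha-\delta>0$ of Theorem~\ref{thm:Holder-continuity} holds when $s>q/m$, giving part (1) with $\gamma$ as in Remark~(i) after that theorem; when $s\le q/m$, Theorem~\ref{thm:small_dimension_projections} applies with any $\gamma>q-ms=\alpha+\delta-\kappa$ (shrinking $\delta$), giving part (2). The main obstacle I anticipate is not any single estimate but the bookkeeping in the inductive WSDM argument in the nonlinear setting: one must verify that slicing a regular zero set of a degree-$\le r$ polynomial $\R^{md}\to\R^q$ by a coordinate plane $\{x_j=c\}$ and projecting off the $j$-th coordinate again produces (for $c$ outside a negligible set, or after a further decomposition) a uniformly-regular family of zero sets of degree-$\le r$ polynomials in $\R^{(m-1)d}$, with uniform transversality to the relevant lower coordinate planes — this is the place where the clean affine structure of Theorem~\ref{thm:independent-product-linear-int} is genuinely lost and must be replaced by the regular-value/implicit-function machinery of Lemma~\ref{lem:isotopic-curves} and the relative-Jacobian coarea bound of Proposition~\ref{prop:coarea}, applied uniformly over a neighbourhood $\mathcal{O}$. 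A secondary technical point is handling, as in the proof of Corollary~\ref{cor:patterns}, the possibility that after slicing the varieties become tangent to some coordinate hyperplane, which may force one to replace the Euclidean metric on the sliced family by a dyadic-type metric to keep \eqref{eq:Lipschitz-a-priori} valid; I would remark that the same device used there carries over verbatim.
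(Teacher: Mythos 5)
Your proposal is correct and follows essentially the same route as the paper's proof: reduce to Theorems \ref{thm:Holder-continuity} and \ref{thm:small_dimension_projections} with $\kappa=md-q$, $\alpha=m(d-s)$, $\theta=m(2d-s)-q$, $\gamma_0=1$, using Lemma \ref{lem:isotopic-curves} and Proposition \ref{prop:coarea} for the Frostman and a priori Lipschitz estimates, and proving the WSDM property by induction on $m$ via slice masses of the $(m-1)$-fold product against the substituted polynomials. The two points you flag as the main obstacle are exactly what the paper supplies: the per-cube Lipschitz bound also invokes Lemma \ref{lem:algebraic-a-priori-Holder} to control the $O(\e)$-neighbourhood of $\partial Q$, and the sliced family is formalized as $\widetilde{\mathcal{O}}=\{P(x_1,\ldots,x_{j-1},c,x_j,\ldots,x_{m-1})-y\}$ (including the shifts $y\in B(0,\delta)$ needed for the coarea step), whose uniform transversality is inherited from the assumption on $\mathcal{R}(\Gamma)$, with no dyadic-metric modification required here.
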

We emphasize that the neighbourhood $\mathcal{O}$ is independent of $s$.

The proof of the theorem depends on several lemmas that we present first.
\begin{lemma} \label{lem:Frostman-algebraic-varieties}
Let $P_0\in\mathcal{P}_{r,q,M}^{\text{reg}}$. Then there are a neighbourhood $\mathcal{O}$ of $P_0$ and $C=C(P_0)>0$ such that
\[
\sup_{P\in\mathcal{O}}\sup_{x\in \R^M}\sup_{r>0} \eta_P(B(x,r)) \le C\, r^{M-q}.
\]
\end{lemma}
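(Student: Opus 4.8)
The plan is to establish a uniform Frostman (Ahlfors-type upper regularity) bound for the measures $\eta_P = \mathcal{H}^{M-q}|_{\zero_P}$ by a compactness argument, reducing it to a pointwise statement about the local geometry of $\zero_{P_0}$. First I would invoke Lemma \ref{lem:isotopic-curves}: since $P_0\in\cP_{r,q,M}^{\text{reg}}$ with $P_0^{-1}(0)\cap\,]0,1[^M\neq\varnothing$, there are neighbourhoods $\mathcal{O}_0$ of $P_0$ and $U$ of $\zero_{P_0}$, and a real-analytic map $G$, so that for $P\in\mathcal{O}_0$ the variety $\zero_P$ is contained in $G(P_0,P,P_0^{-1}(0)\cap U)$, and $G(P_0,P,\cdot)$ is a diffeomorphism of $P_0^{-1}(0)\cap U$ onto its image. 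Because $G$ is $C^1$ jointly in all variables and $\zero_{P_0}$ is compact, the maps $G(P_0,P,\cdot)$ are uniformly bi-Lipschitz on $P_0^{-1}(0)\cap U$, with bi-Lipschitz constant bounded independently of $P\in\mathcal{O}$ for a possibly smaller neighbourhood $\mathcal{O}\subset\mathcal{O}_0$ (using that a $C^1$ map with invertible derivative at a compact set of points is locally bi-Lipschitz with uniform constants). Transporting the Frostman bound through a uniformly bi-Lipschitz map only changes the constant $C$ by a bounded factor, so it suffices to prove the bound for $\eta_{P_0}$ alone, i.e. $\mathcal{H}^{M-q}(\zero_{P_0}\cap B(x,r))\le C r^{M-q}$ for all $x$ and all $r>0$.

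Next I would prove this bound for the fixed regular variety $\zero_{P_0}$. Since $0$ is a regular value of $P_0$ on $[0,1]^M$, the set $\zero_{P_0}$ is a compact $C^\infty$ (indeed real-analytic) embedded submanifold-with-boundary of dimension $M-q$, possibly with boundary contained in $\partial[0,1]^M$. For each $a\in\zero_{P_0}$ the implicit function theorem gives a neighbourhood on which $\zero_{P_0}$ is a graph of a $C^1$ function over its tangent plane $T_a(\zero_{P_0})$, with uniformly controlled Lipschitz constant; by compactness finitely many such graph neighbourhoods, of a common radius $r_0>0$, cover $\zero_{P_0}$. On each such patch, $\mathcal{H}^{M-q}$ restricted to the graph is comparable to $(M-q)$-dimensional Lebesgue measure on the tangent plane via the graph parametrisation, so $\mathcal{H}^{M-q}(\zero_{P_0}\cap B(x,r))\le C' r^{M-q}$ for all $r\le r_0$ and all $x$, with $C'$ depending only on $P_0$ (through the number of patches and the graph Lipschitz bounds). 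For $r>r_0$ the bound is trivial since $\mathcal{H}^{M-q}(\zero_{P_0})\le \mathcal{H}^{M-q}(\zero_{P_0})\cdot 1 \le C' r_0^{-(M-q)} r^{M-q}$ after adjusting the constant (using that the total mass is finite). Combining the two ranges gives the claim for $P_0$.

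Finally, I would assemble the pieces: fix the neighbourhood $\mathcal{O}$ from the bi-Lipschitz step, and let $\Phi_P = G(P_0,P,\cdot): P_0^{-1}(0)\cap U\to\R^M$, which is $L$-bi-Lipschitz with $L$ independent of $P\in\mathcal{O}$. Since $\zero_P\subset\Phi_P(P_0^{-1}(0)\cap U)$, for any ball $B(x,r)$ we have $\Phi_P^{-1}(\zero_P\cap B(x,r))\subset \zero_{P_0}\cap B(\Phi_P^{-1}(x_0), Lr)$ for a suitable reference point $x_0$ in the image (or more simply, $\zero_P\cap B(x,r)$ is covered by $\Phi_P$ of a set of diameter $\le Lr$, hence contained in a ball of radius $Lr$ in $\zero_{P_0}$), so
\[
\eta_P(B(x,r)) = \mathcal{H}^{M-q}(\zero_P\cap B(x,r)) \le L^{M-q}\, \mathcal{H}^{M-q}\big(\zero_{P_0}\cap B(\Phi_P^{-1}(x), Lr)\big) \le L^{M-q} C'\, (Lr)^{M-q},
\]
which is the desired bound with $C = L^{2(M-q)} C'$. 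The only real obstacle is ensuring the bi-Lipschitz constants of $G(P_0,P,\cdot)$ are genuinely uniform in $P$ over a full neighbourhood; this follows because the partial derivative of $G$ in its spatial variable is continuous in $(P,x)$ and equals the identity at $(P_0,x)$ for $x\in\zero_{P_0}$ (by property \eqref{G1}), so it stays close to the identity — hence uniformly invertible with uniformly bounded inverse — on a compact neighbourhood of $\{P_0\}\times(\zero_{P_0})$, which is exactly what yields the uniform bi-Lipschitz estimate after shrinking $\mathcal{O}$ and $U$.
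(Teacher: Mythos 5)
Your proof is correct and follows essentially the same two-step route as the paper: first establish the Frostman bound for $\eta_{P_0}$ by locally flattening the variety onto an $(M-q)$-plane, then transfer it to a whole neighbourhood of $P_0$ by using the map $G$ from Lemma \ref{lem:isotopic-curves} as a uniformly bi-Lipschitz correspondence between $\zero_P$ and a piece of $P_0^{-1}(0)$. The only cosmetic difference is that the paper does the flattening via injective coordinate projections and the coarea formula (Proposition \ref{prop:coarea}) rather than via graph parametrisations over tangent planes.
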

\begin{proof}
We can find finitely many open sets $U_i\subset\R^M$ whose union covers $\zero_{P_0}$, and coordinate projections $\wt{\pi}_i:U_i\to \R^{M-q}$, such that $\wt{\pi}_i|_{U_i}$ is injective and, moreover, the angle between $\text{ker}(\wt{\pi}_i)$ and the tangent plane $T_x(P_0^{-1}(0))$ is at least $c>0$ for all $i$ and $x\in P_0^{-1}(0)\cap U_i$. Proposition \ref{prop:coarea} applied to $P_0^{-1}(0)\cap U_i\cap B(x,r)$ and the maps $\wt{\pi}_i$ now yields, for any $x\in \zero_{P_0}$ and $r>0$,
\[
\eta_{P_0}(B(x,r)) \le O_c(1) \sum_i\mathcal{H}^{M-q}(\wt{\pi}_i(B(x,r))) = O_c(r^{M-q}).
\]
Letting $G=G(P_0,P,\cdot)$ be as in Lemma \ref{lem:isotopic-curves} and using that $B(x,r)\cap \zero_{P}\subset G(B(G^{-1}(x),2r))$, the argument extends to a sufficiently small neighbourhood of $P_0$.
\end{proof}

\begin{lemma} \label{lem:algebraic-a-priori-Holder}
Let $P_0\in\mathcal{P}_{r,q,M}^{\text{reg}}$, $q\in[M-1]$. Assume that the tangent planes of $\zero_{P_0}$ make an angle $>0$ with all coordinate hyperplanes. Then there are a neighbourhood $\mathcal{O}$ of $P_0$ and $C=C(P_0)>0$ such that
\[
\sup_{P\in\mathcal{O}}\sup_{H}\eta_P(H(\delta)) \le C\,\delta,
\]
where $H$ runs over all coordinate hyperplanes, and $H(\delta)$ denotes the $\delta$-neighbourhood of $H$.
\end{lemma}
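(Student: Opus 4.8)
The plan is to reduce Lemma~\ref{lem:algebraic-a-priori-Holder} to a uniform transversality estimate for $\zero_{P_0}$ against coordinate hyperplanes, and then transport that estimate to nearby polynomials via the diffeomorphisms $G$ from Lemma~\ref{lem:isotopic-curves}, exactly as was done for the Frostman bound in Lemma~\ref{lem:Frostman-algebraic-varieties}. The geometric heart of the matter is: near a point $a\in\zero_{P_0}$, parametrize $\zero_{P_0}$ locally as a graph over a coordinate $(M-q)$-plane; the hypothesis that the tangent planes meet each coordinate hyperplane $H$ transversally (angle bounded below by some $c>0$, using compactness of $\zero_{P_0}\cap[0,1]^M$) means that the restriction of the linear functional defining $H$ to the tangent plane is surjective with a quantitative lower bound on its smallest singular value. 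Hence the slicing map $x\mapsto \operatorname{dist}(x,H)$ has $1$-Jacobian relative to the submanifold $\zero_{P_0}$ bounded below by $\Omega(c)$, and Proposition~\ref{prop:coarea} (applied with $\ell=1$, $f(x)=\operatorname{dist}(x,H)$, $V=H(\delta)\cap[0,1]^M$) gives
\[
\mathcal{H}^{M-q}(\zero_{P_0}\cap H(\delta)) \le O(c^{-(M-q)})\int_{-\delta}^{\delta}\mathcal{H}^{M-q-1}\big(\zero_{P_0}\cap\{\operatorname{dist}(\cdot,H)=y\}\big)\,dy.
\]

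So the main step is to bound the $(M-q-1)$-dimensional measure of a single slice $\zero_{P_0}\cap\{x^j_i=c\}$ uniformly in the hyperplane-coordinate $c$. Here I would use that $P_0\in\cP_{r,q,M}^{\text{reg}}$ and that the tangent planes are transversal to $H$: this forces $c$ to be a regular value of the restricted map, so the slice is again (for all but finitely many $c$, and uniformly) a bounded-degree algebraic variety in $H\cong\R^{M-1}$ cut out by a regular system. Its $(M-q-1)$-Hausdorff measure is then bounded by a constant depending only on $r,q,M$ — for instance via the same coordinate-projection argument used to prove Lemma~\ref{lem:Frostman-algebraic-varieties} with $x=$ center, $r=\sqrt M$, or by a crude Crofton/degree bound on the number of connected components and the volume of each. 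The upshot is $\mathcal{H}^{M-q-1}(\text{slice})=O_{P_0}(1)$ uniformly in $c$, whence $\eta_{P_0}(H(\delta))=O_{P_0}(\delta)$, with the implied constant uniform over all coordinate hyperplanes $H$ since there are only finitely many coordinate directions and $\zero_{P_0}\cap[0,1]^M$ is compact.

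Finally, to pass from $P_0$ to a neighbourhood $\mathcal{O}$, I invoke Lemma~\ref{lem:isotopic-curves}: shrinking $\mathcal{O}$ and the neighbourhood $U$ of $\zero_{P_0}$, for $P\in\mathcal{O}$ we have $\zero_P\subset G(P_0,P,\zero_{P_0}\cap U)$ and $G(P_0,P,\cdot)$ is a diffeomorphism whose bi-Lipschitz constants are uniformly bounded (it is real-analytic on the compact set $\overline{\mathcal{O}}\times\overline{\mathcal{O}}\times\overline{U}$ and equals the identity when $P=P_0$). A uniformly bi-Lipschitz map distorts $\mathcal{H}^{M-q}$ by a bounded factor and maps $H(\delta)$ into $H'(O(\delta))$ for a slightly enlarged tube around a coordinate hyperplane; moreover, by shrinking $\mathcal{O}$ we keep the tangent planes of $\zero_P$ transversal to every coordinate hyperplane with a uniform angle bound (the tangent plane depends continuously on $(P,x)$). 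Hence $\eta_P(H(\delta))\le C\,\mathcal{H}^{M-q}(\zero_{P_0}\cap H(O(\delta)))\le C'\delta$ with $C'$ independent of $P\in\mathcal{O}$, $H$ and $\delta$, which is the claim.

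The step I expect to be the main obstacle is the uniform-in-$c$ bound on $\mathcal{H}^{M-q-1}$ of the slices: one must be careful that although finitely many values of $c$ may be non-regular (where the slice degenerates), the generic slices have $(M-q-1)$-measure bounded uniformly, and that this bound depends only on $r,q,M$ and not on the specific $P_0$ — which is what makes the transport to $\mathcal{O}$ legitimate after shrinking. A clean way to organize this is to note that $P_0$ together with the equation $x^j_i=c$ defines a regular system on $[0,1]^{M-1}$ for $c$ outside the (measure-zero, in fact finite) set of critical values of $\pi_{i,j}|_{\zero_{P_0}}$, so the slice lies in $\cP_{r,q+1,M-1}^{\text{reg}}$-type varieties, and one applies the Frostman-type bound of Lemma~\ref{lem:Frostman-algebraic-varieties} (or its proof) with radius $\sqrt{M-1}$ to conclude $\mathcal{H}^{M-q-1}(\text{slice})=O_{P_0}(1)$; the dependence on $P_0$ is then absorbed when we fix $\mathcal{O}$.
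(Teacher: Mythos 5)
Your first two steps (coarea slicing for $\zero_{P_0}$ along a coordinate direction, plus the observation that each slice is the zero set of the augmented regular system ``$P_0$ together with $x_i=c$'') are sound and are essentially the paper's argument, specialized to $P_0$; note in passing that the hedging about critical values of $c$ is unnecessary, since the transversality hypothesis makes \emph{every} $c$ a regular value of $x_i|_{\zero_{P_0}}$. The genuine gap is in your last step, the transport of the tube estimate from $P_0$ to $P\in\mathcal{O}$ via $G=G(P_0,P,\cdot)$ from Lemma \ref{lem:isotopic-curves}. The map $G$ differs from the identity by an amount of order $|P-P_0|$, which is a fixed positive quantity once $\mathcal{O}$ is chosen and does \emph{not} shrink with $\delta$. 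Consequently $G^{-1}(H(\delta))$ is only contained in $H\bigl(\delta+O(|P-P_0|)\bigr)$ (and the $G$-image of a flat slab of width $\delta$ is an $O(\delta)$-neighbourhood of a \emph{curved} hypersurface, not of a coordinate hyperplane), so the claimed inequality $\eta_P(H(\delta))\le C\,\mathcal{H}^{M-q}\bigl(\zero_{P_0}\cap H(O(\delta))\bigr)$ fails for $\delta\ll\operatorname{diam}\mathcal{O}$; what the argument actually yields is $\eta_P(H(\delta))=O(\delta+|P-P_0|)$, which is not the statement of the lemma. (A secondary inaccuracy: the slice bound obtained from the projection argument of Lemma \ref{lem:Frostman-algebraic-varieties} depends on the transversality constants of the variety, hence on $P_0$, not only on $r,q,M$; a degree/Crofton bound would be $P_0$-free but is machinery the paper neither uses nor needs.)

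The fix — and the paper's actual route — is to make the uniformity appear at the level of slices rather than tubes: shrink $\mathcal{O}$ so that all tangent planes of $\zero_P$, $P\in\mathcal{O}$, make a uniformly positive angle with every coordinate hyperplane (you already note this), and apply Proposition \ref{prop:coarea} with $f(x)=x_i$ \emph{directly to each} $\zero_P$, $P\in\mathcal{O}$. The required uniform bound $\sup\mathcal{H}^{M-q-1}\bigl(\zero_P\cap\{x_i=u\}\bigr)<\infty$ over $P\in\mathcal{O}$, $u\in[0,1]$, $i\in[M]$ then comes from applying Lemma \ref{lem:Frostman-algebraic-varieties} to the augmented polynomial $(P_0,x_i-t)\in\cP_{r,q+1,M}^{\text{reg}}$: its Frostman estimate is uniform over a neighbourhood in $\cP_{r,q+1,M}$, which contains all $(P,x_i-u)$ with $P$ near $P_0$ and $u$ near $t$, and a compactness argument in $t$ (and finitely many $i$) finishes. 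With that uniform slice bound, the coarea inequality immediately gives $\eta_P(H(\delta))=O(\delta)$ uniformly in $P\in\mathcal{O}$ and $H$, with no need to push tubes through $G$.
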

\begin{proof}
Firstly, we claim that there is a neighbourhood $\mathcal{O}$ of $P_0$ such that
\begin{equation} \label{eq:supremum-int-hyperplanes}
\sup_{P\in\mathcal{O}}\sup_{H} \mathcal{H}^{M-q-1}(H\cap \zero_P) =: C'=C'(P_0) < \infty,
\end{equation}
where again $H$ runs over coordinate hyperplanes. Indeed, fix $t$ and $i\in [M]$ and let $H_{t,i}=\{(x_1,\ldots,x_M)\,:\,x_i=t\}$ be a coordinate hyperplane. Consider the polynomial $P_0^{t,i}=(P_0,x_i-t):\R^M\to \R^{q+1}$. By the assumption that the tangents to $\zero_{P_0}$ make a positive angle with $H_{t,i}$, $0$ is a regular value of $P_0^{t,i}$ on $[0,1]^M$. By Lemma \ref{lem:Frostman-algebraic-varieties}, there are neighbourhoods $\mathcal{O}$ of $P_0$ and $U$ of $t$ such that the claim holds when taking supremum over all planes of the form $\{x_i=u\}, u\in U$. By compactness, we conclude that \eqref{eq:supremum-int-hyperplanes} holds.

Fix $i\in [M]$, and let $\wt{\pi}(x)=x_i$.
By compactness, and making $\mathcal{O}$ smaller if needed, we may ensure that all tangent planes to $\zero_P, P\in\mathcal{O}$ make a uniformly positive angle with all coordinate hyperplanes. Proposition \ref{prop:coarea} now yields
\[
\eta_P(H_{t,i}(\delta))\le O(1) \int_{s=t-\delta}^{t+\delta} \mathcal{H}^{M-q-1}( \wt{\pi}^{-1}(s)\cap \zero_P)\, ds=  O(\delta),
\]
using \eqref{eq:supremum-int-hyperplanes}. This is what we wanted to prove.
\end{proof}

\begin{proof}[Proof of Theorem \ref{thm:nonlinear}]
The proof proceeds along the lines of the proof of Theorem \ref{thm:independent-product-linear-int}. We highlight the main differences. As before, conditions \eqref{RM:bounded}--\eqref{RM:quotients-bounded} are clearly valid. Our parameter space will consist of a suitable neighbourhood $\mathcal{O}$ of $P_0$ in $\cP_{r,q,md}^{\text{reg}}$, where the conclusions of Lemma \ref{lem:isotopic-curves} hold, and such that the tangent planes of each $P\in \mathcal{O}$ at each $x\in \zero_P$ satisfy the required transversality condition with respect to the planes \eqref{eq:trans_coordinate}-- \eqref{eq:trans_hyperplanes}. Furthermore, denoting the family of all such tangent planes by $\Gamma\subset\AA_{md,md-q}$, we assume that also $\mathcal{R}^p(\Gamma)$ satisfy a similar condition. Note that due to our assumptions and Lemma \ref{lem:isotopic-curves}, all these conditions hold in a sufficiently small neighbourhood $\mathcal{O}$ of $P_0$ (recall Remark \ref{rem:transversality}). In the sequel, we will apply various estimates that all hold in a small neighbourhood of $P_0$, and we assume that $\mathcal{O}$ is fixed and sufficiently small such that all of these are valid simultaneously. We stress that all the induced constants are allowed to depend on $P_0$.

We will show that the assumptions of Theorem \ref{thm:Holder-continuity} hold for $\mu_n$ and the family of measures $\eta_P$, $P\in \mathcal{O}$ (recall that $\eta_P=\mathcal{H}^{md-q}|_{\zero(P)}$). 

Trivially,  \eqref{H:size-parameter-space} holds. Furthermore, \eqref{H:codim-random-measure} holds with $\alpha=m(d-s)$.  The Frostman condition \eqref{H:dim-deterministic-measures} holds, with $\kappa=md-q$, thanks to Lemma \ref{lem:Frostman-algebraic-varieties}.

To prove \eqref{H:Holder-a-priori}, we first show that $P\mapsto\mathcal{H}^{md-q}(Q\cap P^{-1}(0))$ is uniformly Lipschitz for all $Q\in\mathcal{Q}^{md}$, $P\in \mathcal{O}$. To that end, fix $P_1, P_2\in \mathcal{O}$ and let $\varepsilon=|P_1-P_2|$. Write $G=G(P_1,P_2,\cdot)$, and note that $G$ is $O(\e)$-close to the identity in the $C^1$ topology by Lemma \ref{lem:isotopic-curves}. In particular,
\[
J_{md-q}^{\zero(P_1)}(G)(x) \in (1-O(\e),1+O(\e)) \quad\text{for all }x\in \zero(P_1),
\]
where $J_{md-q}^{\zero(P_1)}(G)$ is the relative Jacobian, see \cite[Definition 5.3.3]{KrantzParks2008}. Using this and the estimate $\mathcal{H}^{md-q}(\zero(P_1))=O(1)$, the area formula for submanifolds (\cite[Theorem 5.3.7]{KrantzParks2008}) applied to the submanifold $S=Q^\circ \cap \zero_{P_1}$ and the map $G$ gives
\[
\mathcal{H}^{md-q}(S) \le \mathcal{H}^{md-q}(G(S)) + O(\e).
\]
By Lemma \ref{lem:isotopic-curves} and using again that $G$ is $C^1$ close to the identity,  we know that
\[
\mathcal{H}^{md-q}(G(S)) \le \mathcal{H}^{md-q}\left( P_2^{-1}(0)\cap Q(O(\e))\right),
\]
recall $Q(\delta)$ is the $\delta$ neighbourhood of $Q$. The last two displayed equations together with Lemma \ref{lem:algebraic-a-priori-Holder} yield
\[
\mathcal{H}^{md-q}(\zero_{P_1}\cap Q) \le \mathcal{H}^{md-q}(\zero_{P_2}\cap Q)+ O(\e),
\]
which by symmetry implies that $P\mapsto\mathcal{H}^{md-q}(Q\cap P^{-1}(0))$ is Lipschitz with the constant depending only on $P_0$, as claimed. Note that we are using a quite coarse estimate here since we are not taking into account the size of $Q$ (apart from the fact that it is contained in $[0,1]^{md}$).

Note that each $\zero(P)$, $P\in\mathcal{O}$ intersects at most $O(2^{n(md-q)})$ cubes in $\mathcal{Q}_n$. Indeed, this is true for $P_0^{-1}\cap U$ (where $U$ is the neighbourhood from \ref{lem:isotopic-curves}), since this is a bounded piece of a $(md-q)$-dimensional embedded manifold. The claim for arbitrary $P\in\mathcal{O}$ follows from Lemma \ref{lem:isotopic-curves}, since the image of a cube $Q\in\mathcal{Q}_n^{md}$ hitting $\zero_P$ under $G(P,P_0,\cdot)$ can be covered by $O(1)$ cubes in $\mathcal{Q}_n^{md}$. Using this, we conclude that
\[
\sup_{P_1,P_2\in \mathcal{O}}|Y_n^{P_1}-Y_n^{P_2}|=O(|P_1-P_2|2^{n(m(d-s)+md-q)})\,,
\]
that is \eqref{H:Holder-a-priori} holds with the parameters $\theta=m(2d-s)-q$ and $\gamma_0=1$.

We note that Lemma \ref{lemma:AWSD_product} also holds in this setting with essentially the same proof. The only difference is in the proof of \eqref{Nn-Holder-product}, which we have just explained.

As in the proof of Theorem \ref{thm:independent-product-linear-int}, the claims follow from Theorems \ref{thm:Holder-continuity} and \ref{thm:small_dimension_projections} if we can show that
\begin{equation}\label{AWSDclaimnonlinear}
\mu_n\text{ is a WSDM with parameter }\delta
\end{equation}
for some $\delta<ms-q$, if $s>q/m$, and for any $\delta>0$, if $s\leq q/m$.

Suppose first that $md-q\le d$. In this case, $\mathcal{DI}_n=O(1)$ (deterministically), recall that $\mathcal{DI}_n$ denote the dependency degree of $\mu_n$. To see this, note that because the tangent planes to $\zero_P$ make an angle $\ge c>0$ with each plane $H_{z,j}=\pi_j^{-1}(z)$, for any $P\in\mathcal{O}$, $x\neq x'\in \zero_P\cap H_{z,j}$ we have $|x-x'|=\Omega_{P_0,c}(1)$. Using transversality again, this implies that $\zero_P\cap \pi_j^{-1}(B(z,2^{-n}))$ can be covered by $O_{P_0,c}(1)$ balls of radius $O_{P_0,c}(2^{-n})$, which gives the claim.

Therefore, we assume from now on that $md-q>d$. Again, we proceed by induction on $m$. If $m=1$, the claim holds since then $(\mu_n)$ is even an SI-martingale. Suppose the claim holds for $m-1\ge 1$. We will need an algebraic variant of $\mathcal{R}(\Gamma)$ (Recall that $\Gamma$ denotes the collection of all the tangent planes of $\zero_P$, $P\in \mathcal{O}$). Given $P\in\mathcal{O}, c\in [0,1]^d, j\in [m]$ and $y\in B(0,\delta)\subset\R^q$, let
\[
\wt{P}_{P,c,j,y} = P(x_1,\ldots,x_{j-1},c,x_{j},\ldots,x_{m-1})-y,
\]
and set
\begin{equation}\label{eq:Utildedef}
\widetilde{\mathcal{O}}=\{\widetilde{P}_{P,c,j,y}:P\in \mathcal{O}, c\in[0,1]^d, j\in[m],  y\in B(0,\delta)\subset\R^q \}\,.
\end{equation}
Here $\delta$ is chosen small enough that the transversality assumptions continue to hold for $\wt{P}\in\wt{\mathcal{O}}$ (this is possible thanks to the transversality assumptions for $\mathcal{R}(\Gamma)$).

For each $\widetilde{P}\in\widetilde{\mathcal{O}}$, denote
\begin{equation*}
\widetilde{Y}_{n}^{\widetilde{P}}=\int_{\zero(\widetilde{P})}\nu_n^{(1)}\times\cdots \times \nu_n^{(m-1)}\,d\mathcal{H}^{(m-1)d-q}\,.
\end{equation*}

As in the proof of Theorem \ref{thm:independent-product-linear-int}, letting $Z_n=\sup_{\widetilde{P}\in\widetilde{\mathcal{O}}}\widetilde{Y}_{n}^{\widetilde{P}}$, we will show that for $n\ge O(1)$ there is a constant $C<\infty$ such that
\begin{equation}\label{eq:Delta_nonlinear}
\Psi(n)=C Z_n2^{n(-q+(m-1)s)}\,.
\end{equation}
is an upper bound for the dependency degree of $\mu_n$. To verify this, it suffices to show that
\begin{equation}\label{eq:pigeon2}
|\mathcal{Q}|=O(Z_n2^{n(-q+(m-1)s)})\,,
\end{equation}
whenever $j\in[m]$, $P\in \mathcal{O}$, and $\mathcal{Q}$ is a collection of dyadic cubes $Q\in\mathcal{Q}^{md}_n$
such that each $Q\in\mathcal{Q}$ satisfies $Q\subset A_n$, $Q\cap \zero(P)\neq\varnothing$, and $\pi_j(Q')=\pi_j(Q)$ for all $Q,Q'\in\mathcal{Q}$. Here (slightly abusing notation) we denote $A_n=\supp(\mu_n)=A_{n,1}\times\cdots\times A_{n,m}$, where $A_{n,i}$ are the steps in the construction of each of the independent fractal percolations.

To that end, we fix such $j\in[m]$, $P\in\widetilde{\mathcal{O}}$ and collection $\mathcal{Q}$, and adapt the estimation \eqref{eq:volume_argument} to the present setting as follows:
\begin{equation}\label{eq:volume_argument_coarea}
\begin{split}
&|\mathcal{Q}|2^{-nmd}=\mathcal{L}^{md}\left(\cup_{Q\in\mathcal{Q}}Q\right)=O(1)\int\limits_{y\in B_1}\mathcal{H}^{md-q}(P^{-1}(y)\cap\cup_{Q\in\mathcal{Q}}Q)\,d\mathcal{L}^q(y)\\
&=O(1)\int\limits_{y\in B_1}\int\limits_{z\in B_2}\mathcal{H}^{(m-1)d-q}(P^{-1}(y)\cap  A_n\cap\{x_j=z\})\,d\mathcal{H}^d(z)\,d\mathcal{L}^q(y)\,,
\end{split}
\end{equation}
where $B_1=B(0,O(2^{-n}))\subset \R^q$ and $B_2=\pi_j(Q_i)\subset\R^d$.
Here, the first estimate follows from Proposition \ref{prop:coarea} applied to the trivial polynomial and $f:=P$ (recall that $P\in\cP^\text{reg}$ for $P\in\mathcal{O}$). The integration is over $B(0,O(2^{-n}))$ because the $P\in\mathcal{O}$ are uniformly Lipschitz and $Q\cap P^{-1}(0)\neq \varnothing$ for all $Q\in\mathcal{Q}$.
The second estimate in \eqref{eq:volume_argument_coarea} follows from Proposition \ref{prop:coarea} applied to $\pi_j$, using the hypothesis of transversality with respect to the planes $\{ x_j=0 \}$.

From now on we assume that $n$ is large enough that $B_1\subset B(0,\delta)$, where $\delta$ is the number in the definition of $\wt{\mathcal{O}}$.  Note that
\[
\mathcal{H}^{(m-1)d-q}\left(P^{-1}(y)\cap A_n \cap\{x_j=z\}\right) \le 2^{n(m-1)(s-d)}\widetilde{Y}^{\widetilde{P}}_n\,,
\]
for $\widetilde{P}\in\widetilde{\mathcal{O}}$ defined by $\wt{P}(x_1,\ldots,x_{m-1})=P(x_1,\ldots,x_{j-1},z,x_j,\ldots,x_{m-1})-y$. We infer from \eqref{eq:volume_argument_coarea} that
\[
|\mathcal{Q}| 2^{-nmd} \le O(1) 2^{-nq} 2^{-nd} 2^{n(m-1)(s-d)} Z_n,
\]
from which \eqref{eq:pigeon2} follows.

After this, the rest of the proof proceeds just like the proof of Theorem \ref{thm:independent-product-linear-int}, using that $\nu_n^{(1)}\times\cdots\times \nu_n^{(m-1)}$ is a WSDM with respect to $\mathcal{O}$.
\end{proof}

\begin{rems}\label{rem:nodiagonal}
\begin{enumerate}[(i)]
\item
In general, one cannot remove the requirement that the tangents of $P^{-1}(0)$ are transversal to the planes $\{x_i=0, i\in I\}$, even if they fail at only finitely many points. To see this, let us consider the following example: Let $A=A^{\text{perc}(1,p)}\subset[0,1]$ with $\tfrac12<s<\tfrac23$, $\mu_n=\nu_n^{\text{perc}(1,p)}\times\nu_n^{\text{perc}(1,p)}$ and consider the random variables $Y_n^P$ for the quadratic polynomials $P(x,y)=|(x,y)-(x_0,y_0)|^2-\lambda$, for $x_0,y_0\in[0,1]$, $\lambda>0$. Thus, we are intersecting $A\times A$ and $\nu\times\nu$ with circles. Then, for $\nu\times\nu$ almost every $(x,y)\in A\times A$, and all circles $P^{-1}(0)$ through $(x,y)$ with a vertical tangent at $(x,y)$, it holds that $Y^P_n\longrightarrow\infty$ as $n\to\infty$.
Let us give a short sketch of the proof: The circular arc $P^{-1}(0)\cap\pi_1^{-1}(I)$ has length $\Omega(2^{-n/2})$, where $I\in\mathcal{Q}_n^1$ is the dyadic interval containing $x$. If $I'\in\mathcal{Q}_{k}^1$ is the dyadic interval containing $y$ with $k\sim n/2$, then $P^{-1}(0)$ typically intersects roughly $2^{ns/2}$ squares $I\times I''\in\mathcal{Q}_n^2$, where $I''\subset I'$. Thus, $Y_n^P$ is bounded from below by $\Omega(2^{2n(1-s)}\times 2^{ns/2}\times 2^{-n})=\Omega(2^{n(1-3s/2)})$ and this grows at an exponential rate as $n\to\infty$, whenever $s<2/3$.

This example also shows that the first order transversality condition \eqref{eq:trans_coordinate} for the tangents of $P^{-1}(0)$ cannot be weakened to the maps $P\mapsto\mathcal{H}^{md-q}(P^{-1}(0)\cap Q)$ being H\"older continuous for $Q\in\mathcal{Q}^{md}$.
\item On the other hand, there is scope to weaken the transversality assumption with respect to coordinate hyperplanes, and it might even be possible to eliminate this assumption altogether by considering a suitable version of the dyadic metric (adapting the proof of Corollary \ref{cor:patterns}). To avoid excessive technicalities, and since this is not an issue in our main applications, we do not pursue this.
\item The method behind the proof of Theorem \ref{thm:nonlinear} is not tied to algebraic surfaces and it also works for other parametrized families of smooth surfaces satisfying analogous transversality and dimensional conditions. Since all our applications regarding the existence of various configurations in $A$ will be derived by intersecting the Cartesian powers of $A$ with the elements of some $\cP^{\text{reg}}_{r,q,M}$, we shall not discuss these generalizations here.
\end{enumerate}
\end{rems}

In this setting, the analogue of Lemma \ref{lem:Yt-survives} also holds:
\begin{lemma} \label{lem:Yt-survives-nonlinear}
In the setting of Theorem \ref{thm:nonlinear}, if $s>q/m$, then $\PP(Y^{P_0}>0)>0$.
\end{lemma}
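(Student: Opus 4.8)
The plan is to mimic the proof of Lemma~\ref{lem:Yt-survives}, replacing the linear slices by the algebraic slices $\zero(P_0)$ and using the coarea machinery (Proposition~\ref{prop:coarea}) in place of Fubini. As before, $Y_n^{P_0}=\int_{\zero(P_0)}\mu_n\,d\mathcal{H}^{md-q}$ is a non-negative martingale with respect to $\mathcal{B}_n$ (this is immediate from the martingale property of $\mu_n$ and the fact that $\mathcal{H}^{md-q}|_{\zero(P_0)}$ is a fixed deterministic measure), so by the martingale convergence theorem it suffices to show that $\sup_n\EE((Y_n^{P_0})^2)<\infty$, since then $Y_n^{P_0}\to Y^{P_0}$ in $L^2$, hence $\EE(Y^{P_0})=\EE(Y_0^{P_0})=\eta_{P_0}([0,1]^{md})>0$ (using $P_0^{-1}(0)\cap\,]0,1[^{md}\neq\varnothing$ and that $0$ is a regular value, so $\mathcal{H}^{md-q}(\zero(P_0))>0$), which forces $\PP(Y^{P_0}>0)>0$.

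First I would write, exactly as in \eqref{eq:L2-estimate},
\[
\EE((Y_n^{P_0})^2)=2^{2nm(d-s)}\sum_{Q,Q'}\PP(Q\cup Q'\subset A_n)\,\mathcal{H}^{md-q}(\zero(P_0)\cap Q)\,\mathcal{H}^{md-q}(\zero(P_0)\cap Q'),
\]
where $Q,Q'$ range over cubes in $\mathcal{Q}_n^{md}$ meeting $\zero(P_0)$; since $\mathcal{H}^{md-q}(\zero(P_0)\cap Q)=O(2^{-n(md-q)})$ by Lemma~\ref{lem:Frostman-algebraic-varieties}, and $\zero(P_0)$ meets only $O(2^{n(md-q)})$ such cubes, this reduces (just as in Lemma~\ref{lem:Yt-survives}) to showing
\[
\max_{Q}\ \sum_{Q'}\PP(Q'\subset A_n\mid Q\subset A_n)=O\!\left(2^{n(ms-md+md-q)}\right)=O(2^{n(ms-q+ (m-1)(d-s)\cdot 0)}),
\]
i.e. $\zeta(Q):=\sum_{Q'}\PP(Q'\subset A_n\mid Q\subset A_n)=O(2^{n(ms-q+m(d-s))})$; plugging this into the display gives $\EE((Y_n^{P_0})^2)=O(2^{n(ms-q)})=O(1)$ precisely when $s>q/m$. (I should double-check the exponent bookkeeping against \eqref{eq:L2-estimate} — the counting exponent there was $k+(s-d)m\ge 0$, and here the analogue is $(md-q)+(s-d)m=ms-q\ge 0$, consistent.)

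The heart of the matter is the bound on $\zeta(Q)$, which is where I would split over the common ancestor level $r$ and over $\ell=|\{j\in[m]:\pi_j(Q)=\pi_j(Q')\}|$, exactly as with $\mathcal{T}_{r,\ell}(Q)$ in Lemma~\ref{lem:Yt-survives}. The conditional survival probability $\PP(Q'\subset A_n\mid Q\subset A_n)=2^{(s-d)(n-r)(m-\ell)}$ is unchanged. The only genuinely new ingredient is the counting estimate \eqref{eq:count-dependent-cubes}: the number of cubes $Q'\in\mathcal{Q}_n^{md}$ meeting $\zero(P_0)$, contained in the level-$r$ ancestor $Q_r$ of $Q$, with $\pi_j(Q')=\pi_j(Q)$ for $j$ in a fixed set $I$ of size $\ell$, is $O(\max(1,2^{(n-r)(md-q-\ell d)}))$. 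In the linear case this came from comparing volumes of a parallelepiped; here I would get it from Proposition~\ref{prop:coarea} applied with $f=\widetilde\pi_I$ (projection to the coordinates in $I$), using precisely the hypothesis that the tangent planes of $\zero(P_0)$ make a positive angle with $H^{I}$ — this controls the $(md-q-\ell d)$-dimensional Hausdorff measure of the fibre $\zero(P_0)\cap\widetilde\pi_I^{-1}(\text{pt})$ inside $Q_r$ by $O(2^{-r(md-q-\ell d)})$ when $md-q-\ell d>0$, and by $O(1)$ (a bounded piece of a $0$-dimensional, i.e. finite, set) otherwise — and then a volume/packing comparison at scale $2^{-n}$ converts this into a cube count, as in Lemma~\ref{lem:isotopic-curves}'s covering argument. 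I expect this counting step to be the main obstacle: one must be careful that Proposition~\ref{prop:coarea}'s hypotheses (uniform lower bound on the Jacobian of $\widetilde\pi_I$ restricted to $\zero(P_0)$, and a uniform angle between $\ker D\widetilde\pi_I$ and the tangent of $\zero(P_0)$) follow from the stated transversality with respect to $H^I$, and that the ``finitely many $\widetilde\pi_I$'' reduction and the passage from Hausdorff-measure bounds to cube counts are uniform. Once \eqref{eq:count-dependent-cubes} is in hand, the summation over $\ell$ and $r$ is verbatim as in Lemma~\ref{lem:Yt-survives}, using $ms-q\ge 0$ in the last line, and the proof concludes.
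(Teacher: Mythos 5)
Your proposal is correct and follows essentially the same route as the paper: the paper's own proof simply declares the argument "nearly identical" to Lemma \ref{lem:Yt-survives}, noting that the only new inputs are the Frostman bound $\mathcal{H}^{md-q}(\zero_{P_0}\cap Q)=O(2^{-n(md-q)})$ and the replacement of the linear parallelepiped covering in \eqref{eq:count-dependent-cubes} by a volume argument on the $O(2^{-n})$-neighbourhood of $\zero(P_0)\cap\widetilde{\pi}^{-1}(x_0)$ via the coarea formula (Proposition \ref{prop:coarea}), which is exactly what you do. One small caution: your intermediate restatements of the target bound are inconsistent (the correct requirement is $\zeta(Q)=O(2^{n(ms-q)})$, not $O(2^{n(ms-q+m(d-s))})$, and that bound yields $\EE((Y_n^{P_0})^2)=O(1)$ outright rather than $O(2^{n(ms-q)})$), but your closing sanity check $(md-q)+(s-d)m=ms-q\ge 0$ is the right bookkeeping and the argument stands.
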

\begin{proof}
The proof is nearly identical to the proof of Lemma \ref{lem:Yt-survives}. The geometry of the planes $V$ only enters the proof via the estimate $\mathcal{H}^k(V\cap Q)=O(2^{-nk})$ for $Q\in\mathcal{Q}_{n}^{md}$, and the cardinality bound \eqref{eq:count-dependent-cubes}. In the present context, we clearly have $\mathcal{H}^{md-q}(\zero_{P_0}\cap Q) = O(2^{-n(md-q)})$ for  $Q\in\mathcal{Q}_{n}^{md}$. With respect to \eqref{eq:count-dependent-cubes}, we note that in Lemma \ref{lem:Yt-survives} this is obtained by showing that $V(\sqrt{d}2^{-n})\cap \wt{\pi}^{-1}(\wt{Q})\cap Q_j$ can be covered by a suitable parallelepiped. If we replace $V$ by $\zero_{P_0}$ then the same holds except that instead of the linear parallelepiped, we need to consider an $O(2^{-n})$-neighbourhood of the variety $\zero(P_0)\cap\widetilde{\pi}^{-1}(x_0)$ for some fixed $x_0\in\widetilde{\pi}(Q)$ and use the coarea formula (Proposition \ref{prop:coarea}) to verify the volume argument. Details are left to the interested reader.
\end{proof}

\subsection{Non-linear configurations}\label{ssec:nlc}

In order to find non-linear configurations inside the fractal percolation set, we will apply the following localized version of Theorem \ref{thm:nonlinear}.

\begin{cor} \label{cor:nonlinear}
Let $P_0\in\mathcal{P}_{r,q,md}$ and let $a=(a_1,\ldots,a_m)\in\R^{md}$ be a regular point for $P_0$ such that $P_0(a)=0$ and $a_i\neq a_j$ for all $i\neq j$. Let $V$ be the tangent plane to $P_0^{-1}(0)$ at $a$.
Suppose that for each index set $I\subsetneq [m]$, and each $i\in[m]\setminus I$, $k\in[d]$,
\begin{align*}
\dim V\cap H^I&=\max\{0,(m-|I|)d-q\}\,,\\
\dim V\cap H^{I,i,k}&=\max\{0,(m-|I|)d-q-1\}\,.
\end{align*}
Further, let $s>q/m$.

Then there is a neighbourhood $\mathcal{O}$ of $P_0$ such that
\[
\PP(\zero_P\cap A^m\neq\varnothing \text{ for all }P\in\mathcal{O})>0,
\]
where $A=A^{\text{perc}(d,p)}$ is the fractal percolation set of dimension $s$.
\end{cor}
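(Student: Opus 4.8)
The plan is to deduce Corollary \ref{cor:nonlinear} from Theorem \ref{thm:nonlinear} and Lemma \ref{lem:Yt-survives-nonlinear}, in exactly the same spirit as Corollary \ref{cor:patterns_without_scaling} was deduced from Theorem \ref{thm:independent-product-linear-int} and Lemma \ref{lem:Yt-survives}. The key point is that the hypotheses on $V$ (the tangent plane at $a$) are precisely the dimensional transversality conditions which, by Remark \ref{rem:angles}(ii) and continuity of angles, guarantee that all the angle (transversality) hypotheses of Theorem \ref{thm:nonlinear} hold in a sufficiently small neighbourhood $\mathcal{O}$ of $P_0$. One subtlety is that $P_0$ is only assumed to lie in $\mathcal{P}_{r,q,md}$ with $a$ a regular point (not in $\mathcal{P}^{\text{reg}}_{r,q,md}$ globally), so I would first shrink to a small neighbourhood $W$ of $a$ inside $]0,1[^{md}$ on which $DP(x)$ has full rank $q$ for all $P$ near $P_0$ — this is an open condition — and work with the variety restricted to a union of dyadic cubes inside $W$ avoiding the diagonal $\Delta$.

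More precisely, first I would pick $n_0\in\N$ and a finite union $U$ of cubes $Q\in\mathcal{Q}^{md}_{n_0}$ with $U\subset W$, $U\cap\Delta=\varnothing$, and $\zero(P_0)\cap U^\circ\neq\varnothing$ (possible since $a\in\zero(P_0)\cap U^\circ$ for $U$ a small enough dyadic neighbourhood of $a$, using $a_i\neq a_j$). Since the cubes $Q$ making up $U$ are pairwise disjoint and avoid $\Delta$, conditioning on $\mathcal{B}_{n_0}$ the restriction $\mu_n|_Q = (\nu_n^{\text{perc}})^m|_Q$ is, up to rescaling and renormalising, a product of $m$ independent fractal percolation measures on the coordinate projections $\pi_i(Q)$ — this is the same observation used in Corollary \ref{cor:indep-products}. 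Hence, after conditioning on the positive-probability event that the relevant cube of $U$ survives to level $n_0$, I may apply Theorem \ref{thm:nonlinear} (in the localised form, integrating over $\zero(P)\cap U$) and Lemma \ref{lem:Yt-survives-nonlinear} to the product of independent percolations. Since $s>q/m$, Theorem \ref{thm:nonlinear}(1) gives a neighbourhood $\mathcal{O}$ of $P_0$ on which $Y_n^P$ converges uniformly to a Hölder continuous limit $Y^P$, and Lemma \ref{lem:Yt-survives-nonlinear} gives $\PP(Y^{P_0}>0)>0$.

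From here the conclusion is immediate: by Hölder continuity of $P\mapsto Y^P$ on $\mathcal{O}$, the event $\{Y^{P_0}>0\}$ forces $Y^P>0$ for all $P$ in a (possibly smaller) neighbourhood $\mathcal{O}'$ of $P_0$, so
\[
\PP\bigl(Y^P>0 \text{ for all }P\in\mathcal{O}'\bigr)>0.
\]
Whenever $Y^P>0$ we have $\zero(P)\cap \overline{U}\cap A^m\neq\varnothing$, hence $\zero_P\cap A^m\neq\varnothing$. Relabelling $\mathcal{O}'$ as $\mathcal{O}$ gives the statement. I expect the main (very minor) obstacle to be bookkeeping: checking that the dimensional conditions on $V$ translate, via Remark \ref{rem:angles}(ii) applied to all the finitely many index sets $I$ and all the families $\mathcal{R}^p(\Gamma)$ (Remark \ref{rem:transversality}), into the positive-angle transversality hypotheses of Theorem \ref{thm:nonlinear} on a genuine neighbourhood of $P_0$; and that shrinking $U$ around $a$ does not destroy the condition $\zero(P)\cap U^\circ\neq\varnothing$ for nearby $P$, which follows from Lemma \ref{lem:isotopic-curves}. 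None of these steps is substantive — the real content is all in Theorem \ref{thm:nonlinear} and Lemma \ref{lem:Yt-survives-nonlinear}.
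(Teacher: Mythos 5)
Your proposal is correct and follows essentially the same route as the paper: restrict to a small product of disjoint dyadic cubes around $a$ (off the diagonal, where regularity and the transversality conditions persist by continuity), use that the conditioned restrictions are independent copies of fractal percolation, and then invoke Theorem \ref{thm:nonlinear} together with Lemma \ref{lem:Yt-survives-nonlinear} plus H\"older continuity to get $Y^P>0$ on a fixed neighbourhood with positive probability. The only cosmetic difference is that the paper rescales the cube $Q_1\times\cdots\times Q_m$ back to the unit cube and applies Theorem \ref{thm:nonlinear} to the rescaled polynomial $\overline{P}_0$, whereas you apply a localised version of the theorem in place, as in Corollary \ref{cor:indep-products}.
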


\begin{proof}
Since $V$ is transversal to coordinate hyperplanes, by replacing $a$ by a nearby point we may assume that $a$ is not on the boundary of any dyadic cube. Pick $n_0$ large enough that
\begin{itemize}
\item The dyadic cubes $Q_i\in\mathcal{Q}_{n_0}^d$ containing $a_i$ are all disjoint (this is possible since $a\notin\Delta$),
\item The transversality conditions continue to hold if $V$ is replaced by the tangent plane at $y$ to $\{ P_0 = P_0(y)\}$ for all $y\in Q_1\times\cdots\times Q_m$ (this is possible by continuity).
\end{itemize}
Let $f_i, i\in[m]$ be the homotheties mapping $[0,1]^d$ to $\overline{Q_i}$, and let $\nu_n^{(i)}=f_i^{-1}\nu_n|_{Q_i}$ be the restriction of the original process to the cubes $Q_i$, rescaled back to the unit cube. Let us condition on the cubes $Q_1,\ldots,Q_m$ surviving, so that $\nu_n^{(i)}$ become multiples of independent copies of $\nu^{\text{perc}(d,p)}$. Now consider the polynomial
\[
\overline{P}_0(x_1,\ldots,x_m) = P_0(f_1(x_1),\ldots,f_m(x_m)).
\]
We have set things up so that $\overline{P}_0$ satisfies the conditions of Theorem \ref{thm:nonlinear}, which we can then apply to $\nu_n^{(i)}$ as above. An application of Lemma \ref{lem:Yt-survives-nonlinear} concludes the proof.
\end{proof}

The next straightforward lemma provides a convenient way to verify the transversality assumptions in the last corollary in the case when $q\le d$.
\begin{lemma} \label{lem:verify-transversality}
Let $P\in\mathcal{P}_{r,q,md}$ and let $a=(a_1,\ldots,a_m)\in\R^{md}$. If either of the following conditions hold, then $P$ and $a$ satisfy the assumptions of Corollary \ref{cor:nonlinear}.
\begin{enumerate}
\item $d=q$, and for each $j\in[m]$, the vectors $(\partial P_i/\partial x_j(a))_{i=1}^q$ are linearly independent.
\item $d>q$, and for each $j\in[m]$ and $k\in [d]$, the vectors $(\partial P_i/\partial x_j(a))_{i=1}^q$ together with the canonical vector $e_k=(0,\ldots,1,\ldots,0)\in\R^d$ are linearly independent.
\end{enumerate}
\end{lemma}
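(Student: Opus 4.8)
The plan is to show that each of the two conditions implies that the tangent plane $V = \ker DP_0(a)$ satisfies the transversality (dimension) conditions required in Corollary~\ref{cor:nonlinear}, namely
\[
\dim V\cap H^I=\max\{0,(m-|I|)d-q\},\qquad \dim V\cap H^{I,i,k}=\max\{0,(m-|I|)d-q-1\},
\]
for every $I\subsetneq[m]$, $i\in[m]\setminus I$, $k\in[d]$. The key observation is a general linear-algebra fact: if $L\colon\R^{md}\to\R^q$ is a surjective linear map and $W\subset\R^{md}$ is a coordinate subspace, then $\dim(\ker L\cap W)=\dim W-\operatorname{rank}(L|_W)$, so the transversality conditions are exactly the statements that $L|_{H^I}$ and $L|_{H^{I,i,k}}$ have maximal rank, i.e.\ $\operatorname{rank}(L|_{H^I})=\min\{q,(m-|I|)d\}$ and $\operatorname{rank}(L|_{H^{I,i,k}})=\min\{q,(m-|I|)d-1\}$. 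Here $L=DP_0(a)$, whose $j$-th block of columns is the collection of vectors $v_j:=(\partial P_i/\partial x_j(a))_{i=1}^q$, thought of as the $q\times d$ matrix whose rows are the $\nabla_{x_j}P_i(a)$. Under condition (1), $d=q$ and each block $v_j$ is an invertible $q\times q$ matrix; under condition (2), $d>q$ and each block $v_j$ together with any standard basis vector $e_k\in\R^d$ spans, which in particular forces $v_j$ itself to have rank $q$.

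First I would handle the $H^I$ conditions. The restriction $L|_{H^I}$ is obtained by deleting the column blocks indexed by $I$, so its column space is the span of the blocks $\{v_j: j\notin I\}$. In case (1), a single block $v_j$ (any $j\notin I$, which is nonempty since $I\subsetneq[m]$) is already surjective onto $\R^q$, so $\operatorname{rank}(L|_{H^I})=q=\min\{q,(m-|I|)d\}$ whenever $|I|<m$; and $(m-|I|)d\ge d=q$, so the maximum in the claim is $(m-|I|)d-q$, matching $\dim H^I-\operatorname{rank}(L|_{H^I})$. In case (2), again any single block $v_j$ with $j\notin I$ has rank $q$ (it spans $\R^q$ because adding $e_k$ keeps it spanning and removing one vector from a spanning-plus-one set of a $q$-dimensional space still spans — more carefully, $v_j$ plus $e_k$ linearly independent for all $k$ forces $v_j$ to have rank exactly $q$, since if $\operatorname{rank} v_j<q$ pick $k$ with $e_k$ not in $\operatorname{col}(v_j)^{\perp\text{-complement}}$ — actually simply: if the $q$ columns of $v_j$ were dependent, their span would be a proper subspace of $\R^q$, and then for \emph{some} $k$ the vector $e_k\in\R^d$ maps under the inclusion to something… no: note $v_j\in\R^{q\times d}$ and $e_k\in\R^d$; the hypothesis is about the $q$ rows? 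Let me re-read.) I would here be careful with the precise shape of $v_j$: the statement says ``the vectors $(\partial P_i/\partial x_j(a))_{i=1}^q$'' — so these are $q$ vectors in $\R^d$ (one per output coordinate), namely the rows of the $q\times d$ block, and condition (2) says these $q$ vectors together with $e_k\in\R^d$ are linearly independent in $\R^d$, i.e.\ the block has rank $q$ \emph{and} $e_k\notin\operatorname{span}$ of the rows. Then $\operatorname{rank}(L|_{H^I})\ge\operatorname{rank}(v_j)=q$, and since $(m-|I|)d\ge d>q$, again $\dim V\cap H^I=(m-|I|)d-q$.

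Then I would treat the $H^{I,i,k}$ conditions, which is the only place the extra hypothesis on $e_k$ is used. Here $L|_{H^{I,i,k}}$ acts on the subspace where $x_j=0$ for $j\in I$ and additionally $x_i^k=0$; its column space is the span of $\{v_j: j\notin I\}$ together with the columns of $v_i$ \emph{except} the $k$-th one — equivalently, viewing $v_i$ as $q$ vectors in $\R^d$, it is the image of the restriction of those $q$ vectors to the hyperplane $\{y^k=0\}\subset\R^d$, i.e.\ the span of $\{\pi_{\widehat k}(w): w\text{ a row of }v_i\}$ where $\pi_{\widehat k}$ drops the $k$-th coordinate. In case (1), there is another index $j\notin I$, $j\ne i$? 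Not necessarily if $|I|=m-1$ and $i$ is the unique element of $[m]\setminus I$; in that subcase $L|_{H^{I,i,k}}$ is exactly $v_i$ restricted to $\{y^k=0\}$, and we need its rank to be $\min\{q, d-1\}=\min\{q,q-1\}=q-1$, which holds because $v_i$ is invertible $q\times q$ so its image of the coordinate hyperplane $\{y^k=0\}$ has dimension $q-1$. If there is $j\notin I$ with $j\ne i$, then $v_j$ alone gives rank $q\ge\min\{q,(m-|I|)d-1\}$ and we are done (note $(m-|I|)d-1\ge 2d-1\ge d>q$ when $m-|I|\ge2$). In case (2), the subtle subcase is again $[m]\setminus I=\{i\}$, where we need $\operatorname{rank}$ of the $q$ row-vectors of $v_i$ projected onto $\{y^k=0\}\subset\R^d$ to equal $\min\{q,d-1\}$. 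Since $d>q$, we need this to equal $q$, i.e.\ the $q$ vectors $\pi_{\widehat k}(w)$ remain independent — and \emph{this} is exactly what the independence of the rows of $v_i$ together with $e_k$ guarantees: if $\sum c_\ell \pi_{\widehat k}(w_\ell)=0$ then $\sum c_\ell w_\ell$ is a multiple of $e_k$, and independence of $\{w_1,\dots,w_q,e_k\}$ forces all $c_\ell=0$. When $|[m]\setminus I|\ge2$ a second block $v_j$ already supplies rank $q$. This is the crux of the lemma, and the remaining obstacle — a minor bookkeeping one — is simply to organize the case analysis on $|I|$ and on whether $[m]\setminus(I\cup\{i\})$ is empty so that in every case the maximal-rank conclusion, hence the dimension formula, holds; once that is in place Corollary~\ref{cor:nonlinear} applies verbatim. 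I do not anticipate any genuine difficulty beyond this careful but elementary linear algebra.
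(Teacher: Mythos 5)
Your proposal is correct and follows essentially the same argument as the paper: both reduce the transversality conditions to a maximal-rank statement for $DP(a)$ relative to the relevant coordinate subspace, verified by isolating a single block $j\notin I$ (respectively $j\notin I\cup\{i\}$) and invoking the hypothesis on $(\partial P_i/\partial x_j(a))_{i=1}^q$, with the vector $e_k$ needed exactly in the critical case $|I|=m-1$ for the hyperplanes $H^{I,i,k}$. The paper phrases this as linear independence of the rows of the stacked matrix formed by $DP(a)$ and the coordinate projections, whereas you use rank--nullity for the restricted map, but these are dual formulations of the same computation.
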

\begin{proof}
Fix $I\subsetneq [m]$, let $L_1=DP(a)$ and $L_2(x)=(x_i)_{i\in I}$. The tangent plane to $\{P=P(a)\}$ at $a$ is $\text{ker}(L_1)$, and $\text{ker}(L_2)=\{x_i=0, i\in I\}$. Note also that for the map $L(x)=(L_1 x, L_2 x)$, $\text{ker}(L)=\text{ker}(L_1)\cap\text{ker}(L_2)$. Hence, transversality with respect to $\text{ker}(L_2)$ follows if $L$ has full rank $q+d|I|$. Suppose then that there is a non-trivial linear combination among the rows of $L_1$ and $L_2$. Since the rows of $L_2$ are clearly linearly independent, some row of $L_1$ has a non-zero coefficient. Pick $j\in [m]\setminus I$. Since projections respect linear dependency, there is a non-trivial linear combination among the rows $L_1, L_2$ corresponding to $x_j$, but for $L_2$ these rows consist of $0$'s, so $(\partial P_i/\partial x_j(a))_{i=1}^q$ is linearly dependent, contradicting the assumption. Observe that taking $I=\varnothing$ we get that $a$ is a regular point of $P$.

Note that transversality with respect to the planes $\{x_{\ell}^k\}$ for $\ell\in I$ follows from the above considerations. It remains to check transversality also with respect to the planes of the form $\{ x_\ell^k=0\}$ where $\ell\notin I$. If $|I|<m-1$, then the same argument as above applies, taking $j\notin I\cup\{\ell\}$. If $I=m-1$ and $d=q$, then we already have $\text{ker}(L)=0$ and we are done. Finally, if $|I|=m-1$, and $q>d$, we apply the same argument as above with $j\notin I$, using that linear independence still holds when a canonical vector is added.
\end{proof}

We now provide several applications of Corollary \ref{cor:nonlinear} to the existence of various geometric configurations inside $A^\text{perc}$. We start with a small extension of a Theorem of Rams and Simon \cite{RamsSimon14} asserting that the set of distances between points of $A^\text{perc}$ has non-empty interior whenever $s>\tfrac12$; we show that in this case $A^\text{perc}$ contains all sufficiently small distances. This stronger form can be easily deduced from the result of Rams and Simon together with Lemma \ref{lem:0-1-strong}; we give an alternative proof using Theorem \ref{thm:nonlinear}
because the proof is a model case for the later applications, and is also a simple example of a situation where the linear result, Theorem \ref{thm:independent-product-linear-int}, cannot be directly used.
\begin{cor}\label{cor:distance}
Suppose $s>\tfrac12$, and write $D(A)=\{|x-y|\,:\,x,y\in A\}$, where $A=A^{\text{perc}(d,p)}$. Then:
\begin{enumerate}
\item Almost surely, there is $\e>0$ such that $(0,\e)\subset D(A)$.
\item For any $\e>0$, $\PP\left((0,\sqrt{d}-\e)\subset D(A)\right)>0$.
\end{enumerate}
\end{cor}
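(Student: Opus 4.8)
The plan is to realize a prescribed distance $r>0$ between two points of $A$ as a point of $A\times A$ lying on the quadric $\zero(P_\lambda)$, where $P_\lambda\colon\R^{2d}\to\R$ is $P_\lambda(x_1,x_2)=|x_1-x_2|^2-\lambda$ with $\lambda=r^2$, and then to apply Corollary \ref{cor:nonlinear} with $m=2$, $q=1$; note the threshold $s>q/m$ is then exactly $s>\tfrac12$. Concretely, I would fix $\lambda_0\in(0,d)$ and a base point $a=(a_1,a_2)\in\,]0,1[^{2d}$ with $a_1\neq a_2$, $|a_1-a_2|^2=\lambda_0$, and all $d$ coordinates of $a_1-a_2$ nonzero --- for instance $a_1-a_2=(\sqrt{\lambda_0/d},\dots,\sqrt{\lambda_0/d})$, which is admissible precisely because $\lambda_0<d$ (when $d=1$ one only needs $a_1\neq a_2$). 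Since $\nabla_{x_j}P_{\lambda_0}(a)=\pm2(a_1-a_2)$ is not a multiple of any canonical vector $e_k$, Lemma \ref{lem:verify-transversality}(2) (or its part (1) for $d=1$) shows that $P_{\lambda_0}$ and $a$ satisfy the hypotheses of Corollary \ref{cor:nonlinear}. As $\{P_\lambda\}_\lambda$ is an affine line in $\cP_{2,1,2d}$, the neighbourhood $\mathcal O$ produced by that corollary contains $\{P_\lambda:|\lambda-\lambda_0|<\delta\}$ for some $\delta>0$ with $(\lambda_0-\delta,\lambda_0+\delta)\subset(0,d)$; hence there is a positive-probability event on which $\zero(P_\lambda)\cap A^2\neq\varnothing$, i.e.\ $\sqrt\lambda\in D(A)$, for all such $\lambda$. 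In other words, writing $r_0=\sqrt{\lambda_0}$, the open interval $I_{r_0}=(\sqrt{\lambda_0-\delta},\sqrt{\lambda_0+\delta})\ni r_0$ satisfies $\PP(I_{r_0}\subseteq D(A))>0$.

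I would then pass from single intervals to arbitrary compact intervals of distances. The family $\{I_{r_0}:r_0\in(0,\sqrt d)\}$ is an open cover of $(0,\sqrt d)$, so any compact $[r_1,r_2]\subset(0,\sqrt d)$ is covered by finitely many $I_{r_0}$. Since all events $\{I_{r_0}\subseteq D(A)\}$ and $\{[r_1,r_2]\subseteq D(A)\}$ are increasing (closed under taking supersets of $A$, as $D$ is monotone), iterating Harris' inequality (Lemma \ref{lem:Harris}) yields
\[
\PP\big([r_1,r_2]\subseteq D(A)\big)>0\qquad\text{for all }0<r_1<r_2<\sqrt d.
\]
To obtain (1), apply Lemma \ref{lem:0-1-strong} to $\mathfrak C=\{E\subseteq[0,1]^d:[\tfrac18,\tfrac14]\subseteq D(E)\}$, which has $\PP(\mathfrak C)>0$ by the above (using $\tfrac14<1\le\sqrt d$) and for which $\{A\in\mathfrak C\}$ is measurable. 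The lemma gives, almost surely, an $n_0$ such that for every $n\ge n_0$ there is $Q\in\mathcal Q_n$ with $[\tfrac18,\tfrac14]\subseteq D\big(h_Q(A\cap\overline Q)\big)=2^{n}D(A\cap\overline Q)$, hence $[2^{-n-3},2^{-n-2}]\subseteq D(A\cap\overline Q)\subseteq D(A)$. Consecutive dyadic rescalings abut --- the left endpoint $2^{-n-3}$ of the $n$-th copy is the right endpoint of the $(n+1)$-st --- so the union over $n\ge n_0$ equals $(0,2^{-n_0-2}]$, giving $(0,2^{-n_0-2})\subseteq D(A)$, which is (1).

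For (2), fix $0<\e<\sqrt d$ (otherwise the statement is vacuous) and set $r=\sqrt d-\e$. From (1), $\PP\big(\exists\tilde\e>0:(0,\tilde\e)\subseteq D(A)\big)=1$, and since the increasing events $\{(0,\delta)\subseteq D(A)\}$ exhaust this event as $\delta\downarrow0$, we may fix $\delta_0\in(0,r)$ with $\PP(\mathfrak E_1)>0$, where $\mathfrak E_1=\{(0,\delta_0)\subseteq D(A)\}$. By the compact-interval step, $\PP(\mathfrak E_2)>0$ with $\mathfrak E_2=\{[\delta_0,r]\subseteq D(A)\}$. Both events are increasing, so Harris' inequality gives $\PP(\mathfrak E_1\cap\mathfrak E_2)\ge(1-q)\PP(\mathfrak E_1)\PP(\mathfrak E_2)>0$, and on $\mathfrak E_1\cap\mathfrak E_2$ one has $(0,\delta_0)\cup[\delta_0,r]=(0,r]\subseteq D(A)$, so $(0,\sqrt d-\e)\subseteq D(A)$ with positive probability.

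The transversality check via Lemma \ref{lem:verify-transversality} is routine. The real point --- and the reason Corollary \ref{cor:nonlinear} does not suffice on its own --- is reaching the whole interval down to $0$: each application of the corollary only produces one interval of distances with positive probability, and assembling these into $(0,\e)$ (almost surely) and into $(0,\sqrt d-\e)$ (with positive probability) is exactly what the self-similarity input (Lemma \ref{lem:0-1-strong}) together with Harris' inequality is used for.
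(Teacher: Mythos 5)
Your proposal is correct and follows essentially the same route as the paper: apply Corollary \ref{cor:nonlinear} to the quadrics $|x_1-x_2|^2-\lambda$ (after checking transversality via Lemma \ref{lem:verify-transversality}) to get positive probability for an interval of distances, combine intervals with Harris' inequality, and then use Lemma \ref{lem:0-1-strong} with the rescaling of a dyadic ratio interval to obtain $(0,\e)\subset D(A)$ almost surely, and Harris once more for part (2). Your write-up merely fills in details the paper leaves implicit (the abutting rescaled intervals $[2^{-n-3},2^{-n-2}]$ and the choice of $\delta_0$ in part (2)), and these details are correct.
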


\begin{proof}
For each $\lambda>0$, let $P_\lambda(x,y)=|x-y|^2-\lambda^2 \in\cP^\text{reg}_{2,1,2d}$.

If $\lambda_0\in (0,\sqrt{d})$, we can find $x_0,y_0\in [0,1]^d$ such that $|x_0-y_0|=\lambda$ and the $d$ coordinates of $x_0-y_0$ are all non-zero. A calculation shows that the hypotheses of Corollary \ref{cor:nonlinear} for $P_{\lambda_0}$ and $(x_0,y_0)$ are met, so there exists an interval $I_{\lambda_0}$ around $\lambda_0$ such that
\[
\PP(I_{\lambda_0}\subset D(A)) > 0.
\]

By Harris' inequality (Lemma \ref{lem:Harris}), this implies that e.g.
\[
\PP([1/4,1/2]\subset D(A))=:c(d,p) >0 .
\]
The zero-one law from Lemma \ref{lem:0-1-strong} then yields the first claim, while the second claim follows using the first and Harris' inequality once  again.
\end{proof}

Next, we show that $A^{\text{perc}(d,p)}$ contains simplices of all small positive volumes, provided $s>1/(d+1)$. Related to this, in \cite[Theorem 3.7]{GGIP15} (see also \cite{GIM15}) it is shown that for some explicit $\e_d>0$, if $A\subset\R^d$ has Hausdorff dimension $d-\e_d$, then the simplices determined by $d+1$ points in $A$ determine a positive measure of volumes. Again, by considering the special case of fractal percolation, we get a sharp bound for this phenomenon.
\begin{cor} \label{cor:volumes}
Suppose $s>\tfrac1{d+1}$. Then a.s. there exists $\varepsilon>0$ such that $A=A^{\text{perc}(d,p)}$ contains $d+1$ points determining a simplex of volume $v$ for each $v\in (0,\varepsilon)$.
\end{cor}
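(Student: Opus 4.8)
The plan is to deduce Corollary \ref{cor:volumes} from Corollary \ref{cor:nonlinear} in essentially the same way Corollary \ref{cor:distance} was deduced, by expressing ``volume $=v$'' as the zero set of a suitable polynomial on $(\R^d)^{d+1}$ and verifying the hypotheses of Corollary \ref{cor:nonlinear} (via Lemma \ref{lem:verify-transversality}) at a well-chosen base configuration, then upgrading from positive probability on an interval of volumes to the full conclusion via Harris' inequality (Lemma \ref{lem:Harris}) and the zero-one law (Lemma \ref{lem:0-1-strong}). Concretely, set $m=d+1$ and $q=1$, and for $v>0$ let $P_v\colon(\R^d)^{d+1}\to\R$ be
\[
P_v(x_1,\ldots,x_{d+1}) = \left(\frac{1}{d!}\det\begin{pmatrix} x_2-x_1 \\ \vdots \\ x_{d+1}-x_1\end{pmatrix}\right)^2 - v^2,
\]
so that $\zero_{P_v}\cap A^{d+1}\neq\varnothing$ exactly says that $A$ contains $d+1$ points spanning a simplex of $d$-dimensional volume $v$. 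Since $s>1/(d+1)=q/m$, Corollary \ref{cor:nonlinear} is applicable provided we can produce, for each $v_0$ in the relevant range, a regular point $a=(a_1,\ldots,a_{d+1})$ of $P_{v_0}$ with $P_{v_0}(a)=0$, $a_i\ne a_j$ for $i\ne j$, and the transversality hypotheses.

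The main technical step is verifying transversality, for which I would invoke Lemma \ref{lem:verify-transversality}(2) (here $d>q=1$). Thus it suffices to choose $a$ with $a_i\in]0,1[^d$ pairwise distinct, spanning a nondegenerate simplex of volume $v_0$, and such that for every $j\in[d+1]$ and every $k\in[d]$ the pair $\{\nabla_{x_j}P_{v_0}(a),\,e_k\}$ is linearly independent, i.e. $\nabla_{x_j}P_{v_0}(a)$ is not parallel to any coordinate axis. A short computation gives $\nabla_{x_j}P_{v_0}(a) = \pm 2\,\mathrm{vol}\cdot w_j$, where $w_j$ is (up to sign) the unit normal to the facet of the simplex opposite the vertex $a_j$, scaled by the facet's $(d-1)$-volume; in particular $\nabla_{x_j}P_{v_0}(a)$ is a nonzero multiple of the outward normal to that facet. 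So the requirement becomes: no facet normal of the simplex is axis-parallel, equivalently no facet of the simplex lies in a hyperplane of the form $\{x^k = c\}$. A generic small simplex (obtained by taking a fixed nondegenerate simplex in $]0,1[^d$, rescaling to the target volume $v_0$, and perturbing its vertices slightly) has this property, and the nondegeneracy of the simplex guarantees $a$ is a regular point of $P_{v_0}$. This works for all $v_0$ in $(0,\varepsilon_d)$ for some $\varepsilon_d>0$ depending only on $d$ (small enough that such a simplex fits inside $]0,1[^d$).

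Having this, Corollary \ref{cor:nonlinear} yields, for each such $v_0$, a neighbourhood (an interval) $I_{v_0}\ni v_0$ with $\PP(\zero_{P_v}\cap A^{d+1}\ne\varnothing\text{ for all }v\in I_{v_0})>0$; by compactness finitely many of these cover a fixed closed subinterval $[c_1,c_2]\subset(0,\varepsilon_d)$, and Harris' inequality (Lemma \ref{lem:Harris}) gives $\PP\big(A\text{ contains simplices of all volumes }v\in[c_1,c_2]\big)=:c(d,p)>0$. Finally, the event ``$A$ contains $d+1$ points spanning a simplex of volume $v$ for all $v$ in some interval $(0,\varepsilon)$'' is invariant under homotheties (a homothety with ratio $\lambda$ multiplies all simplex volumes by $\lambda^d$) and under passing to supersets, so Lemma \ref{lem:0-1-strong} (or Corollary \ref{cor:0-1}) upgrades the positive-probability statement to an almost sure one: a.s. there is $\varepsilon>0$ with $(0,\varepsilon)\subset\{\text{volumes of simplices spanned by }d+1\text{ points of }A\}$. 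The only point requiring genuine care is the explicit transversality check, i.e. confirming that a generic small simplex has no axis-parallel facet and that this, together with Lemma \ref{lem:verify-transversality}(2), is exactly the hypothesis needed; everything else is a routine repetition of the argument in Corollary \ref{cor:distance}.
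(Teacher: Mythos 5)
Your proof is correct and essentially identical to the paper's, which uses the signed polynomial $P_v=\det(M(x_1,\ldots,x_{d+1}))-d!\,v$ (rather than its square) and verifies Lemma \ref{lem:verify-transversality} by computing that $\partial P_v/\partial x_i^k$ is $\pm$ the $(d-1)$-volume of a coordinate projection of the facet opposite $x_i$ --- the algebraic form of your ``no axis-parallel facet normal'' condition. The one detail to make explicit in the last step is to choose $[c_1,c_2]$ with $c_2\ge 2^d c_1$, so that the rescaled intervals $[2^{-nd}c_1,2^{-nd}c_2]$ supplied by Lemma \ref{lem:0-1-strong} at successive scales $n$ overlap and their union contains an interval $(0,\varepsilon)$.
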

\begin{proof}
Given $v>0$, let $P_v\in\mathcal{P}_{d,1,d(d+1)}$ be given by
\[
P_v(x_1,\ldots,x_{d+1})=\det(M(x_1,\ldots,x_{d+1}))-d! v\,,
\]
where $M(x_1,\ldots,x_{d+1})$ is the matrix with columns $(x_1,1),\ldots,(x_{d+1},1)$. If $P_v(x)=0$, then the simplex with vertices $x_i$ has volume $v$ (the reciprocal is not true since the determinant gives the oriented volume, so it could be $-v$ as well). Note that $\partial P/\partial x_i^k(z_1,\ldots,z_{d+1})$ is $\pm$ the $(d-1)$-volume of the simplex with vertices $\wt{\pi}_k(z_j), j\in[m]\setminus\{i\}$, where $\wt{\pi}_k(x_1,\ldots,x_{d+1})=(x_1,\ldots,x_{k-1},x_{k+1},\ldots,x_{d+1})$. Hence if $z_1,\ldots,z_{d+1}$ are generic points in $]0,1[^d$, then Lemma \ref{lem:verify-transversality}, and hence Corollary \ref{cor:nonlinear}, apply to $P_v$ and $t_v z=(t_v z_1,\ldots,t_v z_{d+1})$ where $t_v\in (0,1)$ is chosen so that $P_v(t_v z)=0$ (after possibly permuting the $z_i$ to ensure $\det(M(z))>0$). The zero-one law and Harris' inequality can then be used to conclude the proof, as in the proof of Corollary \ref{cor:distance}.
\end{proof}

Now, we look at congruent copies of triangles in $\R^2$.
\begin{cor}\label{cor:isometric-triangles}
Suppose $s>1$. Then for every triple $z=(z_1,z_2,z_3)$ of non-collinear points in $]0,1[^2$, there exists a neighbourhood $\mathcal{O}$ of $z$ such that with positive probability, $A^{\text{perc}(2,p)}$ contains an isometric copy of $\{ y_1,y_2,y_3\}$ whenever  $(y_1,y_2,y_3)\in\mathcal{O}$.
\end{cor}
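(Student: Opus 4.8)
The plan is to realize the congruence constraint as the zero set of a polynomial map $\R^{6}\to\R^3$ and apply Corollary \ref{cor:nonlinear} with $m=3$, $d=2$, $q=3$. Given a target triple $z=(z_1,z_2,z_3)$ of non-collinear points in $\,]0,1[^2$, define $P_z\colon(\R^2)^3\to\R^3$ by
\[
P_z(x_1,x_2,x_3)=\bigl(|x_1-x_2|^2-|z_1-z_2|^2,\;|x_1-x_3|^2-|z_1-z_3|^2,\;|x_2-x_3|^2-|z_2-z_3|^2\bigr),
\]
a member of $\cP_{2,3,6}$. By construction $P_z(z)=0$, and $P_z(x)=0$ forces $\{x_1,x_2,x_3\}$ to be congruent to $\{z_1,z_2,z_3\}$ (the three pairwise squared distances determine a triangle up to isometry). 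Since $s>1=q/m$, the dimension hypothesis of Corollary \ref{cor:nonlinear} is satisfied. It remains to check the transversality hypotheses at the base point $a=z$; here $q=3>d=2$, so I would invoke Lemma \ref{lem:verify-transversality}(2): for each $j\in[3]$ and each $k\in[2]$ I must verify that the three gradient vectors $(\partial P_i/\partial x_j(z))_{i=1}^{3}$ together with the canonical vector $e_k\in\R^2$ are linearly independent — but this is a set of four vectors in $\R^2$, which can never be linearly independent, so Lemma \ref{lem:verify-transversality} as literally stated does not apply, and one must check the transversality conditions of Corollary \ref{cor:nonlinear} directly.

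So the real work is the direct transversality check. Note first that the tangent plane $V=\ker DP_z(z)$ is $3$-dimensional in $\R^6$, and it is precisely the space of infinitesimal rigid motions of the configuration $\{z_1,z_2,z_3\}$ (infinitesimal translations span a $2$-dimensional subspace, infinitesimal rotation about the centroid adds one more; non-collinearity guarantees $DP_z(z)$ has full rank $3$, i.e. $z$ is a regular point, and that the rotation vector is not a translation). The conditions to verify are: for every proper $I\subsetneq[3]$, $\dim(V\cap H^I)=\max\{0,(3-|I|)\cdot2-3\}$, and for $j\notin I$, $k\in[2]$, $\dim(V\cap H^{I,j,k})=\max\{0,(3-|I|)\cdot2-4\}$. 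For $|I|=0$ these read $\dim V=3$ and $\dim(V\cap\{x_j^k=0\})=2$: the first is automatic, the second says the coordinate functional $x\mapsto x_j^k$ is not identically zero on the $3$-dimensional space of infinitesimal motions, which holds because translations already move every coordinate. For $|I|=1$: $\dim(V\cap H^I)=\max\{0,1\}=1$, i.e. fixing one vertex ($x_j=0$ for the single $j\in I$) leaves a $1$-dimensional space of infinitesimal motions — indeed, fixing one vertex kills the $2$ translational degrees of freedom and leaves exactly the infinitesimal rotation about that vertex, which is nonzero and not of the form $x_i=0$ for a second vertex. And $\dim(V\cap H^{I,j,k})=\max\{0,0\}=0$, i.e. fixing vertex $i\in I$ and additionally the $k$-th coordinate of a second vertex $x_j$ kills all infinitesimal motions — true, since the one remaining rotation about $z_i$ moves the $k$-th coordinate of $z_j$ unless $z_j$ lies on the line through $z_i$ in direction $e_k$, which for generic $z$ (and by perturbing $z$ inside the non-collinear locus if necessary) can be assumed; this genericity is harmless because the statement allows us to choose the neighbourhood $\mathcal O$ and one only needs the conclusion for an open set of triples. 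For $|I|=2$: $\dim(V\cap H^I)=\max\{0,-1\}=0$ (fixing two vertices of a triangle rigidly kills all motions — true by non-collinearity, exactly the condition that makes the triangle infinitesimally rigid) and the $H^{I,j,k}$ condition is then automatic.

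With transversality in hand, Corollary \ref{cor:nonlinear} produces a neighbourhood $\mathcal O$ of $z$ in $\cP_{2,3,6}$ such that $\PP(\zero_P\cap A^3\neq\varnothing \text{ for all } P\in\mathcal O)>0$. The map $(y_1,y_2,y_3)\mapsto P_y$ is continuous (indeed real-analytic) into $\cP_{2,3,6}$, so shrinking $\mathcal O$ to a neighbourhood of $z$ in $(\R^2)^3$ we get that with positive probability $\zero_{P_y}\cap A^3\neq\varnothing$ for all $(y_1,y_2,y_3)$ in that neighbourhood; and $\zero_{P_y}\cap A^3\neq\varnothing$ means exactly that $A$ contains a congruent copy of $\{y_1,y_2,y_3\}$. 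One should also note that, as in the proof of Corollary \ref{cor:distance}, the tangent-plane transversality at $z$ persists for the tangent plane at nearby points of $\{P_z=P_z(y)\}$ by continuity, so the localization step in Corollary \ref{cor:nonlinear} goes through. The main obstacle is precisely the transversality verification, since Lemma \ref{lem:verify-transversality} does not apply when $q>d$ with $m=3$; the key insight that makes it tractable is to reinterpret $V\cap H^I$ as the space of infinitesimal rigid motions fixing the vertices indexed by $I$, whereupon all the required dimension counts become the statements that a triangle is infinitesimally rigid, that fixing one vertex leaves a rotation, and a mild genericity condition on the coordinates — the last being absorbed into the freedom to pick the open set of triples.
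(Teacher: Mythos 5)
Your construction and most of your verification coincide with the paper's own proof: the same congruence polynomial $P_y\in\cP_{2,3,6}$, the same observation that Lemma \ref{lem:verify-transversality} is inapplicable because $q=3>d=2$, and then a direct check of the hypotheses of Corollary \ref{cor:nonlinear}. Your reading of $V=\ker DP_z(z)$ as the space of infinitesimal rigid motions is a clean way to organize that check, and the dimension counts you give for $|I|=0,1,2$ (full rank from non-collinearity, one remaining rotation after fixing a vertex, rigidity after fixing two) are correct.

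There is, however, a genuine gap in how you dispose of the degenerate case where some edge $z_j-z_i$ is parallel to a coordinate axis. In that case $V\cap H^{\{i\},j,k}$ is one-dimensional rather than zero-dimensional, so the transversality hypothesis of Corollary \ref{cor:nonlinear} really does fail at the base point $a=z$, and your proposed remedy --- perturb $z$ and declare this harmless because ``one only needs the conclusion for an open set of triples'' --- does not prove the corollary as stated. The statement demands, for \emph{every} non-collinear triple $z$ (including those with an axis-parallel edge), a neighbourhood of that particular $z$, and Theorem \ref{thm:patterns}(5) uses it in exactly this form. A non-rigid perturbation changes the congruence class, so the conclusion for a perturbed triple $z'$ and a small neighbourhood of $z'$ gives no information about isometric copies of triples congruent to $z$; nor does knowing the result off a codimension-one exceptional set recover it on that set. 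The paper's fix is to \emph{rotate} rather than perturb: $P_z$ depends on $z$ only through the pairwise distances, so $P_z=P_{Rz}$ for every rotation $R$, and one may take the base point $a$ in Corollary \ref{cor:nonlinear} to be a rotated copy $Rz\in\zero(P_z)$ with no axis-parallel edge (an arbitrarily small rotation suffices, so $Rz$ stays in $]0,1[^{2\cdot 3}$ and off the diagonal); transversality is only required at one point of the variety, not at $z$ itself. Continuity of $y\mapsto P_y$ then still produces a neighbourhood $\mathcal{O}$ of the original $z$, and with this one adjustment your argument is complete and agrees with the paper's.
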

\begin{proof}
Given $y=(y_1,y_2,y_3)\in(\R^2)^3$, let
\[
P_y(x_1,x_2,x_3)=(|x_2-x_1|^2 - |y_2-y_1|^2, |x_3-x_1|^2-|y_3-y_1|^2, |x_3-x_2|^2-|y_2-y_1|^2).
\]
Then $P_y\in\mathcal{P}_{2,3,6}$, and if $P_y(x_1,x_2,x_3)=0$, then $\{ x_1,x_2,x_3\}$ is isometric to $\{ y_1,y_2,y_3\}$. Note that Lemma \ref{lem:verify-transversality} is not applicable since $q>d$ in this case, but one can directly verify the assumptions from Corollary \ref{cor:nonlinear} as follows. Firstly, $DP_z(z)$ has full rank. By rotating the given $z_1,z_2,z_3$, we may assume $z_i-z_j$ is not in a coordinate line for $i\neq j$. Then for any $j\in [3]$ and any $k\in [2]$, one can verify that $\dim(\text{ker}(DP_z(z))\cap \{ x_i^k=0\})=2$, and $\text{ker}(DP_z(z))\cap \{ x_i=0\}$ is a one-dimensional subspace not contained in a coordinate hyperplane. We can then apply Corollary \ref{cor:nonlinear} to obtain the desired statement.
\end{proof}

\begin{rem} \label{rem:problem-isometric-copies}
The proof of Corollary \ref{cor:isometric-triangles} does not extend to simplices (or other polyhedra) in higher dimensions, since one can see that the transversality conditions fail. Geometrically, the reason is the following: to find an isometric copy of a simplex of vertices $z_1,\ldots,z_{d+1}$ (with the same vertex order), the first coordinate $y_1$ is free, but $y_2$ is constrained to lie in the sphere of center $y_1$ and radius $|z_2-z_1|$, so once $y_1$ is fixed, the angle of the corresponding tangent plane with the coordinate plane $y_2=0$ is zero. More precisely, the transversality of $\text{ker}DP$ (for the corresponding polynomial $P$) fails with respect to the planes $H^{I}$, when $|I|=2$.

This issue does not arise in $\R^2$ because once the second vertex is fixed, the third vertex (and therefore the full triangle) is completely determined. Formally, this means that $\dim\text{ker}(DP_z(z))\cap H^{I}=0$ whenever $|I|\ge 2$.
\end{rem}

\subsection{Scale-invariant patterns}

So far, in \S \ref{ssec:nlc}, we have considered configurations that are not scale invariant, in the sense that they are not preserved under scalings of the set where one is seeking the configuration. For this reason, in Corollaries \ref{cor:distance}-\ref{cor:isometric-triangles}, one can not hope to have \emph{all} the corresponding configurations (distances, volumes or triangles) in the fractal percolation limit set. We now turn to a class of configurations which are scale, and indeed homothety-invariant. Thanks to Corollary \ref{cor:0-1}, for this kind of patterns we will be able to obtain more pleasant results: a.s. $A^{\text{perc}(d,p)}$ will contain \emph{all} the configurations in each class.

We start by considering the angles determined by $A^{\text{perc}(d,p)}$. The problem of what lower bound on the Hausdorff dimension ensures that a subset of $\R^d$ contains a given angle was investigated in \cite{Mathe12, HKKMMMS13} and is far from settled. For fractal percolation, we get the following result:
\begin{cor}\label{cor:angles}
Suppose $s>\tfrac13$. Then, almost surely, all angles in $]0,\pi[$ can be formed by three points in $A^{\text{perc}(d,p)}$.
\end{cor}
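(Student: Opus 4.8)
\textbf{Proof plan for Corollary \ref{cor:angles}.}
The plan is to realize the angle condition as the vanishing locus of a single polynomial in $(\R^d)^3$, apply Corollary \ref{cor:nonlinear} to get the configuration with positive probability in a neighbourhood of any prescribed angle, and then upgrade to an almost sure statement for \emph{all} angles via the zero-one law (Corollary \ref{cor:0-1}). Concretely, given $\beta\in\,]0,\pi[$, encode ``the angle at $x_1$ in the triple $(x_1,x_2,x_3)$ equals $\beta$'' by the quadratic polynomial
\[
P_\beta(x_1,x_2,x_3) = (x_2-x_1)\cdot(x_3-x_1) - \cos\beta\,|x_2-x_1|\,|x_3-x_1|,
\]
which after clearing the square root becomes a genuine polynomial $\widetilde P_\beta$ of degree $4$ with $q=1$; its zero set (away from the degenerate locus where $x_1=x_2$ or $x_1=x_3$) consists exactly of triples forming angle $\beta$ at $x_1$. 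So $m=3$, $d\ge 2$, $q=1$, and the dimension condition $s>q/m=1/3$ is exactly our hypothesis.

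First I would fix $\beta\in\,]0,\pi[$ and choose a non-degenerate triple $z=(z_1,z_2,z_3)$ of points in $\,]0,1[^d$ with angle $\beta$ at $z_1$, chosen generically so that the three points are in ``general position'' with respect to the coordinate directions (no two of $z_i-z_j$ parallel to a coordinate axis, no $z_i^k$ coincidences). Since $q=1<d$, I would verify the hypotheses of Corollary \ref{cor:nonlinear} using Lemma \ref{lem:verify-transversality}(2): for each $j\in[3]$ and each $k\in[d]$ one must check that the single gradient vector $\nabla_{x_j}\widetilde P_\beta(z)\in\R^d$ together with the canonical vector $e_k$ is linearly independent, i.e. that $\nabla_{x_j}\widetilde P_\beta(z)$ is not parallel to any coordinate axis. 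A direct computation shows $\nabla_{x_2}P_\beta$ (and likewise $\nabla_{x_3}$) is a non-zero combination of $x_3-x_1$ and $x_2-x_1$, and $\nabla_{x_1}P_\beta$ a combination of all three difference vectors; for generic $z$ none of these lies in a coordinate line, and none vanishes since the triple is non-degenerate. Hence Corollary \ref{cor:nonlinear} applies and yields a neighbourhood $\mathcal{O}_\beta$ of $\widetilde P_\beta$ (hence an open interval $J_\beta\ni\beta$ of angles) such that $\PP(A^{\text{perc}(d,p)}\text{ contains a triple with angle }\gamma\text{ for all }\gamma\in J_\beta)>0$.

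Finally I would pass from positive probability, for one open set of angles, to almost sure occurrence of all angles. For this, apply Lemma \ref{lem:Harris} (Harris' inequality) to intersect finitely many such events and deduce that for any fixed closed subinterval $[\beta_1,\beta_2]\subset\,]0,\pi[$, $\PP(\text{all angles in }[\beta_1,\beta_2]\text{ are realized in }A)>0$. The collection $\mathfrak{C}$ of sets $E\subset\R^d$ that realize every angle in $[\beta_1,\beta_2]$ is closed under taking supersets and under homotheties (angles are preserved by similarities), so Corollary \ref{cor:0-1} boosts positive probability to probability one. Taking an exhaustion $[\beta_1,\beta_2]\uparrow\,]0,\pi[$ by countably many such intervals and intersecting the resulting full-measure events gives that almost surely all angles in $]0,\pi[$ are realized by triples in $A^{\text{perc}(d,p)}$. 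The main obstacle, and the only place requiring genuine care, is the transversality verification for $\widetilde P_\beta$: one must make sure that after clearing the square roots the relevant point $z$ is still a regular point and that the gradient non-degeneracy survives, which forces the careful generic choice of $z$ (and the use of the freedom to rotate, exactly as in the proof of Corollary \ref{cor:isometric-triangles}); the probabilistic boosting steps are then routine given the machinery already in place.
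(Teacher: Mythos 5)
Your proposal is correct and follows essentially the same route as the paper: the same squared degree-$4$ polynomial encoding the angle (with $m=3$, $q=1$, so the threshold $s>1/3$), transversality checked at a generic zero via Lemma \ref{lem:verify-transversality}(2), Corollary \ref{cor:nonlinear} for positive probability on an open interval of angles, and then the zero-one law plus a countable cover of $]0,\pi[$. The intermediate Harris' inequality step is harmless but unnecessary, since Corollary \ref{cor:0-1} can be applied directly to each interval $J_\beta$ before taking the countable intersection.
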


\begin{proof}

For each $\lambda\in]-1,1[$, consider the polynomial
\[
P_\lambda(x_1,x_2,x_3)=((x_1-x_2)\cdot(x_3-x_2))^2-\lambda^2|x_1-x_2|^2|x_3-x_2|^2\in\cP_{4,1,3d}.
\]
Note that $x=(x_1,x_2,x_3)\in\R^{3d}$ satisfies $\cos(\angle x_1x_2x_3)=\lambda$ if and only if $P_\lambda(x)=0$.

Fix $\lambda_0\in]-1,1[$  and pick a point $x'=(x'_1,x'_2,x'_3)\in]0,1[^{3d}$ such that $P_{\lambda_0}(x')=0$ and, moreover, $x'_i-x'_j$ does not lie in a coordinate plane for $i\neq j$. Then Lemma \ref{lem:verify-transversality} can be used to show that $P_{\lambda_0}$ and $x'$ satisfy the hypotheses of Corollary \ref{cor:nonlinear}. We deduce that there is an open interval $I_{\lambda_0}$ containing $\cos^{-1}(\lambda_0)$ such that, with positive probability,  $A^{\text{perc}(d,p)}$ contains all angles in $I_{\lambda_0}$. By the zero-one law for fractal percolation (Corollary \ref{cor:0-1}), `positive probability' can be replaced by `full probability'. Finally, covering $]0,\pi[$ by countably many such intervals $I_{\lambda_0}$ we reach the final conclusion.
\end{proof}

\begin{rem}
The extreme angles $\alpha\in\{0,\pi\}$ correspond to three collinear points in $A$. For the existence of three points on a line in $A^\text{perc}$, the threshold is $s>(d-1)/3$, which can be seen by applying Lemma \ref{lem:Yt-survives} to the plane
$V=\{(x,y,z)\in(\R^d)^3\,:\,z=x+\lambda(y-x)\text{ for some }\lambda\in\R\}\in\AA_{3d,2d+1}$.
\end{rem}

Another problem that has received a lot of attention is: what geometric conditions on a subset $A\subset\R^d$ ensure that $A$ contains the vertices of an equilateral triangle (or, more generally, a similar copy of a given triangle)? In particular, does any lower bound on the Hausdorff dimension of a set $A\subset\R^d$ suffice? For $d=2$, the answer is no: there exists sets $A\subset\R^2$ of Hausdorff dimension two which do not contain the vertices of an equilateral triangle, see \cite{Falconer92}, \cite{Maga10}. However, the answer is yes, for similar copies of any triangle, if one additionally assumes a Fourier decay condition on a Frostman measure on $A$ \cite{Chanetal16}, although even in this case the lower bound on the dimension is not explicit. Recently, A. Iosevich and B. Liu \cite{IosevichLiu16} showed that for $d\ge 4$ there is $\e_d>0$ (still not explicit) such that any set $A\subset\R^d$ of Hausdorff dimension $>d-\e_d$ does contain the vertices of an equilateral triangle. The problem is open in dimension $d=3$. For fractal percolation sets, we have the following result:
\begin{cor} \label{cor:triangles}
Suppose $s>2/3$. Then almost surely, $A=A^{\text{perc}(d,p)}$ contains the vertices of a triangle similar to an arbitrary non-degenerate triangle in $\R^d$.
\end{cor}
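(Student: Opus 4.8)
The plan is to encode the property that $\{x_1,x_2,x_3\}$ is similar to $\{y_1,y_2,y_3\}$ as the vanishing of a polynomial map $\R^{3d}\to\R^2$ and then invoke Corollary \ref{cor:nonlinear}, in the same spirit as Corollaries \ref{cor:angles} and \ref{cor:isometric-triangles}; the key point is that two equations suffice, since a similarity class of triangles is pinned down by two ratios of side lengths, and this is exactly what produces the threshold $q/m=2/3$. Concretely, for a non-collinear triple $y=(y_1,y_2,y_3)$ I would put
\begin{align*}
P_y^{(1)}(x_1,x_2,x_3)&=|x_2-x_1|^2|y_3-y_1|^2-|x_3-x_1|^2|y_2-y_1|^2,\\
P_y^{(2)}(x_1,x_2,x_3)&=|x_2-x_1|^2|y_3-y_2|^2-|x_3-x_2|^2|y_2-y_1|^2,
\end{align*}
so that $P_y=(P_y^{(1)},P_y^{(2)})\in\cP_{2,2,3d}$, and note that if $P_y(x)=0$ with $x_i\neq x_j$ for $i\neq j$, then the three quantities $|x_i-x_j|^2/|y_i-y_j|^2$ coincide and are equal to a positive constant, so $\{x_1,x_2,x_3\}$ is the vertex set of a non-degenerate triangle similar to $\{y_1,y_2,y_3\}$.

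I would then fix a representative $z\in\,]0,1[^{3d}$ of the given similarity class and check the hypotheses of Corollary \ref{cor:nonlinear} for $P_0=P_z$ and $a=z$ (note that $P_z(z)=0$ automatically and $z_i\neq z_j$). A direct computation shows that for each $j\in[3]$ the vectors $(\partial P_z^{(i)}/\partial x_j(z))_{i=1,2}$ span the linear $2$-plane $\Pi$ parallel to the plane of the triangle, which is $2$-dimensional precisely because $z$ is non-collinear. When $d=2$ this is part (1) of Lemma \ref{lem:verify-transversality}. When $d\ge3$ I would first rotate the triangle so that $\Pi$ contains none of the coordinate vectors $e_1,\ldots,e_d$ — a generic $2$-plane has this property, and rotating keeps us in the same similarity class — and then part (2) of Lemma \ref{lem:verify-transversality} applies; in either case the lemma delivers the transversality (and regularity at $z$) required by Corollary \ref{cor:nonlinear}. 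Since $s>2/3=q/m$, that corollary supplies a neighbourhood $\mathcal{O}$ of $P_z$ in $\cP_{2,2,3d}$ with $\PP(\zero_P\cap A^3\neq\varnothing\text{ for all }P\in\mathcal{O})>0$, and the localization built into its proof places the intersection point inside a product of pairwise disjoint cubes, so it genuinely is a non-degenerate triangle.

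To finish, continuity of $y\mapsto P_y$ provides a neighbourhood $\mathcal{N}$ of $z$, consisting of non-collinear triples, with $P_y\in\mathcal{O}$ for all $y\in\mathcal{N}$; hence with positive probability $A$ contains a triangle similar to $\{y_1,y_2,y_3\}$ simultaneously for all $y\in\mathcal{N}$. The collection of sets with this property is closed under homotheties and under passing to supersets, so — unlike in Corollary \ref{cor:distance}, where Harris' inequality was needed — the zero-one law of Corollary \ref{cor:0-1} upgrades the probability directly to $1$. Finally, the set of non-collinear triples in $]0,1[^{3d}$, intersected when $d\ge3$ with the dense open set of triples for which $\Pi$ misses all coordinate axes, is $\sigma$-compact and meets every similarity class of non-degenerate triangles; covering it by countably many such $\mathcal{N}$ and intersecting the corresponding almost sure events yields the statement. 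The only step requiring genuine care is the transversality verification, in particular the rotation needed when $d\ge3$ so that the plane of the triangle avoids all coordinate axes; everything else is a routine reprise of the arguments already used for angles and for isometric triangles.
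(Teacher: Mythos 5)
Your proof is correct and follows essentially the same route as the paper: encode the similarity class by two quadratic equations (so $q=2$, $m=3$, threshold $q/m=2/3$), verify the hypotheses of Corollary \ref{cor:nonlinear} via Lemma \ref{lem:verify-transversality} at a generic rotated representative, and then upgrade to full probability with Corollary \ref{cor:0-1} and a countable cover of the parameter space. The paper merely normalizes the side lengths to $1,a,b$ instead of carrying the $y$-dependent coefficients, and is terser about the genericity needed when $d\ge 3$; your explicit condition that the plane of the triangle contain no coordinate vector is exactly what Lemma \ref{lem:verify-transversality}(2) requires.
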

\begin{proof}
Given $a,b>0$ such that $a+b>1$, let $P_{a,b}:\R^{3d}\to \R^2$ be given by
\[
P_{a,b}(x_1,x_2,x_3) = (|x_3-x_1|^2 - a^2 |x_2-x_1|^2,|x_3-x_2|^2 - b^2 |x_2-x_1|^2).
\]
Then $P_{a,b}(x_1,x_2,x_3)=0$ if the triangle $(x_1,x_2,x_3)$ is similar to a triangle with side-lengths $1,a,b$. Fix, then, $a,b$ as above, and let $y_1,y_2,y_3\in\R^d$ be such that $P_{a,b}(y_1,y_2,y_3)=0$ and $y_i-y_j$ does not lie in a coordinate hyperplane for $i\neq j$. We can then apply Lemma \ref{lem:verify-transversality}, Corollary \ref{cor:nonlinear} and conclude the proof as in Corollary \ref{cor:angles}.
\end{proof}

For similar reasons to those explained in Remark \ref{rem:problem-isometric-copies}, our methods do not directly apply to the problem of finding similar copies of higher-dimensional polyhedra inside $A^{\text{perc}(d,p)}$. However, in the plane, we can extend Corollary \ref{cor:triangles} to general polygons:
\begin{cor} \label{cor:polygons}
If $s>2-4/m$, then a.s. $A^{\text{perc}(2,p)}$ contains a similar copy of all $m$-point sets $\{ a_1,\ldots,a_m\}$ such that no three of the $a_i$ are collinear.
\end{cor}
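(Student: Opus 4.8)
The plan is to mimic the proof of Corollary \ref{cor:triangles}, but set up a polynomial map encoding similarity of an $m$-gon rather than a triangle. Given a target configuration $\{a_1,\ldots,a_m\}\subset\R^2$ with no three points collinear, a similar copy of it is determined (up to the placement of two points) by fixing the images of $a_1$ and $a_2$: once $x_1,x_2$ are chosen, each remaining $x_j$, $j\ge 3$, is forced to satisfy two quadratic constraints expressing that the ratio $|x_j-x_1|/|x_2-x_1|$ and $|x_j-x_2|/|x_2-x_1|$ equal the corresponding ratios for $a$. Concretely, for $j=3,\ldots,m$ put
\[
P_a^{(j)}(x_1,\ldots,x_m)=\bigl(|x_j-x_1|^2-\alpha_j^2|x_2-x_1|^2,\ |x_j-x_2|^2-\beta_j^2|x_2-x_1|^2\bigr),
\]
where $\alpha_j=|a_j-a_1|/|a_2-a_1|$, $\beta_j=|a_j-a_2|/|a_2-a_1|$, and let $P_a=(P_a^{(3)},\ldots,P_a^{(m)})\colon\R^{2m}\to\R^{2(m-2)}$. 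Thus $q=2(m-2)=2m-4$ and $M=md=2m$, so $q/m=2-4/m$, which matches the stated threshold $s>2-4/m$. If $P_a(x)=0$ (and the points are in the right configuration, which we ensure by working near a genuine similar copy), then $\{x_1,\ldots,x_m\}$ is similar to $\{a_1,\ldots,a_m\}$.

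Next I would verify the transversality hypotheses of Corollary \ref{cor:nonlinear} for $P_a$ at a suitable base point. Pick a genuine similar copy $z=(z_1,\ldots,z_m)\in\,]0,1[^{2m}$ of $\{a_i\}$; after a generic rotation we may assume $z_i-z_j$ does not lie in a coordinate line for any $i\neq j$. Lemma \ref{lem:verify-transversality} does not apply directly since $q>d$, so one checks the conditions by hand, exactly as in Corollary \ref{cor:isometric-triangles}: the key point is that $DP_a^{(j)}(z)$ only involves the variables $x_1,x_2,x_j$, and for fixed $j$ the $2\times 2$ block $\partial(P_a^{(j)})/\partial x_j$ at $z$ is (twice) $[z_j-z_1\ \ z_j-z_2]^t$, which is invertible because the three points $z_1,z_2,z_j$ are non-collinear. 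This block structure lets one compute $\dim(\ker DP_a(z)\cap H^I)$ and $\dim(\ker DP_a(z)\cap H^{I,i,k})$ for each $I\subsetneq[m]$: for $j\in[m]\setminus(I\cup\{1,2\})$ the variable $x_j$ is freely solvable from the $j$-th pair of equations once $x_1,x_2$ and the coordinates in $I$ are fixed, so the counts come out to $\max\{0,(m-|I|)d-q\}$ and $\max\{0,(m-|I|)d-q-1\}$ respectively; the non-collinearity of every triple $\{a_1,a_2,a_j\}$ is exactly what is needed, and the genericity of the rotation handles the coordinate-hyperplane conditions. Crucially, since every $x_j$ with $j\ge 3$ is slaved to $\{x_1,x_2\}$ rather than to a chain of previously determined points, the obstruction described in Remark \ref{rem:problem-isometric-copies} (where $|I|=2$ kills transversality) does not occur: the relevant kernels remain as small as a dimension count permits. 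Having checked this, Corollary \ref{cor:nonlinear} gives a neighbourhood $\mathcal{O}$ of $P_a$ such that with positive probability $\zero_P\cap A^m\neq\varnothing$ for all $P\in\mathcal{O}$, hence (taking $P$ near $P_a$) that with positive probability $A$ contains a similar copy of $\{a_1',\ldots,a_m'\}$ for all $a'$ in a neighbourhood of $a$.

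Finally, to upgrade ``positive probability for $a'$ near $a$'' to ``almost surely for all admissible $m$-point sets'' I would invoke the zero-one law, Corollary \ref{cor:0-1}: the property ``$A$ contains a similar copy of every $\{a_1',\ldots,a_m'\}$ with $a'$ in the fixed neighbourhood $\mathcal{U}$ of $a$'' is invariant under homotheties and under passing to supersets, so its probability is $0$ or $1$, hence $1$. Since the space of $m$-point sets in general position (no three collinear), considered up to similarity, is $\sigma$-compact, it can be covered by countably many such neighbourhoods $\mathcal{U}$, and intersecting the corresponding full-measure events yields the claim. The main obstacle I expect is the transversality verification in the second step: one must confirm that the failure mode of Remark \ref{rem:problem-isometric-copies} is genuinely absent here (that is, that in the planar case $d=2$ the ``two free points determine everything'' phenomenon forces $\dim(\ker DP_a(z)\cap H^I)=0$ whenever $|I|\ge 2$), and that the coordinate-hyperplane transversality $H^{I,i,k}$ can be arranged by a generic rotation of $z$ — this is where the hypothesis ``no three of the $a_i$ collinear'' is used in an essential way, and a careful case analysis on $|I|$ (in particular $|I|\le m-2$ versus $|I|=m-1$) is required.
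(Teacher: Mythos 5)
Your proposal is correct and follows essentially the same route as the paper: the identical polynomial system $P_a\in\cP_{2,2m-4,2m}$ tying each $x_j$, $j\ge 3$, to $x_1,x_2$ via the two distance-ratio equations, a hands-on transversality check at a generic similar copy (the paper phrases it as linear independence of the gradient rows of $DP_{a'}(a')$ together with the canonical vectors $e_{i_0}^k,e_{j_0}^k$, which is equivalent to your kernel-dimension/block-structure count), followed by Corollary \ref{cor:nonlinear}, the zero-one law of Corollary \ref{cor:0-1}, and a countable cover of the parameter space. Your observation that the obstruction of Remark \ref{rem:problem-isometric-copies} is absent because every vertex is slaved directly to $\{x_1,x_2\}$ is exactly the point the paper is exploiting.
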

\begin{proof}
The starting point of the proof is the following observation: Suppose that $\{x'_1,\ldots, x'_m\}$ is a similar copy of $\{a'_1,\ldots,a'_m\}$ such that $x'_j=h(a'_j)$ for all $j\in[m]$ and some $h\in\simi_2$. Then there are small neighbourhoods $U$ of $x'=(x'_1,\ldots,x'_m)$ and $\widetilde{U}$ of $a'=(a'_1,\ldots,a'_m)$ such that for all $(a_1,\ldots,a_m)\in\widetilde{U}$, $x\in U$ yields a similar copy of $a'$ if $|x_j-x_i|/|x_2-x_1|=|a_j-a_i|/|a_2-a_1|$ whenever $3\le i\le m$ and $j\in\{1,2\}$.

Now to the details. Let
\[\mathcal{N}=\{(i,j)\,:\,3\le i\le m\,,j\in\{1,2\}\}\,.\]
For any $a=(a_1,a_2,\ldots, a_m)\subset(\R^2)^m$, where no three of the $\{a_1,\ldots, a_m\}$ are collinear,
denote $a(i,j)=|a_j-a_i|/|a_2-a_1|$, for all $(i,j)\in\mathcal{N}$, and define
\[
P_{i,j,a(i,j)}(x_1,\ldots,x_m)=|x_j-x_i|^2-a(i,j)^2|x_2-x_1|^2
\]
for $(x_1,\ldots,x_m)\in(\R^2)^m$. Finally, consider the polynomial
\[
P_a=(P_{i,j,a(i,j)})_{(i,j)}\in\cP_{2,2m-4,2m}.
\]

Fix $a'\in(]0,1[^d)^m\setminus\Delta$. As we have observed, if $x'$ is close to $a'$ and $P_{a'}(x')=0$, then $x'$ is similar to $a'$. Our goal is to verify that the assumptions of Corollary \ref{cor:nonlinear} apply to $P_{a'}$ and $a'$. We cannot apply Lemma \ref{lem:verify-transversality} directly, but we argue analogously as follows: let $1\le i_0<j_0\le m$. It is enough to show that the rows of $DP_{a'}(a')$ together with the vectors $e_{i_0}^k, e_{j_0}^k$, $k\in\{1,2\}$, are linearly independent (and hence a basis). Here $e_i^k$ is the canonical vector for the coordinate $x_i^k$. Suppose, on the contrary, that there is a non-trivial linear combination. Since $e_{i_0}^k, e_{j_0}^k$ are linearly independent, some $\nabla P_{i,j,a'(i,j)}(a'_1,\ldots,a'_m)$ must have a non-zero coefficient. Pick $\ell\in \{1,2,i,j\}\setminus \{i_0,j_0\}$. Then there is a non-trivial linear combination among $\partial P_{i,j,a'(i,j)}/\partial x_\ell(a'_1,\ldots,a'_m)$, but a calculation (using the non-collinearity of the $a'_i$) shows this is not the case.

Hence  Corollary \ref{cor:nonlinear}  can be applied, and together with Corollary \ref{cor:0-1} and Lemma \ref{lem:Harris} this concludes the proof.
\end{proof}

\begin{rem}
A straightforward adaptation of the proof of Proposition \ref{prop:threshold_sharp}, one can again see that the dimension thresholds provided by Corollaries \ref{cor:distance}--\ref{cor:polygons} are sharp for packing dimension even for deterministic sets (up to the endpoint).
\end{rem}

\section{Patterns in sets of positive measure}
\label{sec:szemeredi}

\subsection{The dimension of intersections and patterns}

We refine the results in Sections \ref{sec:affine-intersections}--\ref{sec:nonlinear} by providing upper and lower bounds for the dimension of intersections of $A^m$ with algebraic varieties. As a direct corollary, this yields dimension estimates for the `number of times' a given configuration is found in the set $A$.

\begin{thm} \label{thm:dim_patterns}
Let $A=A^{\text{perc}(d,p)}$. In the setting of Theorem \ref{thm:nonlinear}, if $s>q/m$, there is a neighbourhood $\mathcal{O}$ of $P_0$ such that:
\begin{enumerate}
\item For every $\delta>0$ there is a finite random variable $K$ such that $A^m\cap \zero_P\setminus\Delta(\delta)$ can be covered by $K 2^{n(ms-q)}$ cubes in $\mathcal{Q}_n^{md}$ for all $P\in\mathcal{O}$. In particular, a.s.
\[
    \dim_P(A^m\cap \zero_P\setminus\Delta)\le ms-q \quad\text{for all }P\in\mathcal{O}.
\]
\item For any $\delta>0$, with positive probability, $\dim_H(A^m\cap \zero_P\setminus\Delta)>ms-q-\delta$ for all $P\in\mathcal{O}$.
\end{enumerate}
\end{thm}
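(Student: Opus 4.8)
\textbf{Proof proposal for Theorem \ref{thm:dim_patterns}.}

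The plan is to use Theorem \ref{thm:nonlinear} and its local companion Corollary \ref{cor:nonlinear} together with the quantitative extinction estimate of Corollary \ref{cor:extinction-quantitative} and the capacity characterization of fractal percolation hitting, Theorem \ref{thm:hausdorff-dim-percolation}. As in the proof of Corollary \ref{cor:nonlinear}, we first reduce to independent copies: fix a regular point $a=(a_1,\dots,a_m)\in P_0^{-1}(0)\cap]0,1[^{md}$ with $a\notin\Delta$, choose $n_0$ large so that the cubes $Q_i\in\mathcal{Q}_{n_0}^d$ containing the $a_i$ are disjoint, condition on these surviving, and pass to the polynomial $\overline{P}_0(x_1,\dots,x_m)=P_0(f_1(x_1),\dots,f_m(x_m))$ and the product $\mu_n=\nu_n^{(1)}\times\cdots\times\nu_n^{(m)}$ of independent realizations. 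Since $A^m\setminus\Delta$ decomposes (up to the diagonal) into countably many such product pieces, and the diagonal is removed in the statement, it suffices to prove both bounds for $\mu_n$, $\overline{P}_0$, and a small neighbourhood $\mathcal{O}$.

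For the upper bound (1), I would work with the family $\Gamma$ of tangent planes of $\zero_P$, $P\in\mathcal{O}$; by Remark \ref{rem:transversality} the transversality hypotheses in Corollary \ref{cor:nonlinear} say exactly that all planes in $\mathcal{R}^p(\Gamma)$ make a definite angle with the relevant coordinate planes, so the hypotheses of Theorem \ref{thm:nonlinear} apply. For a cube $Q\in\mathcal{Q}_n^{md}$ with $Q\cap\zero_P\cap\Delta(\delta)^c\neq\varnothing$, one has $\mathcal{H}^{md-q}(\zero_P\cap Q)=\Omega(2^{-n(md-q)})$ (transversality with the faces of $Q$ gives a lower bound on the measure of the slice once the cube is actually hit; alternatively use the coarea formula, Proposition \ref{prop:coarea}, exactly as in the a priori Lipschitz estimate in the proof of Theorem \ref{thm:nonlinear}), so if $Q\subset A_n$ then its contribution to $Y_n^P$ is $\Omega(2^{-n(md-q)}\,2^{mn(d-s)})=\Omega(2^{n(q-ms)})$. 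Hence the number of such cubes is $O(2^{n(ms-q)}\,Y_n^P)$. Since $s>q/m$, part (1) of Theorem \ref{thm:nonlinear} gives that $\overline{Y}_n=\sup_{P\in\mathcal{O}}Y_n^P$ converges, so it is bounded by a finite random variable $K$; this yields the covering bound $K2^{n(ms-q)}$ uniformly in $P\in\mathcal{O}$, and letting $\delta\downarrow0$ along a sequence gives $\dim_P(A^m\cap\zero_P\setminus\Delta)\le ms-q$ a.s. (The one subtlety is uniformity over the uncountable family $\mathcal{O}$: this is exactly what the uniform convergence in Theorem \ref{thm:nonlinear} buys us; together with the Lipschitz dependence $V\mapsto\mathcal{H}^{md-q}(V\cap Q)$ it lets a single $K$ serve all $P$.)

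For the lower bound (2), the idea is to realize $A^m\cap\zero_P$ as a hitting set for an auxiliary fractal percolation and invoke Theorem \ref{thm:hausdorff-dim-percolation} in reverse, i.e. use that if a percolation of dimension $s'$ hits a set $B$ with positive probability then $\dim_H B\ge d'-s'$. Concretely, split the $m$ coordinates into two groups and view $\mu_n$ as $\mu_n'\times\mu_n''$ where $\mu_n''=\nu_n^{(m)}$ (say) is a single independent copy in $\R^d$; conditionally on $\mu_n'$, the event that $\nu^{(m)}$ hits the (random) slice $B_{x'}=\{x_m: (x',x_m)\in\zero_P\}$ implies $\zero_P\cap A^m\neq\varnothing$. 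Running Theorem \ref{thm:nonlinear} (the $s>q/m$ case) on the lower-dimensional product shows the relevant slice-mass random variables are positive with positive probability uniformly over $P\in\mathcal{O}$ and over a positive-measure set of $x'$; combined with Corollary \ref{cor:extinction-quantitative} to rule out too-small intersections, and the capacity bound, one gets $\dim_H(\zero_P\cap A^m)\ge (\text{slice dimension})\ge ms-q-\delta$ for all $P\in\mathcal{O}$ on an event of positive probability — the loss $\delta$ coming from the arbitrarily small $\delta$-neighbourhood of $\Delta$ removed and from the endpoint in Theorem \ref{thm:hausdorff-dim-percolation}. The Harris inequality (Lemma \ref{lem:Harris}) upgrades "positive probability for each $P$" to "positive probability simultaneously for all $P$ in a fixed $\mathcal{O}$" after passing to a countable dense subfamily and using the continuity of $P\mapsto Y^P$.

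The main obstacle I anticipate is the lower bound (2): getting a genuine \emph{Hausdorff}-dimension bound that is \emph{uniform over the uncountable neighbourhood $\mathcal{O}$} and holds with a single positive-probability event. The Hölder continuity of $P\mapsto Y^P$ from Theorem \ref{thm:nonlinear} handles continuity of the total mass of the intersection measure, but transferring this to a uniform lower bound on $\dim_H$ of the support requires either a mass-distribution/energy argument for the limiting slice measure $\mu^P$ that is itself continuous in $P$, or a careful discretization reducing to hitting probabilities for percolation at many scales simultaneously — and then controlling the small exceptional probabilities via Corollary \ref{cor:extinction-quantitative} uniformly in $P$. This is where the bulk of the technical work will lie; the upper bound (1) is comparatively routine given Theorem \ref{thm:nonlinear}.
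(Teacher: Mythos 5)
In part (1) there is a genuine gap: you claim that transversality to the faces forces $\mathcal{H}^{md-q}(\zero_P\cap Q)=\Omega(2^{-n(md-q)})$ for \emph{every} cube $Q\in\mathcal{Q}_n^{md}$ that $\zero_P$ merely intersects, and then bound the number of surviving cubes by $O(2^{n(ms-q)})Y_n^P$. This per-cube lower bound is false: transversality prevents tangencies with the faces but does not prevent the variety from clipping an arbitrarily small corner of $Q$ (already a line transversal to the axes can meet a square in an arbitrarily short segment), and enlarging the cube does not help because the neighbouring cubes need not lie in $A_n^m$, so the extra piece of $\zero_P$ carries no $\mu_n$-mass. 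The paper circumvents exactly this by averaging over translates of the polynomial: writing $P_y=P-y$ and noting $P_y\in\mathcal{O}$ for $|y|$ small, the coarea formula (Proposition \ref{prop:coarea}) applied on each surviving cube gives, for $M$ the number of cubes of $A_n^m$ meeting $\zero_P$ off $\Delta(\delta)$,
\[
M\,2^{-nms}\le O(1)\int_{y\in B(0,O(2^{-n}))\subset\R^q} Y_n^{P_y}\,d\mathcal{L}^q(y)\le O(K)\,2^{-nq},
\]
where $K=\sup_{n,P\in\mathcal{O}}Y_n^P<\infty$ comes from the uniform convergence in Theorem \ref{thm:nonlinear}. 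The shifted varieties sweep out all of $Q$, so no single-variety lower bound is needed; you should replace your per-cube estimate by this averaging step (your reduction to independent copies and the use of uniform convergence for a single random $K$ are fine).

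Part (2) is the larger gap, and you correctly sense it but your proposed route does not close it. Using one coordinate factor $\nu^{(m)}$ as the ``test'' percolation cannot work: Theorem \ref{thm:hausdorff-dim-percolation} lower-bounds the dimension of the \emph{deterministic} set being hit by an \emph{independent} percolation, whereas $\nu^{(m)}$ is part of the set $A^m\cap\zero_P$ whose dimension you are trying to bound, and hitting the slices $B_{x'}$ only tells you about $\dim_H B_{x'}$ (which is trivially $(m-1)d-q$), not about $\dim_H(A^m\cap\zero_P)$; Corollary \ref{cor:extinction-quantitative} and Harris' inequality do not repair this, and Harris cannot by itself give the required uniformity over the uncountable family $\mathcal{O}$. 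The paper's key idea, absent from your sketch, is to introduce an \emph{auxiliary independent} fractal percolation $\wt{A}=\wt{A}^{\text{perc}}$ on $\R^{md}$ of dimension $md-t$ with densities $\wt{\mu}_n$, and to run the whole machinery on the product $\overline{\mu}_n=\mu_n\wt{\mu}_n$: this is still a martingale measure with the same dependency structure (dependencies only along coordinate directions), so the proof of Theorem \ref{thm:nonlinear} and the second-moment survival argument (Lemma \ref{lem:Yt-survives-nonlinear}) apply verbatim, giving, whenever $ms-t>q$, that with positive probability $(A^m\cap\zero_P)\cap\wt{A}\neq\varnothing$ simultaneously for all $P\in\mathcal{O}$ (the uniformity coming from the H\"older continuity of the limit masses for $\overline{\mu}$, not from Harris). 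Conditioning on a good realization of $A^m$ and applying Theorem \ref{thm:hausdorff-dim-percolation} to the independent $\wt{A}$ then yields $\dim_H(A^m\cap\zero_P)\ge t$ for all $P\in\mathcal{O}$, and letting $t\uparrow ms-q$ gives the claim. Without this (or an equivalent uniform mass-distribution argument, which you only mention as a possibility), part (2) remains unproved.
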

\begin{proof}
To begin, we can partition $[0,1]^{md}\setminus\Delta$ into a countable union of products $Q:=Q_1\times\ldots\times Q_m$, where $Q_i\in\mathcal{Q}_n^d$ for some (variable) $d$ and $(\pi_j(Q_i))_{i=1}^m$ are disjoint for all $j$. By considering the restriction to each of these cubes, and applying the usual independence and rescaling arguments, we may replace $A^m$ by the product of $m$ independent fractal percolations (for the upper bound we are using the countable stability of packing dimension). Let $A'$ denote the product of the $m$ copies of $A^{\text{perc}(d,s)}$.

Write  $\mu_n = \nu_n^{(1)}\times\cdots\times \nu_n^{(m)}$ with $\nu_n^{(i)}$ independent realizations of $\nu_n^{\text{perc}(d,p)}$. To establish the first claim, suppose that $\zero_P$ intersects $M$ disjoint cubes $Q\in\mathcal{Q}^{md}_n$ for some $P\in\mathcal{O}$. Write $P_y=P-y$, and note this is in $\mathcal{O}$ for small enough $y$. By the coarea formula (Proposition \ref{prop:coarea}), using that the $d$-Jacobian of $P_y$ is bounded away from zero for $y$ small, we deduce that if $n$ is large enough, then
\begin{equation}\label{eq:co-area}
M 2^{-nms}\le O(1)\int_{y\in B(0,O(2^{-n}))\subset\R^q}Y_n^{P_y}\,d\mathcal{L}^q\le O(K 2^{-nq})\,.
\end{equation}
This shows that $M\le O_K(1) 2^{n(ms-q)}$, so that the packing (and indeed the box-counting) dimension of $\zero_P$ is at most $ms-q$ for all $P\in\mathcal{O}$, establishing the first claim.

For the second claim, fix $t\in (0,md)$ and let $\widetilde{A}=\widetilde{A}^{\text{perc}(md,q)}$ be fractal percolation on $\R^{md}$ of Hausdorff dimension $md-t$, independent of $\mu_n$, with corresponding approximating measures $\widetilde{\mu}_n$. Now set
\[
\overline{\mu}_n = \mu_n \widetilde{\mu}_n.
\]
The sequence $\overline{\mu}_n$ is neither fractal percolation nor a product of independent percolations. However, it is easily checked to be a martingale measure, and the dependency structure matches exactly that of the product of independent copies $\mu_n$: there are dependencies only among coordinate directions. In particular, $\overline{\mu}_n$ converges almost surely to a measure $\overline{\mu}$ supported on $A'\cap \widetilde{A}$, which has dimension $ms-t$. The proof of Theorem \ref{thm:nonlinear} apply verbatim to $\overline{\mu}$, while the proof of Lemma \ref{lem:Yt-survives-nonlinear} (or rather Lemma \ref{lem:Yt-survives}) extends with very minor changes (the estimates for the $L^2$ norm actually get better since there is more independence than in the setting of Lemma \ref{lem:Yt-survives-nonlinear}). We deduce that if $ms-t>q$, then
\[
\PP\left((A'\cap\zero_P)\cap \wt{A} \neq \varnothing \text{ for all }P\in\mathcal{O}\right) > 0,
\]
where $\mathcal{O}$ is a neighbourhood of $P_0$ which is independent of $s$ and $t$ (so long as $ms-t>q$). Since $A'$ and $\wt{A}$ are independent, this implies that there are positively many realizations of $A'$ such that the above holds  with positive probability with respect to the construction of $\wt{A}$. Fixing such a realization of $A'$, Theorem \ref{thm:hausdorff-dim-percolation} allows us to conclude that $\dim_H(A'\cap \zero_P)\ge t$ for all $P\in\mathcal{O}$. Since $t<ms-q$ is arbitrary, we get the second claim.
\end{proof}

\begin{rems} \label{rems:dim-patterns}
\begin{enumerate}[(i)]
\item The same result holds in the setting of Theorem \ref{thm:independent-product-linear-int} (with $k$ in place of $md-q$), either by noting that the proof works works verbatim in that case, or by seeing Theorem \ref{thm:independent-product-linear-int} as (essentially) the particular case of Theorem \ref{thm:nonlinear} in which the polynomial $P$ is affine.
\item It seems very likely that using the approach of \cite[Theorem 12.8]{ShmerkinSuomala14}, one can improve the second part of the theorem as follows: almost surely, for every $P\in\mathcal{O}$ such that $Y^P>0$,
\[
\dim_H(A'\cap \zero_P\setminus\Delta)\ge ms-q.
\]
Recall that $Y^P>0$ for an open set of $P$'s. For the sake of simplicity, we will work with the slightly weaker version above.
\end{enumerate}
\end{rems}

For a fixed $P$, we also have the following upper bound, without any transversality assumptions:
\begin{lemma} \label{lem:dim-upper-bound-no-assumptions}
Let $P\in\mathcal{P}_{r,md,q}^{\text{reg}}$, and let $A=A^{\text{perc}(d,p)}$. Assume $s>q/m$. Then almost surely, for any $\delta>0$,
\[
\overline{\dim}_B(\zero_P \cap A^m\setminus \Delta(\delta)) \le ms-q.
\]
\end{lemma}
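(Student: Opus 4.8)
The plan is to combine the deterministic bound on the number of dyadic cubes that a $(md-q)$-dimensional variety can meet with a first-moment (expectation) count of how many of those cubes survive in $A^m$, and then to conclude via Borel--Cantelli. Because $\Delta(\delta)$ is removed, the usual long-range dependency problems are absent and one can reduce to a product of independent fractal percolations exactly as in the proof of Theorem~\ref{thm:dim_patterns}.

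First I would fix $\delta>0$ and pick $n_0$ large enough that every cube $R\in\mathcal{Q}^{md}_{n_0}$ meeting $[0,1]^{md}\setminus\Delta(\delta)$ has the form $R=R_1\times\cdots\times R_m$ with $R_1,\dots,R_m\in\mathcal{Q}^d_{n_0}$ pairwise disjoint; this is possible since a point at distance $\ge\delta$ from $\Delta$ has pairwise $\sqrt{2}\delta$-separated coordinate blocks, so $\diam R<\sqrt{2}\delta/3$ suffices. There are finitely many such cubes $R$, and by finite stability of upper box dimension it is enough to estimate $\overline{\dim}_B(\zero_P\cap A^m\cap R)$ for each. Conditioning on the (positive probability) event that $R_1,\dots,R_m$ all survive to level $n_0$ — off this event the intersection is empty — and rescaling each $R_i$ back to $[0,1]^d$ by a homothety $h_i$, the sets $h_i(A\cap R_i)$ become independent realizations of $A^{\text{perc}(d,p)}$, while $\zero_P\cap R$ is carried onto $\zero_{\overline{P}}$ with $\overline{P}(x_1,\dots,x_m)=P(h_1^{-1}(x_1),\dots,h_m^{-1}(x_m))$, for which $0$ is still a regular value. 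Thus $\zero_{\overline{P}}$ is a compact piece of a $C^1$ submanifold of $\R^{md}$ of dimension $md-q$, and the problem is reduced to the following: \emph{if $A'=A^{(1)}\times\cdots\times A^{(m)}$ is a product of independent copies of $A^{\text{perc}(d,p)}$ and $S\subset[0,1]^{md}$ is a bounded piece of a $C^1$ $(md-q)$-manifold, then almost surely $\overline{\dim}_B(S\cap A')\le ms-q$.}

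For this reduced statement, write $A'_n=A^{(1)}_n\times\cdots\times A^{(m)}_n$ and let $N_n$ be the number of $Q\in\mathcal{Q}^{md}_n$ with $Q\subset A'_n$ and $Q\cap S\neq\varnothing$; these are exactly the cubes of $\mathcal{Q}_n^{md}$ needed to cover $S\cap A'$. Since $S$ is a bounded piece of a $(md-q)$-dimensional $C^1$ manifold, it meets $O(2^{n(md-q)})$ cubes of $\mathcal{Q}_n^{md}$. For a product cube $Q=Q_1\times\cdots\times Q_m$, independence and the self-similarity of fractal percolation give $\PP(Q\subset A'_n)=\prod_{i}\PP(Q_i\subset A^{(i)}_n)=p^{mn}=2^{mn(s-d)}$, so $\EE(N_n)=O(2^{n(md-q)}\cdot 2^{mn(s-d)})=O(2^{n(ms-q)})$. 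Fixing $\varepsilon>0$, Markov's inequality yields $\PP(N_n>2^{n(ms-q+\varepsilon)})=O(2^{-n\varepsilon})$, which is summable, so Borel--Cantelli gives that a.s. $N_n\le 2^{n(ms-q+\varepsilon)}$ for all large $n$, i.e. $\overline{\dim}_B(S\cap A')\le ms-q+\varepsilon$. Intersecting the corresponding a.s. events over a sequence $\varepsilon_k\downarrow 0$, and over rational $\delta$ (using monotonicity of $\overline{\dim}_B$ in $\delta$), produces a single almost sure event on which the asserted bound holds for all $\delta>0$.

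I do not expect a serious obstacle here; the argument is essentially routine. The two points requiring a little care are the standard passage from $A^m$ near the diagonal to a product of independent percolations (already used repeatedly in the paper), and the observation that \emph{no transversality hypothesis is needed}: since we only want an upper bound we may afford the crude manifold-counting estimate $O(2^{n(md-q)})$ rather than the sharp coarea bound underlying Theorem~\ref{thm:nonlinear}.
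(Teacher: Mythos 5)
Your proof is correct and is essentially the paper's argument: a first-moment count combining the deterministic bound $O(2^{n(md-q)})$ on the number of dyadic cubes meeting $\zero_P$ with the survival probability $2^{nm(s-d)}$ for off-diagonal cubes, followed by Markov's inequality and Borel--Cantelli. The paper skips your preliminary reduction to independent copies and instead works directly with $A_n^m$, observing that once $n$ is large relative to $\delta$ the $m$ coordinate projections of a cube off $\Delta(\delta)$ are distinct, so its survival probability is (up to an $O_\delta(1)$ constant) the product $2^{nm(s-d)}$; your conditioning-and-rescaling step achieves the same thing slightly more laboriously.
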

\begin{proof}
We use the first moment method.  Let $K_n$ be the number of cubes $Q\in\mathcal{Q}_n^{md}$ such that $Q\cap A_n^m\cap \zero_P\setminus\Delta(\delta)\neq \varnothing$. For large $n$ (in terms of $\delta$) each cube $Q\subset [0,1]^{md}\setminus\Delta(\delta)$ survives in $A_n^m$ with probability $2^{nm(s-d)}$, so for each $\e>0$ Markov's inequality yields
\[
\PP(K_n> 2^{n(ms-q+\e)}) \le  2^{-n\e} 2^{n(q-ms)} \EE(K_n) = O(2^{-\e n})\,,
\]
note that $\zero_P$ intersects $O(2^{n(md-q)})$ cubes in $\mathcal{Q}_n^{md}$.
By the Borel-Cantelli Lemma, a.s. $K_n\le 2^{n(ms-q+\e)}$ for all large $n$, which gives the claim.
\end{proof}

As a corollary of Theorem \ref{thm:dim_patterns}, we can now prove Theorem \ref{thm:dim-patterns-homothetic}:
\begin{thm}\label{cor:dim_patterns}
If $s>d-(d+1)/m$, then a.s. for each $m$ point set  $S\subset\R^d$,
\[\dim\left\{(a,b)\in \R\times \R^d\,:\,aS+b\subset A\right\}=m(s-d)+d+1\,,\]
where $\dim$ is either Hausdorff or packing dimension.
\end{thm}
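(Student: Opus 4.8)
The statement follows by applying Theorem \ref{thm:dim_patterns} (and Remark \ref{rems:dim-patterns}(i), which transfers the conclusion to the affine setting of Theorem \ref{thm:independent-product-linear-int}) to the family of affine subspaces parametrizing homothetic copies of $S$, together with the zero-one law of Corollary \ref{cor:0-1} to upgrade positive probability to full probability. Fix $S=\{a_1,\ldots,a_m\}\subset\R^d$. First I would dispose of the degenerate case: if all the $a_i$ coincide then $\{(a,b):aS+b\subset A\}$ is just $\R\times A$ (modulo the case $a=0$), which has dimension $1+s=m(s-d)+d+1$ precisely when $s=d$, but in general we should assume $S$ has at least two distinct points; the case where some but not all coordinates coincide is handled exactly as in the second part of the proof of Corollary \ref{cor:patterns}, using the modified dyadic metric on the relevant $\mathcal{R}_i(\Gamma)$, so I would treat first the generic case $a_i^k\neq a_j^k$ for all $i\neq j$, $k\in[d]$ and then indicate that the coincidence classes are handled verbatim as there.

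\textbf{Setup of the parametrization.} For the generic case, after a translation normalizing $a_m=0$, write $T=(a_1,\ldots,a_{m-1})$ and consider, as in Corollary \ref{cor:patterns}, the $(d+1)$-plane $V_T=\{(y,\ldots,y)+\lambda(a_1,\ldots,a_{m-1},0):y\in\R^d,\lambda\in\R\}\in\mathbb{G}_{md,d+1}$. A point $(x,\ldots,x)+\lambda(T,0)\in V_T\cap A^m$ with $\lambda>0$ corresponds exactly to the homothety $\lambda S+x'\subset A$ for a suitable $x'$; more precisely, the set $E_S:=\{(a,b)\in\R\times\R^d: aS+b\subset A\}$ is, up to a bi-Lipschitz change of coordinates and removal of the slice $a=0$, the union over a fixed neighbourhood $U_0$ of the relevant parameter of the slices $V_T\cap A^m$ (intersected with a dyadic box $Q$ avoiding $\Delta$, as arranged in the proof of Corollary \ref{cor:patterns}). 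The key transversality computation — that $V_{T_0}$ meets each $H^I$, $H^{I,j,i}$ transversally, with the crucial identity \eqref{eq:trans_d+1} — is exactly the one already carried out in Corollary \ref{cor:patterns}, so the hypotheses of Theorem \ref{thm:dim_patterns} (via Remark \ref{rems:dim-patterns}(i), with $k=d+1$, so $ms-k=m(s-d)+md-d-1$... wait: here $\alpha=m(d-s)$ and the relevant exponent is $mk/\ldots$) — concretely, the exponent $ms-q$ of Theorem \ref{thm:dim_patterns} becomes $m s-(md-(d+1))=m(s-d)+d+1$ in the affine picture with codimension $q=md-k$, $k=d+1$.

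\textbf{Assembling the two bounds.} The upper bound is the clean part: Theorem \ref{thm:dim_patterns}(1) (in its affine form) gives that a.s., for every $V$ in the parameter neighbourhood, $V\cap A^m\setminus\Delta$ has packing dimension $\le m(s-d)+d+1$; since $E_S$ is a countable union (over dyadic boxes $Q$ and over the finitely many coincidence classes, then over a countable cover of parameter space by neighbourhoods $U_0$) of bi-Lipschitz images of such slices, and packing dimension is countably stable and Lipschitz-monotone, we get $\dim_P E_S\le m(s-d)+d+1$ a.s., hence also $\dim_H E_S\le m(s-d)+d+1$. For the lower bound, Theorem \ref{thm:dim_patterns}(2) gives, for each $\delta>0$, \emph{positive} probability that $\dim_H(V\cap A^m\setminus\Delta)>m(s-d)+d+1-\delta$ for all $V\in\mathcal{O}$; the event ``$\dim_H E_S\ge m(s-d)+d+1-\delta$'' is closed under passing to supersets and under homotheties, and is measurable, so Corollary \ref{cor:0-1} forces it to have probability one. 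Intersecting over a sequence $\delta\downarrow0$ gives $\dim_H E_S\ge m(s-d)+d+1$ a.s., and since this holds simultaneously for each $S$ in a countable dense family one upgrades to all $S$ by a short continuity/monotonicity argument in $S$ (or, more simply, one runs the whole argument with $S$ ranging over the parameter space $U_0$, exactly as the existence statement in Corollary \ref{cor:patterns} is proved for all $T\in U_0$ at once). Combining the two bounds yields the asserted equality for both dimensions.

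\textbf{Main obstacle.} The genuinely delicate point is not the dimension bookkeeping but making sure the zero-one law can be applied to the \emph{uncountable} family of sets $S$ simultaneously: one must argue, as in Corollary \ref{cor:patterns}, that the H\"older-in-$T$ continuity of $Y^T$ (for the existence part) together with the semicontinuity of Hausdorff dimension of the slices lets one pass from ``for each $S$, a.s.\ $\dim_H E_S=\ldots$'' to ``a.s., for all $S$, $\dim_H E_S=\ldots$''. For the upper bound this is automatic (countable stability plus the fact that Theorem \ref{thm:dim_patterns}(1) already gives a uniform statement over $P\in\mathcal{O}$). For the lower bound, the uniformity over $P\in\mathcal{O}$ in Theorem \ref{thm:dim_patterns}(2) is exactly what is needed, so the remaining work is just to note that finitely many neighbourhoods $\mathcal{O}$ cover any compact piece of parameter space and to handle the coincidence classes by the dyadic-metric device from Corollary \ref{cor:patterns}; I expect this to be routine once the analogy with that proof is spelled out. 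A secondary minor point is checking that the change of coordinates from $(a,b)$-space to the $(y,\lambda)$-parametrization of $\cup_T V_T$ is bi-Lipschitz away from $a=0$ and that the removed slice $a=0$ contributes dimension $\le d<m(s-d)+d+1$ (using $s>d-(d+1)/m$), which is elementary.
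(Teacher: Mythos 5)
Your proposal is correct and follows essentially the same route as the paper's proof: upper bound from Theorem \ref{thm:dim_patterns}(1) plus countable stability of packing dimension, lower bound from Theorem \ref{thm:dim_patterns}(2) upgraded via the zero-one law (Corollary \ref{cor:0-1}) and $\delta\downarrow 0$, with the degenerate coincidence classes handled by the dyadic-metric device from Corollary \ref{cor:patterns}. The dimension bookkeeping ($q=md-(d+1)$ giving $ms-q=m(s-d)+d+1$) and the observation that the uniformity over the parameter neighbourhood $\mathcal{O}$ is what allows the zero-one law to handle the uncountable family of sets $S$ are both exactly as in the paper.
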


\begin{proof}
By covering the parameter space by countably many neighbourhoods $\mathcal{O}$, we can work with a fixed $\mathcal{O}$ for which the conclusions of Theorem \ref{thm:dim_patterns} hold. The upper bound is a direct consequence of the first part of Theorem \ref{thm:dim_patterns}. For the lower bound, we apply the second part of Theorem \ref{thm:dim_patterns} to each $\delta>0$, use Corollary \ref{cor:0-1} to upgrade positive probability to full probability, and then let $\delta\downarrow 0$. As in the proof of Corollary \ref{cor:patterns}, we need to consider the dyadic metric to bypass the failure of transversality with respect to the coordinate hyperplanes for certain patterns.
\end{proof}

\begin{rem}
Similar results hold for other classes of configurations. For patterns which are not scale-invariant, we cannot in principle use the zero-one law to get rid of the $\delta$ in the lower bound, but see Remarks \ref{rems:dim-patterns}(ii) for a possible approach to overcome this. If transversality with respect to coordinate hyperplanes fails at some points, then we do not in get a uniform upper bound for the dimension (but we do for each given configuration, thanks to Lemma \ref{lem:dim-upper-bound-no-assumptions}). However, as pointed out in Remarks \ref{rem:nodiagonal}(ii), it may be possible to remove the assumption of hyperplane transversality altogether.
\end{rem}

\subsection{Lack of patterns in sets of full measure}

As an application of Lemma \ref{lem:dim-upper-bound-no-assumptions}, we show that whenever $s<\tfrac{q}{m-1}$ it is possible to find a full $\nu$-measure subset $A'\subset A$ such that $(A')^n\cap \zero_P=\varnothing$. We will show in the next section that the opposite happens when $s>\tfrac{q}{m-1}$.
\begin{prop}\label{prop:dim_threshold}
Let $\nu=\nu^{\text{perc}(d,p)}$ and $A=A^{\text{perc}(d,p)}$. Let $\nu=\nu^{\text{perc}(d,p)}$ and $A=A^{\text{perc}(d,p)}$. If $P\in\mathcal{P}_{r,q,md}^{\text{reg}}$ and $s=s(d,p)<\tfrac{q}{m-1}$,  then a.s there is a Borel set $A'\subset A$ of full $\nu$-measure such that $A'$ does not contain distinct points $x_1,\ldots,x_m$ with $P(x_1,\ldots,x_m)=0$.
\end{prop}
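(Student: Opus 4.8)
The idea is to remove, scale by scale, the `bad' cubes where the pattern could possibly occur, and to show that the total $\nu$-mass removed can be made arbitrarily small. Fix a small $\delta>0$. Working first with $m$ independent realizations $A^{(1)},\ldots,A^{(m)}$ of $A^{\text{perc}(d,p)}$ (the general case follows by the standard decomposition of $[0,1]^{md}\setminus\Delta$ into products of disjoint cubes, to which we can reduce since $P$-zero sets away from $\Delta$ are what matters, and the case where two points coincide is trivial), consider for each $n$ the collection $\mathcal{E}_n$ of cubes $Q=Q_1\times\cdots\times Q_m\in\mathcal{Q}^{md}_n$ with $Q\subset A_n^{(1)}\times\cdots\times A_n^{(m)}$, $Q\cap\zero_P\neq\varnothing$, and $Q\cap\Delta(\delta)=\varnothing$. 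By Lemma \ref{lem:dim-upper-bound-no-assumptions} (applied with $q/m<q/(m-1)$ so that $s>q/m$ may fail — here is the first subtlety, see below), or rather by a direct first-moment computation as in its proof, $|\mathcal{E}_n|\le K 2^{n(ms-q+\varepsilon)}$ almost surely for all large $n$, where $K$ is a finite random variable.

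\textbf{The removal step.} For each $Q_1\times\cdots\times Q_m\in\mathcal{E}_n$, the pattern $P(x_1,\ldots,x_m)=0$ with $x_i\in Q_i$ forces the $x_i$ to lie in $\zero_P\cap(Q_1\times\cdots\times Q_m)$; projecting, the `last' coordinate $x_m$ is constrained to a set that, together with the constraints from the regularity of $P$, has codimension at least $1$ in $Q_m$ once $x_1,\ldots,x_{m-1}$ are fixed — more precisely, I would remove from $A$ the union $\bigcup_{n}\bigcup_{Q\in\mathcal{E}_n}\pi_m(Q)$, i.e. the $\pi_m$-shadows of the bad cubes, and argue that this removes a subset of $A$ of arbitrarily small $\nu$-measure. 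The expected $\nu$-mass removed at level $n$ is at most $|\mathcal{E}_n|\cdot p^{-n}\mathcal{L}^d(\text{a cube in }\mathcal{Q}^d_n)\cdot(\text{overcounting factor})$; since each distinct shadow $\pi_m(Q)$ has $\nu_n$-mass $p^{-n}2^{-dn}=2^{-sn}$ and there are at most $|\mathcal{E}_n|$ of them, the total mass removed at level $n$ is $O(K 2^{n(ms-q+\varepsilon)}2^{-sn})=O(K 2^{n((m-1)s-q+\varepsilon)})$. The hypothesis $s<q/(m-1)$ gives $(m-1)s-q<0$, so for $\varepsilon$ small this is summable, and the tail $\sum_{n\ge N}$ tends to $0$ as $N\to\infty$. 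Hence after removing the shadows of all bad cubes of generation $\ge N$, we obtain a subset of $A$ whose complement in $A$ has $\nu$-measure $O(K2^{-\Omega(N)})$, which contains no pattern with all coordinates separated by $\delta$; intersecting over a countable sequence $\delta_j\downarrow 0$ and $N_j\to\infty$ chosen so the removed masses sum to less than $1/j$, and then taking a further limit, produces a Borel set $A'\subset A$ with $\nu(A\setminus A')=0$ containing no distinct $x_1,\ldots,x_m$ with $P(x_1,\ldots,x_m)=0$.

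\textbf{The main obstacle.} The delicate point is the bookkeeping of the removal: one must ensure that (i) the removed shadows are measured with respect to $\nu$ (not $\nu_n$) correctly — this requires using the martingale property and that $\nu(\pi_m(Q))$ is comparable to $2^{-sn}$ in expectation, handled by $\EE(\nu(B))=\mathcal L^d(B)$ for dyadic $B$ and a Borel--Cantelli / Markov argument — and (ii) that removing $\pi_m(Q)$ genuinely destroys \emph{all} patterns with base-point cube $Q_1\times\cdots\times Q_{m-1}$, which is why we remove the full fiber $\pi_m^{-1}(\pi_m(Q))$ intersected with $A$ rather than just $Q$. A secondary subtlety is that Lemma \ref{lem:dim-upper-bound-no-assumptions} is stated under $s>q/m$; when $s\le q/m$ the bound $|\mathcal E_n|\le K2^{n(ms-q+\varepsilon)}$ is even easier (the expected number of surviving bad cubes is already exponentially small), so the first-moment estimate goes through unchanged and in fact one could then take $A'=A$. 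I would also need to handle the reduction from $A^m$ to independent copies carefully near $\Delta$, but since we only ever require $x_i$ pairwise distinct and we exhaust $[0,1]^{md}\setminus\Delta$ by products of disjoint cubes (on each of which $\nu^m$ restricts, up to rescaling, to a product of independent percolations), this is routine. Assembling these pieces and a standard diagonal argument over $\delta_j, N_j$ completes the proof.
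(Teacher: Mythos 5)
Your argument is essentially a hands-on, unfolded version of the paper's proof, and it is correct in substance, but one step of the write-up needs tightening. The paper proceeds in two lines: Lemma \ref{lem:dim-upper-bound-no-assumptions} (the same first-moment count you perform) gives $\dim_P(\zero_P\cap A^m\setminus\Delta)\le ms-q<s$, and then $A'=A\setminus\pi_1(\zero_P\cap A^m\setminus\Delta)$ has full measure because $\nu$ annihilates every set of dimension $<s$ (as $\underline{\dim}_{\mathrm{loc}}(\nu,x)=s$ on $A$). You instead convert the covering bound $|\mathcal{E}_n|=O(2^{n(ms-q+\e)})$ directly into a $\nu$-measure bound on the union of shadows via $\EE(\nu(B)\mid\mathcal{B}_n)=\nu_n(B)$ and Borel--Cantelli; this trades the appeal to the local dimension of $\nu$ for an explicit martingale estimate, and both routes rest on the same arithmetic $(m-1)s-q<0$. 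Your handling of the side issues is fine: the $s>q/m$ hypothesis of Lemma \ref{lem:dim-upper-bound-no-assumptions} is indeed irrelevant to the first-moment computation (and the paper itself is slightly loose in citing it here), and the reduction to independent copies off $\Delta$ is the standard one -- though note the first-moment count does not actually require independence, only $\PP(Q\subset A_n^m)\approx 2^{nm(s-d)}$ for $Q$ off the diagonal, so that detour is unnecessary.

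The step that does not work as written is the final ``taking a further limit'': a union of pattern-free sets of measure $>\nu(A)-1/k$ need not be pattern-free, so you cannot pass from ``pattern-free up to measure $\e$'' to ``pattern-free of full measure'' by taking unions. The fix is already implicit in your construction: for fixed $\delta$, the removed sets $R_N=\bigcup_{n\ge N}\bigcup_{Q\in\mathcal{E}_n}\pi_m(Q)$ are \emph{decreasing} in $N$ with $\nu(R_N)\to 0$, and every $\delta$-separated pattern has its $m$-th coordinate in $\pi_m(Q)$ for the bad cube of \emph{every} sufficiently large generation, hence in the $\nu$-null set $\bigcap_N R_N$. Removing $\bigcup_j\bigcap_N R_N^{\delta_j}$ (rather than a single $R_{N_j}^{\delta_j}$ for each $j$) then gives a genuinely full-measure pattern-free set, which is exactly the set $A\setminus\pi_m(\zero_P\cap A^m\setminus\Delta)$ of the paper's proof. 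With that one-line correction your proof is complete.
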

\begin{proof}
We know from Lemma \ref{lem:dim-upper-bound-no-assumptions} that a.s.
\[
\dim_P(\zero_P\cap A^m\setminus\Delta) \le ms-q< s\,
\]
using the assumption $s<\tfrac{q}{m-1}$ for the second inequality. Let
\[
A' =  A\setminus  \pi_1(\zero_P\cap A^m\setminus\Delta)\,.
\]
Since $A\setminus A'$ has dimension $<s$, we have $\nu(A')=\nu(A)$. On the other hand, it is clear from the definition that $A'$ cannot contain distinct points $x_1,\ldots,x_m$ such that $P(x_1,\ldots,x_m)=0$.
\end{proof}

Assuming the same transversality conditions as in Corollary \ref{cor:nonlinear}, the previous Proposition extends to the more delicate case of the threshold $s=q/(m-1)$.
\begin{thm}\label{thm:sharp_Szemeredi_threshold}
Let $\nu=\nu^{\text{perc}(d,p)}$ and $A=A^{\text{perc}(d,p)}$. If $P\in\mathcal{P}_{r,q,md}^{\text{reg}}$ satisfies the Assumptions of Corollary \ref{cor:nonlinear} and $s=s(d,p)=\tfrac{q}{m-1}$,  then a.s. there is a Borel set $A'\subset A$ of full $\nu$-measure such that $A'$ does not contain distinct points $x_1,\ldots,x_m$ with $P(x_1,\ldots,x_m)=0$.
\end{thm}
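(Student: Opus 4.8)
The strategy is to reduce the threshold case $s = q/(m-1)$ to the subcritical situation handled in Proposition \ref{prop:dim_threshold}, by passing to the finite level approximations and exploiting the critical extinction estimate of Corollary \ref{cor:extinction-quantitative}. The point is that at the critical value $s = q/(m-1)$, the set $\zero_P \cap A^m \setminus \Delta$ is ``$0$-dimensional'' in a borderline sense: by Theorem \ref{thm:dim_patterns}(1) (applicable thanks to the transversality assumptions of Corollary \ref{cor:nonlinear}), for each $\delta > 0$ the set $\zero_P \cap A^m \setminus \Delta(\delta)$ can be covered by $K 2^{n(ms-q)}$ cubes in $\mathcal{Q}_n^{md}$, and at the threshold $ms - q = s$. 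This is exactly the exponent where first-moment arguments are inconclusive, so we need the refined extinction estimate rather than a naive Borel--Cantelli bound.

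First I would set up the decomposition: as in the proof of Theorem \ref{thm:dim_patterns}, partition $[0,1]^{md} \setminus \Delta$ into countably many products $Q = Q_1 \times \cdots \times Q_m$ with $(\pi_j(Q_i))_i$ disjoint for every $j$, so that on each such piece $\mu_n$ restricted to $Q$ becomes (after rescaling and conditioning on survival of the $Q_i$) a product of $m$ independent fractal percolations. It suffices to produce, for each fixed such piece, a full-measure subset avoiding configurations supported there; a countable intersection then finishes the job. So fix such a $Q$ and work with $A' = A^{(1)} \times \cdots \times A^{(m)}$, a product of independent copies. Next, combining Theorem \ref{thm:dim_patterns}(1) with a diagonal argument over $\delta = \delta_k \downarrow 0$, I would argue that the ``bad set'' $B := \zero_P \cap (A^{(1)}\times\cdots\times A^{(m)}) \setminus \Delta$ can be covered, at level $n$, by at most $K_\delta 2^{n(s+o(1))}$ cubes once we stay at distance $\ge \delta$ from $\Delta$. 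The key refinement: I want to show $\nu(\pi_1(B)) = 0$ almost surely. Here is where Corollary \ref{cor:extinction-quantitative} enters — I would intersect $B$ with an \emph{independent} auxiliary fractal percolation $\widetilde{A}$ on $\R^d$ (or on $\R^{md}$) of an appropriate subcritical dimension and use the quantitative extinction bound to show that the $\nu$-measure of $\pi_1(B)$ is zero, exploiting that a set coverable by $\approx 2^{ns}$ cubes meets a critical-dimension independent percolation only with probability $\to 0$.

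More concretely, to show $\nu(\pi_1(B)) = 0$ a.s.\ it suffices (by Fubini and the stochastic self-similarity / density-point structure of $\nu$) to show that for $\nu$-a.e.\ $x$, the fiber $B \cap \pi_1^{-1}(x)$ is empty; equivalently, that $\EE \nu(\pi_1(B)) = 0$. I would estimate $\EE \nu_n(\pi_1(B_n))$ where $B_n$ is the level-$n$ approximation (cubes of $\mathcal{Q}_n^{md}$ meeting $\zero_P$, surviving in $A_n^{(1)}\times\cdots\times A_n^{(m)}$, and $\delta$-away from $\Delta$). By the transversality with respect to the coordinate planes $H^I$ (here $I = \{2,\ldots,m\}$ or rather the one-coordinate projections), each fiber $\pi_1^{-1}(x) \cap \zero_P$ is itself covered by $O(2^{n((m-1)d - q)})$ cubes of $\mathcal{Q}_n^{(m-1)d}$; running the second-moment / extinction argument on the $(m-1)$-fold product $A^{(2)}\times\cdots\times A^{(m)}$ at the critical exponent $(m-1)d - q - (m-1)(d-s) = (m-1)s - q = 0$ (using $s = q/(m-1)$), Corollary \ref{cor:extinction-quantitative} gives that this fiber meets $A_n^{(2)}\times\cdots\times A_n^{(m)}$ with probability $q_n \downarrow 0$. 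Integrating over $x \in A^{(1)}$ and using $\EE \nu_n^{(1)} = \leb^d$, we get $\EE \nu_n^{(1)}(\pi_1(B_n)) \le q_n \to 0$; a limiting argument (monotonicity in $n$ after restricting to a sub-level that stabilizes, or a direct Borel--Cantelli once $q_n$ is summable along a subsequence) then yields $\nu(\pi_1(B)) = 0$. Then set $A' = A \setminus \pi_1(B)$; it has full $\nu$-measure and contains no nontrivial solution of $P = 0$.

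\textbf{Main obstacle.} The delicate point is the limiting step from the finite-level estimate $\EE \nu_n^{(1)}(\pi_1(B_n)) \to 0$ to the statement $\nu(\pi_1(B)) = 0$ almost surely: the sets $\pi_1(B_n)$ need not be nested, and $B$ is a limit of $B_n$ only in a weak sense, so one must be careful about semicontinuity of $\nu$-measure and about the $\delta$-truncation near $\Delta$. I expect the cleanest route is to fix $\delta$, show $\nu(\pi_1(\zero_P \cap A^m \setminus \Delta(\delta))) = 0$ a.s.\ for each $\delta$ (this is a genuinely closed set, so the finite-level covering bounds transfer by compactness and the extinction estimate can be applied to the limit set directly, as in the proof of Proposition \ref{prop:extinction-critical}), and then take a countable union over $\delta = 1/k$. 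The extinction estimate Corollary \ref{cor:extinction-quantitative} is tailor-made for exactly this: it controls $\PP(V \cap A_n^{(1)}\times\cdots\times A_n^{(m)} \ne \varnothing)$ uniformly for $V$ admitting $C 2^{m(d-s)n}$-cube covers, which is precisely the critical scaling appearing here.
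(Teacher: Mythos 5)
Your proposal follows essentially the same route as the paper's proof: reduce to a product of $m$ independent percolations, fix the first coordinate, observe that the fiber $\zero(\widetilde{P}_a)$ with $\widetilde{P}_a(x_2,\ldots,x_m)=P(a,x_2,\ldots,x_m)$ is covered by $O(2^{n((m-1)d-q)})=O(2^{n(m-1)(d-s)})$ cubes (exactly critical for the $(m-1)$-fold product when $s=q/(m-1)$), apply Corollary \ref{cor:extinction-quantitative} to get $\PP(\zero(\widetilde{P}_a)\cap A_n^{(2)}\times\cdots\times A_n^{(m)}\neq\varnothing)\le q_n\downarrow 0$ uniformly in $a$, and conclude via independence of this event from $\nu^{(1)}$ and $\EE\nu^{(1)}=\leb$ that $\EE\nu(\pi_1(\zero_P\cap A^m\setminus\Delta))=O(q_n)$ for every $n$, hence is zero.

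The one step you flag as the ``main obstacle'' is indeed where the work lies, and your description of it slightly conflates two objects: at level $n$, the event that $Q\in\mathcal{Q}_n^d$ contributes to $\pi_1(B_n)$ is that the \emph{thickened} fiber $\pi^1(\pi_1^{-1}(Q\cap\zero_P))$ — a union of exact fibers over all $x_1\in Q$ — meets $A_n^{(2)}\times\cdots\times A_n^{(m)}$, whereas the extinction estimate applies only to a \emph{fixed} compact set. The paper resolves this by using the isotopy map $G$ of Lemma \ref{lem:isotopic-curves} (twice, exploiting transversality with both $\pi_1(\R^{md})$ and $\pi^1(\R^{md})$) to show that, at scale $2^{-n}$, the thickened fiber is absorbed into $O_P(1)$ exact fibers $\zero(\widetilde{P}_{a_i})$ with $a_i$ in a bounded net of $CQ$; this converts the event $\mathfrak{F}_Q$ into a union of boundedly many events to which Corollary \ref{cor:extinction-quantitative} applies. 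Your proposed alternative — working with the limit set, where $x\in\pi_1(B)$ does correspond to the exact fiber over $x$ meeting $A^{(2)}\times\cdots\times A^{(m)}$, and then integrating the pointwise extinction probability against $\leb$ via Fubini — would also work and is arguably cleaner, but as written it is only a sketch; either this Fubini argument over exact fibers or the paper's finite discretization must be carried out to make the proof complete.
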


\begin{proof}
As in the proof of Proposition \ref{prop:dim_threshold}, it is enough to show that a.s.
\begin{equation}\label{eq:p1null}
\nu(\pi_1(\zero_P\cap A^m\setminus\Delta))=0\,.
\end{equation}
Covering $\R^{md}\setminus\Delta$ by cubes $Q\in\mathcal{Q}_{n}^{md}$, $n\in\N$, with $Q\cap\Delta=\varnothing$ and conditioning on $Q\subset A_n^m$, we see that \eqref{eq:p1null} is implied by
\begin{equation}\label{eq:p1null_indep}
\nu(\pi_1(A^{(1)}\times\cdots\times A^{(m)}\cap\zero_P))=0\,,
\end{equation}
where $A^{(1)},\ldots,A^{(m)}$ are independent fractal percolations, and $\nu=\nu^{(1)}$ is the fractal percolation measure on $A^{(1)}$.

Given $a\in\R^d$,
define $\widetilde{P}_a\in\mathcal{P}_{r,q,(m-1)d}$ by
\[\widetilde{P}_a(x_2,\ldots,x_m)=P(a,x_2,\ldots,x_m)\,.\]
We claim that for each large enough $n$ and each $Q\in\mathcal{Q}^{d}_n$, there is a set $\{a_1,\ldots, a_\ell\}$ with $\ell=O_{P}(1)$, such that $A_n^{(2)}\times\cdots\times A_n^{(m)}\cap\pi^1(\pi_1^{-1}(Q\cap \zero_{P}))\neq\varnothing$ only if
\[
A_n^{(2)}\times\cdots\times A_n^{(m)}\cap\zero(\widetilde{P}_{a_i})\neq\varnothing
\]
for some $a_i\in\{a_1,\ldots,a_\ell\}$. To that end, let $Q'\subset A^{(2)}_n\times\cdots\times A^{(m)}_n$, $Q'\in\mathcal{Q}^{(m-1)d}_n$ and denote by $x'$ the center point of $Q'$. Suppose $Q'\cap\pi^1(\pi_1^{-1}(Q\cap \zero_{P}))\neq\varnothing$ so that $P(x_1,x_2,\ldots,x_m)=0$ for some $x_1\in Q$, $(x_2,\ldots,x_m)\in Q'$. Note that $P(x_1,x')=O_P(2^{-n})$. Letting $y=G(P_1,P_2,x_1)$, where $P_i\colon\R^d\to\R^q$, $P_1(x)=P(x,x')-P(x_1,x')$, $P_2(x)=P(x,x')$, and $G$ is as in Lemma \ref{lem:isotopic-curves}, we have that $P(y,x')=0$ and $|y-x_1|=O_P(2^{-n})$. Here the use of Lemma \ref{lem:isotopic-curves} is justified because the tangent planes of $\zero_P$ are uniformly transversal with respect to the coordinate plane $\pi_1(\R^{md})$.

Let $C$ be a sufficiently large constant to be chosen later, and let $\{a_1,\ldots, a_\ell\}$ be a maximal $C^{-1} 2^{-n}$ separated subset of $CQ$ (the cube with the same centre as $Q$ and side length $C2^{-n}$). Pick $a\in\{a_1,\ldots,a_\ell\}$ such that $|y-a|\le C^{-1}2^{-n}$ and let $z=\widetilde{G}(\widetilde{P}_1,\widetilde{P}_2,x')$, where $\widetilde{P}_1,\widetilde{P}_2\colon\R^{(m-1)d}\to\R^q$, $\widetilde{P}_1(x)=P(y,x)$, $\widetilde{P}_2(x)=P(a,x)=\widetilde{P}_{a}(x)$ and $\widetilde{G}$ is again provided by Lemma \ref{lem:isotopic-curves} (now using the transversality with respect to the plane $\pi^1(\R^{md})$). We conclude that $z\in\zero(\widetilde{P}_{a})$ and $|z-x'|=O_P(C^{-1} 2^{-n})<2^{-n-1}$ so that $z\in Q'$ provided $C$ is large enough depending on $P$. Thus, if $C=O_P(1)$ is large enough, then $\{a_1,\ldots,a_\ell\}$ is the desired family.

Now fix $n\gg 1$ and $Q\in\mathcal{Q}^d_n$, and let $\mathfrak{F}_Q$ denote the event
\[
A^{(2)}_n\times\cdots\times A^{(m)}_n\cap\pi^1(\pi_1^{-1}(Q\cap \zero_{P}))\neq\varnothing.
\]
Then $\mathfrak{F}_Q$ is independent of the realization of $(A_n^{(1)})_n$, and
\[
\PP(\mathfrak{F}_Q) \le \sum_{i=1}^\ell \PP(A_n^{(2)}\times\cdots\times A_n^{(m)}\cap\zero(\widetilde{P}_{a_i})\neq\varnothing).
\]
Note that $\zero(\widetilde{P}_a)$ can be covered by $O(1) 2^{n((m-1)d-q)}$ cubes in $\mathcal{Q}_n$ for $a\in [0,1]^d$ (with the $O$ constant independent of $a$); the proof is the same as \eqref{eq:co-area}, together with compactness. Since $s=q/(m-1)$, Corollary \ref{cor:extinction-quantitative} implies that for each $a\in [0,1]^d$,
\[
\PP(A^{(2)}_n\times\cdots\times A^{(m)}_n\cap \zero(\widetilde{P}_a)\neq \varnothing)\le q_n
\]
for some sequence $q_n\downarrow 0$ independent of $a$.  We deduce that $\PP(\mathfrak{F}_Q)=O(q_n)$.
Let us write $\wt{A} = A^{(1)}\times\cdots\times A^{(m)}$. Observe that
\[
\EE(\nu(Q\cap\pi_1(\wt{A}\cap\zero_P))) \le \PP(\mathfrak{F}_Q) \EE(\nu(Q)) = O(q_n) \EE(\nu(Q)),
\]
using the independence of $\nu$ and $\mathfrak{F}_Q$. Since $\EE(\|\nu\|)=1$, we conclude that
\[
\EE(\nu(\pi_1(\wt{A}\cap\zero_P))) = \sum_{Q\in\mathcal{Q}_n^d} \EE(\nu(Q\cap\pi_1(\wt{A}\cap \zero_P))) = O(q_n).
\]
Since $n\gg 1$ is arbitrary, this establishes \eqref{eq:p1null_indep}, as desired.
\end{proof}

\subsection{Patterns in sets of positive measure}

Having obtained the critical dimension for the existence of different patterns in fractal percolation, we next turn our attention to the following problem: what is the critical value $s_c$, such that  if $s>s_c$ then all `large' subsets $A'\subset A$ (i.e. the ones with $\nu(A')>0$) contain the required pattern? As described in the introduction, this question is motivated by various analogous results in the discrete setting, in particular the Green-Tao and Conlon-Gowers-Schacht theorems on the existence of arithmetic progressions inside positive density subsets of the primes and random discrete sets.

It turns out that the answer to this question is closely related to the dimension of the pattern structures investigated earlier in this section. Let us consider the problem of finding homothetic copies of a set $S$ for concreteness. As explained in the proof of Proposition \ref{prop:dim_threshold}, if the dimension of $\{ (\lambda,x):x+\lambda S\subset A\} $ is $<s$, then the points in $A$ that belong to at least one such $x+\lambda S$  also lie in a set of dimension $<s$, so we can remove all such points to end up with a full $\nu$-measure subset of $A$ which contains no homothetic copy of $S$. On the other hand, if $\dim_H\{ (\lambda,x):x+\lambda S\subset A\}>s$, then we will show that, almost surely, all subsets $A'\subset A$ with positive $\nu$ measure do contain a  homothetic copy of $S$. Indeed, we have the following abstract result for polynomial configurations:
\begin{thm} \label{thm:relative-S-abstract}
Let $\Lambda$ be an open subset of $\R^M$ for some $M\in\N$. Suppose $\lambda\mapsto P_\lambda$ is a continuous map from $\Lambda$ to $\mathcal{P}_{r,q,md}$ such that for each $\lambda\in\Lambda$,  the polynomial $P_\lambda$ satisfies the hypotheses of Corollary \ref{cor:nonlinear} (for some point $a=a(\lambda)$).

Suppose $s(d,p)>\tfrac{q}{m-1}$. Then a.s. there is a nonempty open set $U\subset\Lambda$ such that for all Borel sets $A'\subset [0,1]^{d}$ with $\nu(A')>0$ there exist $r>0$ and $t\in\R^d$ such that for all $\lambda\in U$ there are points $x_1,\ldots,x_m\in A'$ with $P_\lambda(r x_1+t,\ldots,r x_m+t)=0$.

If, furthermore,  $P_\lambda(x_1,\ldots,x_m)=0$ if and only if $P_\lambda(rx_1+t,\ldots,r x_m+t)=0$ for all $\lambda\in\Lambda$, $r>0$ and $t\in \R^{d}$, then a.s. for all $\lambda\in\Lambda$ and all Borel sets $A'$ with $\nu(A)'>0$ there are distinct points $x_1,\ldots,x_m\in A'$ such that $P_\lambda(x_1,\ldots,x_m)=0$.
\end{thm}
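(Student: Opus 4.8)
The plan is to reduce the theorem to a statement about the dimension of the "pattern set," combine this with the mass distribution principle, and then exploit the stochastic self-similarity of $\nu$ together with the density point theorem to pass from ``positive probability / positive measure'' to ``full probability / full measure''. Let me describe the steps.

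\textbf{Step 1: A lower bound for the dimension of the pattern set.} Fix $\lambda_0\in\Lambda$. By Remarks \ref{rems:dim-patterns}(ii) (or, if we want to avoid invoking that, by combining the second part of Theorem \ref{thm:dim_patterns} with Corollary \ref{cor:0-1}), there is a neighbourhood $\mathcal{O}$ of $P_{\lambda_0}$ and, for each $\delta>0$, positive probability that $\dim_H(A^m\cap\zero_P\setminus\Delta)>ms-q-\delta$ for all $P\in\mathcal{O}$. Since $s>q/(m-1)$, we have $ms-q>s$, so for $\delta$ small enough $ms-q-\delta>s$. The key point is to translate this into a statement about the set of \emph{homothety parameters} $(r,t)$ such that $rS+t\subset A'$ (where $S$ is the configuration, or in the abstract setting, such that $rx_i+t$ realizes $P_\lambda=0$); this requires the extra $\Lambda$-parameter to be carried along. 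Concretely, one considers the polynomial $\overline{P}$ on $(\R^d)^m\times\R^M\times\R^{d+1}$ whose zero set records $\{(x_1,\dots,x_m,\lambda,r,t): P_\lambda(rx_1+t,\dots,rx_m+t)=0\}$, intersects $A^m$ with it (in the first group of variables only), and applies the dimension lower bound of Theorem \ref{thm:dim_patterns} to this augmented family. This yields: almost surely there is an open $U\subset\Lambda$ such that $\dim_H\{(x_1,\dots,x_m,r,t)\in A^m\times(0,\infty)\times\R^d: P_\lambda(rx_1+t,\dots,rx_m+t)=0\ \forall\lambda\in U\}>s+\varepsilon_0$ for some $\varepsilon_0>0$.

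\textbf{Step 2: From a set of positive measure to a sub-percolation, and survival.} Let $A'\subset[0,1]^d$ be a fixed Borel set with $\nu(A')>0$. The idea (paralleling the proof of Proposition \ref{prop:dim_threshold} run ``in reverse'') is: if \emph{no} homothetic copy as above landed inside $A'$, then the set of all points of $A$ that belong to some such copy would be disjoint from $A'$; but we will show this set has full $\nu$-measure when $s>q/(m-1)$, a contradiction. To make this rigorous one cannot quite argue pointwise; instead, fix a density point $x_0$ of $A'$ with respect to $\nu$ (which exists since $\nu(A')>0$ and by the density point theorem for $\nu$, valid because $\underline{\dimloc}(\nu,x)=s$ everywhere). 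Zooming into a small dyadic cube $Q$ around $x_0$, the stochastic self-similarity of $\nu$ means $\nu|_Q$, renormalized, is a copy of $\nu$; and $\nu(A'\cap Q)/\nu(Q)$ is close to $1$. We want to find a configuration inside $A'\cap Q$. By the scale/translation invariance hypothesis on $P_\lambda$ in the second paragraph, it is enough to find a configuration in the rescaled copy of $A'$ that misses a set of small $\nu$-measure. This reduces matters to: for $A''\subset A$ with $\nu(A\setminus A'')$ small, $A''$ contains a configuration. Using Theorem \ref{thm:hausdorff-dim-percolation} (Lyons' theorem) in the form that a random closed set of Hausdorff dimension $>d-s$ intersects $A$ a.s. on a positive-probability event — applied to the random set of parameters from Step 1 — together with the dimension bound $>s+\varepsilon_0$, we get that the pattern parameter set, intersected with the requirement ``all $m$ points lie in $A''$'', is still nonempty.

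\textbf{Step 3: Upgrading to all $\lambda$ and full measure via zero--one laws.} Step 2 gives, with positive probability, an open $U$ that works for every positive-measure $A'$ simultaneously (the event ``$U$ works for all Borel $A'$ with $\nu(A')>0$'' is measurable and the argument produces $U$ independently of $A'$). Since the property of the pattern parameter set is homothety-invariant and closed under supersets, Corollary \ref{cor:0-1} (or Lemma \ref{lem:0-1-strong}) upgrades ``positive probability'' to ``full probability''. Finally, under the extra invariance hypothesis, covering $\Lambda$ by countably many such open sets $U$ and using that a single $\lambda$ can be reached from any $U$ by a homothety (which, by invariance, preserves the existence of the configuration), we obtain the full statement for all $\lambda\in\Lambda$ and all positive-measure $A'$, almost surely.

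\textbf{Main obstacle.} The delicate point is Step 2: transferring ``the pattern parameter set has dimension $>s$'' into ``a pattern actually lies inside a prescribed full-measure-up-to-$\varepsilon$ subset of $A$''. The dimension statement concerns parameters $(r,t)$, but membership of \emph{all} $m$ vertices $rx_i+t$ in $A'$ is a constraint on the vertices, not directly on $(r,t)$. The right way to handle this is to work throughout with the set $\{(x_1,\dots,x_m,r,t)\in A^m\times\dots\}$ (as in Step 1) rather than its projection, apply Theorem \ref{thm:hausdorff-dim-percolation} to this set inside the ambient percolation $A^m$ (recording that $A^m$ itself is, locally off the diagonal, a product of independent percolations so that Lyons' bound applies with the correct codimension $m(d-s)$), and only then project to read off the vertices. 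One must also check that removing a $\nu$-small set from $A'$ removes only a $\dim<s$, hence ``$(d-s)$-thin'', set of vertices, so that the survival estimate is not destroyed — this is exactly where the strict inequality $s>q/(m-1)$, equivalently $ms-q>s$, is used, giving the slack $\varepsilon_0$.
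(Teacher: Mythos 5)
There is a genuine gap, and it sits exactly at the point you flag as the ``main obstacle'': your Step 2 does not close. The route via Hausdorff-dimension lower bounds for the pattern set plus Theorem \ref{thm:hausdorff-dim-percolation} cannot deliver the needed statement, for three reasons. First, Lyons' theorem (and the capacity criterion behind it) applies to a target set that is independent of the percolation being tested; here the set you want to test is built from $A$ itself and, worse, is further constrained by an arbitrary Borel set $A'$ chosen \emph{after} the realization, so no survival/capacity argument over an auxiliary percolation can see it. Second, your check ``removing a $\nu$-small set removes only a $\dim<s$ set of vertices'' is false: any set of positive $\nu$-measure, however small, has Hausdorff dimension at least $s$ (this is exactly the mechanism of Proposition \ref{prop:dim_threshold}, which only works \emph{below} the threshold). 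So knowing $\dim_H(A^m\cap\zero_{P_\lambda}\setminus\Delta)=ms-q>s$ does not by itself rule out that a positive- but small-measure removed set meets every configuration; one needs a quantitative bound on how many configurations a single removed cube can destroy. Third, the object in your Step 1 --- the set of $(x_1,\dots,x_m,r,t)$ satisfying $P_\lambda(rx_i+t)=0$ \emph{for all} $\lambda\in U$ --- is typically empty (think of distances: a fixed pair realizes only one distance); in the theorem the points are allowed to depend on $\lambda$, only $(r,t)$ is uniform.

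What is missing is precisely the counting/slicing estimate that the paper isolates in Proposition \ref{prop:counting-szemeredi}: on a positive-probability event one has, uniformly in $\lambda\in U$ and $n$, at least $\Omega(2^{n(ms-q)})$ surviving cubes of $\mathcal{Q}_n^{md}$ meeting $\zero(P_\lambda)\cap Q_0$ (from $Y^{P_\lambda}$ being bounded away from $0$), while for every fixed $Q'\in\mathcal{Q}_n^d$ and coordinate $j$ at most $O(2^{n((m-1)s-q)})$ such cubes have $\pi_j$-projection $Q'$; the latter comes from applying Theorem \ref{thm:nonlinear} to the $(m-1)$-fold product and the sliced polynomials $P_\lambda(x_1,\dots,x_{j-1},t,x_{j+1},\dots,x_m)$, and this is where $s>q/(m-1)$ enters. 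Dividing, destroying all configurations in $Q_0$ forces the removal of $\Omega(2^{ns})$ cubes of $A_n$, i.e.\ $\nu(A\setminus A')\ge\e$. Your proposal contains no substitute for this slicing bound. Finally, your Step 3 upgrade via Corollary \ref{cor:0-1} is also not justified: the $\delta$-Szemer\'edi condition concerns the measure $\nu$ and is not a superset-monotone, homothety-invariant collection of sets, which is why the paper proves the separate zero-one law of Lemma \ref{lem:szemeredi-0-1-law} (branching plus Hoeffding, at the cost of worsening $\delta$ to $\delta'$) before invoking the density-point argument; your density-point/self-similarity idea and the final covering-by-$U$'s step under the invariance hypothesis do match the paper, but they rest on the two ingredients above.
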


Combining this with (the proofs of) Corollaries \ref{cor:patterns}, \ref{cor:distance},  \ref{cor:volumes}, \ref{cor:isometric-triangles}, \ref{cor:angles}, \ref{cor:triangles}, \ref{cor:polygons}, we immediately obtain Theorem \ref{thm:relative_S}.

We note that the threshold for $s$ is sharp in a rather strong way, thanks to Theorem \ref{thm:sharp_Szemeredi_threshold}.
For non-scale invariant patterns, Theorem \ref{thm:sharp_Szemeredi_threshold}  has to be applied to a countable family of polynomials that witnesses a dense set of patterns in the appropriate family. For example, if $s\in (1/2,1]$, then, although $A$ contains all small distances a.s., there exists a set $A'\subset A$ of full $\nu$-measure which does not contain any rational distances.

\begin{rem}
Note that in Corollary \ref{cor:patterns_without_scaling}, we have $k=d$ so that $s<\tfrac{md-k}{m-1}$ for all $s\le d$. So there is no relative Szemer\'{e}di theorem for translated copies.
\end{rem}

We give the idea of the proof of Theorem \ref{thm:relative-S-abstract}  in the special case of homothetic copies. Suppose that $s>d-\tfrac{1}{m-1}$, and for $T=\{t_1,\ldots,t_{m-1},0\}$, let $V_T$ be as in the proof of Corollary \ref{cor:patterns}, and let $\Gamma$ be the family of all such planes. Then, an application of Theorem \ref{thm:independent-product-linear-int} to $(\nu_n)^{m-1}$ and $\mathcal{R}(\Gamma)$ implies  that for any fixed $Q\in\mathcal{Q}_n^{d}$, there cannot be more than
\[
O(2^{n(1+(m-1)(s-d))})
\]
cubes $Q'\in\mathcal{Q}^{md}_n$, $Q'\subset (A_n)^m$, $Q'\cap V_T\neq\varnothing$ with $\pi_j(Q')=Q$.  Note that the $O$-constant is random but uniform in $n$ and $T$. Recalling that $(A_n)^m\cap V_T$ intersects roughly $2^{n(d+1+m(s-d))}$ cubes in $\mathcal{Q}^{md}_n$, we observe that in order to violate $(A_n)^m\cap V_T\neq\varnothing$, we need to remove at least $\Omega(2^{ns})$ cubes from $A_n$. Letting $n\to\infty$, this essentially shows that $(A')^m\cap V_T\neq\varnothing$ whenever $A'\subset A$ has full $\mu$-measure. Finally, to pass from full $\nu$-measure to $\nu(A')>0$, we will use a density point argument.

We pass to the details. For $F\subset [0,1]^d$, let $N_n(F)$ denote the the number of cubes $Q$ in $\mathcal{Q}^d_n$ such that $Q\cap F\neq\varnothing$. Recall also the notation $N_n:=N_n(A)$ from Section \ref{sec:perco}. For the rest of this section, we assume that $\Lambda$, $\lambda\to P_\lambda$ satisfy the assumptions of Theorem \ref{thm:relative-S-abstract}, and $s>\tfrac{q}{m-1}$.  Moreover, we always denote $\nu=\nu^{\text{perc}(d,p)}$ and $A=A^{\text{perc}(d,p)}$ with $s=s(d,p)$.

\begin{prop} \label{prop:counting-szemeredi}
Fix $\lambda_0\in\Lambda$. There are $\e>0$, $n_0\in\N$, a cube $Q_0\in\mathcal{Q}_{n_0}^{md}$, and an open neighbourhood $U\subset\Lambda$ of $\lambda_0$  (all deterministic) such that the following event has positive probability: for any compact $A'\subset A$ such that $(A')^m \cap \zero(P_\lambda)\cap Q_0=\varnothing$ for some $\lambda\in U$, we have $\nu(A\setminus A')\ge \e$.
\end{prop}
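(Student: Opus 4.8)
The strategy is to show that if a set $A'\subset A$ is obtained by removing a small $\nu$-proportion of $A$, then for $\lambda$ near $\lambda_0$ the intersection $(A')^m\cap\zero(P_\lambda)$ cannot be emptied inside a suitable cube, because emptying it would require removing too many generation-$n$ cubes. The essential quantitative ingredient is the bound on the ``fibre counts'' that comes from applying Theorem \ref{thm:independent-product-linear-int} (via its nonlinear extension Theorem \ref{thm:nonlinear}, or rather the tangent-plane transversality assumptions carried by Corollary \ref{cor:nonlinear}) to the product of $m-1$ independent realizations.

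First I would fix $\lambda_0$ and a regular point $a=(a_1,\dots,a_m)\in\zero(P_{\lambda_0})$, not on any diagonal and with coordinate differences in general position. Choosing $n_0$ large, pick disjoint cubes $Q_1,\dots,Q_m\in\mathcal{Q}_{n_0}^d$ containing the $a_i$, let $Q_0=Q_1\times\cdots\times Q_m$, and (shrinking the neighbourhood $U\ni\lambda_0$) arrange that $\zero(P_\lambda)\cap Q_0^\circ\neq\varnothing$ and that all transversality conditions hold uniformly for $\lambda\in U$ and all tangent planes of $\zero(P_\lambda)$ on $Q_0$. As in Corollary \ref{cor:nonlinear}, conditioning on $Q_1,\dots,Q_m$ surviving, $\nu|_{Q_i}$ become (rescaled) independent copies $\nu^{(i)}$ of $\nu^{\mathrm{perc}(d,p)}$, and we may work with $\mu_n=\nu_n^{(1)}\times\cdots\times\nu_n^{(m)}$ on $[0,1]^{md}$ and a rescaled polynomial family $\overline P_\lambda$. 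Now I would apply Theorem \ref{thm:nonlinear} (and Lemma \ref{lem:Yt-survives-nonlinear}) to get: with positive probability there is a finite random $K$ with $Y^{\overline P_\lambda}\ge c>0$ and $|Y^{\overline P_\lambda}_n|\le K$ for all $n$ and all $\lambda\in U$; and crucially, applying the same theorems to the $(m-1)$-fold product $\nu_n^{(2)}\times\cdots\times\nu_n^{(m)}$ intersected with the family $\mathcal R(\Gamma)$ of slices (as in the proof of Theorem \ref{thm:independent-product-linear-int}), obtain a uniform random constant $K'$ such that for every $j$, every $Q\in\mathcal{Q}_n^d$, and every $\lambda\in U$, the number of cubes $Q'\in\mathcal{Q}_n^{md}$ with $Q'\subset A_n^{(1)}\times\cdots\times A_n^{(m)}$, $Q'\cap\zero(\overline P_\lambda)\neq\varnothing$ and $\pi_j(Q')=Q$ is at most $K'2^{n\xi}$, where $\xi=(m-1)(d-s)-q+d<s$ by the hypothesis $s>q/(m-1)$. (This last inequality, $\xi<s$, is exactly where $s>q/(m-1)$ enters and is the whole point.)

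With these bounds in hand the combinatorial core is a counting argument. Since $Y^{\overline P_\lambda}\ge c>0$ and the densities are $2^{m(d-s)n}$ on surviving cubes, for large $n$ the number of cubes $Q'\in\mathcal{Q}_n^{md}$ meeting $A_n'{}^m\cap\zero(\overline P_\lambda)$ (where $A_n'$ is the $n$-th approximation of the subset) is at least $\Omega(c\,2^{(ms-q)n})$ whenever $(A')^m\cap\zero(\overline P_\lambda)\cap Q_0\neq\varnothing$; conversely, to force this intersection to be empty one must destroy all such $Q'$, and by the fibre bound each removed cube $Q\in\mathcal{Q}_n^d$ of $A_n\setminus A_n'$ kills at most $m\cdot K'2^{n\xi}$ of them. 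Hence emptying the intersection requires $N_n(A\setminus A')\ge \Omega(c\,K'^{-1}2^{(ms-q-\xi)n})=\Omega(2^{sn})$ (using $ms-q-\xi=s$). Since $N_n(A)\le K''2^{sn}$ for a finite random $K''$ (the martingale bound from Section \ref{sec:perco}), this forces $N_n(A\setminus A')\ge \e\,N_n(A)$ for a fixed deterministic $\e>0$ and all large $n$ on the positive-probability event where $K,K',K''$ are all controlled; passing to the limit and using the definition of $\nu$ via the level-$n$ counts gives $\nu(A\setminus A')\ge\e$. Unwinding the conditioning on the survival of $Q_1,\dots,Q_m$ (which has positive probability) yields the stated positive-probability event for the original fractal percolation.

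\textbf{Main obstacle.} The delicate point is making the fibre bound $K'2^{n\xi}$ genuinely uniform: uniform in $n$, in the base cube $Q\in\mathcal{Q}_n^d$, in the index $j$, and in $\lambda\in U$ simultaneously, with a single almost surely finite random constant. This is precisely the content of the weak-spatial-dependence machinery — it is what the bound \eqref{eq:pigeon2}/\eqref{eq:Delta_nonlinear} in the proof of Theorem \ref{thm:nonlinear} provides, via $\Psi(n)=CZ_n2^{n\xi}$ together with the tail control of $Z_n=\sup_{\widetilde P}\widetilde Y_n^{\widetilde P}$ furnished by Lemma \ref{lemma:AWSD_product} — so the work is in quoting it correctly for the slice family rather than re-deriving it. A secondary technical nuisance, as in Corollary \ref{cor:patterns} and Theorem \ref{cor:dim_patterns}, is that for patterns whose defining tangent planes fail to be transversal to coordinate hyperplanes one must replace the Euclidean metric on $\mathcal R(\Gamma)$ by the dyadic metric; but since here we fix a single $\lambda_0$ in general position and only need a small neighbourhood, this can be arranged at the outset and does not cause real trouble.
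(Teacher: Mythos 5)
Your proposal is correct and follows essentially the same route as the paper: a uniform lower bound $Y_n^{P_\lambda}\ge c$ on the slice mass (via Lemma \ref{lem:Yt-survives-nonlinear} and the H\"older continuity of Theorem \ref{thm:nonlinear}) giving $\Omega(2^{(ms-q)n})$ cubes of $A_n^m$ on $\zero(P_\lambda)\cap Q_0$, combined with the fibre bound coming from applying Theorem \ref{thm:nonlinear} to the $(m-1)$-fold product and the family of slices $P_\lambda(\ldots,t,\ldots)$, so that each removed level-$n$ cube of $A_n$ destroys only $O(2^{((m-1)s-q)n})$ of them, forcing $\Omega(2^{sn})$ removals and hence $\nu(A\setminus A')\ge\e$. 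One slip: your exponent should be $\xi=(m-1)s-q$ (which is what your identity $ms-q-\xi=s$ actually requires), not $(m-1)(d-s)-q+d$, and the hypothesis $s>q/(m-1)$ enters not through an inequality $\xi<s$ but by making the supremum of the slice masses (your $K'$) almost surely finite.
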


\begin{proof}
In the course of the proof, $C_i, C'_i$ denote finite positive constants independent of $n$ or any dyadic cubes. By Lemma \ref{lem:Yt-survives-nonlinear} and (the proof of) Corollary  \ref{cor:nonlinear}, there are an open neighbourhood $U$ of $\lambda_0$ and a small dyadic cube $Q_0$ such that, with positive probability,
\begin{equation}\label{eq:Y_tunif_bound}
\int_{Q_0\cap \zero(P_\lambda)}(\nu_n)^m\,d\mathcal{H}^{md-q}\ge C_1\text{ for all }\lambda\in U, n\in\N\,.
\end{equation}
Indeed, one only needs to take $Q_0$ to be a small enough dyadic cube containing $a(\lambda_0)$ and disjoint from $\Delta$, such that the transversality conditions hold on $Q$ for all $\lambda\in U$, as in the proof of Corollary  \ref{cor:nonlinear}.

In particular, if $N_{n,\lambda}=N_{n,\lambda}(A_n)$ is the number of cubes $Q\in\mathcal{Q}^{md}_n$ such that $Q\subset (A_n)^m\cap Q_0$ and $Q\cap \zero(P_\lambda)\neq\varnothing$, then \eqref{eq:Y_tunif_bound} implies
\begin{equation}\label{eq:N_nTbound}
N_{n,\lambda}\ge C'_1 2^{n(ms-q)} \text{ for all }\lambda\in U, n\in\N\,.
\end{equation}

Let $\Gamma$ be the family of all tangent planes to $\zero(P_\lambda)$ at $x$ for $\lambda\in U$, $x\in Q_0$. Since $s>\tfrac{q}{m-1}$ and $\mathcal{R}(\Gamma)$ satisfies the required transversality assumptions (by our choice of $Q_0$ and $ U$), we can apply Theorem \ref{thm:nonlinear} to each polynomial in the family
\[
\widetilde{P}_{\lambda,t,j}(x_1,\ldots,x_m) = P_\lambda(x_1,\ldots,x_{j-1},t,x_{j+1},\ldots,x_m)  \in\mathcal{P}_{r,q,d(m-1)},
\]
with $\lambda\in U$, $t\in\pi_j(Q_0)$, $j\in [m]$. This yields a constant $C_2$ such that, letting $\mathcal{R}$ consist of the sets
\[
\zero(P_\lambda)\cap Q_0\cap\{x_j=t\}, \quad\lambda\in U, t\in[0,1]^d, j\in[m]\,,
\]
there is a positive probability that \eqref{eq:Y_tunif_bound} holds together with the bound
\begin{equation}\label{eq:Y'_unif_bound}
\int_{V'} \nu_n^{m-1}(x)\,d\mathcal{H}^{(m-1)d-q} \le C_2\quad\text{ for  all }V'\in\mathcal{R},n\in\N\,.
\end{equation}
Indeed, by Theorem \ref{thm:nonlinear} (and compactness, making $U$ smaller if needed), the probability that \eqref{eq:Y'_unif_bound} holds tends to $1$ as $C_2\uparrow\infty$.

Applying \eqref{eq:co-area} in the proof of Theorem \ref{thm:dim_patterns}, we deduce the following: for any $Q'\in\mathcal{Q}_n^d$ with centre $t'$, the number of cubes $Q\subset A_n^m$ such that $Q\cap\zero(P_\lambda)\cap Q_0\cap \{x_j=t'\}$ is bounded by $C'_2 2^{n((m-1)s-q)}$. On the other hand, applying Lemma \ref{lem:isotopic-curves} as in the proof of Theorem \ref{thm:sharp_Szemeredi_threshold}, we see
that if $x\in \zero(P_\lam) \cap Q_0 \cap \{x_j=t\}$ with $t\in Q'$, then there is $x'\in B(x,O(2^{-n}))\cap \zero(P_\lam)\cap Q_0 \cap \{ x'_j=t'\}$ where $t'$ is again the centre of $Q'$ (and the $O$ constant is independent of $\lambda,Q'$). Combining these facts, we deduce that if $M_{n,\lambda,j,Q'}$ denotes the number of cubes $Q\subset A_n^{m}$ such that $\pi_j(Q)=Q'$ and $Q\cap \zero(P_\lambda)\cap Q_0\neq\varnothing$, then
\begin{equation}\label{eq:M_nTbound}
M_{n,\lambda,j,Q'}\le C'_3 2^{n((m-1)s-q)}\text{ for all }\lambda\in  U, n\in\N, j\in[m]\text{ and }Q'\in\mathcal{Q}^d_n\,.
\end{equation}

To finish the proof, we will show that the claimed conclusion holds on the positive probability event that \eqref{eq:N_nTbound} and \eqref{eq:M_nTbound}  
hold. Suppose then that $A'\subset A$ is a compact set such that $(A')^m\cap \zero(P_\lambda)\cap Q_0=\varnothing$ for some $\lambda\in  U$. Since $A'$ is compact, the same still holds if we replace $A'$ by the union of the cubes in $\mathcal{Q}_{n_1}^d$ hitting $A'$ provided $n_1$ is sufficiently large. By further enlarging $A'$ slightly, we may assume without loss of generality that $A'$ is the interior of a union of cubes $\mathcal{Q}_{n_1}^d$ for some $n_1\in\N$.

Fix $n\ge n_1$. By \eqref{eq:M_nTbound}, for each $Q'\subset A_n$, $Q'\in\mathcal{Q}^d_n$, there are at most $C'_4 2^{n((m-1)s-q)}$ cubes $Q\in\mathcal{Q}^{md}_n$ with $Q\subset A_{n}^m\cap Q_0$, $Q\cap \zero(P_\lambda)\neq\varnothing$, and $\pi_j(Q)=Q'$ for some $j\in[m]$. Combining this with \eqref{eq:N_nTbound}, it follows that $A_n\setminus A'$ contains at least
\[
C'_1 2^{n(ms-q)} (C'_4)^{-1}2^{-n((m-1)s-q)}\ge \varepsilon 2^{ns}
\]
cubes in $\mathcal{Q}_n^{d}$, where $\varepsilon=C'_1/C'_4$. 

Finally, the definition of $\nu$ as the weak limit of $p^{-n} \leb|_{A_n}$ implies (since we took $A'$ to be open) that $\nu(A\setminus A')\ge\e$, as desired.
\end{proof}

Given an open set $U\subset\Lambda$, we will say that the \emph{$\delta$-Szemer\'{e}di condition} for $ U$ holds if for any Borel $A'\subset\R^d$ with $\nu(A')>\delta\nu(A)$ there exist $r>0$ and $t\in\R^d$ such that for every $\lambda\in U$,
\[
\zero(P_\lambda) \cap (r A'+t)^m \setminus\Delta \neq \varnothing.
\]
This is an event depending on the realization of the fractal percolation process.  Since $\nu$ is a Radon measure, `Borel set' may be replaced by `compact set' without changing the definition. Hence, Proposition \ref{prop:counting-szemeredi} implies that for each $\lambda_0\in\Lambda$ there is a positive probability that the $\delta$-Szemer\'{e}di condition holds in a neighbourhood of $\lambda_0$, if $\delta$ is close enough to $1$ (indeed, we can even take $r=1,t=0$). In the next step, we upgrade ``positive probability'' to ``full probability'' at the price of changing the value of $\delta$.
\begin{lemma} \label{lem:szemeredi-0-1-law}
If there is $\delta<1$ such that
\[
\PP(\delta\text{-Szemer\'{e}di condition holds  for } U)>0\,,
\]
then there is $\delta'<1$ such that the $\delta'$-Szemer\'{e}di condition for $ U$ holds almost surely.
\end{lemma}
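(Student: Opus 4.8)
The plan is to use the stochastic self-similarity of fractal percolation together with a Borel--Cantelli argument in the spirit of Lemma~\ref{lem:0-1-strong}. Fix the $\delta$ and the open set $U$ for which $\PP(\delta\text{-Szemer\'{e}di condition holds for }U)=:\rho>0$. The key observation is that the $\delta$-Szemer\'{e}di property is, up to a harmless rescaling, inherited by small dyadic subcubes: if $Q\in\mathcal{Q}_n$ and $h_Q$ is the homothety sending $\overline Q$ back to $[0,1]^d$, then $A\cap\overline Q$ rescaled has the law of a fresh fractal percolation, and the condition involves an \emph{arbitrary} rescaling $(r,t)$ anyway (and the configurations $P_\lambda$ are homothety-equivariant in the relevant sense, since in the non-scale-invariant case we only need \emph{some} $(r,t)$). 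So the event ``$h_Q(A\cap\overline Q)$ satisfies the $\delta$-Szemer\'{e}di condition for $U$'' has probability $\rho$, conditional on $Q$ surviving.

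First I would make precise the monotonicity needed for the union bound: if the $\delta$-Szemer\'{e}di condition holds for the renormalized copy $h_Q(A\cap\overline Q)$ for \emph{some} surviving $Q\in\mathcal{Q}_n$, then a weaker $\delta'$-Szemer\'{e}di condition holds for $A$ itself, where $\delta'<1$ depends only on $\delta$ and on how much mass a single level-$n$ cube can carry. Concretely, if $A'\subset[0,1]^d$ has $\nu(A')>\delta'\nu(A)$, one wants to deduce that $\nu\big(h_Q^{-1}(\text{rescaled }A'\cap Q)\big)$ exceeds $\delta$ times the mass of the rescaled copy; this holds provided $\nu(A'\cap Q)\ge \delta\, \nu(Q)$ and $\nu(Q)\ge \tau$ for a fixed constant $\tau$, which in turn is implied by $\nu(A\setminus A')\le(1-\delta')\nu(A)$ being small enough relative to $\nu(Q)$. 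So the argument should be run over cubes $Q$ with $\nu(Q)$ bounded below; by the branching structure there are a.s.\ many such cubes at every scale (this is exactly the type of fact established via \eqref{eq:many-surviving-squares} and the zero-one law), and one only needs $\nu(Q)\ge\tau$ for \emph{one} of them to beat the deficit $\nu(A\setminus A')$ — choose $\delta'$ so close to $1$ that $(1-\delta')<\delta\tau$, say.

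Next, combine two Borel--Cantelli-type inputs. Using Lemma~\ref{lem:0-1-strong} applied to the collection $\mathfrak{C}$ of sets whose renormalization satisfies the $\delta$-Szemer\'{e}di condition for $U$ (this is a measurable event with $\PP(\mathfrak{C})=\rho>0$), we get that a.s.\ for all large $n$ there exists $Q\in\mathcal{Q}_n$, $Q\subset A_n$, with $h_Q(A\cap\overline Q)\in\mathfrak{C}$. Independently, the mass-decay estimate $\underline\dim_{\mathrm{loc}}(\nu,x)=s$ for all $x\in A$, together with a covering/pigeonhole argument, guarantees that a.s.\ for all large $n$ there is a surviving cube $Q\in\mathcal{Q}_n$ with $\nu(Q)\ge \tau$ (take $\tau$ comparable to $2^{-sn}$ times a constant; more simply, among the $N_n\ge\delta n$ surviving cubes the total mass is $\|\nu\|$, so the largest has mass $\ge\|\nu\|/N_n$ — this decays, so instead one should phrase the quantitative statement at a \emph{fixed finite} scale). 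The cleaner route: fix a single scale $n_1$ large enough that, with probability $>1-\rho/2$, there is a surviving $Q\in\mathcal{Q}_{n_1}$ with $\nu(Q)\ge\tau_0$ for a deterministic $\tau_0>0$ (possible since $\nu(Q)\to\nu(\{x\})$-type degeneration is controlled: $\EE\|\nu\|=1$ and there are $\ge 2^d p$ cubes in expectation, giving a positive-probability lower bound on $\max_Q\nu(Q)$ at level $1$, hence at every level by self-similarity, hence the Borel--Cantelli sum diverges). Intersecting with the event from Lemma~\ref{lem:0-1-strong} on the same cube — here one uses Harris' inequality (Lemma~\ref{lem:Harris}), since ``$Q$ survives with $\nu(Q)\ge\tau_0$'' and ``$h_Q(A\cap Q)\in\mathfrak{C}$'' are both increasing and, conditioned on $Q\subset A_1$, the second concerns only the sub-tree at $Q$ — one obtains a.s.\ the existence of a scale and a cube simultaneously witnessing both. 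Feeding this into the monotonicity of Step~2 with $\delta'$ chosen as above (so $(1-\delta')<\delta\tau_0$ after accounting for the rescaling factor) shows the $\delta'$-Szemer\'{e}di condition for $U$ holds a.s.

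The main obstacle I anticipate is the bookkeeping in Step~2: matching the definitions so that ``renormalized copy has a $\delta$-rich subset hitting $\zero(P_\lambda)$'' really does transfer to ``$A$ has a $\delta'$-rich subset hitting $\zero(P_\lambda)$'' uniformly over $\lambda\in U$, and in particular tracking how the free parameters $(r,t)$ in the Szemer\'{e}di condition interact with the homothety $h_Q$. Since $h_Q$ is itself a homothety and the condition quantifies existentially over $(r,t)$, the composition is again of the allowed form, so this should go through, but it requires care to state the constant $\tau_0$ and the comparison $\nu(A'\cap Q)$ vs.\ $\delta\nu(Q)$ correctly (the deficit $\nu(A\setminus A')$ is global, so one genuinely needs $\nu(Q)$ bounded below by a constant, not merely positive, which is why the argument is anchored at a fixed finite scale $n_1$ rather than run asymptotically). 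Everything else — measurability, the use of Lemma~\ref{lem:0-1-strong}, Harris' inequality, and the self-similarity of $\nu$ — is routine.
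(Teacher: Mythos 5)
Your Step 2 reduction is fine as far as it goes, but the mechanism you propose for upgrading positive probability to full probability has a genuine gap, and you have put your finger on it yourself without resolving it. The single-cube transfer requires a surviving cube $Q$ with $\nu(Q)\ge\tau_0$ for a \emph{fixed} $\tau_0>0$, so that the global deficit $\nu(A\setminus A')\le(1-\delta')\|\nu\|<(1-\delta)\tau_0$ cannot destroy a $\delta$-fraction of $\nu(Q)$. Since $\nu$ is non-atomic, $\max_{Q\in\mathcal{Q}_n}\nu(Q)\to 0$ a.s., so such a cube exists at only boundedly many scales; but at any fixed scale the event that some surviving $Q$ has $\nu(Q)\ge\tau_0$ \emph{and} $h_Q(A\cap\overline Q)$ satisfies the $\delta$-Szemer\'{e}di condition has probability bounded away from $1$: the Szemer\'{e}di event on the subtree at $Q$ has probability exactly $\rho<1$, and Harris' inequality only ever gives a \emph{lower} bound of the form $(1-q)\rho\cdot(\cdots)$, never probability one. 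Your remark that ``the Borel--Cantelli sum diverges'' cannot rescue this: the relevant events at different scales are nested and heavily dependent, and in any case the mass constraint confines you to finitely many scales. As written, the argument proves only that the $\delta'$-Szemer\'{e}di condition holds with positive probability --- which is the hypothesis, not the conclusion.

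The paper's proof replaces the single large cube by a \emph{positive proportion} of small cubes. At scale $n$, conditionally on $\mathcal{B}_n$ the events ``$\nu_Q$ satisfies the $\delta$-Szemer\'{e}di condition for $U$'' are independent over surviving $Q\in\mathcal{Q}_n^d$, each with probability $\eta:=\PP(\delta\text{-Szemer\'{e}di for }U)$, so Hoeffding plus Borel--Cantelli (together with \eqref{eq:many-surviving-squares}) give that a.s. at least $\tfrac{\eta}{2}N_n$ of the surviving cubes satisfy it for all large $n$; moreover a.s. at most $\tfrac{\eta}{4}N_n$ surviving cubes have $\nu(Q)<\varepsilon 2^{-ns}$, and $\|\nu\|\le 2N_n2^{-ns}$. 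If $A'$ had $\nu(A'\cap Q)\le\delta\nu(Q)$ in every one of the $\ge\tfrac{\eta}{4}N_n$ good cubes of mass $\ge\varepsilon 2^{-ns}$, the deficits would \emph{add up} to $\nu(A\setminus A')\ge(1-\delta)\tfrac{\eta}{4}\varepsilon 2^{-ns}N_n\ge(1-\delta)\tfrac{\eta\varepsilon}{8}\|\nu\|$ --- a definite fraction of the total mass, extracted by summing over many individually tiny cubes rather than from one large one. Taking $\delta'=1-\tfrac{(1-\delta)\eta\varepsilon}{8}$ then forces at least one good cube with $\nu(A'\cap Q)>\delta\nu(Q)$, and the homothety-invariance of the condition (your Step 2) finishes the proof. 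This counting step is the idea missing from your proposal.
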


\begin{proof}
Denote
\[
\eta=\PP(\delta\text{-Szemer\'{e}di condition holds  for } U)\,.
\]
Let $X_n$ be the number of those $Q\subset A_n$, $Q\in\mathcal{Q}^d_n$ such that the fractal percolation measure $\nu_Q$ induced on $Q$ satisfies the $\delta$-Szemer\'{e}di condition for $ U$. Fix $\varepsilon>0$ such that $p_\varepsilon<\eta/8$, where $p_\varepsilon=\PP(\|\nu\|<\varepsilon)$. Denote by $Z_n$ the number of those $Q\in\mathcal{Q}^{d}_n$ with $\nu(Q)<\varepsilon 2^{-ns}$. We claim that almost surely the estimates
\begin{align*}
X_n\ge\frac{\eta}{2}N_n\,,\quad Z_n\le \frac{\eta}{4}N_n\,\,\text{ and }\,\,
\|\nu\|\le 2 N_n 2^{-ns}
\end{align*}
hold for all large $n$.  
Indeed, recall from \eqref{eq:many-surviving-squares} that there is $c_1>0$ such that
\begin{equation} \label{eq:many-cubes}
\PP(N_n\le c_1 n)<(1-c_1)^n.
\end{equation}
Conditioned on $\mathcal{B}_n$,  Hoeffding's inequality yields
\[
\PP\left(\left(X_n\ge\frac{\eta}{2}N_n\right) \cap (N_n\ge c_1 n)\right) \le \exp(-\Omega_\eta(n)),
\]
so Borel-Cantelli and Lemma \eqref{eq:many-cubes} ensure that a.s. $X_n \ge \tfrac{\eta}{2}N_n$ for all large enough $n$. The rest of the claim follows in a similar way.

Now suppose $A'\subset A$ is a compact set such that $\nu(A'\cap Q)\le\delta\nu(Q)$ for $\tfrac{\eta}{2} N_n$ cubes $Q\in\mathcal{Q}^d_n$ with $Q\subset A_n$. Then at least $\tfrac{\eta}{4}N_n$ of these satisfy $\nu(Q)\ge \varepsilon 2^{-ns}$, so that
\[
\nu(A\setminus A')\ge (1-\delta)\tfrac{\eta}{4}\varepsilon 2^{-ns}N_n\ge \tfrac{(1-\delta)\eta\varepsilon}{8}\nu(A).
\]
In other words, if $\nu(A')>\delta'\nu(A)$, where $\delta'=1-\tfrac{(1-\delta)\eta\varepsilon}{8}$ then $\nu(A'\cap Q)>\delta\nu(Q)$ for at least one $Q\in\mathcal{Q}^d_n$ such that $\nu_Q$ satisfies the $\delta$-Szemer\'{e}di condition. By definition, the $\delta$-Szemer\'{e}di condition for $ U$ is invariant under homothetic changes of coordinates. We thus conclude that the $\delta'$-Szemer\'{e}di condition for $ U$ holds almost surely.
\end{proof}

\begin{proof}[Proof of Theorem \ref{thm:relative-S-abstract}]
By Proposition \ref{prop:counting-szemeredi}, for each $\lambda\in \Lambda$ there exist a neighbourhood $ U$ of $\lambda$ and $\delta<1$ such that
\[
\PP(\delta\text{-Szemer\'{e}di condition holds  for } U) > 0.
\]
(By the regularity of $\nu$, we may assume $A'$ is compact.) By Lemma \ref{lem:szemeredi-0-1-law}, there exists $\delta'<1$ such that
\[
\PP(\delta'\text{-Szemer\'{e}di condition holds  for } U) = 1.
\]
In particular, a.s. for all surviving cubes $Q$, the restricted process $\nu_Q$ satisfies the $\delta'$-Szemer\'{e}di condition for $ U$. Fix a realization such that this holds,  and let $A'\subset A$ be a measurable set with $\nu(A')>0$. By a weak version of the Lebesgue density point theorem, there is a cube $Q$ (depending on $A'$) such $\nu(A'\cap Q)>\delta' \nu(Q)$. Hence, if $Q=r[0,1]^d+t$, then for each $\lambda\in U$, the set $A'$ contains points $x_1,\ldots,x_m$ such that $P_\lambda(r x_1+t,\ldots,r x_m+t)=0$. This proves the first claim in  Theorem \ref{thm:relative-S-abstract}.

The latter claim in Theorem \ref{thm:relative-S-abstract} follows by covering the parameter space $\Lambda$ by countably many such sets $U$.
\end{proof}

\bibliographystyle{plain}
\bibliography{patterns}

\end{document}